\definecolor{ultramarine}{RGB}{0,32,96}
\colorlet{mygreen}{green!20!gray}
\colorlet{myultramarine}{ultramarine!20!gray}
\numberwithin{equation}{section}
\numberwithin{table}{section}
\DeclareFontFamily{U}{BOONDOX-calo}{\skewchar\font=45 }
\DeclareFontShape{U}{BOONDOX-calo}{m}{n}{
   <-> s*[1.05] BOONDOX-r-calo}{}
\DeclareFontShape{U}{BOONDOX-calo}{b}{n}{
   <-> s*[1.05] BOONDOX-b-calo}{}
\DeclareMathAlphabet{\mathcalboondox}{U}{BOONDOX-calo}{m}{n}
\SetMathAlphabet{\mathcalboondox}{bold}{U}{BOONDOX-calo}{b}{n}
\DeclareMathAlphabet{\mathbcalboondox}{U}{BOONDOX-calo}{b}{n}
\newtheorem{thm}{Theorem}[section]
\newtheorem{rk}[thm]{Remark}
\newtheorem{lem}[thm]{Lemma}
\newtheorem{prop}[thm]{Proposition}
\newtheorem{cor}[thm]{Corollary}
\newtheorem{fact}[thm]{Fact}
\newtheorem*{prop*}{Proposition}
\newcommand\FK{{\operatorname{FK}}}
\newcommand\id{{\operatorname{id}}}
\newcommand\Img{{\operatorname{Im}}}
\newcommand\Ker{{\operatorname{Ker}}}
\newcommand\T{{\mathbb{T}}}
\newcommand\NN{{\mathbb{N}}}
\newcommand\RR{{\mathbb{R}}}
\newcommand\ZZ{{\mathbb{Z}}}
\newcommand{\suline}[1]{\smash{\underline{\vphantom{\mathstrut}#1}}}
\title{\texorpdfstring{Hochschild and cyclic (co)homology of the Fomin–Kirillov algebra on $3$ generators}{Hochschild and cyclic (co)homology of the Fomin–Kirillov algebra on 3 generators}}
\author{Estanislao Herscovich and Ziling Li}
\date{}
\begin{document}

\maketitle

\begin{abstract}
The goal of this article is to explicitly compute the Hochschild (co)homology of the Fomin-Kirillov algebra on three generators over a field of characteristic different from $2$ and $3$. 
We also obtain the cyclic (co)homology of the Fomin-Kirillov algebra in case the characteristic of the field is zero. 
Moreover, we compute the algebra structure of the Hochschild cohomology. 
\end{abstract}

\textbf{AMS Mathematics subject classification 2020:} 16E40, 16S37, 16T05, 18G10.

\textbf{Keywords:} Fomin-Kirillov algebras, Hochschild (co)homology, cyclic (co)ho\-mol\-o\-gy, Nich\-ols algebras.


\section{Introduction}

In their study of the Schubert calculus of flag manifolds, S. Fomin and A. Kirillov introduced a family of quadratic algebras over a field $\Bbbk$ of characteristic zero, now called the {\color{ultramarine}{\textbf{Fomin-Kirillov algebras}}} $\operatorname{FK}(n)$, indexed by the positive integers $n \in \NN$ (see \cites{FK99, Ki00, Ki16}). 
In case the index $n$ takes the values $3$, $4$ or $5$, the Fomin-Kirillov algebras are also Nichols algebras (see \cites{MS00,GV16}), which appear in the classification of finite dimensional pointed Hopf algebras (see \cite{AS10}). 
Their (co)homological properties have gained some importance, in particular in relation to the conjecture by P. Etingof and V. Ostrik that claims that the Yoneda algebra of every finite dimensional 
Hopf algebra is finitely generated. 
In particular, the Yoneda algebra of the Fomin-Kirillov algebra $\operatorname{FK}(3)$ was first computed by D. \c{S}tefan and C. Vay in \cite{SV16} 
using several calculations involving spectral sequences. 
The Yoneda algebra of $\operatorname{FK}(3)$ was more recently obtained in \cite{es} by more direct methods, namely by explicitly computing the minimal projective resolution of the trivial 
module $\Bbbk$ in the category of bounded below graded modules.
The aim of this article is to explicitly compute the Hochschild (co)homology of $\operatorname{FK}(3)$ over a field $\Bbbk$ 
of characteristic different from $2$ and $3$, and the cyclic (co)homology if the field $\Bbbk$ has characteristic zero. 

The contents of the article are as follows. 
After recalling some basic facts about the Fomin-Kirillov algebra $A = \operatorname{FK}(3)$ on three generators in Section \ref{section:generalities}, 
we explicitly construct in Section \ref{section:min-proj-res} the minimal projective resolution of the standard bimodule $A$ in the category of bounded below graded bimodules (see Proposition \ref{prb}), building upon the minimal projective resolution of the trivial module $\Bbbk$ in the category of bounded below graded modules of \cite{es}. 
Using this resolution we then compute in Section \ref{section:homology} explicit bases for the Hochschild homology groups of $A$. 
In particular, we prove the following result. 

\begin{prop*}[see Proposition \ref{prop dim HHA}]
	The dimension of $\operatorname{HH}_n(A)$ is given by 
	\[ \operatorname{dim}\operatorname{HH}_n(A) 
	= 
	\begin{cases}
		6,  &\text{if $n=0$}, \\
		\frac{5}{2}n+5, &\text{if $n=4r$ for $r\in\NN$},
		\\
		\frac{5n+13}{2}, &\text{if $n=4r+1$ for $r\in\NN_0$},
		\\
		\frac{5}{2}n+6, &\text{if $n=4r+2$ for $r\in\NN_0$},
		\\
		\frac{5n+9}{2}, &\text{if $n=4r+3$ for $r\in\NN_0$}.
	\end{cases} 
	\]
\end{prop*}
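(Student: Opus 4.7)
The plan is to compute $\operatorname{HH}_\bullet(A)=\operatorname{Tor}^{A^e}_\bullet(A,A)$ directly from the minimal projective bimodule resolution $(P_\bullet,d_\bullet)\to A$ provided by Proposition \ref{prb}. Applying the functor $A\otimes_{A^e}-$ produces a complex $C_\bullet$ of graded vector spaces whose $n$-th term is a direct sum of degree-shifted copies of $A$, with multiplicities given by the ranks of the $P_n$. Since the bimodule differentials of Section \ref{section:min-proj-res} are written out explicitly in terms of elements of $A$, the induced differentials $\bar d_n=\operatorname{id}_A\otimes_{A^e}d_n$ on $C_\bullet$ can also be made completely explicit, and
\[
\dim \operatorname{HH}_n(A)=\dim\ker\bar d_n-\dim\operatorname{im}\bar d_{n+1}.
\]

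Because $A$ is finite dimensional, every term of $C_\bullet$ is finite dimensional and the computation splits along the internal grading. The fact that the closed-form answer depends only on $n\bmod 4$ strongly suggests that, past some initial range, the ranks of the $P_n$ and the shapes of the matrices of the $\bar d_n$ stabilize into a pattern of period $4$. I would therefore organize the argument case by case according to the residue of $n$ modulo $4$: in each class, identify a uniform description of $\ker \bar d_n$ and $\operatorname{im}\bar d_{n+1}$ valid for all sufficiently large $n$, and then handle the small residues (in particular $n=0,1,2,3$) by hand. The case $n=0$ reduces to the familiar identification $\operatorname{HH}_0(A)\cong A/[A,A]$, whose dimension can be read off directly from the multiplication table of $A$, matching the value $6$ in the statement.

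The main obstacle is the explicit linear algebra of pinning down bases for the kernels and images in each residue class. Minimality of the resolution helps in that the generators of $P_n$ correspond via augmentation to a basis of $\operatorname{Tor}^A_n(\Bbbk,\Bbbk)$, and these generators provide natural candidate cycles; nevertheless, one still has to trace each candidate through $\bar d_n$ and $\bar d_{n+1}$ and decide precisely which linear combinations survive modulo boundaries. Some care is needed with the hypothesis that the characteristic is different from $2$ and $3$, since inverses of these primes may enter the normalization of the explicit formulas for the differentials and of the bases produced in Proposition \ref{prb}. Once the four uniform patterns are established and the low-degree exceptional cases are checked, the five lines of the formula for $\dim \operatorname{HH}_n(A)$ follow by a routine bookkeeping.
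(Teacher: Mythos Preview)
Your plan is exactly the paper's approach: apply $A\otimes_{A^e}-$ to the minimal bimodule resolution of Proposition~\ref{prb} and compute kernels and images of the induced maps $\partial_n$ by explicit linear algebra, splitting along the internal grading. One refinement worth noting: the matrices of $\partial_n$ do not literally stabilize with period~$4$, since $\dim\tilde P_n$ grows linearly in $n$. What the paper exploits instead is the $\omega_i$-structure of the resolution. Writing $\tilde P_{n}=\bigoplus_m\tilde P_{n,m}$, Proposition~\ref{bdh} shows that for $m\ge3$ the cycle and boundary spaces $D_{n,m},B_{n,m}$ are carried by $\omega$-shifts to the cases $m\in\{3,4\}$, so the whole computation reduces to $m\in\llbracket 0,4\rrbracket$. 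For each such $m$ the Koszul-type pieces $\tilde\partial_{n,m}$ depend only on the parity of $n$ once $n$ is large, yielding the stable building blocks $\tilde H_{n,m}$ of Corollary~\ref{corollary:tildeH}; Proposition~\ref{prop:linear HHnA} then assembles $\operatorname{HH}_n(A)$ as $\bigoplus_{i,m}\omega_i\tilde H_{n-4i,m}$, and the dimension formula drops out by summing these stable contributions. So your outline is correct, but the finiteness of the computation comes from this $\omega$-recursive reduction rather than from a direct period-$4$ pattern in the full matrices.
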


We also compute their full Hilbert series with respect to the internal degree of $A$ (see Corollary \ref{cor hilbert series homology}). 
Moreover, (the Hilbert series of) the cyclic homology is immediately obtained from the Hochschild homology by means of Goodwillie's theorem (see Corollary \ref{hseriescyclichomology}) in case the characteristic of the field is zero. 

Analogously, using again the minimal projective resolution of $A$ in the category of bounded below graded bimodules, 
we compute in Section \ref{Hochschild cohomology} explicit bases for the Hochschild cohomology groups of $A$. 
In particular, we prove the following result. 

\begin{prop*} [see Proposition \ref{proposition:cohomology}]
	The dimension of $\operatorname{HH}^n(A)$ is given by 
	\[ \operatorname{dim}\operatorname{HH}^n(A)=
\begin{cases}
		 \frac{5}{2}n+4, & \text{if $n=4r$ for $r\in\NN_0$}, 
		 \\
		 \frac{5}{2}n+5, &\text{if $n=4r+2$ for $r\in\NN_0$},\\
		\frac{5n+9}{2}, &\text{if $n=2r+1$ for $r\in\NN_0$}.
	\end{cases} \]
\end{prop*}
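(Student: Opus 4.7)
The strategy exactly parallels the one that established Proposition \ref{prop dim HHA}. We apply the functor $\operatorname{Hom}_{A^e}(-,A)$ to the minimal projective resolution $(P_\bullet,d_\bullet)$ of $A$ as a graded $A$-bimodule provided by Proposition \ref{prb}, and compute the cohomology of the resulting cochain complex. Each term has the shape $P_n = A\otimes V_n\otimes A$ for a finite-dimensional graded vector space $V_n$ with an explicit basis, and the standard identification
\[
\operatorname{Hom}_{A^e}(P_n,A)\cong\operatorname{Hom}_{\Bbbk}(V_n,A)
\]
turns the coboundaries into the prescription $d^n(\varphi)(v) = \sum_i a_i\,\varphi(v_i)\,b_i$ whenever $d_{n+1}(v) = \sum_i a_i\otimes v_i\otimes b_i$. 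Since the matrices of the $d_{n+1}$ were already described when computing Hochschild homology, no new combinatorial input is required to write down the $d^n$.

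Next I would decompose the cochain complex according to the internal grading carried by $A$ and by the resolution: in each internal degree the problem becomes a finite-dimensional linear algebra computation, and the periodicity modulo $4$ of the minimal resolution constructed in Proposition \ref{prb} reduces the infinitely many homological degrees to three families, matching the cases $n=4r$, $n=4r+2$, and $n$ odd of the statement. For each family I would exhibit, uniformly in $r$, an explicit basis of $\operatorname{Ker}(d^{n+1})$ and of $\operatorname{Im}(d^n)$, and sum the resulting dimensions to match the claimed closed-form formulas.

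The step that carries the essential content is the rank computation of the coboundary maps. In Hochschild homology the condition of being a cycle largely decouples into conditions that only involve one side of the bimodule action at a time, but the Hochschild cohomology coboundaries involve simultaneous left and right multiplications by the generators of $A$, so cocycle and coboundary spaces do not split as cleanly. The main obstacle will be to handle this coupling in the internal degrees where the left and right actions of the generators cancel to produce unexpected cocycles; tracking these carefully in each of the three residue classes modulo $4$ is what yields the three distinct closed-form expressions and, after the same bookkeeping refined by internal degree, will also feed into the explicit Hilbert series computation needed later.
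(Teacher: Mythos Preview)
Your proposal is correct and follows essentially the same approach as the paper: apply $\operatorname{Hom}_{A^e}(-,A)$ to the minimal bimodule resolution of Proposition \ref{prb}, decompose the resulting complex by internal degree, and exploit the $4$-periodicity encoded by the symbols $\omega^*_i$ to reduce to a finite computation (the paper packages this as the recursion in Proposition \ref{bdnmco} and the splitting $\operatorname{HH}^n(A)=\bigoplus_i \omega^*_i\tilde H^{n-4i}$ of Proposition \ref{linear structure of cohomology}, from which the formula follows by summing $\dim\tilde H^j$). Your closing remark about left/right ``coupling'' being a special obstacle in cohomology is overstated---the homology differentials $\tilde\partial_n$ already mix both actions in exactly the same way, and the paper handles both cases by the identical method of writing down explicit bases.
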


We also compute the full Hilbert series of the Hochschild cohomology with respect to the internal degree of $A$ (see Corollary \ref{corollary:cohomology-hilbert-series}). 
Finally, using techniques from Gröbner bases, we prove in Section \ref{section:algebra-cohomology} the main results of this article, Theorem \ref{thm:HHcohomologymain} and Corollary \ref{cor:HHcohomologymain}, 
which explicitly describe the algebra structure of the Hochschild cohomology of $A$ by generators and relations. 
Namely, $\operatorname{HH}^{\bullet}(A)$ is given as a quotient of a free graded-commutative algebra (for the homological degree) with $14$ homogeneous generators (see Proposition \ref{proposition:generators-cohomology}) 
modulo the homogeneous ideal generated by the $63$ relations listed in \eqref{eq:relations-cohomology-2}. 

\section{Generalities} 
\label{section:generalities}

In this section we will review the basic definitions of quadratic algebras, in particular applied to Fomin-Kirillov algebras.

We will denote by $\NN$ (resp., $\NN_{0}$) the set of positive (resp., nonnegative) integers. 
Given $i\in\ZZ$, we will denote by $\ZZ_{\leqslant i}$ the set $\{m\in\ZZ|m\leqslant i \}$.
Given $i,j\in \mathbb{Z}$ with $i\leqslant j$, we will denote by $\llbracket i,j \rrbracket =\{m\in \mathbb{Z}|i\leqslant m\leqslant j \}$ the integer interval, and we define $\chi_n=0$ if $n$ is an odd integer and $\chi_n=1$ if $n$ is an even integer. 
Moreover, given $r \in \RR$, we set $\lfloor r\rfloor =\sup \{n\in \ZZ|n\leqslant r \}$. 

From now on, $\Bbbk$ will be a field of characteristic different from $2$ and $3$. 
All maps between $\Bbbk$-vector spaces will be $\Bbbk$-linear.
Given an integer $N \geqslant 2$, recall that an algebra $A$ is said to be {\color{ultramarine}{\textbf{$N$-homogeneous}}} if it is of the form $\T V/(R)$, where $V$ is a $\Bbbk$-vector space, 
$\T V = \oplus_{n \in \NN_{0}} V^{\otimes n}$ is the tensor algebra on $V$ and $R \subseteq V^{\otimes N}$. 
An $N$-homogeneous algebra with $N = 2$ is called {\color{ultramarine}{\textbf{quadratic}}}. 
The grading of an $N$-homogeneous algebra $A = \oplus_{n \in \NN_{0}} A_{n}$ induced by setting $V$ to be concentrated in degree $1$ is called the {\color{ultramarine}{\textbf{Adams grading}}} or the {\color{ultramarine}{\textbf{internal grading}}} of $A$. 
As usual, we consider $A$ to be $\ZZ$-graded with $A_{n} = 0$ if $n \in \ZZ \setminus \NN_{0}$.

Let $V^*$ be the dual vector space of $V$ and define $\gamma : (V^{*})^{\otimes 2} \otimes V^{\otimes 2} \rightarrow \Bbbk$ by $\gamma (f_1 \otimes f_2 , v_1 \otimes v_2) = f_1(v_1) f_2(v_2)$, for all $v_1, v_2 \in V$ and $f_1, f_2 \in V^{*}$.  
Given a quadratic algebra $A = \T V/(R)$, let $A^!=\mathbb{T}(V^*)/(R^{\bot})=\oplus_{n\in\NN_0} A^!_{-n}$ be the {\color{ultramarine}{\textbf{quadratic dual}}} of $A$, where 
\begin{equation}
\label{eq:perpendicular-relations}
       R^{\bot}=\{ \alpha \in (V^*)^{\otimes 2}| \gamma(\alpha,r)=0 \textnormal{ for all } r \in R \}. 
\end{equation} 
Note that $A^!_{-n}$ is concentrated in Adams degree $-n$ for $n \in \NN_{0}$, and we consider $A^!$ to be $\ZZ$-graded with $A^!_{n} = 0$ for 
$n \in \NN$. 

Recall that, given a $\Bbbk$-algebra $A$, denote by $A^{op}$ the {\color{ultramarine}{\textbf{opposite algebra}}} of $A$, which is the $\Bbbk$-module $A$ with multiplication $x \cdot_{A^{op}} y=yx$ for $x,y\in A$. 
The algebra $A$ itself is an $A$-bimodule under left and right multiplications. 
We also recall that an $A$-bimodule $M$ can also be viewed as a left $A^e$-module, where $A^e=A\otimes A^{op}$ and the action is $(x\otimes y) m=xmy$ for $x,y\in A$ and $m\in M$. 
Analogously, 
it can also be regarded as a right $A^e$-module, where the action is $m (x\otimes y)=ymx$ for $x,y\in A$ and $m\in M$.

We recall that the 
{\color{ultramarine}{\textbf{Fomin–Kirillov algebra}}} 
on $3$ generators is the $\Bbbk$-algebra $\FK(3)$ generated by the $\Bbbk$-vector space $V$ spanned by three elements $a,b,c$, modulo the ideal generated by the vector space $R \subseteq V^{\otimes 2}$ spanned by 
\begin{equation}
    \{ a^2, b^2, c^2, ab+bc+ca, ba+ac+cb \}. 
\end{equation}        
As usual, we will omit the tensor symbol $\otimes$ when denoting the product of the elements of $\T V $. 
We refer the reader to \cites{FK99,MS00} for more information on Fomin-Kirillov algebras.
For simplicity, from now on we will denote the Fomin-Kirillov algebra $\FK(3)$ simply by $A$. 
Note that the algebra $A$ is quadratic and that $A=\oplus_{m\in \llbracket 0,4 \rrbracket} A_m$. 
It is easy to see that 
\begin{equation}
\label{eq:basis-FK}
\{1,a,b,c,ab,bc,ba,ac,aba,abc,bac,abac  \}   
\end{equation}     
is a basis of $A$ (see \cite{FK99}). 
Given, $m\in \llbracket 0,4 \rrbracket$, we will denote by $\mathcalboondox{B}_m$ the subset of \eqref{eq:basis-FK} that is a basis of $A_{m}$. 

We will denote by $\{A,B,C\}$ the basis of $V^*$ dual to the basis $\{a,b,c\}$ of $V$. 
Then, it is easy to see that 
\[     A^!=\Bbbk\langle A,B,C\rangle/ (BA-AC, CA-AB, AB-BC, CB-BA).     \]
Note that $A^!_0=\Bbbk$ and $A^!_{-1}\cong V^*$. 
We recall that $\mathcalboondox{B}^!_n=\{A^n, B^n, C^n, A^{n-1}B, A^{n-1}C, A^{n-2}B^2 \}$ is a basis of $A^!_{-n}$ for all integers $n \geqslant 2$, where we follow the convention that $A^{0}B^{2} = B^{2}$ (see \cite{SV16}, Lemma 4.4). 

Let $(A_{-n}^!)^*$ be the dual space of $A_{-n}^!$ and $\mathcalboondox{B}^{!*}_n = \{\alpha_n, \beta_n,\gamma_n,\alpha_{n-1}\beta, \alpha_{n-1}\gamma, \alpha_{n-2}\beta_2\} \setminus \{ \mathbf{0} \}$ the dual basis of $\mathcalboondox{B}^!_n$ for $n\in \NN$, where we will follow the convention that if the index of some letter in an element of the previous sets is less than or equal to zero, this element is zero $\mathbf{0}$.
We will omit the index $1$ for the elements of the previous bases and write $\epsilon^!$ for the basis of $(A_{0}^!)^*$. 
The previous bases for the homogeneous components of $A$, $A^{!}$ or $(A^!)^{\#}  =\oplus_{n\in\NN_0}(A_{-n}^!)^*$ will be called {\color{ultramarine}{\textbf{usual}}}. 

Recall that  $(A^!)^{\#}$ is a graded bimodule over $A^!$ via $(ufv)(w)=f(vwu)$ for $u,v,w \in A^!$ and $f\in (A^!)^{\#}$. 
Using this definition of action of $A^!$ together with \cite{es}, Fact 3.6, and the clear identities 
of $A^{!}$ given by 
\begin{equation}
\label{eq:identites-a-shriek}
X Y^{n} = A^{n} X \hskip 0.5cm \text{ and } \hskip 0.5cm X A^{n} = \begin{cases}
A^{n} X, &\text{if $n$ is even,}
\\
A^{n} Y, &\text{if $n$ is odd,}
\end{cases}
\end{equation}
for $n \in \NN$ and $\{ X, Y \} = \{ B , C \}$, we immediately get that 
\begin{equation}
	\label{eq:action_1}
	\begin{split}
	 A\alpha & =B\beta =C\gamma = \alpha A=\beta B=\gamma C = \epsilon^!, 
	\\
	 A\beta & =A\gamma =B\alpha =B\gamma =C\alpha =C\beta = \beta A=\gamma A=\alpha B=\gamma B=\alpha C=\beta C = 0.
	\end{split}
\end{equation} 
Moreover, for $n\geqslant 2$, we have 
\begin{small}
\begin{equation}
	\label{eq:action_n>1}
	\begin{aligned}
	A\alpha_n & =\alpha_n A=\alpha_{n-1},
	\quad &
	A\beta_n & =\beta_n A=A\gamma_n=\gamma_n A=0,
	\\
	A\alpha_{n-1}\beta & =\chi_n\gamma_{n-1}+\alpha_{n-2}\gamma,
	\quad &
	\alpha_{n-1}\beta A & =\chi_n\beta_{n-1}+\alpha_{n-2}\beta,
	\\
	A\alpha_{n-1}\gamma & =\chi_n\beta_{n-1}+\alpha_{n-2}\beta,
	\quad &
	\alpha_{n-1}\gamma A & =\chi_n\gamma_{n-1}+\alpha_{n-2}\gamma,
	\\
	A\alpha_{n-2}\beta_2 & =\chi_{n+1}(\beta_{n-1}+\gamma_{n-1})+\alpha_{n-3}\beta_2,
	\quad &
	\alpha_{n-2}\beta_2 A & =\chi_{n+1}(\beta_{n-1}+\gamma_{n-1})+\alpha_{n-3}\beta_2,
	\\
	B\beta_n & =\beta_n B=\beta_{n-1},
	\quad &
	B\alpha_{n} & =\alpha_{n} B=B\gamma_{n} =\gamma_{n} B=0,
	\\
	 B\alpha_{n-1}\beta & =\alpha_{n-1}+\chi_{n+1}\gamma_{n-1}+\alpha_{n-3}\beta_2,
	\quad &
	\alpha_{n-1}\beta B & =\gamma_{n-1}+\chi_n\alpha_{n-2}\gamma+\chi_{n+1}(\alpha_{n-1}+\alpha_{n-3}\beta_2),
	\\
	B\alpha_{n-1}\gamma & =\chi_{n}\gamma_{n-1}+\alpha_{n-2}\gamma,
	\quad &
	\alpha_{n-1}\gamma B & =\chi_{n+1}\alpha_{n-2}\beta+\chi_n(\alpha_{n-1}+\alpha_{n-3}\beta_2),
	\\
	B\alpha_{n-2}\beta_2 & =\alpha_{n-2}\beta,
	\quad &
	\alpha_{n-2}\beta_2 B & =\chi_n\alpha_{n-2}\beta+\chi_{n+1}\alpha_{n-2}\gamma,
	\\
	C\gamma_n & =\gamma_n C=\gamma_{n-1},
	\quad &
	C\alpha_n & =\alpha_n C=C\beta_n=\beta_n C=0,
	\\
	C\alpha_{n-1}\beta & =\chi_{n}\beta_{n-1}+\alpha_{n-2}\beta,
	\quad &
	\alpha_{n-1}\beta C & 
	=\chi_{n+1}\alpha_{n-2}\gamma+\chi_n(\alpha_{n-1}+\alpha_{n-3}\beta_2),
	\\
	C\alpha_{n-1}\gamma & =\alpha_{n-1}+\chi_{n+1}\beta_{n-1}+\alpha_{n-3}\beta_2,
	\quad &
	\alpha_{n-1}\gamma C & =\beta_{n-1}+\chi_n\alpha_{n-2}\beta+\chi_{n+1}(\alpha_{n-1}+\alpha_{n-3}\beta_2),
	\\
	C\alpha_{n-2}\beta_2 & =\alpha_{n-2}\gamma,
	\quad &
	\alpha_{n-2}\beta_2 C & =\chi_{n+1}\alpha_{n-2}\beta+\chi_n \alpha_{n-2}\gamma.
\end{aligned}
\end{equation} 
\end{small}

Finally, the following elementary result, whose proof is immediate, will be useful in the sequel to establish the linear independence of several sets of (co)boundaries and (co)cycles. 
\begin{fact} 
\label{indep}
Let $V$ be a $\Bbbk$-vector space of dimension $n\in\NN$ and $\{v_1,\cdots, v_n\}$ a basis of $V$. Let $r\leqslant n$ be a positive integer and $U=\{\sum_{j=1}^n c^k_jv_j\in V|c^k_j\in \Bbbk, k\in \llbracket 1, r\rrbracket \}$ a set of $r$ elements. If there is an injective map $\varphi: \llbracket 1,r \rrbracket \to \llbracket 1, n \rrbracket $ such that for all $k\in \llbracket 1,r\rrbracket$, $c^k_{\varphi(k)}\neq 0$, but $c^i_{\varphi(k)}=0$ for $i\in\llbracket 1, k-1 \rrbracket$, then the elements in $U$ are linearly independent.
\end{fact}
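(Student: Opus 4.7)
The plan is a direct downward induction on the step index. Writing the $k$-th element of $U$ as $u_k = \sum_{j=1}^n c^k_j v_j$ and supposing a linear dependence $\sum_{k=1}^r \lambda_k u_k = 0$ with $\lambda_k \in \Bbbk$, the linear independence of $\{v_1,\ldots,v_n\}$ yields the scalar equations $\sum_{k=1}^r \lambda_k c^k_j = 0$ for every $j \in \llbracket 1,n \rrbracket$. The argument will only evaluate these at the $r$ distinguished coordinates $j = \varphi(1),\ldots,\varphi(r)$.

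The base case takes $j = \varphi(r)$: by hypothesis $c^i_{\varphi(r)}=0$ for all $i \in \llbracket 1,r-1 \rrbracket$, so the only surviving term is $\lambda_r c^r_{\varphi(r)} = 0$, and since $c^r_{\varphi(r)} \neq 0$ this forces $\lambda_r = 0$. For the inductive step, fix $k \in \llbracket 1,r-1 \rrbracket$ and assume $\lambda_{k+1} = \cdots = \lambda_r = 0$ have been established. Evaluating the scalar equation at $j = \varphi(k)$ kills every term with index $i<k$ by hypothesis and every term with index $i>k$ by the inductive assumption, leaving $\lambda_k c^k_{\varphi(k)} = 0$, whence $\lambda_k = 0$. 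Running the induction downward from $r$ to $1$ gives $\lambda_k = 0$ for every $k$, so the elements of $U$ are linearly independent.

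There is essentially no obstacle here: the whole content of the fact is the observation that the $r \times r$ matrix with entries $c^k_{\varphi(\ell)}$ is lower triangular with nonzero diagonal and therefore invertible. The injectivity of $\varphi$ is needed only to ensure that the coordinates $\varphi(1),\ldots,\varphi(r)$ are distinct; in fact it is automatic from the other hypotheses, since $\varphi(i) = \varphi(k)$ with $i<k$ would force $0 \neq c^i_{\varphi(i)} = c^i_{\varphi(k)} = 0$, a contradiction.
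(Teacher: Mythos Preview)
Your proof is correct. The paper itself does not give a proof of this fact, stating only that its ``proof is immediate''; your downward induction (equivalently, the observation that the $r\times r$ matrix $(c^k_{\varphi(\ell)})$ is lower triangular with nonzero diagonal) is precisely the routine verification the authors leave to the reader.
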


Instead of writing the specific map $\varphi$, in the cases of (ordered) sets $U$ we will consider in the sequel we will simply underline the corresponding term $c^k_{\varphi(k)} v_{\varphi(k)}$. 
This will be the case in particular in Subsections \ref{subsection: boundaries} - \ref{subsection:homology} and \ref{subsection: com coboun} - \ref{subsection: cohomology}. 
In that situation, the basis $\{v_1,\cdots, v_n\}$ of the larger vector space will be an usual basis and the condition on $\varphi$ is tantamount to the fact that the underlined term of an element does not appear (with nonzero coefficient) in the expressions of the previous elements of the same set.

\section{\texorpdfstring{The projective bimodule resolution of the Fomin-Kirillov algebra on $3$ generators}{The projective bimodule resolution of the Fomin-Kirillov algebra on 3 generators}}
\label{section:min-proj-res} 

In this section we will explicitly construct the minimal projective resolution of the standard bimodule $A$ in the category of bounded below graded $A$-bimodules. 

\subsection{The bimodule Koszul complex}

In the article \cite{berger} R. Berger and N. Marconnet introduced the 
{\color{ultramarine}{\textbf{bimodule Koszul complex}}}
for any $N$-homogeneous algebra.
We will recall this for the special case of the Fomin–Kirillov algebra (so $N=2$). 
Given $n \in \NN_0$, let $K_n^b= A \otimes (A_{-n}^!)^*\otimes A$ be the bimodule over $A$ for the outer action. 
Define the maps 
 \[     i_{\mathcalboondox{l}}, i_{\mathcalboondox{r}} : A \otimes (A^!)^{\#} \otimes A \rightarrow A \otimes (A^!)^{\#} \otimes A     \]
given by $i_{\mathcalboondox{l}} (x\otimes u \otimes y)=xa\otimes uA \otimes y+xb\otimes uB\otimes y+xc\otimes uC \otimes y$ 
and $i_{\mathcalboondox{r}}(x\otimes u\otimes y)= x\otimes Au\otimes ay+x\otimes Bu\otimes by+x\otimes Cu\otimes cy$ for $x,y\in A$ and $u\in (A^!)^{\#}$.
Note that $i_{\mathcalboondox{l}}^2=0$, $i_{\mathcalboondox{r}}^2=0$ and $i_{\mathcalboondox{l}}i_{\mathcalboondox{r}}=i_{\mathcalboondox{r}}i_{\mathcalboondox{l}}$. 
Indeed, the first identity follows from the fact that 
\[     (a\otimes A+b\otimes B+c\otimes C)^2=ba\otimes BA+ca\otimes CA+ab\otimes AB+cb\otimes CB+ac\otimes AC+ bc\otimes BC \]     
is trivially zero by applying the relations in $A$ and $A^!$ and the fact that $i_{\mathcalboondox{l}}^2(x\otimes u\otimes y)=(x\otimes u\otimes y)(a\otimes A \otimes 1 +b\otimes B \otimes 1+c\otimes C \otimes 1)^2$. 
The identity $i_{\mathcalboondox{r}}^2=0$ is proved in the same way.
Since the left and right actions of $A^!$ on $(A^!)^{\#}$ are compatible, the maps $i_{\mathcalboondox{l}}$ and $i_{\mathcalboondox{r}}$ commute.

\begin{fact}
   Take $x, y \in A$. 
   To reduce space, we will typically use vertical bars instead of the tensor product symbols $\otimes$. 
   The map $i_{\mathcalboondox{l}}|_{A\otimes (A_{-1}^!)^*\otimes A}: A\otimes (A_{-1}^!)^*\otimes A \to A\otimes (A_{0}^!)^*\otimes A$ sends $x|\alpha|y$ to $xa|\epsilon^!|y$, $x|\beta|y$ to $xb|\epsilon^!|y$, and $x|\gamma|y$ to $xc|\epsilon^!|y$. For $n\geqslant 2$,  $i_{\mathcalboondox{l}}|_{A\otimes (A_{-n}^!)^*\otimes A}:A\otimes (A_{-n}^!)^*\otimes A\to A\otimes (A_{-(n-1)}^!)^*\otimes A $ is given by  
   \begin{align*}
   x|\alpha_n|y &\mapsto xa|\alpha_{n-1}|y,\quad
   x|\beta_n |y \mapsto xb|\beta_{n-1}|y,\quad 
   x|\gamma_n |y \mapsto xc|\gamma_{n-1}|y,
   \\
   x|\alpha_{n-1}\beta|y &\mapsto xa| (\chi_n\beta_{n-1}+\alpha_{n-2}\beta)|y+xb| (\gamma_{n-1}+\chi_n\alpha_{n-2}\gamma+\chi_{n+1}(\alpha_{n-1}+\alpha_{n-3}\beta_2))|y
   \\
   &\phantom{ \mapsto \;} 
   +xc| (\chi_{n+1}\alpha_{n-2}\gamma+\chi_n(\alpha_{n-1}+\alpha_{n-3}\beta_2))|y,
   \\
   x|\alpha_{n-1}\gamma|y &\mapsto xa| (\chi_n\gamma_{n-1}+\alpha_{n-2}\gamma)|y+xb| (\chi_{n+1}\alpha_{n-2}\beta+\chi_n(\alpha_{n-1}+\alpha_{n-3}\beta_2))|y
   \\
   & \phantom{ \mapsto \;}
   +xc| (\beta_{n-1}+\chi_n\alpha_{n-2}\beta+\chi_{n+1}(\alpha_{n-1}+\alpha_{n-3}\beta_2))|y,
   \\
   x|\alpha_{n-2}\beta_2|y&\mapsto xa|(\chi_{n+1}(\beta_{n-1}+\gamma_{n-1})+\alpha_{n-3}\beta_2)|y+xb|(\chi_n\alpha_{n-2}\beta+\chi_{n+1}\alpha_{n-2}\gamma)|y
   \\
   & \phantom{ \mapsto \;}
   +xc|(\chi_{n+1}\alpha_{n-2}\beta+\chi_n \alpha_{n-2}\gamma)|y.
   \end{align*}

   The map $i_{\mathcalboondox{r}}|_{A\otimes (A_{-1}^!)^*\otimes A}: A\otimes (A_{-1}^!)^*\otimes A\to A \otimes (A_{0}^!)^*\otimes A$ sends $x|\alpha|y$ to $x|\epsilon^!|ay$, $x|\beta|y$ to $x|\epsilon^!|by$, and $x|\gamma|y$ to $x|\epsilon^!|cy$. 
   For $n\geqslant 2$,  $i_{\mathcalboondox{r}}|_{A\otimes (A_{-n}^!)^*\otimes A}:A\otimes (A_{-n}^!)^*\otimes A\to A\otimes (A_{-(n-1)}^!)^*\otimes A$ is given by 
   \begin{align*}
	x|\alpha_n|y &\mapsto x|\alpha_{n-1}|ay,\quad
	x|\beta_n|y\mapsto x|\beta_{n-1}|by,\quad 
	x|\gamma_n|y\mapsto x|\gamma_{n-1}|cy,
	\\
	x|\alpha_{n-1}\beta|y &\mapsto x|(\chi_n \gamma_{n-1}+\alpha_{n-2}\gamma)|ay+x|(\alpha_{n-1}+\chi_{n+1}\gamma_{n-1}+\alpha_{n-3}\beta_2)|by\\
	& \phantom{ \mapsto \;}
	+x|(\chi_n\beta_{n-1}+\alpha_{n-2}\beta)|cy,
	\\
	x|\alpha_{n-1}\gamma|y&\mapsto x|(\chi_n\beta_{n-1}+\alpha_{n-2}\beta)|ay+x|(\chi_n\gamma_{n-1}+\alpha_{n-2}\gamma)|by
	\\
	& \phantom{ \mapsto \;}
	+x|(\alpha_{n-1}+\chi_{n+1}\beta_{n-1}+\alpha_{n-3}\beta_2)|cy,
	\\
	x|\alpha_{n-2}\beta_2|y&\mapsto x|(\chi_{n+1}(\beta_{n-1}+\gamma_{n-1})+\alpha_{n-3}\beta_2)|ay+x|\alpha_{n-2}\beta|by+x|\alpha_{n-2}\gamma|cy.
	\end{align*}
\end{fact}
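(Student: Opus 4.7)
The plan is a direct unfolding of the definitions. By construction,
\[
i_{\mathcalboondox{l}}(x\otimes u\otimes y) = xa\otimes uA\otimes y + xb\otimes uB\otimes y + xc\otimes uC\otimes y,
\]
\[
i_{\mathcalboondox{r}}(x\otimes u\otimes y) = x\otimes Au\otimes ay + x\otimes Bu\otimes by + x\otimes Cu\otimes cy,
\]
so once the right (resp., left) actions of $A$, $B$, $C$ are known on each basis vector $u\in \mathcalboondox{B}^{!*}_n$ of $(A^!_{-n})^*$, the asserted formulas for $i_{\mathcalboondox{l}}$ (resp., $i_{\mathcalboondox{r}}$) are obtained by a single substitution. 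These actions are precisely the content of \eqref{eq:action_1} for $n=1$ and of \eqref{eq:action_n>1} for $n\geqslant 2$, so no new computation is required; what remains is to organize the substitution cleanly.

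For $n=1$ the claim is immediate from \eqref{eq:action_1}: for example, $\alpha A=\epsilon^!$ and $\alpha B=\alpha C=0$ yield $i_{\mathcalboondox{l}}(x|\alpha|y)=xa|\epsilon^!|y$, and the remaining cases for both $i_{\mathcalboondox{l}}$ and $i_{\mathcalboondox{r}}$ are analogous. For $n\geqslant 2$ one proceeds identically; to illustrate, inserting the three identities $\alpha_{n-1}\beta\, A=\chi_n\beta_{n-1}+\alpha_{n-2}\beta$, $\alpha_{n-1}\beta\, B=\gamma_{n-1}+\chi_n\alpha_{n-2}\gamma+\chi_{n+1}(\alpha_{n-1}+\alpha_{n-3}\beta_2)$ and $\alpha_{n-1}\beta\, C=\chi_{n+1}\alpha_{n-2}\gamma+\chi_n(\alpha_{n-1}+\alpha_{n-3}\beta_2)$ from \eqref{eq:action_n>1} into the defining formula for $i_{\mathcalboondox{l}}(x|\alpha_{n-1}\beta|y)$ reproduces exactly the stated expression. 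The remaining five basis vectors $\alpha_n$, $\beta_n$, $\gamma_n$, $\alpha_{n-1}\gamma$, $\alpha_{n-2}\beta_2$ are handled in the same way, and the formulas for $i_{\mathcalboondox{r}}$ are obtained by substituting the corresponding left-action identities from \eqref{eq:action_n>1}.

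Since every step is a direct substitution, there is no genuine obstacle; the only care required is the parity bookkeeping inherent to the $\chi_n$ factors together with the convention that any basis vector whose index drops to zero or below is identified with the zero vector. These conventions guarantee that the stated formulas hold uniformly in $n$, in particular in the low-degree boundary cases $n=2$ and $n=3$, where several of the terms $\alpha_{n-3}\beta_2$, $\alpha_{n-2}\beta$ and $\alpha_{n-2}\gamma$ vanish automatically and the formulas still reproduce the correct image.
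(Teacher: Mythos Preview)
Your proposal is correct and matches the paper's approach: the Fact is stated without proof there, since it is an immediate consequence of inserting the right and left action formulas \eqref{eq:action_1} and \eqref{eq:action_n>1} into the defining expressions for $i_{\mathcalboondox{l}}$ and $i_{\mathcalboondox{r}}$, exactly as you describe.
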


Following \cite{berger}, we now set $d_{n}^b:K_n^b\to K_{n-1}^b$ by $d_{n}^b=(-1)^n i_{\mathcalboondox{l}}+i_{\mathcalboondox{r}}$ for $n\in \NN $. It is easy to see that $d^b_{n}d^b_{n+1}=-i_{\mathcalboondox{l}}^2+i_{\mathcalboondox{r}}^2=0$ for $n\in \NN$. Then $(K^b_{\bullet}, d^b_{\bullet})$ is a complex in the category of bounded below graded $A$-bimodules, called the {\color{ultramarine}{\textbf{bimodule Koszul complex}}}
over $A$. It is clear that $\Bbbk\otimes_A (K^b_{\bullet}, d^b_{\bullet})\cong (K_{\bullet},d_{\bullet})$, where $(K_{\bullet},d_{\bullet})$, considered in \cite{es}, is the Koszul complex of the trivial right $A$-module $\Bbbk$ in the category of graded right $A$-modules.

\begin{rk}
The bimodule Koszul complex $(K^b_{\bullet}, d^b_{\bullet})$ is minimal, since the complex $\Bbbk\otimes_{A^e}(K^b_{\bullet}, d^b_{\bullet})\cong \Bbbk\otimes_A (K^b_{\bullet}, d^b_{\bullet})\otimes_A \Bbbk\cong (K_{\bullet}, d_{\bullet})\otimes_A \Bbbk$ has zero differentials.
\end{rk}

 We recall the following result.
\begin{prop}[\cite{berger}, Proposition 4.1]
	\label{berger}
Let $B$ be a nonnegatively graded connected $\Bbbk$-algebra, and let
\[  M_1\xrightarrow{f} M_2\xrightarrow{g} M_3  \]
be a sequence of graded-free $B$-modules, with $M_1$ bounded below and $gf=0$. Then this sequence is exact if 
\[ \Bbbk\otimes_B M_1\xrightarrow{\id_{\Bbbk}\otimes_B f} \Bbbk\otimes_B M_2\xrightarrow{\id_{\Bbbk}\otimes_B g} \Bbbk\otimes_B M_3 \]
is exact.
\end{prop}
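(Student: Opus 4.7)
My plan is to prove this via an iterative approximation argument using the $B_+$-adic filtration on the complex $M_1 \to M_2 \to M_3$, where $B_+ = \oplus_{n \geqslant 1} B_n$ is the augmentation ideal of $B$. In the standard setting of graded-free modules in the category of bounded-below graded $B$-modules, all three $M_i$ are bounded below, so for any fixed internal degree $n$ there is a $p_0(n) \in \NN$ with $(B_+^p M_i)_n = 0$ for all $p \geqslant p_0(n)$, since each power of $B_+$ shifts the minimal degree up by at least one.

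The key observation is that, because each $M_i$ is graded-free, writing $M_i \cong B \otimes_{\Bbbk} (\Bbbk \otimes_B M_i)$ yields the isomorphism
\[ B_+^p M_i / B_+^{p+1} M_i \cong \bigl(B_+^p / B_+^{p+1}\bigr) \otimes_{\Bbbk} \bigl(\Bbbk \otimes_B M_i\bigr), \]
and under this identification the associated graded maps $\mathrm{gr}^p f$ and $\mathrm{gr}^p g$ become $\id \otimes (\id_{\Bbbk} \otimes_B f)$ and $\id \otimes (\id_{\Bbbk} \otimes_B g)$ respectively. Since tensoring with the $\Bbbk$-vector space $B_+^p / B_+^{p+1}$ preserves exactness of $\Bbbk$-linear sequences, the hypothesis implies that every associated graded sequence $\mathrm{gr}^p M_1 \to \mathrm{gr}^p M_2 \to \mathrm{gr}^p M_3$ is exact at the middle.

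To conclude exactness at $M_2$, I would take any $x \in (\Ker g)_n$ and inductively construct $y_p \in (M_1)_n$ with $x - f(y_p) \in B_+^p M_2$. Starting from $y_0 = 0$: given $y_p$, the class $\xi_p$ of $x - f(y_p)$ in $\mathrm{gr}^p M_2$ satisfies $\mathrm{gr}^p g(\xi_p) = 0$ since $g(x - f(y_p)) = 0$, so by the associated-graded exactness there exists $\eta_p \in \mathrm{gr}^p M_1$ with $\mathrm{gr}^p f(\eta_p) = \xi_p$. Lifting $\eta_p$ to some $y'_p \in B_+^p M_1$ in internal degree $n$ and setting $y_{p+1} = y_p + y'_p$ yields $x - f(y_{p+1}) \in B_+^{p+1} M_2$. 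Once $p \geqslant p_0(n)$ we obtain $x = f(y_p) \in \Img(f)$.

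The main obstacle I foresee is verifying carefully that each step can be carried out within internal degree $n$: this requires that the $B_+$-adic filtration respect the internal grading (which it does, since $B_+$ is a graded ideal) and that the homogeneous lifts $y'_p$ exist in the correct internal degree (which they do because the identifications above are compatible with the internal grading). Beyond this bookkeeping, the argument is a standard successive approximation of graded Nakayama type, and the termination of the process—guaranteed by the bounded-below hypothesis—is what makes the passage from the $\Bbbk \otimes_B$ sequence back to the original sequence possible.
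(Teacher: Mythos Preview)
The paper does not give its own proof of this proposition: it is simply quoted from \cite{berger}, Proposition~4.1, and used as a black box. So there is nothing to compare your argument against.

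That said, your proof is correct and is exactly the standard graded--Nakayama successive-approximation argument one expects here. Two small remarks. First, the identification $\mathrm{gr}^p f = \id \otimes (\id_\Bbbk \otimes_B f)$ deserves one line of justification: writing $M_i = B \otimes_\Bbbk V_i$ with $V_i = \Bbbk \otimes_B M_i$, the map $f$ decomposes as $\id_B \otimes \bar f$ plus a correction sending $M_1$ into $B_+ M_2$, and it is the latter that vanishes on the associated graded. Second, as you yourself note, the termination step uses that $(B_+^p M_2)_n = 0$ for $p$ large, which requires $M_2$ (not just $M_1$) to be bounded below; the proposition as stated in the paper only hypothesises this for $M_1$. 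In every application in the paper all the $M_i$ live in the category of bounded-below graded modules, so this is harmless in context, but strictly speaking your argument proves a slight variant of the stated proposition.
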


\begin{cor}
	 We have $\operatorname{H}_n(K^b_{\bullet}, d^b_{\bullet})=0$ for $n$ different from $0$ and $3$.
\end{cor}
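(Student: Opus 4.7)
The plan is to reduce the desired vanishing of $\operatorname{H}_n(K^b_\bullet,d^b_\bullet)$ for $n\neq 0,3$ to the corresponding statement for the one-sided Koszul complex $(K_\bullet,d_\bullet)$, and then to invoke the known structure of the latter coming from \cite{es}.

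First, each $K^b_n = A\otimes (A^!_{-n})^*\otimes A$ is graded-free as a left $A$-module via the outer action on the leftmost factor, and is bounded below in the Adams grading, since $A$ is nonnegatively graded connected and $(A^!_{-n})^*$ is finite-dimensional and concentrated in a single internal degree. Thus, for every $n\geqslant 1$, the segment
\[ K^b_{n+1}\xrightarrow{d^b_{n+1}} K^b_n\xrightarrow{d^b_n} K^b_{n-1} \]
is a sequence of graded-free left $A$-modules with $d^b_n\circ d^b_{n+1}=0$, and Proposition \ref{berger} applies taking $B=A$. It reduces the exactness at the middle term to the exactness of the segment obtained after applying $\Bbbk\otimes_A-$. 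Using the identification $\Bbbk\otimes_A (K^b_\bullet, d^b_\bullet)\cong (K_\bullet,d_\bullet)$ already displayed in the excerpt, the desired vanishing for the bimodule complex follows from the analogous statement $\operatorname{H}_n(K_\bullet,d_\bullet)=0$ for $n\neq 0,3$ for the one-sided Koszul complex.

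This last fact is where the specific structure of $A=\operatorname{FK}(3)$ enters. The paper \cite{es} constructs an explicit minimal projective resolution of the trivial right $A$-module $\Bbbk$ in the category of bounded below graded right $A$-modules. The Koszul complex $K_\bullet$ coincides with this resolution in low homological degrees (since $A$ is quadratic), and the sole obstruction to $A$ being Koszul is concentrated in a single cubic generator appearing in homological degree $3$, coming from a nonquadratic element of a Gröbner basis of the defining ideal; all higher generators of the minimal resolution are obtained from this single obstruction by multiplication by Koszul-type generators. Reading off the comparison, one gets $\operatorname{H}_n(K_\bullet,d_\bullet)=0$ for $n\neq 0,3$.

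The main obstacle is the last step, namely the careful extraction from the explicit resolution of \cite{es} of the precise homological degrees where the Koszul complex $K_\bullet$ fails to be acyclic. Everything else is formal: given the vanishing of $\operatorname{H}_n(K_\bullet,d_\bullet)$ in all degrees but $0$ and $3$, Berger's reduction yields the same vanishing for the bimodule Koszul complex, completing the proof of the corollary.
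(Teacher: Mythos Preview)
Your proposal is correct and follows essentially the same approach as the paper: apply Proposition~\ref{berger} to reduce to the one-sided Koszul complex via $\Bbbk\otimes_A(K^b_\bullet,d^b_\bullet)\cong(K_\bullet,d_\bullet)$, and then invoke the vanishing $\operatorname{H}_n(K_\bullet,d_\bullet)=0$ for $n\neq 0,3$ from \cite{es}. Your third paragraph's heuristic explanation of \emph{why} the one-sided complex behaves this way (Gr\"obner obstruction, cubic generator) is extraneous speculation; the paper simply cites \cite{es}, Proposition~3.1, and you should do the same rather than sketch a mechanism that is not what \cite{es} actually does.
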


\begin{proof}
Recall that $\operatorname{H}_{n}(K_{\bullet},d_{\bullet})=0$ for $n\neq 0,3$, by \cite{es}, Proposition 3.1. Applying Proposition \ref{berger}, we get the result.
\end{proof}

\subsection{The minimal projective bimodule resolution}
In this subsection, we will explicitly describe the minimal projective resolution of $A$ in the category of bounded below graded bimodules. We recall the following result (see Proposition 3.3 in \cite{es}).
\begin{prop}\label{es}
Let $(K_{\bullet},d_{\bullet})$ be the Koszul complex of the trivial right $A$-module $\Bbbk$ in the category of graded right $A$-modules. The minimal projective resolution $(P_{\bullet},\delta_{\bullet})$ of $\Bbbk$ in the category of bounded below graded $A$-modules is given as follows. For $n\in\NN_0$, set 
\[ P_n=\underset{i\in \llbracket 0, \lfloor n/4\rfloor \rrbracket}{\bigoplus}\omega_i K_{n-4i},\]
where $\omega_i$ is a symbol of internal degree $6i$ for all $i\in \NN_0$, and the differential $\delta_{n}:P_n\to P_{n-1}$ for $n\in\NN$ is given by 
\[ \delta_{n}\bigg(\underset{i\in \llbracket 0, \lfloor n/4\rfloor \rrbracket}{\sum}\omega_i\rho_{n-4i}\bigg)=\underset{i\in \llbracket 0, \lfloor n/4\rfloor \rrbracket}{\sum}\big(\omega_i d_{n-4i}(\rho_{n-4i})+\omega_{i-1}f_{n-4i}(\rho_{n-4i})\big),\]
where $\rho_j\in K_j$ for $j\in\NN_0$, $\omega_{-1}=0$ and $f_j:K_j\to K_{j+3}$ are morphisms of graded right $A$-modules of internal degree $6$ such that $d_{j+4}f_{j+1}=-f_jd_{j+1}$ for $j\in\NN_0$, $d_3f_0=0$ and $\Img(f_0)\nsubseteq \Img(d_4)$. This gives a minimal projective resolution of the trivial right $A$-module $\Bbbk$ by means of the augmentation $\epsilon:P_0=K_0\to\Bbbk$ of the Koszul complex. We usually omit $\omega_0$ for simplicity.
\end{prop}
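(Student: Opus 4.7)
The plan is to verify three properties: that $(P_\bullet, \delta_\bullet)$ is a complex of graded-free right $A$-modules, that it is exact and augments to $\Bbbk$, and that this resolution is minimal. Projectivity is immediate, since each $K_m = A \otimes (A^!_{-m})^*$ is free as a right $A$-module and hence so is each $P_n$. To verify $\delta^2 = 0$, expand
\[
\delta_{n-1} \delta_n(\omega_i \rho)
= \omega_i d^2(\rho)
+ \omega_{i-1}\bigl(d_{n-4i+3} f_{n-4i}(\rho) + f_{n-4i-1} d_{n-4i}(\rho)\bigr)
+ \omega_{i-2} f_{n-4i+3} f_{n-4i}(\rho).
\]
The first term is zero because $K_\bullet$ is a complex, the middle vanishes by the hypothesis $d_{j+4}f_{j+1} = -f_j d_{j+1}$ (with the edge case $n-4i = 0$ covered separately by $d_3 f_0 = 0$, and the case $n - 4i = 1$ by the $j=0$ instance of the hypothesis), and the last vanishes automatically: the composition $f_{j+3} f_j$ has internal degree $12$ while raising the $K_\bullet$-index only by $6$, so its output would carry an $A$-valued coefficient of degree $6$, which is zero by \eqref{eq:basis-FK}.

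For exactness I would use the ascending filtration $F_k P_n = \bigoplus_{0 \leqslant i \leqslant k} \omega_i K_{n-4i}$, which is preserved by $\delta_\bullet$ because the $f$-part of $\delta$ strictly decreases the $\omega$-index by one. The associated graded complex in filtration degree $k$ is the shifted Koszul complex $(\omega_k K_\bullet, d_\bullet)$, so the $E^1$-page of the induced spectral sequence is $\bigoplus_k H_\bullet(\omega_k K_\bullet)$, which by the earlier corollary is supported only in $K_\bullet$-degrees $0$ and $3$ (contributing to total homological degrees $4k$ and $4k+3$). The differential $d^1$ is induced by $f_0$ and sends the generator of $H_0(\omega_k K_\bullet)$ to $[f_0(1)] \cdot \omega_{k-1} \in H_3(\omega_{k-1} K_\bullet)$; the assumptions $d_3 f_0 = 0$ and $\Img(f_0) \nsubseteq \Img(d_4)$ ensure this class is nonzero. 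Since $H_0(K_\bullet) \cong \Bbbk$ and $H_3(K_\bullet)$ is one-dimensional in internal degree $6$ (matching the degree shift between successive $\omega_k$'s), $d^1$ is an isomorphism at every level $k \geqslant 1$, so the spectral sequence collapses at $E^2$ to a single copy of $\Bbbk$ placed at the bottom, which is identified with the target of the augmentation $\epsilon : P_0 \to \Bbbk$.

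Minimality follows from the vanishing of $\delta_\bullet \otimes_A \id_\Bbbk$: the $d$-part produces $A$-valued coefficients in $V = A_1$, and the $f$-part produces coefficients in $A_3$ (since $f$ raises internal degree by $6$ but the $K_\bullet$-index by only $3$), both of which lie in $\bigoplus_{m \geqslant 1} A_m$.

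The main obstacle I expect will be the identification of $d^1$ as an isomorphism in the second step: this requires verifying that $H_3(K_\bullet) \cong \Bbbk$ is concentrated precisely in internal degree $6$. The input here is a non-trivial computation using the explicit action \eqref{eq:action_n>1} and the dimensions of the Koszul pieces, and is drawn from the detailed analysis in \cite{es}.
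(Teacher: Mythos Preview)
The paper does not prove this proposition; it is quoted verbatim from \cite{es}, Proposition~3.3, and only the explicit family \eqref{fn} is added afterwards. Your argument is therefore not competing with a proof in the present paper, and it is correct. The verification that $\delta^2=0$ is fine, including the degree argument killing $f_{j+3}\circ f_j$ (the $A$-coefficient lands in $A_6=0$). Your exactness proof via the filtration by the $\omega$-index is the natural one: the associated graded is a direct sum of shifted Koszul complexes, $E^1$ is supported only at $H_0$ and $H_3$ of each copy, and the $d^1$-differential is multiplication by the class $[f_0(1)]\in H_3(K_\bullet)$, which is nonzero precisely because of the hypotheses $d_3f_0=0$ and $\Img(f_0)\nsubseteq\Img(d_4)$. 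The only external input you need, and correctly flag, is that $H_3(K_\bullet)$ is one-dimensional and sits in internal degree $6$; this is exactly what is established in \cite{es} and is what makes a nonzero $d^1$ automatically an isomorphism. Minimality is immediate for the reason you give. In short: your proof is a clean, self-contained reconstruction of the result the paper imports from \cite{es}.
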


We further provide an explicit family of morphisms $\{f_{\bullet}\}_{\bullet\in\NN_0}$ satisfying the above conditions, since we will need it for the calculations. Indeed, a lengthy but straightforward computation shows that 
    \begin{equation}
    \label{fn}
		\begin{split}
	f_0(\epsilon^{!} |1)&=2\alpha_3|bac+2\beta_3|abc-2\gamma_3|aba-\alpha_2\beta|abc+\alpha_2\gamma|aba-\alpha\beta_2|bac,\\
	f_n(\alpha_n|1)&=(2\alpha_{n+3}-\alpha_{n+1}\beta_2)|bac+\chi_n\beta_{n+3}|abc-\chi_n\gamma_{n+3}|aba,\\
	f_n(\beta_n|1)&=(2\beta_{n+3}-\chi_n\alpha_{n+2}\beta-\chi_{n+1}\alpha_{n+1}\beta_2)|abc+\chi_n\alpha_{n+3}|bac-\chi_n\gamma_{n+3}|aba,\\
	f_n(\gamma_n|1)&=(-2\gamma_{n+3}+\chi_n\alpha_{n+2}\gamma+\chi_{n+1}\alpha_{n+1}\beta_2)|aba+\chi_n\alpha_{n+3}|bac+\chi_n\beta_{n+3}|abc,\\
	f_n(\alpha_{n-1}\beta|1)&=(n-1)\chi_{n+1}\beta_{n+3}|abc,\\	
	f_n(\alpha_{n-1}\gamma|1)&=-(n-1)\chi_{n+1}\gamma_{n+3}|aba,\\	
	f_n(\alpha_{n-2}\beta_2|1)&=((n-2)+\chi_{n+1})\alpha_{n+3}|bac+(n-2)\chi_n\beta_{n+3}|abc-(n-2)\chi_n\gamma_{n+3}|aba,
	\end{split}
    \end{equation}
for $n\in\NN$, satisfy the conditions of Proposition \ref{es}. Note that $f_0$ already appeared in \cite{es}.

Given $n\in\NN_0$, we now define the morphisms of $A$-bimodules $f_n^b:K_n^b\to K_{n+3}^b$ by 
\begin{small}
	\allowdisplaybreaks
\begin{align*}
f_0^b(1|\epsilon^!|1)&=	2|\alpha_3|bac+2|\beta_3|abc-2|\gamma_3|aba-1|\alpha_2\beta|abc+1|\alpha_2\gamma|aba-1|\alpha\beta_2|bac\\
	&\quad  +a|\alpha_2\beta|(ba+ac)-c|\alpha_2\beta|ab-a|\alpha_2\gamma|bc-b|\alpha_2\gamma|ac+b|\alpha\beta_2|(ab+bc)-c|\alpha\beta_2|ba\\
	&\quad -2b|\alpha_3|(ab+bc)+2c|\alpha_3|ba-2a|\beta_3|(ba+ac)+2c|\beta_3|ab+2a|\gamma_3|bc+2b|\gamma_3|ac\\
	&\quad -bc|\alpha_2\beta|a-ba|\alpha_2\beta|c+(ab+bc)|\alpha_2\gamma|b+(ba+ac)|\alpha_2\gamma|a-ab|\alpha\beta_2|c-ac|\alpha\beta_2|b\\
	&\quad +2ab|\alpha_3|c+2ac|\alpha_3|b+2bc|\beta_3|a+2ba|\beta_3|c-2(ab+bc)|\gamma_3|b-2(ba+ac)|\gamma_3|a\\
	&\quad +2bac|\alpha_3|1+2abc|\beta_3|1-2aba|\gamma_3|1-abc|\alpha_2\beta|1+aba|\alpha_2\gamma|1-bac|\alpha\beta_2|1,\\
f_n^b(1|\alpha_n|1)&=2|\alpha_{n+3}|bac+\chi_n|\beta_{n+3}|abc-\chi_n|\gamma_{n+3}|aba-1|\alpha_{n+1}\beta_2|bac -\chi_n c|\alpha_{n+2}\beta|ab\\
	&\quad -\chi_n b|\alpha_{n+2}\gamma|ac+\chi_{n+1} b|\alpha_{n+1}\beta_2|ac+\chi_{n+1}c|\alpha_{n+1}\beta_2|ab -\chi_n b|\alpha_{n+3}|(ab+bc)\\
	&\quad + \chi_n c|\alpha_{n+3}|ba+(-1)^n 2c|\beta_{n+3}|ab-\chi_n a|\beta_{n+3}|ac-\chi_n a|\gamma_{n+3}|ab +(-1)^n 2b|\gamma_{n+3}|ac\\
	&\quad -\chi_n ba|\alpha_{n+2}\beta|c+\chi_n (ab+bc)|\alpha_{n+2}\gamma|b+\chi_{n+1} (ab+bc)|\alpha_{n+1}\beta_2|b\\
	&\quad -\chi_{n+1} ba|\alpha_{n+1}\beta_2|c +\chi_n ac|\alpha_{n+3}|b+\chi_n ab|\alpha_{n+3}|c+2ba|\beta_{n+3}|c+\chi_n (ab+bc)|\beta_{n+3}|a\\
	&\quad -\chi_n ba|\gamma_{n+3}|a-2(ab+bc)|\gamma_{n+3}|b+(-1)^n 2bac|\alpha_{n+3}|1+\chi_n abc|\beta_{n+3}|1\\
	&\quad -\chi_n aba|\gamma_{n+3}|1 +(-1)^{n+1}bac|\alpha_{n+1}\beta_2|1,	\\
f_n^b(1|\beta_n|1)&=2|\beta_{n+3}|abc-\chi_n |\gamma_{n+3}|aba+\chi_n |\alpha_{n+3}|bac-\chi_n|\alpha_{n+2}\beta|abc-\chi_{n+1}|\alpha_{n+1}\beta_2|abc\\
	&\quad -\chi_n a|\alpha_{n+2}\gamma|bc+(-1)^{n+1} c|\alpha_{n+1}\beta_2|ba+\chi_{n+1}a|\alpha_{n+1}\beta_2|bc+\chi_n c|\beta_{n+3}|ab\\
	&\quad  -\chi_n a|\beta_{n+3}|(ba+ac)
	 +(-1)^n 2a|\gamma_{n+3}|bc-\chi_n b|\gamma_{n+3}|ba-\chi_n b|\alpha_{n+3}|bc\\
	 &\quad +(-1)^n2c|\alpha_{n+3}|ba+\chi_n (ba+ac)|\alpha_{n+2}\gamma|a- ab|\alpha_{n+1}\beta_2|c +\chi_{n+1}(ba+ac)|\alpha_{n+1}\beta_2|a\\
	 &\quad +\chi_n ba|\beta_{n+3}|c+\chi_n bc|\beta_{n+3}|a-2(ba+ac)|\gamma_{n+3}|a-\chi_n ab|\gamma_{n+3}|b\\
	 &\quad +\chi_n (ba+ac)|\alpha_{n+3}|b+2ab|\alpha_{n+3}|c+(-1)^n2abc|\beta_{n+3}|1-\chi_n aba|\gamma_{n+3}|1\\
	 &\quad +\chi_n bac|\alpha_{n+3}|1 -\chi_n abc|\alpha_{n+2}\beta|1+\chi_{n+1}abc|\alpha_{n+1}\beta_2|1,\\
f_n^b(1|\gamma_n|1)&=-2|\gamma_{n+3}|aba+\chi_n|\alpha_{n+3}|bac+\chi_n|\beta_{n+3}|abc+\chi_n|\alpha_{n+2}\gamma|aba+\chi_{n+1}|\alpha_{n+1}\beta_2|aba\\
	 &\quad  +\chi_n a|\alpha_{n+2}\beta|(ba+ac)-\chi_{n+1} a|\alpha_{n+1}\beta_2|(ba+ac)+(-1)^n b|\alpha_{n+1}\beta_2|(ab+bc)\\
	 &\quad +\chi_n a|\gamma_{n+3}|bc+\chi_n b|\gamma_{n+3}|ac+(-1)^{n+1}2b|\alpha_{n+3}|(ab+bc)+\chi_n c|\alpha_{n+3}|(ba+ac)\\
	 &\quad +\chi_n c|\beta_{n+3}|(ab+bc)+(-1)^{n+1}2a|\beta_{n+3}|(ba+ac)-\chi_n bc|\alpha_{n+2}\beta|a
	 \stepcounter{equation}\tag{\theequation}\label{fbn}\\
	 &\quad -\chi_{n+1} bc|\alpha_{n+1}\beta_2|a -ac|\alpha_{n+1}\beta_2|b-\chi_n (ba+ac)|\gamma_{n+3}|a-\chi_n (ab+bc)|\gamma_{n+3}|b\\
	 &\quad +2ac|\alpha_{n+3}|b-\chi_n bc|\alpha_{n+3}|c -\chi_n ac|\beta_{n+3}|c+2bc|\beta_{n+3}|a+(-1)^{n+1}2aba|\gamma_{n+3}|1 \\
	 &\quad +\chi_n bac|\alpha_{n+3}|1+\chi_n abc|\beta_{n+3}|1+\chi_n aba|\alpha_{n+2}\gamma|1-\chi_{n+1} aba|\alpha_{n+1}\beta_2|1,\\
	f_n^b(1|\alpha_{n-1}\beta|1)&=\chi_{n+1}[(n-1)|\beta_{n+3}|abc+a|\alpha_{n+3}|ab-(n-2)c|\alpha_{n+3}|ba+c|\alpha_{n+3}|ac-a|\beta_{n+3}|ab\\
		&\quad +c|\beta_{n+3}|(ba+ac)-a|\gamma_{n+3}|ab-c|\gamma_{n+3}|(ba+ac)-(n-1)a|\gamma_{n+3}|bc-ba|\alpha_{n+3}|a\\
		&\quad +(n-1)ab|\alpha_{n+3}|c+bc|\alpha_{n+3}|c+ba|\beta_{n+3}|a+bc|\beta_{n+3}|c-(n-2)ba|\gamma_{n+3}|a\\
		&\quad -(n-1)ac|\gamma_{n+3}|a-bc|\gamma_{n+3}|c-(n-1)abc|\beta_{n+3}|1],	\\
		f_n^b(1|\alpha_{n-1}\gamma|1)&=\chi_{n+1}[-(n-1)|\gamma_{n+3}|aba+b|\beta_{n+3}|bc+(n-1)a|\beta_{n+3}|ba+(n-2)a|\beta_{n+3}|ac\\
		&\quad -b|\gamma_{n+3}|bc-a|\gamma_{n+3}|ac+a|\alpha_{n+3}|ac+(n-1)b|\alpha_{n+3}|ab+(n-2)b|\alpha_{n+3}|bc\\
		&\quad +(ba+ac)|\beta_{n+3}|b+(n-2)bc|\beta_{n+3}|a-ab|\beta_{n+3}|a-(ba+ac)|\gamma_{n+3}|b\\
		&\quad -(ab+bc)|\gamma_{n+3}|a+(n-2)ac|\alpha_{n+3}|b-ba|\alpha_{n+3}|b+(ab+bc)|\alpha_{n+3}|a\\
		&\quad +(n-1)aba|\gamma_{n+3}|1],\\
f_n^b(1|\alpha_{n-2}\beta_2|1)&=\chi_{n+1}[(n-1)|\alpha_{n+3}|bac-c|\gamma_{n+3}|(ab+bc)-(n-1)b|\gamma_{n+3}|ac-b|\gamma_{n+3}|ba\\
		&\quad +c|\alpha_{n+3}|(ab+bc)-b|\alpha_{n+3}|ba+b|\beta_{n+3}|ba-(n-2)c|\beta_{n+3}|ab+c|\beta_{n+3}|bc\\
		&\quad -ac|\gamma_{n+3}|c-(n-1)bc|\gamma_{n+3}|b-(n-2)ab|\gamma_{n+3}|b+ac|\alpha_{n+3}|c+ab|\alpha_{n+3}|b\\
		&\quad +ac|\beta_{n+3}|c+(n-1)ba|\beta_{n+3}|c-ab|\beta_{n+3}|b-(n-1)bac|\alpha_{n+3}|1]\\
		&\quad +\chi_n (n-2)	[1|\alpha_{n+3}|bac+1|\beta_{n+3}|abc-1|\gamma_{n+3}|aba-b|\alpha_{n+3}|(ab+bc)\\
		&\quad +c|\alpha_{n+3}|ba+c|\beta_{n+3}|ab-a|\beta_{n+3}|(ba+ac)+a|\gamma_{n+3}|bc+b|\gamma_{n+3}|ac\\
		&\quad +ac|\alpha_{n+3}|b+ab|\alpha_{n+3}|c+ba|\beta_{n+3}|c+bc|\beta_{n+3}|a-(ba+ac)|\gamma_{n+3}|a\\
		&\quad -(ab+bc)|\gamma_{n+3}|b+bac|\alpha_{n+3}|1+abc|\beta_{n+3}|1-aba|\gamma_{n+3}|1],
	\end{align*}
\end{small}
where $n\in \NN$.

The proof of the following result is a tedious but straightforward computation, that we leave to the reader. 

\begin{lem}\label{fbn1}
The $A$-bimodule morphisms $f_n^b:K_n^b\to K_{n+3}^b$ defined above are homogeneous morphisms of internal degree $6$, such that $d_3^b f_0^b=0$, $d_{n+4}^b f_{n+1}^b+f_{n}^bd_{n+1}^b=0$ and $\id_{\Bbbk}\otimes_A f^b_n=f_n$ for $n\in \NN_0$, where $f_n$ are the specific morphisms given in \eqref{fn}. 
\end{lem}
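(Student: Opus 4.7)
The proof splits into four claims, all amenable to direct inspection. First, the $A$-bimodularity of each $f_n^b$ is automatic from the convention in \eqref{fbn}: the formulas specify the values on the free generators $1 \otimes u \otimes 1$ of the free $A$-bimodule $K_n^b$, and $f_n^b$ extends by $A$-bilinearity. Second, since $(A^!_{-n})^*$ sits in internal degree $n$, a monomial $x_1 \otimes v \otimes x_2 \in K^b_{n+3}$ has internal degree $|x_1| + (n+3) + |x_2|$; inspecting each summand of $f_n^b(1 \otimes u \otimes 1)$ in \eqref{fbn} gives $|x_1| + |x_2| = 3$ in every case, so every term lies in internal degree $n + 6$, as required.

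Third, the identity $\id_\Bbbk \otimes_A f_n^b = f_n$ follows because the canonical isomorphism $\Bbbk \otimes_A K^b_n \cong K_n$ sends the class of $x_1 \otimes v \otimes x_2$ to $v \otimes x_2$ when $x_1 \in A_0$ and kills it when $x_1 \in A_{\geqslant 1}$. Restricting the formulas \eqref{fbn} to the summands whose left $A$-factor equals $1$ reproduces exactly \eqref{fn}; for example, the four such summands of $f_n^b(1 \otimes \alpha_n \otimes 1)$ collapse to $(2\alpha_{n+3} - \alpha_{n+1}\beta_2)\otimes bac + \chi_n \beta_{n+3}\otimes abc - \chi_n \gamma_{n+3} \otimes aba$, matching $f_n(\alpha_n \otimes 1)$.

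Fourth, and most substantially, the chain conditions $d_3^b f_0^b = 0$ and $d_{n+4}^b f_{n+1}^b + f_n^b d_{n+1}^b = 0$ for $n \in \NN_0$ are proved by direct expansion. Using $d_k^b = (-1)^k i_\mathcalboondox{l} + i_\mathcalboondox{r}$ together with the explicit formulas for $i_\mathcalboondox{l}$ and $i_\mathcalboondox{r}$ given earlier in this subsection, the multiplication table of $A$ encoded in \eqref{eq:basis-FK}, and the $A^!$-actions \eqref{eq:action_n>1}, one expands both $d^b_{n+4} f^b_{n+1}(1 \otimes u \otimes 1)$ and $f^b_n d^b_{n+1}(1 \otimes u \otimes 1)$ as $\Bbbk$-linear combinations of basis tensors of $K^b_{n+3}$ and verifies that their sum vanishes for each $u$ in the usual basis $\{\alpha_{n+1}, \beta_{n+1}, \gamma_{n+1}, \alpha_n\beta, \alpha_n\gamma, \alpha_{n-1}\beta_2\}$ of $(A^!_{-(n+1)})^*$.

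The main obstacle is the sheer volume of bookkeeping. Two structural observations make it tractable. First, the parity factors $\chi_n$ kill roughly half of the terms for each fixed parity of $n$, so the verification naturally splits into two parallel cases. Second, the one-sided identity $d_{n+4} f_{n+1} + f_n d_{n+1} = 0$ from Proposition \ref{es}, combined with the third claim, already forces $\id_\Bbbk \otimes_A (d^b_{n+4} f^b_{n+1} + f^b_n d^b_{n+1}) = 0$, so any residual contribution to this bimodule morphism must lie in $A_{\geqslant 1} \otimes (A^!_{-(n+3)})^* \otimes A$. This provides a powerful consistency check and sharply localizes possible errors. Beyond these reductions the verification offers no conceptual content, only careful expansion and cancellation, which is precisely what the paper deliberately leaves to the reader.
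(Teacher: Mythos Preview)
Your proposal is correct and follows exactly the approach the paper indicates: the paper's ``proof'' is the single sentence that this is a tedious but straightforward computation left to the reader, and you have supplied a sensible outline of how that computation is organized (bimodularity from freeness, degree count, reduction modulo $A_{\geqslant 1}$ for the compatibility with $f_n$, and direct expansion for the chain conditions). Your structural remarks on parity and on the one-sided reduction via $\Bbbk \otimes_A(-)$ are accurate and helpful, though note that the one-sided identity for the specific $f_n$ in \eqref{fn} is itself only asserted (just before \eqref{fn}) rather than proved in Proposition~\ref{es}.
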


Using the previous lemma, we can now prove the main result of this section. 

\begin{prop}\label{prb}
The minimal projective resolution $(P^b_{\bullet},\delta^b_{\bullet})$ of $A$ in the category of bounded below graded $A$-bimodules is given as follows. For $n\in\NN_0$, set 
\[ P^b_n=\underset{i\in \llbracket 0, \lfloor n/4\rfloor \rrbracket}{\bigoplus}\omega_i K^b_{n-4i}=\underset{i\in \llbracket 0, \lfloor n/4\rfloor \rrbracket}{\bigoplus}\omega_i A\otimes (A^!_{-(n-4i)})^*\otimes A,\]
where $\omega_i$ is a symbol of internal degree $6i$ for all $i\in\NN_0$, the $A$-bimodule structure of $P^b_n$ is given by 
$x'(\omega_i x\otimes u\otimes y)y'=\omega_i x'x\otimes u\otimes yy'$ for all $x,x',y,y'\in A$ and $u\in (A^!_{-(n-4i)})^*$, and the differential $\delta^b_{n}:P^b_n\to P^b_{n-1}$ for $n\in \NN$ is given by 
\[\delta^b_{n}\bigg( \underset{i\in \llbracket 0, \lfloor n/4\rfloor \rrbracket}{\sum} \omega_i\rho_{n-4i}\bigg)=\underset{i\in \llbracket 0, \lfloor n/4\rfloor \rrbracket}{\sum} \big( \omega_i d^b_{n-4i}(\rho_{n-4i})+\omega_{i-1}f^b_{n-4i}(\rho_{n-4i}) \big),\]
where $\rho_j\in K^b_j$ for $j\in\NN_0$, $\omega_{-1}=0$ and $f^b_j:K^b_j\to K^b_{j+3}$ are the morphisms in \eqref{fbn}. This gives a minimal projective resolution of $A$ by means of the augmentation $\epsilon^b:P^b_0=A\otimes (A^!_0)^*\otimes A\to A$, where $\epsilon^b(x|\epsilon^!|y)=xy$ for $x,y\in A$.
\end{prop}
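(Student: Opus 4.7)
The plan is to mimic the argument of Proposition \ref{es} in the bimodule setting. Four things need to be checked: that $\delta^b_\bullet$ squares to zero and is compatible with the augmentation $\epsilon^b$; that each $P^b_n$ is projective; that the resulting augmented complex is exact; and that the resolution is minimal. Projectivity is immediate, since each summand $\omega_i K^b_{n-4i}=\omega_i\, A\otimes(A^!_{-(n-4i)})^*\otimes A$ is a free $A$-bimodule.

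For the complex property, I would expand $\delta^b_{n-1}\delta^b_n(\omega_i\rho_j)$ with $j=n-4i$ and sort the output into three groups: pure $(d^b)^2$ contributions, which vanish because $(K^b_\bullet,d^b_\bullet)$ is a complex; mixed $d^b_{j+3}f^b_j+f^b_{j-1}d^b_j$ contributions, which cancel by the chain-map identity in Lemma \ref{fbn1}; and iterated $f^b_{j+3}f^b_j(\rho_j)$ contributions, whose vanishing requires a direct check on the explicit formulas \eqref{fbn}, using crucially that $A$ is concentrated in degrees $\llbracket 0,4\rrbracket$. The identity $\epsilon^b\circ\delta^b_1=0$ follows immediately from the definitions of $\delta^b_1=-i_{\mathcalboondox{l}}+i_{\mathcalboondox{r}}$ on $K^b_1$ and $\epsilon^b(x|\epsilon^!|y)=xy$.

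Exactness is obtained by reducing to the one-sided case. Each $P^b_n$ is graded-free as a right $A$-module; combining the obvious isomorphism $\Bbbk\otimes_A(K^b_\bullet,d^b_\bullet)\cong(K_\bullet,d_\bullet)$ noted after the definition of the bimodule Koszul complex with the identity $\id_\Bbbk\otimes_A f^b_n=f_n$ of Lemma \ref{fbn1}, I get a natural isomorphism of complexes $\Bbbk\otimes_A(P^b_\bullet,\delta^b_\bullet)\cong(P_\bullet,\delta_\bullet)$ together with $\Bbbk\otimes_A\epsilon^b=\epsilon$. Since $(P_\bullet,\delta_\bullet)\to\Bbbk\to 0$ is exact by Proposition \ref{es}, Proposition \ref{berger} applied to each successive three-term subsequence of the augmented complex (with $A$ treated as a free right $A$-module at the tail) yields the desired exactness. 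Minimality follows because $\Bbbk\otimes_{A^e}\delta^b_n=0$ for every $n$: the $d^b$ part vanishes by minimality of the bimodule Koszul complex recalled in the remark following its definition, while the $f^b$ part vanishes because, by the internal degree shift of $+6$, every summand of $f^b_n(1|u|1)$ contains at least one outer $A$-factor of strictly positive degree.

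The main obstacle is the verification $f^b_{j+3}f^b_j=0$. It does not reduce to a single formal consequence of what has already been recorded in Lemma \ref{fbn1}, and genuinely requires a case analysis on the formulas \eqref{fbn}, relying on $A_m=0$ for $m\geqslant 5$ to kill the iterated outer-factor products and on the specific form of the correction terms of $f^b_\bullet$ to cancel the surviving ones. This is mechanical but lengthy, in keeping with the ``tedious but straightforward'' flavour of the computations that the author defers to the reader.
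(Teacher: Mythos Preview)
Your approach is correct and essentially identical to the paper's: show the augmented sequence is a complex using Lemma \ref{fbn1}, reduce exactness to the one-sided resolution via $\Bbbk\otimes_A(-)$ and Proposition \ref{berger} together with Proposition \ref{es}, and conclude minimality from $\id_{\Bbbk}\otimes_A\delta^b_\bullet\otimes_A\id_{\Bbbk}=0$. You are in fact more explicit than the paper on one point: the paper simply asserts that the complex property follows ``by Lemma \ref{fbn1}'', tacitly absorbing the verification $f^b_{j+3}f^b_j=0$ into the ``tedious but straightforward'' computations it leaves to the reader, whereas you correctly isolate it as a separate check (your minor slip of writing ``right $A$-module'' where ``left'' is required for the hypothesis of Proposition \ref{berger} is harmless, since each $P^b_n$ is free on both sides).
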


\begin{proof}
It is clear that $P^b_{\bullet}\to A\to 0$ is a complex of graded-free (left) $A$-modules by Lemma \ref{fbn1}, $(\Bbbk\otimes_A P^b_{\bullet},\id_{\Bbbk}\otimes_A\delta^b_{\bullet})\cong(P_{\bullet}, \delta_{\bullet})$ and $\id_{\Bbbk}\otimes_A \epsilon^b\cong\epsilon$. Proposition \ref{es} tells us that $\Bbbk\otimes_A P^b_{\bullet}\to\Bbbk\otimes_A A\to 0$ is exact. Proposition \ref{berger} in turn shows that the complex $P^b_{\bullet}\to A\to 0$ is also exact. Moreover, the bimodule resolution $(P^b_{\bullet},\delta^b_{\bullet})$ is minimal since $\id_{\Bbbk}\otimes_A \delta^b_{\bullet}\otimes_A \id_{\Bbbk}=0$.
\end{proof}

We follow the convention that $P^b_n=0$, $K^b_n=0$ for $n\in \ZZ  \setminus \NN_0 $, and $\delta^b_n=0$, $d^b_n=0$ for $n\in \ZZ  \setminus \NN$ in the following sections.

\section{\texorpdfstring{Hochschild  and cyclic homology of $\FK(3)$}{Hochschild and cyclic homology of FK(3)}}
\label{section:homology}

Using the minimal projective bimodule resolution $(P_{\bullet}^b, \delta^b_{\bullet})$ of $A$ in Proposition \ref{prb}, we will compute the linear structure of the Hochschild homology $\operatorname{HH}_{\bullet}(A)=\operatorname{Tor}_{\bullet}^{A^e}(A,A)=\operatorname{H}_{\bullet}(A\otimes_{A^e}P_{\bullet}^b)$. 
For further information about Hochschild and cyclic homology, we refer the reader to \cite{Louis}. 

\subsection{Recursive description of the spaces}
\label{subsection:rds}
 
Let $\tilde{K}_n=A\otimes (A_{-n}^!)^*$ for $n\in \NN_0$ and $\tilde{K}_n=0$ for $n\in\ZZ  \setminus \NN_0 $. 
We have $A\otimes_{A^e}P_{n}^b\cong \tilde{P}_n$ as $\Bbbk$-vector spaces, where $\tilde{P}_n= \oplus_ {i\in \llbracket 0, \lfloor n/4 \rfloor \rrbracket}\omega_{i}\tilde{K}_{n-4i}$ for $n\in\NN_0$ and $\tilde{P}_n=0$ for $n\in \ZZ  \setminus \NN_0 $. 
We will denote by $\partial_{n}: \tilde{P}_{n}\to \tilde{P}_{n-1}$ the differential $\id_A\otimes_{A^e}\delta_{n}^b$, 
$\tilde{\partial}_{n}: \tilde{K}_{n}\to \tilde{K}_{n-1}$ the differential $\id_A\otimes_{A^e} d_{n}^b$ for $n\in \ZZ$, and $\tilde{f}_n$ the map $\id_{A}\otimes_{A^e} f^b_n$ for $n\in\NN_0$. 
Then the differential $\partial_{n}$ for $n\in\NN$ is given by 
\[ \partial_{n}\bigg(\underset{i\in \llbracket 0, \lfloor n/4\rfloor \rrbracket}{\sum}\omega_i\rho_{n-4i}\bigg)=\underset{i\in \llbracket 0, \lfloor n/4\rfloor \rrbracket}{\sum}\big(\omega_i \tilde{\partial}_{n-4i}(\rho_{n-4i})+\omega_{i-1}\tilde{f}_{n-4i}(\rho_{n-4i}) \big),\]
where $\rho_j\in \tilde{K}_j$ for $j\in \NN_0$. Note that $\partial_{n}=\tilde{\partial}_n=0$ for $n\in\ZZ  \setminus \NN $.

The aim of this section is to compute the homology of the complex $(\tilde{P}_{\bullet}, \partial_{\bullet})$. 
Let $\tilde{K}_{n,m}=A_m\otimes (A_{-n}^!)^*$ for $(n, m) \in  \NN_0\times \llbracket 0, 4 \rrbracket$ 
and $\tilde{K}_{n,m}=0$ for $(n,m)\in  \ZZ^{2} \setminus (\NN_0 \times \llbracket 0, 4 \rrbracket)$.
Let $\tilde{P}_{n,m}= \oplus_ {i\in \llbracket 0, \lfloor n/4 \rfloor \rrbracket}\omega_{i} \tilde{K}_{n-4i, m-2i}$ for $m,n \in \NN_0$ 
and $\tilde{P}_{n,m}=0$ for $(n,m)\in  \ZZ^{2} \setminus \NN_0^{2}$, 
where the symbol $\omega_i$ has homological degree $4i$ and internal degree $6i$ for $i\in \NN_0$, and we usually omit $\omega_0$ for simplicity. 
The spaces $\tilde{K}_{n,m}$ and $\tilde{P}_{n,m}$ are concentrated in homological degree $n$ and internal degree $m+n$. 
We have $\tilde{P}_n=\oplus_{m\in \NN_0}\tilde{P}_{n,m}$.  
Let $\partial_{n,m}=\partial_{n}|_{\tilde{P}_{n,m}}:\tilde{P}_{n,m}\to \tilde{P}_{n-1,m+1}$, and $\tilde{\partial}_{n,m}=\tilde{\partial}_{n}|_{\tilde{K}_{n,m}}:\tilde{K}_{n,m}\to \tilde{K}_{n-1,m+1}$.
Let $D_{n,m}=\Ker(\partial_{n,m})$, $B_{n,m}=\Img(\partial_{n+1,m-1})$ for $m,n\in \NN_0$, 
and $\tilde{D}_{n,m}=\Ker(\tilde{\partial}_{n,m})$, $\tilde{B}_{n,m}=\Img(\tilde{\partial}_{n+1,m-1})$ for $(n,m)\in \NN_0 \times \llbracket 0,4 \rrbracket$. 
Notice that $D_{n,m}=B_{n,m}=0$ for $(n,m)\in \ZZ^2 \setminus \NN_0^2$, and $\tilde{D}_{n,m}=\tilde{B}_{n,m}=0$ for $(n,m)\in  \ZZ^{2} \setminus (\NN_0 \times \llbracket 0, 4 \rrbracket)$.
\begin{prop}\label{bdh}
For integers $m\geqslant 3$ and $n\in\NN_0$, we have 
\begin{equation}\label{bnm}
	\begin{split}
 B_{n,m}=
 \begin{cases}
\omega_{\frac{m-3}{2}}B_{n-2m+6,3},  & \text{if $m$ is odd}, 
\\
 \omega_{\frac{m}{2}-2}B_{n-2m+8,4}, &  \text{if $m$ is even},
\end{cases}
\end{split}
\end{equation}
and
\begin{equation}\label{dnm}
	\begin{split}
D_{n,m}=
\begin{cases}
\omega_{\frac{m-3}{2}}D_{n-2m+6,3},  & \text{if $m$ is odd},
\\
 \omega_{\frac{m}{2}-2}D_{n-2m+8,4}, & \text{if $m$ is even},
\end{cases}
\end{split}
\end{equation}
where we follow the convention that $\omega_i\omega_j=\omega_{i+j}$ for $i,j\in \NN_0$ and $\omega_i=0$ for $i\in \ZZ\setminus \NN_0$. 
\end{prop}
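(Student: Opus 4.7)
The plan is to proceed by induction on $m$, using left-multiplication by $\omega_1$ as the shift operator. The key claim is that for every $m\geqslant 5$ one has $B_{n,m}=\omega_1 B_{n-4,m-2}$ and $D_{n,m}=\omega_1 D_{n-4,m-2}$; iterating this recursion and observing that the formulas in the proposition are tautologies in the base cases $m\in\{3,4\}$ then yields the stated identities.

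First I would verify that for $m\geqslant 5$ the map $\omega_1\cdot:\tilde{P}_{n-4,m-2}\to\tilde{P}_{n,m}$ is an isomorphism of graded $\Bbbk$-vector spaces. Using the decomposition $\tilde{P}_{n,m}=\bigoplus_i\omega_i\tilde{K}_{n-4i,m-2i}$, the requirement $\tilde{K}_{\bullet,m-2i}\neq 0$, which forces $m-2i\leqslant 4$, combined with $m\geqslant 5$, forces $i\geqslant 1$ in every nonzero summand. Reindexing $i=i'+1$ then identifies $\tilde{P}_{n,m}$ with $\omega_1\tilde{P}_{n-4,m-2}$.

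Next I would check that $\omega_1\cdot$ intertwines the differentials. From $\partial(\omega_i\rho)=\omega_i\tilde{\partial}(\rho)+\omega_{i-1}\tilde{f}(\rho)$, the identity $\partial(\omega_1\omega_i\rho)=\omega_1\partial(\omega_i\rho)$ holds automatically for $i\geqslant 1$; for $i=0$ it reduces to $\tilde{f}(\rho)=0$ for $\rho\in\tilde{K}_{n-4,m-2}$, which holds because $\tilde{f}$ raises the $A$-degree by $3$, landing in $\tilde{K}_{n-1,m+1}=0$ since $m+1\geqslant 6>4$. Hence $\omega_1\cdot$ is a chain map.

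For $D_{n,m}$ the chain isomorphism immediately transports kernels: both vertical arrows $\omega_1\cdot$ in the commutative square relating $\partial_{n-4,m-2}$ and $\partial_{n,m}$ are isomorphisms (the target side needs only $m+1\geqslant 5$), yielding $D_{n,m}=\omega_1 D_{n-4,m-2}$. For $B_{n,m}$ the analogous argument works whenever $m\geqslant 6$, since then the source $\tilde{P}_{n+1,m-1}$ also splits as $\omega_1\tilde{P}_{n-3,m-3}$. The only remaining case is $m=5$, where $\tilde{P}_{n+1,4}$ carries an extra summand $\tilde{K}_{n+1,4}$ not in the image of $\omega_1\cdot$; but on this summand $\partial$ reduces to $\tilde{\partial}$ landing in $\tilde{K}_{n,5}=0$, so it contributes nothing to the image, and again $B_{n,5}=\omega_1 B_{n-4,3}$. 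The main obstacle is essentially bookkeeping: carefully tracking which direct summands vanish for internal-degree reasons and isolating the $m=5$ edge case, where the chain-isomorphism argument must be supplemented by the explicit observation that the offending summand maps to zero.
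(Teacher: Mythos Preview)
Your proof is correct and follows essentially the same approach as the paper's. The only organizational difference is that you proceed by induction on $m$, reducing from $m$ to $m-2$ via multiplication by $\omega_1$, whereas the paper jumps directly to $m\in\{3,4\}$ by multiplying by $\omega_{(m-3)/2}$ or $\omega_{m/2-2}$ and treats the two parities separately; both arguments hinge on the same two observations---that the $\omega$-shift identifies $\tilde{P}_{n,m}$ with $\tilde{P}_{n-4,m-2}$ once $m\geqslant 5$, and that the residual $\omega_0$-summand $\tilde{K}_{n+1,4}$ on the source side of $\partial_{n+1,4}$ maps to $\tilde{K}_{n,5}=0$---so the content is identical.
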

\begin{proof}
Consider $\tilde{P}_{n,m}= \oplus_ {i\in \llbracket 0, \lfloor n/4 \rfloor \rrbracket}\omega_{i} \tilde{K}_{n-4i,m-2i}$ for $m,n\in\NN_0$. For the index $m-2i$ of $\tilde{K}_{n-4i,m-2i}$, we have $ m-2i \in \llbracket 0, 4 \rrbracket$. If $m$ is odd, then $m-2i=1$ or $3$, \textit{i.e.} $i=(m-1)/2$ or $(m-3)/2$. Since $n-4i\in \NN_0$, we have 
\begin{equation}\label{pnmodd}
	\begin{split}
\tilde{P}_{n,m}=
\begin{cases}
\omega_{\frac{m-3}{2}}\tilde{K}_{n-2m+6,3}\oplus \omega_{\frac{m-1}{2}}\tilde{K}_{n-2m+2,1}, &\text{if $n\geqslant 2m-2$},
\\
\omega_{\frac{m-3}{2}}\tilde{K}_{n-2m+6,3} , & \text{if $2m-6\leqslant n<2m-2$},
\\
0 ,&\text{if $0\leqslant n< 2m-6$}.
\end{cases} 
\end{split}
\end{equation}
If $m$ is even, then $m-2i=0,2$ or $4$, \textit{i.e.} $i=m/2$, $m/2-1$ or $m/2-2$. Then 
\begin{equation}\label{pnmeven}
	\begin{split}
 \tilde{P}_{n,m}=
 \begin{cases}
\omega_{\frac{m}{2}-2}\tilde{K}_{n-2m+8,4}\oplus \omega_{\frac{m}{2}-1}\tilde{K}_{n-2m+4,2} \oplus \omega_{\frac{m}{2}}\tilde{K}_{n-2m,0}, & \text{if $n\geqslant 2m$},
\\
\omega_{\frac{m}{2}-2}\tilde{K}_{n-2m+8,4}\oplus\omega_{\frac{m}{2}-1}\tilde{K}_{n-2m+4,2}, &\text{if $2m-4\leqslant n<2m$},
\\
\omega_{\frac{m}{2}-2}\tilde{K}_{n-2m+8,4}, & \text{if $2m-8\leqslant n<2m-4$}, 
\\
0, &\text{if $0\leqslant n< 2m-8$}.
\end{cases}
\end{split}
\end{equation}
Hence,
\begin{equation}\label{pnm1}
 \begin{split}
	\tilde{P}_{n,m}=
	\begin{cases}
     \omega_{\frac{m-3}{2}}\tilde{P}_{n-2m+6,3}, &\text{if $m\geqslant 3$ is odd}, 
     \\
     \omega_{\frac{m}{2}-2}\tilde{P}_{n-2m+8,4}, &\text{if $m\geqslant 4$ is even}.
     \end{cases}
 \end{split}
\end{equation}
Since the identities \eqref{bnm} and \eqref{dnm} for $m=3$ are immediate, we suppose $m\geqslant 4$ from now on.

Assume that $m$ is even. 
Then \eqref{pnm1} tells us that the sequence 
\[\tilde{P}_{n+1,m-1}\xrightarrow{\partial{n+1,m-1}} \tilde{P}_{n,m}\xrightarrow{\partial_{n,m}} \tilde{P}_{n-1,m+1}\]
of graded $\Bbbk$-vector spaces is of the form
\[\omega_{\frac{m}{2}-2}\tilde{P}_{n-2m+9,3}\xrightarrow{\partial_{n+1,m-1}} \omega_{\frac{m}{2}-2}\tilde{P}_{n-2m+8,4}\xrightarrow{\partial_{n,m}} \omega_{\frac{m}{2}-1}\tilde{P}_{n-2m+3,3}.\]
Since $\tilde{P}_{n-2m+7,5}=\omega_1\tilde{P}_{n-2m+3,3}$ by \eqref{pnm1}, the above sequence is of the form 
\[\omega_{\frac{m}{2}-2}\tilde{P}_{n-2m+9,3}\xrightarrow{\partial_{n+1,m-1}} \omega_{\frac{m}{2}-2}\tilde{P}_{n-2m+8,4}\xrightarrow{\partial_{n,m}} \omega_{\frac{m}{2}-2}\tilde{P}_{n-2m+7,5}.\]
Note further that $\partial_{n,m}=\omega_{\frac{m}{2}-2}\partial_{n-2m+8,4}$ and $\partial_{n+1,m-1}=\omega_{\frac{m}{2}-2}\partial_{n-2m+9,3}$, where the differential $\omega_j \partial_{n',m'}:\omega_j \tilde{P}_{n',m'}\rightarrow \omega_j \tilde{P}_{n'-1,m'+1}$ maps $\omega_j x$ to $\omega_j \partial_{n',m'}(x)$ for all $x\in \tilde{P}_{n',m'}$ and $j,m',n'\in \NN_0$.
Hence, $B_{n,m}=\omega_{\frac{m}{2}-2}B_{n-2m+8,4}$ and $D_{n,m}=\omega_{\frac{m}{2}-2}D_{n-2m+8,4}$.

Assume that $m$ is odd (so $m\geqslant 5$). Then \eqref{pnm1} tells us that the sequence
\[\tilde{P}_{n+1,m-1}\xrightarrow{\partial_{n+1,m-1}} \tilde{P}_{n,m}\xrightarrow{\partial_{n,m}}\tilde{P}_{n-1,m+1}\] 
of graded $\Bbbk$-vector spaces is of the form
\[\omega_{\frac{m-5}{2}}\tilde{P}_{n-2m+11,4}\xrightarrow{\partial_{n+1,m-1}} \omega_{\frac{m-3}{2}}\tilde{P}_{n-2m+6,3}\xrightarrow{\partial_{n,m}} \omega_{\frac{m-3}{2}}\tilde{P}_{n-2m+5,4}.\]
Note that $\partial_{n,m}=\omega_{\frac{m-3}{2}}\partial_{n-2m+6,3}$ and $\partial_{n+1,m-1}=\omega_{\frac{m-5}{2}}\partial_{n-2m+11,4}$.
Since 
\[\tilde{P}_{n-2m+11,4}=\omega_0 \tilde{K}_{n-2m+11,4}\oplus\omega_1\tilde{P}_{n-2m+7,2}\] 
by \eqref{pnmeven}, 
\[ \partial_{n-2m+11,4}(\omega_0 x+\omega_1 y)=\omega_0\tilde{\partial}_{n-2m+11,4}(x)+\omega_1\partial_{n-2m+7,2}(y) \] 
for all $x\in \tilde{K}_{n-2m+11,4}$ and $y\in \tilde{P}_{n-2m+7,2}$, and $\tilde{\partial}_{n-2m+11,4}(\tilde{K}_{n-2m+11,4})=0$, 
it is sufficient to consider the following sequence  
\[\omega_{\frac{m-3}{2}}\tilde{P}_{n-2m+7,2}\xrightarrow{\omega_{\frac{m-3}{2}}\partial_{n-2m+7,2}} \omega_{\frac{m-3}{2}}\tilde{P}_{n-2m+6,3}\xrightarrow{\omega_{\frac{m-3}{2}}\partial_{n-2m+6,3}} \omega_{\frac{m-3}{2}}\tilde{P}_{n-2m+5,4}.\]
Hence, $B_{n,m}=\omega_{\frac{m-3}{2}}B_{n-2m+6,3}$ and $D_{n,m}=\omega_{\frac{m-3}{2}}D_{n-2m+6,3}$, as was to be shown.
\end{proof}
\begin{prop}\label{d4n}
For $n\in\NN_0$, we have $D_{n,4}= \tilde{K}_{n,4}\oplus \omega_1 D_{n-4,2}$.
\end{prop}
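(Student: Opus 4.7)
The plan is to identify $\partial_{n,4}$ directly using the decomposition of $\tilde{P}_{n,4}$ coming from \eqref{pnmeven}, and to show that it differs from $\partial_{n-4,2}$ only by a shift in the $\omega$-index and by the extra free summand $\tilde{K}_{n,4}$.

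First, I would apply \eqref{pnmeven} with $m=4$ to write
\[
\tilde{P}_{n,4}=\tilde{K}_{n,4}\oplus \omega_1\tilde{K}_{n-4,2}\oplus \omega_2\tilde{K}_{n-8,0}
\]
(with the obvious convention that terms with negative Koszul index are zero), and similarly $\tilde{P}_{n-1,5}=\omega_1\tilde{K}_{n-5,3}\oplus\omega_2\tilde{K}_{n-9,1}$, noting that the $\omega_0$ summand $\tilde{K}_{n-1,5}$ is absent since $A_5=0$.

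The second step is the crucial pair of vanishings coming from the fact that the $A$-part of any element in $\tilde{K}_{\bullet,m}$ lives in degree $m\leqslant 4$. On the one hand, $\tilde{\partial}_{n,4}|_{\tilde{K}_{n,4}}=0$, because $d^b_n$ multiplies on the left or right by an element of $A_1$, and the resulting $A$-factor of degree $5$ is zero after tensoring over $A^e$; equivalently, its target $\tilde{K}_{n-1,5}$ vanishes. On the other hand, inspection of \eqref{fbn} shows that every summand of $f^b_n(1|u|1)$ has the form $x|u'|y$ with $|x|+|y|=3$; therefore $\tilde{f}_{n-4}(a\otimes u)=\sum y_i a x_i\otimes u'_i$ has $A$-component of degree $2+3=5$, so $\tilde{f}_{n-4}|_{\tilde{K}_{n-4,2}}=0$. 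Both facts can alternatively be seen from the internal-degree count, since $\tilde{f}$ raises the internal degree by $6$ while $\tilde{\partial}$ raises it by $1$.

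Combining these vanishings with the explicit formula for $\partial_{n}$ from Subsection \ref{subsection:rds}, for $x\in\tilde{K}_{n,4}$, $y\in\tilde{K}_{n-4,2}$, $z\in\tilde{K}_{n-8,0}$ one computes
\[
\partial_{n,4}(x+\omega_1 y+\omega_2 z)=\omega_1\bigl(\tilde{\partial}_{n-4,2}(y)+\tilde{f}_{n-8}(z)\bigr)+\omega_2\tilde{\partial}_{n-8,0}(z),
\]
which under the canonical identification $\omega_1 \tilde{K}_{n-4,2}\oplus\omega_2 \tilde{K}_{n-8,0}\cong \omega_1\tilde{P}_{n-4,2}$ is exactly $\omega_1\partial_{n-4,2}(y+\omega_1 z)$. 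Hence $\partial_{n,4}$ kills the summand $\tilde{K}_{n,4}$ entirely and acts on the remaining part as $\omega_1\partial_{n-4,2}$. Taking kernels yields the desired decomposition $D_{n,4}=\tilde{K}_{n,4}\oplus\omega_1 D_{n-4,2}$.

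The step most prone to a slip is the second one: one must be careful that both the outgoing differential on $\omega_0\tilde{K}_{n,4}$ and the connecting map $\tilde{f}_{n-4}$ on $\omega_1\tilde{K}_{n-4,2}$ vanish, since otherwise spurious $\omega_0$-components would appear in $\tilde{P}_{n-1,5}$. Both vanishings are however immediate from the degree bound $A_{m}=0$ for $m\geqslant 5$, so once that observation is made, the rest is a direct identification of the differential.
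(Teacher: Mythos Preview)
Your proof is correct and follows the same approach as the paper: you use the decomposition $\tilde{P}_{n,4}=\tilde{K}_{n,4}\oplus\omega_1\tilde{P}_{n-4,2}$, the fact that $\tilde{P}_{n-1,5}=\omega_1\tilde{P}_{n-5,3}$ has no $\omega_0$-component (so $\partial_{n,4}(\tilde{K}_{n,4})=0$ and the $\omega_0$-part of $\tilde f_{n-4}$ lands in zero), and then identify the remaining map with $\omega_1\partial_{n-4,2}$. The paper's proof records exactly these three ingredients in one line; your version simply unpacks them in more detail.
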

\begin{proof}
This follows directly from the facts that $\tilde{P}_{n,4}=\tilde{K}_{n,4}\oplus \omega_1 \tilde{P}_{n-4,2}$, $\tilde{P}_{n-1,5}=\omega_1 \tilde{P}_{n-5,3}$ and $\partial_{n,4}(\tilde{K}_{n,4})=0$.
\end{proof}

In order to compute $B_{n,m}$ and $D_{n,m}$, it is sufficient to compute the case $m \in \llbracket 0,4 \rrbracket$ according to Proposition \ref{bdh}. 
First, we will compute the boundaries, and then we will compute the cycles. 
Since this will require handling elements of $\tilde{K}_{n,m}$ and $\tilde{P}_{n,m}$ for $n \in \NN_{0}$ and $m \in \llbracket 0 , 4 \rrbracket$, we will use the basis $\{ x \otimes y | x \in \mathcalboondox{B}_{m}, y \in \mathcalboondox{B}_{n}^{!*} \}$ of 
$\tilde{K}_{n,m}$ and the basis $\{ \omega_{i} x \otimes y | i \in \llbracket 0 , \lfloor n/4 \rfloor \rrbracket, x \in \mathcalboondox{B}_{m-2i}, y \in \mathcalboondox{B}_{n-4i}^{!*} \}$ of 
$\tilde{P}_{n,m}$, both of which will be called {\color{ultramarine}{\textbf{usual}}} bases, constructed from the usual bases of the homogeneous components of $A$ and $(A^{!})^{\#}$, introduced in Section \ref{section:generalities}. 

\subsection{Explicit description of the differentials}
\label{subsection:explicit-description-diff-homology} 

Recall the isomorphism $ A\otimes_{A^e}(A\otimes (A^!_{-n})^*\otimes A)\to A\otimes (A^!_{-n})^*$ given by $x\otimes_{A^e}(y|u|z)\mapsto zxy|u$, and its inverse $ A\otimes (A^!_{-n})^*\to A\otimes_{A^e}(A\otimes (A^!_{-n})^*\otimes A)$ given by $ x|u\mapsto x\otimes_{A^e} (1|u|1)$ for all $x,y,z\in A$, $u\in (A^!_{-n})^*$ and $n\in\NN_0$. We will use them together with Proposition \ref{prb} to explicitly describe  $\tilde{\partial}_n$ and $\tilde{f}_n$, which were defined at the beginning of Subsection \ref{subsection:rds}.

Let $x\in A $. It is then straightforward to see that the differential $\tilde{\partial}_1:A\otimes (A^!_{-1})^*\to A\otimes (A^!_{0})^*$ is given by $\tilde{\partial}_1(x|\alpha)=(ax-xa)|\epsilon^!$, $\tilde{\partial}_1(x|\beta)=(bx-xb)|\epsilon^!$ and $\tilde{\partial}_1(x|\gamma)=(cx-xc)|\epsilon^! $. Analogously, for $n\geqslant 2$ and $n$  even, $\tilde{\partial}_{n}: A\otimes (A^!_{-n})^*\to A\otimes (A^!_{-(n-1)})^* $ is given by
\begin{equation}\label{tildepartialeven}
	\begin{split}
x|\alpha_n & \mapsto (xa+ax)|\alpha_{n-1}, \quad x|\beta_n\mapsto (xb+bx)|\beta_{n-1}, \quad x|\gamma_n\mapsto (xc+cx)|\gamma_{n-1},
\\
x|\alpha_{n-1}\beta & \mapsto (xa+cx)|(\beta_{n-1}+\alpha_{n-2}\beta)+(xb+ax)|(\gamma_{n-1}+\alpha_{n-2}\gamma)
\\
& \phantom{ \mapsto \;}                       
+(xc+bx)|(\alpha_{n-1}+\alpha_{n-3}\beta_2),
\\
x|\alpha_{n-1}\gamma & \mapsto (xa+bx)|(\gamma_{n-1}+\alpha_{n-2}\gamma)+(xb+cx)|(\alpha_{n-1}+\alpha_{n-3}\beta_2)
\\
& \phantom{ \mapsto \;}                 
+(xc+ax)|(\beta_{n-1}+\alpha_{n-2}\beta),
\\
 x|\alpha_{n-2}\beta_2 & \mapsto (xa+ax)|\alpha_{n-3}\beta_2+(xb+bx)|\alpha_{n-2}\beta+(xc+cx)|\alpha_{n-2}\gamma,
	\end{split}
\end{equation}
whereas, for $n\geqslant 3$ and $n$ odd, $\tilde{\partial}_{n}: A\otimes (A^!_{-n})^*\to A\otimes (A^!_{-(n-1)})^* $ is given by
\begin{equation}\label{tildepartialodd}
	\begin{split}
x|\alpha_n & \mapsto (ax-xa)|\alpha_{n-1}, \quad x|\beta_n\mapsto (bx-xb)|\beta_{n-1},\quad x|\gamma_n\mapsto (cx-xc)|\gamma_{n-1},
\\
x|\alpha_{n-1}\beta & \mapsto (cx-xa)|\alpha_{n-2}\beta+(ax-xc)|\alpha_{n-2}\gamma+(bx-xb)|(\alpha_{n-1}+\gamma_{n-1}+\alpha_{n-3}\beta_2),
\\
x|\alpha_{n-1}\gamma & \mapsto (ax-xb)|\alpha_{n-2}\beta+(bx-xa)|\alpha_{n-2}\gamma+(cx-xc)|(\alpha_{n-1}+\beta_{n-1}+\alpha_{n-3}\beta_2),
\\
x|\alpha_{n-2}\beta_2 & \mapsto (ax-xa)|(\beta_{n-1}+\gamma_{n-1}+\alpha_{n-3}\beta_2)+(bx-xc)|\alpha_{n-2}\beta+(cx-xb)|\alpha_{n-2}\gamma.
  \end{split}
\end{equation}

For the reader's convenience, we list the images of the differentials $\tilde{\partial}_n$ evaluated at elements of the usual $\Bbbk$-basis of the respective domain. 
In the following tables, $\tilde{\partial}_{n,m}(x|y)$ is the entry appearing in the column indexed by $y$ and the row indexed by $x$, where $m$ is the internal degree of $x$ and $n$ is the internal degree of $y$. The differential $\tilde{\partial}_1$ is given by 
\begin{table}[H]
	\begin{center}
	\begin{tabular}{|c|c|c|c|}
		\hline
		 \diagbox[width=13mm ,height=5.4mm ]{$x$}{$y$}  & $\alpha$                & $\beta$                 & $\gamma$  \\
		\hline
		$1$          & $0$                     & $0$                     & $0$    \\
		\hline
		$a$          & $0$                     & $(ba-ab)|\epsilon^!$    & $(-ab-bc-ac)|\epsilon^!$ \\
		\hline     
		$b$          & $(ab-ba)|\epsilon^!$    & $0$                     & $(-ba-ac-bc)|\epsilon^!$  \\
		\hline
        $c$          & $(ab+bc+ac)|\epsilon^!$ & $(ba+ac+bc)|\epsilon^!$ & $0$  \\
		\hline
		$ab$         & $-aba|\epsilon^!$       & $aba|\epsilon^!$        & $(bac-abc)|\epsilon^!$    \\
		\hline
		$bc$         & $(aba+abc)|\epsilon^!$  & $bac|\epsilon^!$        & $-bac|\epsilon^!$  \\
		\hline
		$ba$         & $aba|\epsilon^!$        & $-aba|\epsilon^!$       & $(abc-bac)|\epsilon^!$  \\
		\hline 
		$ac$         & $abc|\epsilon^!$        & $(aba+bac)|\epsilon^!$  & $-abc|\epsilon^!$  \\
		\hline
		$aba$        & $0$                     & $0$                     & $-2abac|\epsilon^!$ \\
		\hline 
		$abc$        & $0$                     & $2abac|\epsilon^!$      & $0$            \\
		\hline
		$bac$        & $2abac|\epsilon^!$      & $0$                     & $0$        \\
		\hline
		$abac$       & $0$                     & $0$                     & $0$     \\
		\hline
	\end{tabular}
	\end{center}
	\caption{Images of $\tilde{\partial}_1$.}	
	\label{tpar1}
	\end{table}
For $n\geqslant 2$ and $n$ even, $\tilde{\partial}_n$ is given by 
\begin{table}[H]
	\begin{center}
	\begin{tabular}{|c|c|c|c|}
		\hline
		     \diagbox[width=13mm ,height=5.4mm ]{$x$}{$y$}      & $\alpha_n$              & $\beta_n$               & $\gamma_n$                         \\
		\hline
		$1$          & $2a|\alpha_{n-1}$       & $2b|\beta_{n-1}$        & $2c|\gamma_{n-1}$                  \\
		\hline
		$a$          & $0$                     & $(ab+ba)|\beta_{n-1}$    & $(ac-ab-bc)|\gamma_{n-1}$                          \\
		\hline     
		$b$          & $(ab+ba)|\alpha_{n-1}$  & $0$                     & $(bc-ba-ac)|\gamma_{n-1}$                        \\
		\hline
        $c$          & $(ac-ab-bc)|\alpha_{n-1}$ & $(bc-ba-ac)|\beta_{n-1}$ & $0$                                           \\
		\hline
		$ab$         & $aba|\alpha_{n-1}$       & $aba|\beta_{n-1}$        & $(abc+bac)|\gamma_{n-1}$                       \\
		\hline
		$bc$         & $(abc-aba)|\alpha_{n-1}$  & $-bac|\beta_{n-1}$        & $-bac|\gamma_{n-1}$                   \\
		\hline
		$ba$         & $aba|\alpha_{n-1}$        & $aba|\beta_{n-1}$       & $(abc+bac)|\gamma_{n-1}$                  \\
		\hline 
		$ac$         & $-abc|\alpha_{n-1}$        & $(bac-aba)|\beta_{n-1}$  & $-abc|\gamma_{n-1}$                   \\
		\hline
		$aba$        & $0$                     & $0$                     & $0$                 \\
		\hline 
		$abc$        & $0$                     & $0$      & $0$                                    \\
		\hline
		$bac$        & $0$      & $0$                     & $0$                                  \\
		\hline
		$abac$       & $0$                     & $0$                     & $0$                                    \\
		\hline
	\end{tabular}
	\end{center}
	\caption{Images of $\tilde{\partial}_n$ for $n\geqslant 2$ and $n$ even.}	
	\label{tparevenalphan}
	\end{table}

\noindent together with

	\begin{table}[H]
		\begin{center}
		\begin{tabular}{|c|c|c|c|}
			\hline
\diagbox[width=13mm ,height=5.4mm ]{$x$}{$y$}  & $\alpha_{n-1}\beta$                                     \\
			\hline
			$1$      & $(a+c)|(\beta_{n-1}+\alpha_{n-2}\beta)+(b+a)|(\gamma_{n-1}+\alpha_{n-2}\gamma)+(c+b)|(\alpha_{n-1}+\alpha_{n-3}\beta_2) $ \\
			\hline
			$a$      & $-(ab+bc)|(\beta_{n-1}+\alpha_{n-2}\beta)+ab|(\gamma_{n-1}+\alpha_{n-2}\gamma)+(ba+ac)|(\alpha_{n-1}+\alpha_{n-3}\beta_2)$ \\
			\hline     
			$b$      & $-ac|(\beta_{n-1}+\alpha_{n-2}\beta)+ab|(\gamma_{n-1}+\alpha_{n-2}\gamma)+bc|(\alpha_{n-1}+\alpha_{n-3}\beta_2)$   \\
			\hline
			$c$      & $-(ab+bc)|(\beta_{n-1}+\alpha_{n-2}\beta)-ba|(\gamma_{n-1}+\alpha_{n-2}\gamma)+bc|(\alpha_{n-1}+\alpha_{n-3}\beta_2)$ \\
			\hline
			$ab$     & $(aba+bac)|(\beta_{n-1}+\alpha_{n-2}\beta)+(aba+abc)|(\alpha_{n-1}+\alpha_{n-3}\beta_2)$               \\
			\hline
			$bc$     & $(-aba-bac)|(\beta_{n-1}+\alpha_{n-2}\beta)+(abc-bac)|(\gamma_{n-1}+\alpha_{n-2}\gamma)$                \\
			\hline
			$ba$     & $abc|(\beta_{n-1}+\alpha_{n-2}\beta)+2aba|(\gamma_{n-1}+\alpha_{n-2}\gamma)+bac|(\alpha_{n-1}+\alpha_{n-3}\beta_2)$  \\
			\hline 
			$ac$     & $-2abc|(\beta_{n-1}+\alpha_{n-2}\beta)-aba|(\gamma_{n-1}+\alpha_{n-2}\gamma)+bac|(\alpha_{n-1}+\alpha_{n-3}\beta_2)$ \\
			\hline
			$aba$    & $abac|(-\beta_{n-1}-\alpha_{n-2}\beta+\alpha_{n-1}+\alpha_{n-3}\beta_2)$                                      \\
			\hline 
			$abc$    & $abac|(-\gamma_{n-1}-\alpha_{n-2}\gamma+\alpha_{n-1}+\alpha_{n-3}\beta_2)$                              \\
			\hline
			$bac$    & $abac|(-\beta_{n-1}-\alpha_{n-2}\beta+\gamma_{n-1}+\alpha_{n-2}\gamma)$                                       \\
			\hline
			$abac$   & $0$                                                        \\
			\hline
		\end{tabular}
		\end{center}
		\caption{Images of $\tilde{\partial}_n$ for $n\geqslant 2$ and $n$ even.}	
		\label{tparevenalphabeta}
		\end{table}

\noindent and

		\begin{table}[H]
			\begin{center}
			\begin{tabular}{|c|c|c|c|}
				\hline
			\diagbox[width=13mm ,height=5.4mm ]{$x$}{$y$}		 & $\alpha_{n-1}\gamma$                                     \\
				\hline
				$1$      & $(a+b)|(\gamma_{n-1}+\alpha_{n-2}\gamma)+(b+c)|(\alpha_{n-1}+\alpha_{n-3}\beta_2)+(c+a)|(\beta_{n-1}+\alpha_{n-2}\beta)$ \\
				\hline
				$a$      & $ba|(\gamma_{n-1}+\alpha_{n-2}\gamma)-bc|(\alpha_{n-1}+\alpha_{n-3}\beta_2)+ac|(\beta_{n-1}+\alpha_{n-2}\beta)$  \\
				\hline     
				$b$      &  $ba|(\gamma_{n-1}+\alpha_{n-2}\gamma)-(ba+ac))|(\alpha_{n-1}+\alpha_{n-3}\beta_2)+(ab+bc)|(\beta_{n-1}+\alpha_{n-2}\beta) $ \\
				\hline
				$c$      & $-ab|(\gamma_{n-1}+\alpha_{n-2}\gamma)-(ba+ac)|(\alpha_{n-1}+\alpha_{n-3}\beta_2)+ac|(\beta_{n-1}+\alpha_{n-2}\beta) $  \\
				\hline
				$ab$     & $2aba|(\gamma_{n-1}+\alpha_{n-2}\gamma)+bac|(\alpha_{n-1}+\alpha_{n-3}\beta_2)+abc|(\beta_{n-1}+\alpha_{n-2}\beta) $ \\
				\hline
				$bc$     & $-aba|(\gamma_{n-1}+\alpha_{n-2}\gamma)-2bac|(\alpha_{n-1}+\alpha_{n-3}\beta_2)+abc|(\beta_{n-1}+\alpha_{n-2}\beta) $ \\
				\hline
				$ba$     &  $(aba+abc)|(\alpha_{n-1}+\alpha_{n-3}\beta_2)+(aba+bac)|(\beta_{n-1}+\alpha_{n-2}\beta) $ \\
				\hline 
				$ac$     &  $(bac-abc)|(\gamma_{n-1}+\alpha_{n-2}\gamma)-(aba+abc)|(\alpha_{n-1}+\alpha_{n-3}\beta_2) $ \\
				\hline
				$aba$    &    $abac|(-\alpha_{n-1}-\alpha_{n-3}\beta_2+\beta_{n-1}+\alpha_{n-2}\beta)  $              \\
				\hline 
				$abc$    &    $abac|(\gamma_{n-1}+\alpha_{n-2}\gamma-\alpha_{n-1}-\alpha_{n-3}\beta_2)  $           \\
				\hline
				$bac$    &    $ abac|(-\gamma_{n-1}-\alpha_{n-2}\gamma+\beta_{n-1}+\alpha_{n-2}\beta)   $                  \\
				\hline
				$abac$   & $0$                                                        \\
				\hline
			\end{tabular}
			\end{center}
			\caption{Images of $\tilde{\partial}_n$ for $n\geqslant 2$ and $n$ even.}	
			\label{tparevenalphagamma}
			\end{table}

\noindent as well as

	\begin{table}[H]
				\begin{center}
				\begin{tabular}{|c|c|c|c|}
					\hline
				\diagbox[width=13mm ,height=5.4mm ]{$x$}{$y$}	 & $\alpha_{n-2}\beta_2$                                     \\
					\hline
					$1$      & $2a|\alpha_{n-3}\beta_2+2b|\alpha_{n-2}\beta+2c|\alpha_{n-2}\gamma$  \\
					\hline
					$a$      &  $ (ab+ba)|\alpha_{n-2}\beta+(ac-ab-bc)|\alpha_{n-2}\gamma$ \\
					\hline     
					$b$      &  $ (ab+ba)|\alpha_{n-3}\beta_2+(bc-ba-ac)|\alpha_{n-2}\gamma$ \\
					\hline
					$c$      &  $ (ac-ab-bc)|\alpha_{n-3}\beta_2+(bc-ba-ac)|\alpha_{n-2}\beta$ \\
					\hline
					$ab$     &  $aba|\alpha_{n-3}\beta_2+aba|\alpha_{n-2}\beta+(abc+bac)|\alpha_{n-2}\gamma$ \\
					\hline
					$bc$     &  $(abc-aba)|\alpha_{n-3}\beta_2-bac|\alpha_{n-2}\beta-bac|\alpha_{n-2}\gamma$ \\
					\hline
					$ba$     &  $aba|\alpha_{n-3}\beta_2+aba|\alpha_{n-2}\beta+(abc+bac)|\alpha_{n-2}\gamma$ \\
					\hline 
					$ac$     &  $-abc|\alpha_{n-3}\beta_2+(bac-aba)|\alpha_{n-2}\beta-abc|\alpha_{n-2}\gamma$ \\
					\hline
					$aba$    &    $0 $              \\
					\hline 
					$abc$    &    $0 $           \\
					\hline
					$bac$    &    $0 $                  \\
					\hline
					$abac$   & $0$                                                        \\
					\hline
				\end{tabular}
				\end{center}
				\caption{Images of $\tilde{\partial}_n$ for $n\geqslant 4$ and $n$ even.}	
				\label{tparevenalphabeta2}
				\end{table}

For $n\geqslant 3$ and $n$ odd, $\tilde{\partial}_n$ is given by

\begin{table}[H]
	\begin{center}
	\begin{tabular}{|c|c|c|c|}
		\hline
		\diagbox[width=13mm ,height=5.4mm ]{$x$}{$y$}        & $\alpha_n$                & $\beta_n$                 & $\gamma_n$  \\
		\hline
		$1$          & $0$                     & $0$                     & $0$    \\
		\hline
		$a$          & $0$                     & $(ba-ab)|\beta_{n-1}$    & $(-ab-bc-ac)|\gamma_{n-1}$ \\
		\hline     
		$b$          & $(ab-ba)|\alpha_{n-1}$    & $0$                     & $(-ba-ac-bc)|\gamma_{n-1}$  \\
		\hline
        $c$          & $(ab+bc+ac)|\alpha_{n-1}$ & $(ba+ac+bc)|\beta_{n-1}$ & $0$  \\
		\hline
		$ab$         & $-aba|\alpha_{n-1}$       & $aba|\beta_{n-1}$        & $(bac-abc)|\gamma_{n-1}$    \\
		\hline
		$bc$         & $(aba+abc)|\alpha_{n-1}$  & $bac|\beta_{n-1}$        & $-bac|\gamma_{n-1}$  \\
		\hline
		$ba$         & $aba|\alpha_{n-1}$        & $-aba|\beta_{n-1}$       & $(abc-bac)|\gamma_{n-1}$  \\
		\hline 
		$ac$         & $abc|\alpha_{n-1}$        & $(aba+bac)|\beta_{n-1}$  & $-abc|\gamma_{n-1}$  \\
		\hline
		$aba$        & $0$                     & $0$                     & $-2abac|\gamma_{n-1}$ \\
		\hline 
		$abc$        & $0$                     & $2abac|\beta_{n-1}$      & $0$            \\
		\hline
		$bac$        & $2abac|\alpha_{n-1}$      & $0$                     & $0$        \\
		\hline
		$abac$       & $0$                     & $0$                     & $0$     \\
		\hline
	\end{tabular}
	\end{center}
	\caption{Images of $\tilde{\partial}_n$ for $n\geqslant 3$ and $n$ odd.}	
	\label{tparoddalphan}
	\end{table}

\noindent together with

	\begin{table}[H]
		\begin{center}
		\begin{tabular}{|c|c|c|c|}
			\hline
		\diagbox[width=13mm ,height=5.4mm ]{$x$}{$y$}	 & $\alpha_{n-1}\beta$                                     \\
			\hline
			$1$      & $(c-a)|\alpha_{n-2}\beta+(a-c)|\alpha_{n-2}\gamma$ \\
			\hline
			$a$      & $-(ab+bc)|\alpha_{n-2}\beta-ac|\alpha_{n-2}\gamma+(ba-ab)|(\alpha_{n-1}+\gamma_{n-1}+\alpha_{n-3}\beta_2)$ \\
			\hline     
			$b$      &  $(-2ba-ac)|\alpha_{n-2}\beta+(ab-bc)|\alpha_{n-2}\gamma$  \\
			\hline
			$c$      &  $(ab+bc)|\alpha_{n-2}\beta+ac|\alpha_{n-2}\gamma+(ba+ac+bc)|(\alpha_{n-1}+\gamma_{n-1}+\alpha_{n-3}\beta_2)$\\
			\hline
			$ab$     &   $(bac-aba)|\alpha_{n-2}\beta-abc|\alpha_{n-2}\gamma+aba|(\alpha_{n-1}+\gamma_{n-1}+\alpha_{n-3}\beta_2)$         \\
			\hline
			$bc$     &   $(aba-bac)|\alpha_{n-2}\beta+abc|\alpha_{n-2}\gamma+bac|(\alpha_{n-1}+\gamma_{n-1}+\alpha_{n-3}\beta_2)$        \\
			\hline
			$ba$     &   $abc|\alpha_{n-2}\beta+(aba-bac)|\alpha_{n-2}\gamma-aba|(\alpha_{n-1}+\gamma_{n-1}+\alpha_{n-3}\beta_2)$ \\
			\hline 
			$ac$     &   $(aba+bac)|(\alpha_{n-1}+\gamma_{n-1}+\alpha_{n-3}\beta_2)$   \\
			\hline
			$aba$    &    $-abac|(\alpha_{n-2}\beta+\alpha_{n-2}\gamma)$                                \\
			\hline 
			$abc$    &  $2abac|(\alpha_{n-1}+\gamma_{n-1}+\alpha_{n-3}\beta_2)$                  \\
			\hline
			$bac$    &  $abac|(\alpha_{n-2}\beta+\alpha_{n-2}\gamma)$ \\
			\hline
			$abac$   & $0$                                                        \\
			\hline
		\end{tabular}
		\end{center}
		\caption{Images of $\tilde{\partial}_n$ for $n\geqslant 3$ and $n$ odd.}	
		\label{tparoddalphabeta}
		\end{table}

\noindent and

		\begin{table}[H]
			\begin{center}
			\begin{tabular}{|c|c|c|c|}
				\hline
					\diagbox[width=13mm ,height=5.4mm ]{$x$}{$y$}	 & $\alpha_{n-1}\gamma$                              \\
				\hline
				$1$      & $(a-b)|\alpha_{n-2}\beta+(b-a)|\alpha_{n-2}\gamma$ \\
				\hline
				$a$      & $-ab|\alpha_{n-2}\beta+ba|\alpha_{n-2}\gamma-(ab+bc+ac)|(\alpha_{n-1}+\beta_{n-1}+\alpha_{n-3}\beta_2)$ \\
				\hline     
				$b$      &  $ab|\alpha_{n-2}\beta-ba|\alpha_{n-2}\gamma-(ba+ac+bc)|(\alpha_{n-1}+\beta_{n-1}+\alpha_{n-3}\beta_2)$  \\
				\hline
				$c$      &  $(2ac+ba)|\alpha_{n-2}\beta+(ab+2bc)|\alpha_{n-2}\gamma$ \\
				\hline
				$ab$     &  $(bac-abc)|(\alpha_{n-1}+\beta_{n-1}+\alpha_{n-3}\beta_2)$         \\
				\hline
				$bc$     &   $(abc+bac)|\alpha_{n-2}\beta+aba|\alpha_{n-2}\gamma-bac|(\alpha_{n-1}+\beta_{n-1}+\alpha_{n-3}\beta_2)$        \\
				\hline
				$ba$     &   $(abc-bac)|(\alpha_{n-1}+\beta_{n-1}+\alpha_{n-3}\beta_2)$ \\
				\hline 
				$ac$     &   $aba|\alpha_{n-2}\beta+(abc+bac)|\alpha_{n-2}\gamma-abc|(\alpha_{n-1}+\beta_{n-1}+\alpha_{n-3}\beta_2)$   \\
				\hline
				$aba$    &    $-2abac|(\alpha_{n-1}+\beta_{n-1}+\alpha_{n-3}\beta_2)$                                \\
				\hline 
				$abc$    &  $abac|(\alpha_{n-2}\beta+\alpha_{n-2}\gamma)$                  \\
				\hline
				$bac$    &  $abac|(\alpha_{n-2}\beta+\alpha_{n-2}\gamma)$ \\
				\hline
				$abac$   & $0$                                                        \\
				\hline
			\end{tabular}
			\end{center}
			\caption{Images of $\tilde{\partial}_n$ for $n\geqslant 3$ and $n$ odd.}	
			\label{tparoddalphagamma}
			\end{table}

\noindent as well as

			\begin{table}[H]
				\begin{center}
				\begin{tabular}{|c|c|c|c|}
					\hline
					\diagbox[width=13mm ,height=5.4mm ]{$x$}{$y$}	 & $\alpha_{n-2}\beta_2$                              \\
					\hline
					$1$      & $(b-c)|\alpha_{n-2}\beta+(c-b)|\alpha_{n-2}\gamma$ \\
					\hline
					$a$      & $(ba-ac)|\alpha_{n-2}\beta-(2ab+bc)|\alpha_{n-2}\gamma$ \\
					\hline     
					$b$      &  $(ab-ba)|(\beta_{n-1}+\gamma_{n-1}+\alpha_{n-3}\beta_2)-bc|\alpha_{n-2}\beta-(ba+ac)|\alpha_{n-2}\gamma$  \\
					\hline
					$c$      &  $ (ab+bc+ac)|(\beta_{n-1}+\gamma_{n-1}+\alpha_{n-3}\beta_2)+bc|\alpha_{n-2}\beta+(ba+ac)|\alpha_{n-2}\gamma $ \\
					\hline
					$ab$     &  $ -aba|(\beta_{n-1}+\gamma_{n-1}+\alpha_{n-3}\beta_2)+(aba-abc)|\alpha_{n-2}\beta+bac|\alpha_{n-2}\gamma  $ \\
					\hline
					$bc$     &   $(aba+abc)|(\beta_{n-1}+\gamma_{n-1}+\alpha_{n-3}\beta_2)$        \\
					\hline
					$ba$     &   $aba|(\beta_{n-1}+\gamma_{n-1}+\alpha_{n-3}\beta_2)-bac|\alpha_{n-2}\beta+(abc-aba)|\alpha_{n-2}\gamma$ \\
					\hline 
					$ac$     &   $abc|(\beta_{n-1}+\gamma_{n-1}+\alpha_{n-3}\beta_2)+bac|\alpha_{n-2}\beta+(aba-abc)|\alpha_{n-2}\gamma$   \\
					\hline
					$aba$    &    $-abac|(\alpha_{n-2}\beta+\alpha_{n-2}\gamma)$                                \\
					\hline 
					$abc$    &  $abac|(\alpha_{n-2}\beta+\alpha_{n-2}\gamma)$                  \\
					\hline
					$bac$    &  $2abac|(\beta_{n-1}+\gamma_{n-1}+\alpha_{n-3}\beta_2)$ \\
					\hline
					$abac$   & $0$                                                        \\
					\hline
				\end{tabular}
				\end{center}
				\caption{Images of $\tilde{\partial}_n$ for $n\geqslant 3$ and $n$ odd.}	
				\label{tparoddalphabeta2}
				\end{table}

Let us now turn to the maps $\tilde{f}_n$. Note first that the $\Bbbk$-linear maps $\tilde{f}_n:A\otimes (A^!_{-n})^*\to A\otimes (A^!_{-(n+3)})^*$ are homogeneous of homological degree $3$ and internal degree $6$. By degree reasons we see that $\tilde{f}_n(x|y)=0$ for all $x\in A_m$, $y\in (A^!_{-n})^*$, with $m\in \llbracket 2,4\rrbracket$ and $n\in\NN_0$. A straightforward computation using \eqref{fbn} tells us that the map $\tilde{f}_0$ is given by
\begin{equation}
\label{eq:f0-homology}
\begin{split}
\tilde{f}_0(1|\epsilon^!) & = 12bac|\alpha_3+12abc|\beta_3-12aba|\gamma_3-6abc|\alpha_2\beta+6aba|\alpha_2\gamma-6bac|\alpha\beta_2,
\\
\tilde{f}_0(a|\epsilon^!) & =\tilde{f}_0(b|\epsilon^!)=\tilde{f}_0(c|\epsilon^!)= 0.
\end{split}
\end{equation}
Analogously, if $n\in\NN$ is odd, then 
\begin{align*}
 \tilde{f}_n(a|\alpha_n)&=\tilde{f}_n(b|\beta_n)=\tilde{f}_n(c|\gamma_n)= -4abac|\alpha_{n+3}-4abac|\beta_{n+3}-4abac|\gamma_{n+3}+6abac|\alpha_{n+1}\beta_2,
 \\
\tilde{f}_n(b|\alpha_{n-1}\beta)&=\tilde{f}_n(c|\alpha_{n-1}\gamma)=\tilde{f}_n(a|\alpha_{n-2}\beta_2)
\\
  &= -2(n-1)abac|\alpha_{n+3}-2(n-1)abac|\beta_{n+3}-2(n-1)abac|\gamma_{n+3},
\end{align*}
and $\tilde{f}_n(x)=0$ for
\begin{equation}
\label{eq:fn-homology}
\begin{split}
x\in \big\{ 
& 1|\alpha_n, 1\beta_n, 1|\gamma_n, 1|\alpha_{n-1}\beta, 1|\alpha_{n-1}\gamma,1|\alpha_{n-2}\beta_2,b|\alpha_n, c|\alpha_n, a|\beta_n,  c|\beta_n,a|\gamma_n,
\\
& b|\gamma_n,a|\alpha_{n-1}\beta,c|\alpha_{n-1}\beta,
 a|\alpha_{n-1}\gamma,b|\alpha_{n-1}\gamma,b|\alpha_{n-2}\beta_2,c|\alpha_{n-2}\beta_2 
 \big\} .
 \end{split}
\end{equation}
Finally, if $n\geqslant 2$ is even, then  
\begin{align*}
\tilde{f}_n(1|\alpha_n)&=\tilde{f}_n(1|\beta_n)=\tilde{f}_n(1|\gamma_n)
=8bac|\alpha_{n+3}+8abc|\beta_{n+3}-8aba|\gamma_{n+3}-2abc|\alpha_{n+2}\beta
\\
& \phantom{ = \;}
+2aba|\alpha_{n+2}\gamma-2bac|\alpha_{n+1}\beta_2,
\\
\tilde{f}_n(1|\alpha_{n-1}\beta)&=\tilde{f}_n(1|\alpha_{n-1}\gamma)=0,\quad 
\tilde{f}_n(1|\alpha_{n-2}\beta_2)=6(n-2)(bac|\alpha_{n+3}+abc|\beta_{n+3}-aba|\gamma_{n+3}),
\end{align*}
and $\tilde{f}_n(x)=0$ for $x\in A_1\otimes (A^!_{-n})^*$. 

\subsection{Computation of the boundaries}
\label{subsection: boundaries} 

In this subsection, we will explicitly construct bases $\tilde{\mathfrak{B}}_{n,m}$ and $\mathfrak{B}_{n,m}$ of the $\Bbbk$-vector spaces $\tilde{B}_{n,m}=\Img(\tilde{\partial}_{n+1,m-1})$ and $B_{n,m}=\Img(\partial_{n+1,m-1})$ for $m\in \llbracket 0,4\rrbracket$ and $n\in\NN_0$ respectively, defined before Proposition \ref{bdh}. 
This will be done by simply applying the corresponding differential $\tilde{\partial}_{n+1,m-1}$ or $\partial_{n+1,m-1}$ to the usual basis of its domain and extracting a linearly independent generating subset. 

\subsubsection{\texorpdfstring{Computation of $\tilde{\mathfrak{B}}_{n,m}$}{Computation of Bnm}}
\label{subsubsection:boundaries-homology-1}

Recall that $\tilde{B}_{n,m}=\Img(\tilde{\partial}_{n+1,m-1})$ and $\tilde{\partial}_{n,m}:\tilde{K}_{n,m}=A_m\otimes (A^!_{-n})^*\to \tilde{K}_{n-1,m+1}=A_{m+1}\otimes (A^!_{-(n-1)})^*$ was defined in Subsection \ref{subsection:rds}. 
Obviously, 
$\tilde{B}_{n,0}=\Img(\tilde{\partial}_{n+1,-1})=0$
for $n\in\NN_0$. 
Then we define 
$\tilde{\mathfrak{B}}_{n,0}=\emptyset$
for $n\in\NN_0$. 

Suppose $m=1$. Table \ref{tpar1} shows that $\tilde{\partial}_{1,0}(\tilde{K}_{1,0})=0$, so 
$\tilde{B}_{0,1}=\Img(\tilde{\partial}_{1,0})=0$.
We define 
$\tilde{\mathfrak{B}}_{0,1}=\emptyset$.
For $n\in\NN$ with $n$ odd, Tables \ref{tparevenalphan} - \ref{tparevenalphabeta2} show that
\begin{align*}
& a|\alpha_n=(1/2)\tilde{\partial}_{n+1,0}(1|\alpha_{n+1}),
\quad 
b|\beta_n=(1/2)\tilde{\partial}_{n+1,0}(1|\beta_{n+1}), 
\quad 
c|\gamma_n=(1/2)\tilde{\partial}_{n+1,0}(1|\gamma_{n+1}), 
\\
& (a+c)|(\beta_n+\alpha_{n-1}\beta)+(b+a)|(\gamma_n+\alpha_{n-1}\gamma)+(c+b)|(\alpha_{n}+\alpha_{n-2}\beta_2) =\tilde{\partial}_{n+1,0}(1|\alpha_{n}\beta)
\\
& \qquad =\tilde{\partial}_{n+1,0}(1|\alpha_{n}\gamma), 
\\
& a|\alpha_{n-2}\beta_2+b|\alpha_{n-1}\beta+c|\alpha_{n-1}\gamma=(1/2)\tilde{\partial}_{n+1,0}(1|\alpha_{n-1}\beta_2).
\end{align*}
These five elements are linearly independent if none of them vanishes, so they form a $\Bbbk$-basis of $\tilde{B}_{n,1}$. 
If $n=1$, we define a basis of $\tilde{B}_{1,1}$ by \[
\tilde{\mathfrak{B}}_{1,1}= \big\{ \suline{a|\alpha}, 
\suline{b|\beta},
\suline{c|\gamma}, (\suline{a}+c)|\suline{\beta}+(b+a)|\gamma+(c+b)|\alpha \big\}.
\] 
If $n\geqslant 3$ is odd, we define a basis of $\tilde{B}_{n,1}$ by  
\begin{align*}
\tilde{\mathfrak{B}}_{n,1}= \big\{ 
& \suline{a|\alpha_{n}}, 
\suline{b|\beta_n}, 
\suline{c|\gamma_{n}}, (a+\suline{c})|(\beta_n+\suline{\alpha_{n-1}\beta})+(b+a)|(\gamma_n+\alpha_{n-1}\gamma)+(c+b)|(\alpha_{n}+\alpha_{n-2}\beta_2),\\
& \suline{a|\alpha_{n-2}\beta_2}+b|\alpha_{n-1}\beta+c|\alpha_{n-1}\gamma \big\}.
\end{align*}
If $n\geqslant 2$ is even, Tables \ref{tparoddalphan} - \ref{tparoddalphabeta2} show that
\begin{align*}
	& 0=\tilde{\partial}_{n+1,0}(1|\alpha_{n+1})=\tilde{\partial}_{n+1,0}(1|\beta_{n+1})=\tilde{\partial}_{n+1,0}(1|\gamma_{n+1}),\\
	& (c-a)|(\alpha_{n-1}\beta-\alpha_{n-1}\gamma)=\tilde{\partial}_{n+1,0}(1|\alpha_{n}\beta),\quad 
	(a-b)|(\alpha_{n-1}\beta-\alpha_{n-1}\gamma)=\tilde{\partial}_{n+1,0}(1|\alpha_{n}\gamma),\\
	& (b-c)|(\alpha_{n-1}\beta-\alpha_{n-1}\gamma)=\tilde{\partial}_{n+1,0}(1|\alpha_{n-1}\beta_2)=-\tilde{\partial}_{n+1,0}(1|\alpha_{n}\beta)-\tilde{\partial}_{n+1,0}(1|\alpha_{n}\gamma).
\end{align*}
Since the elements $(c-a)|(\alpha_{n-1}\beta-\alpha_{n-1}\gamma)$ and $(a-b)|(\alpha_{n-1}\beta-\alpha_{n-1}\gamma) $ are linearly independent, we define a basis of $\tilde{B}_{n,1}$ by 
\[ 
	\tilde{\mathfrak{B}}_{n,1}=\big\{
	(\suline{c}-a)|(\alpha_{n-1}\beta-\suline{\alpha_{n-1}\gamma}),
	(a-\suline{b})|(\suline{\alpha_{n-1}\beta}-\alpha_{n-1}\gamma) \big\}.
\] 
The dimension of $\tilde{B}_{n,1}$ is then given by 
\begin{equation}\label{dimbtilde1}
	\begin{split}
		\operatorname{dim} \tilde{B}_{n,1} =
		\begin{cases}
			0,  & \text{if $n=0$}, 
			\\
			4,  & \text{if $n=1$}, 
			\\
			2,  & \text{if $n\geqslant 2$ is even},
			\\
			5,  & \text{if $n\geqslant 3$ is odd}.
			\end{cases} 
	\end{split}
\end{equation}

Suppose now $m=2$. Table \ref{tpar1} shows that $\tilde{B}_{0,2}$ is spanned by $(ab-ba)|\epsilon^!$, $(ba+ac+bc)|\epsilon^!$ and $(ab+bc+ac)|\epsilon^!$. Since $(ab+ba)|\epsilon^!$ and $(ba+ac+bc)|\epsilon^!$ are linearly independent, and $(ab+bc+ac)|\epsilon^!=(ab-ba)|\epsilon^!+(ba+ac+bc)|\epsilon^!$, we see that  
\[ \tilde{\mathfrak{B}}_{0,2}=\big\{ (ab-\suline{ba})|\suline{\epsilon^!}, (ba+\suline{ac}+bc)|\suline{\epsilon^!} \big\}\]
is a basis of $\tilde{B}_{0,2}$. 
If $n\in\NN$ is odd, let 
\begin{align*}
	\mathcal{E}_{n,2}=\big\{
		 e_{1,n,2}& =(\suline{ab }+ba)|\suline{\alpha_n} =\tilde{\partial}_{n+1,1}(b|\alpha_{n+1}), \\
		 e_{2,n,2}& =(\suline{bc}-ba-ac)|\suline{\alpha_n}=-\tilde{\partial}_{n+1,1}(b|\alpha_{n+1})-\tilde{\partial}_{n+1,1}(c|\alpha_{n+1}), \\
		 e_{3,n,2}& =(\suline{ab}+ba)|\suline{\beta_n}=\tilde{\partial}_{n+1,1}(a|\beta_{n+1}),\\
		 e_{4,n,2}& =(\suline{bc}-ba-ac)|\suline{\beta_n} =\tilde{\partial}_{n+1,1}(c|\beta_{n+1}),\\
		 e_{5,n,2}& =(\suline{ab}+ba)|\suline{\gamma_n} =-\tilde{\partial}_{n+1,1}(a|\gamma_{n+1})-\tilde{\partial}_{n+1,1}(b|\gamma_{n+1}),\\
		 e_{6,n,2}& =(\suline{bc}-ba-ac)|\suline{\gamma_n}= \tilde{\partial}_{n+1,1}(b|\gamma_{n+1}),\\
		 e_{7,n,2}& =bc|(\alpha_n+\alpha_{n-2}\beta_2)-ac|(\beta_n+\alpha_{n-1}\beta)+\suline{ab}|(\gamma_n+\suline{\alpha_{n-1}\gamma})= \tilde{\partial}_{n+1,1}(b|\alpha_{n}\beta)
		\big\}.
\end{align*}
Then we define the set 
$\tilde{\mathfrak{B}}_{1,2}=\mathcal{E}_{1,2}$, 
and  
\begin{align*}
\tilde{\mathfrak{B}}_{n,2}=\mathcal{E}_{n,2}\cup \big\{
e_{8,n,2} & =(ab+\suline{ba})|\suline{\alpha_{n-1}\gamma} =\tilde{\partial}_{n+1,1}(b|\alpha_n\beta)+ \tilde{\partial}_{n+1,1}(a|\alpha_n\gamma)- e_{5,n,2}, 
 \\
e_{9,n,2} & =(\suline{ab}+ba)|(\suline{\alpha_{n-1}\beta}+\alpha_{n-2}\beta_2)
 \\
& = \tilde{\partial}_{n+1,1}(a|\alpha_{n-1}\beta_2)+\tilde{\partial}_{n+1,1}(b|\alpha_{n-1}\beta_2)+e_{8,n,2},
\\
e_{10,n,2}& =(bc-ba-\suline{ac})|\suline{\alpha_{n-2}\beta_2 }
\\
& =\tilde{\partial}_{n+1,1}(c|\alpha_n\beta)-\tilde{\partial}_{n+1,1}(a|\alpha_n\beta)+e_{8,n,2}+e_{5,n,2}-e_{2,n,2},
\\
e_{11,n,2} & =(ab+ba)|\alpha_{n-1}\beta-(\suline{bc}-ba-ac)|\suline{\alpha_{n-1}\gamma}=\tilde{\partial}_{n+1,1}(a|\alpha_{n-1}\beta_2)+e_{8,n,2},
\\
e_{12,n,2} & =(\suline{bc}-ba-ac)|\suline{\alpha_{n-1}\beta}-(ab+ba)|\alpha_{n-2}\beta_2 \\
& =\tilde{\partial}_{n+1,1}(c|\alpha_{n-1}\beta_2)+e_{10,n,2} 
\big\}
\end{align*}
for $n\geqslant 3$ with $n$ odd.
We will show that $\tilde{\mathfrak{B}}_{n,2}$ is a basis of $\tilde{B}_{n,2}$ for $n\in\NN$ with $n$ odd.
As noted before, $\tilde{\mathfrak{B}}_{n,2}\subseteq \tilde{B}_{n,2}$. 
Since 
\begin{align*}
	&\tilde{\partial}_{n+1,1}(a|\alpha_{n+1})=\tilde{\partial}_{n+1,1}(b|\beta_{n+1})=\tilde{\partial}_{n+1,1}(c|\gamma_{n+1})=0, \\
	&\tilde{\partial}_{n+1,1}(b|\alpha_{n+1})=e_{1,n,2}, \quad 
	\tilde{\partial}_{n+1,1}(c|\alpha_{n+1})=-e_{1,n,2}-e_{2,n,2}, \quad 
	\tilde{\partial}_{n+1,1}(a|\beta_{n+1})=e_{3,n,2}, \\
	&\tilde{\partial}_{n+1,1}(c|\beta_{n+1})=e_{4,n,2}, \quad 
	\tilde{\partial}_{n+1,1}(a|\gamma_{n+1})=-e_{5,n,2}-e_{6,n,2}, \quad 
	\tilde{\partial}_{n+1,1}(b|\gamma_{n+1})=e_{6,n,2}, \\
	&\tilde{\partial}_{n+1,1}(a|\alpha_n\beta)=-e_{2,n,2}-e_{3,n,2}-e_{4,n,2}+e_{7,n,2}-e_{9,n,2}-e_{10,n,2}-e_{12,n,2}, \\ 
	& \tilde{\partial}_{n+1,1}(b|\alpha_n\beta)=e_{7,n,2}, \\
	& \tilde{\partial}_{n+1,1}(c|\alpha_n\beta)=-e_{3,n,2}-e_{4,n,2}-e_{5,n,2}+e_{7,n,2}-e_{8,n,2}-e_{9,n,2}-e_{12,n,2}, \\
	& \tilde{\partial}_{n+1,1}(a|\alpha_n\gamma)=e_{5,n,2}-e_{7,n,2}+e_{8,n,2}, \quad
	 \tilde{\partial}_{n+1,1}(b|\alpha_n\gamma)=-\tilde{\partial}_{n+1,1}(c|\alpha_n\beta)+e_{2,n,2}+e_{10,n,2} , \\
	&\tilde{\partial}_{n+1,1}(c|\alpha_n\gamma)=-\tilde{\partial}_{n+1,1}(b|\alpha_n\beta)+e_{2,n,2}+e_{10,n,2}, \quad 
	\tilde{\partial}_{n+1,1}(a|\alpha_{n-1}\beta_2)=-e_{8,n,2}+e_{11,n,2}, \\
	& \tilde{\partial}_{n+1,1}(b|\alpha_{n-1}\beta_2)=e_{9,n,2}-e_{11,n,2},\quad 
	 \tilde{\partial}_{n+1,1}(c|\alpha_{n-1}\beta_2)=-e_{10,n,2}+e_{12,n,2},
\end{align*}
the elements in $\tilde{\mathfrak{B}}_{n,2}$ span the space $\tilde{B}_{n,2}$. 
By Fact \ref{indep}, the elements in $\tilde{\mathfrak{B}}_{n,2}$ are linearly independent, so $\tilde{\mathfrak{B}}_{n,2}$ is a basis of $\tilde{B}_{n,2}$, as claimed. 
If $n\geqslant 2$ is even, let
\begin{align*}
	\mathcal{G}_{n,2}=\big\{ 
		g_{1,n,2} & =(\suline{ba}-ab)|\suline{\alpha_n}=-\tilde{\partial}_{n+1,1}(b|\alpha_{n+1}), 
		\\
		g_{2,n,2} & =(bc+ba+\suline{ac})|\suline{\alpha_n}=\tilde{\partial}_{n+1,1}(c|\alpha_{n+1})-\tilde{\partial}_{n+1,1}(b|\alpha_{n+1}),
		\\
		g_{3,n,2} & =(\suline{ba}-ab)|\suline{\beta_n}=\tilde{\partial}_{n+1,1}(a|\beta_{n+1}),
		\quad
		g_{4,n,2}=(bc+ba+\suline{ac})|\suline{\beta_n}=\tilde{\partial}_{n+1,1}(c|\beta_{n+1}), 
		\\
		g_{5,n,2} & =(\suline{ba}-ab)|\suline{\gamma_n}=\tilde{\partial}_{n+1,1}(a|\gamma_{n+1})-\tilde{\partial}_{n+1,1}(b|\gamma_{n+1}),
		\\
		g_{6,n,2} & =(bc+ba+\suline{ac})|\suline{\gamma_n}=-\tilde{\partial}_{n+1,1}(b|\gamma_{n+1}),
		\\
		g_{9,n,2} & =ab|\alpha_{n-1}\beta-\suline{ba|\alpha_{n-1}\gamma} =
		(1/3)\big[ \tilde{\partial}_{n+1,1}(2a|\alpha_n\beta+2c|\alpha_n\beta +3b|\alpha_n\gamma
		+b|\alpha_{n-1}\beta_2
		\\
 & \phantom{= \; }
 +c|\alpha_{n-1}\beta_2)
 -2g_{1,n,2}+g_{2,n,2}+2g_{3,n,2}+2g_{4,n,2}-3g_{6,n,2} \big], 
\\
		g_{10,n,2} & = \suline{ba|\alpha_{n-1}\beta}-ab|\alpha_{n-1}\gamma=(1/3)\tilde{\partial}_{n+1,1}(a|\alpha_{n-1}\beta_2-b|\alpha_n\beta),
		\\
		g_{11,n,2} & =\suline{ac|\alpha_{n-1}\beta}+(ab+bc)|\alpha_{n-1}\gamma=-(1/3)\tilde{\partial}_{n+1,1}(b|\alpha_n\beta+2a|\alpha_{n-1}\beta_2),
		\\
g_{12,n,2} & =(ab+bc)|\alpha_{n-1}\beta+\suline{ac|\alpha_{n-1}\gamma}=(1/3)\big[ \tilde{\partial}_{n+1,1}(c|\alpha_n\beta-2a|\alpha_n\beta-b|\alpha_{n-1}\beta_2 
\\
& \phantom{= \ \, }
-c|\alpha_{n-1}\beta_2)
+2g_{1,n,2}-g_{2,n,2}-2g_{3,n,2}+g_{4,n,2} \big]
\big\},
\end{align*}
Then we define  
$\tilde{\mathfrak{B}}_{2,2}=\mathcal{G}_{2,2}$,
and 
\begin{align*}
 \tilde{\mathfrak{B}}_{n,2}=\mathcal{G}_{n,2}\cup \big\{
g_{7,n,2} & =(ba-\suline{ab})|\suline{\alpha_{n-2}\beta_2 } =(1/3)[\tilde{\partial}_{n+1,1}(a|\alpha_n\beta+c|\alpha_n\beta-b|\alpha_{n-1}\beta_2 
\\
& \phantom{ = \ \,}
-c|\alpha_{n-1}\beta_2)
-g_{1,n,2}-g_{2,n,2}-2g_{3,n,2}+g_{4,n,2}-3g_{5,n,2} ],\\
g_{8,n,2} & =(\suline{bc}+ba+ac)|\suline{\alpha_{n-2}\beta_2 } =(1/3)\big[ \tilde{\partial}_{n+1,1}(2a|\alpha_n\beta+2c|\alpha_n\beta+b|\alpha_{n-1}\beta_2 
\\
& \phantom{= \ \, }
+c|\alpha_{n-1}\beta_2)
-2g_{1,n,2}-2g_{2,n,2}+2g_{3,n,2}-g_{4,n,2}-3g_{6,n,2} \big]
\big\}
\end{align*}
for $n\geqslant 4$ with $n$ even.
We will show that $\tilde{\mathfrak{B}}_{n,2}$ is a basis of $\tilde{B}_{n,2}$ for $n\geqslant 2$ with $n$ even. 
From the definition, we see that $\tilde{\mathfrak{B}}_{n,2}\subseteq \tilde{B}_{n,2}$. Since
\begin{align*} 
& \tilde{\partial}_{n+1,1}(a|\alpha_{n+1})=\tilde{\partial}_{n+1,1}(b|\beta_{n+1})=\tilde{\partial}_{n+1,1}(c|\gamma_{n+1})=0, \\ 
&\tilde{\partial}_{n+1,1}(b|\alpha_{n+1})=-g_{1,n,2},\quad 
\tilde{\partial}_{n+1,1}(c|\alpha_n)=g_{2,n,2}-g_{1,n,2}, \quad 
\tilde{\partial}_{n+1,1}(a|\beta_{n+1})=g_{3,n,2}, \\
&\tilde{\partial}_{n+1,1}(c|\beta_{n+1})=g_{4,n,2}, \quad
\tilde{\partial}_{n+1,1}(a|\gamma_{n+1})=g_{5,n,2}-g_{6,n,2}, \quad
\tilde{\partial}_{n+1,1}(b|\gamma_{n+1})=-g_{6,n,2}, \\
& \tilde{\partial}_{n+1,1}(a|\alpha_{n}\beta)=g_{1,n,2}+g_{5,n,2}+g_{7,n,2}-g_{12,n,2},\quad 
 \tilde{\partial}_{n+1,1}(b|\alpha_n\beta)=-2g_{10,n,2}-g_{11,n,2},\\
& \tilde{\partial}_{n+1,1}(c|\alpha_n\beta)=g_{2,n,2}+g_{6,n,2}+g_{8,n,2}+g_{12,n,2}, \\
&\tilde{\partial}_{n+1,1}(a|\alpha_n\gamma)=g_{1,n,2}-g_{2,n,2}+g_{3,n,2}-g_{4,n,2}+g_{7,n,2}-g_{8,n,2}-g_{9,n,2}, \\
&\tilde{\partial}_{n+1,1}(b|\alpha_n\gamma)=-g_{2,n,2}-g_{4,n,2}-g_{8,n,2}+g_{9,n,2}, \quad 
\tilde{\partial}_{n+1,1}(c|\alpha_n\gamma)=g_{10,n,2}+2g_{11,n,2}, \\ 
& \tilde{\partial}_{n+1,1}(a|\alpha_{n-1}\beta_2)=g_{10,n,2}-g_{11,n,2}, \\
& \tilde{\partial}_{n+1,1}(b|\alpha_{n-1}\beta_2)=-g_{3,n,2}-g_{5,n,2}-g_{7,n,2}+g_{9,n,2}-g_{12,n,2},\\
&\tilde{\partial}_{n+1,1}(c|\alpha_{n-1}\beta_2)=-g_{3,n,2}+g_{4,n,2}-g_{5,n,2}+g_{6,n,2}-g_{7,n,2}+g_{8,n,2}-g_{9,n,2}+g_{12,n,2},
\end{align*}
the elements in $\tilde{\mathfrak{B}}_{n,2}$ span the space $\tilde{B}_{n,2}$. 
By Fact \ref{indep}, the elements in $\tilde{\mathfrak{B}}_{n,2}$ are linearly independent, so $\tilde{\mathfrak{B}}_{n,2}$ is a basis of $\tilde{B}_{n,2}$, as claimed. 
The dimension of $\tilde{B}_{n,2}$ is thus given by 
\begin{equation}\label{dimbtilde2}
	\begin{split}
		\operatorname{dim} \tilde{B}_{n,2} =
		\begin{cases}
			2,  &  \text{if $n=0$}, 
			\\
			7,  &  \text{if $n=1$}, 
			\\
			10,  &  \text{if $n=2$}, 
			\\
			12,  &  \text{if $n\geqslant 3$}. 
			\end{cases} 
	\end{split}
\end{equation}

Suppose now $m=3$. Table \ref{tpar1} shows that $\tilde{\partial}_{1,2}$ is surjective. We thus define a basis of $\tilde{B}_{0,3}$ by the usual basis of $\tilde{K}_{0,3}$.
If $n\in\NN$ is odd, let 
\begin{align*}
	\mathcal{E}_{n,3}=\big\{ 
		& e_{1,n,3}=\suline{aba|\alpha_n}=\tilde{\partial}_{n+1,2}(ab|\alpha_{n+1}),   \quad 
		e_{2,n,3}=\suline{abc|\alpha_n}=-\tilde{\partial}_{n+1,2}(ac|\alpha_{n+1}),\\
	   & e_{3,n,3}=\suline{aba|\beta_n}=\tilde{\partial}_{n+1,2}(ab|\beta_{n+1}),\quad 
		e_{4,n,3}=\suline{bac|\beta_n}=-\tilde{\partial}_{n+1,2}(bc|\beta_{n+1}),\\
	   & e_{5,n,3}=\suline{abc|\gamma_n}=-\tilde{\partial}_{n+1,2}(ac|\gamma_{n+1}),\quad 
		e_{6,n,3}=\suline{bac|\gamma_n}=-\tilde{\partial}_{n+1,2}(bc|\gamma_{n+1}), \\
	   & e_{7,n,3}=\suline{abc}|(\suline{\beta_n}+\alpha_{n-1}\beta)+aba|(\gamma_n+\alpha_{n-1}\gamma)=(1/3)\tilde{\partial}_{n+1,2}((ba-ac)|\alpha_n\beta), \\
	   & e_{8,n,3}=aba|(\gamma_n+\alpha_{n-1}\gamma)+\suline{bac}|(\suline{\alpha_n}+\alpha_{n-2}\beta_2)=(1/3)\tilde{\partial}_{n+1,2}((2ba+ac)|\alpha_n\beta)	
	\big\}.
\end{align*}
Then we define 
$\tilde{\mathfrak{B}}_{1,3}=\mathcal{E}_{1,3}$, 
and 
\begin{align*}
\tilde{\mathfrak{B}}_{n,3}=\mathcal{E}_{n,3}\cup \big\{
e_{9,n,3} & =aba|\alpha_{n-1}\beta+(abc+bac)|\alpha_{n-1}\gamma+aba|\alpha_{n-2}\beta_2=\tilde{\partial}_{n+1,2}(ba|\alpha_{n-1}\beta_2), 
\\
e_{10,n,3} & =bac|\alpha_{n-1}\beta+bac|\alpha_{n-1}\gamma+(aba-abc)|\alpha_{n-2}\beta_2=-\tilde{\partial}_{n+1,2}(bc|\alpha_{n-1}\beta_2),
\\
 e_{11,n,3} & =(aba+bac)|\alpha_{n-1}\beta+(bac-abc)|\alpha_{n-1}\gamma 
 \\
&  =-\tilde{\partial}_{n+1,2}(bc|\alpha_n\beta)-e_{3,n,3}-e_{4,n,3}+e_{5,n,3}-e_{6,n,3}, 
\\
e_{12,n,3} & =(abc-bac)|\alpha_{n-1}\gamma+(aba+abc)|\alpha_{n-2}\beta_2 
\\
&  =\tilde{\partial}_{n+1,2}((ab+bc)|\alpha_n\beta)-e_{1,n,3}-e_{2,n,3}-e_{5,n,3}+e_{6,n,3} 
\big\} 
\end{align*}
for $n\geqslant 3$ with $n$ odd.
We now show that $\tilde{\mathfrak{B}}_{n,3}$ is a basis of $\tilde{B}_{n,3}$ for $n\in\NN$ with $n$ odd.
As noted before, $ \tilde{\mathfrak{B}}_{n,3}\subseteq \tilde{B}_{n,3}$. 
Since
\begin{align*}
&\tilde{\partial}_{n+1,2}(ab|\alpha_{n+1})=\tilde{\partial}_{n+1,2}(ba|\alpha_{n+1})=e_{1,n,3}, \quad
\tilde{\partial}_{n+1,2}(bc|\alpha_{n+1})=e_{2,n,3}-e_{1,n,3},\\
&\tilde{\partial}_{n+1,2}(ac|\alpha_{n+1})=-e_{2,n,3}, \quad
\tilde{\partial}_{n+1,2}(ab|\beta_{n+1})=\tilde{\partial}_{n+1,2}(ba|\beta_{n+1})=e_{3,n,3}, \\
& \tilde{\partial}_{n+1,2}(bc|\beta_{n+1})=-e_{4,n,3}, \quad 
\tilde{\partial}_{n+1,2}(ac|\beta_{n+1})=e_{4,n,3}-e_{3,n,3}, \\
&\tilde{\partial}_{n+1,2}(ab|\gamma_{n+1})=\tilde{\partial}_{n+1,2}(ba|\gamma_{n+1})=e_{5,n,3}+e_{6,n,3}, \quad 
\tilde{\partial}_{n+1,2}(bc|\gamma_{n+1})=-e_{6,n,3}, \\
& \tilde{\partial}_{n+1,2}(ac|\gamma_{n+1})=-e_{5,n,3},\quad 
\tilde{\partial}_{n+1,2}(ab|\alpha_n\beta)=e_{1,n,3}+e_{2,n,3}+e_{3,n,3}+e_{4,n,3}+e_{11,n,3}+e_{12,n,3}, \\
&\tilde{\partial}_{n+1,2}(bc|\alpha_n\beta)=-e_{3,n,3}-e_{4,n,3}+e_{5,n,3}-e_{6,n,3}-e_{11,n,3}, \\
& \tilde{\partial}_{n+1,2}(ba|\alpha_n\beta)=\tilde{\partial}_{n+1,2}(ab|\alpha_n\gamma)=e_{7,n,3}+e_{8,n,3}, \quad
\tilde{\partial}_{n+1,2}(ac|\alpha_n\beta)=-2e_{7,n,3}+e_{8,n,3}, \\
&\tilde{\partial}_{n+1,2}(bc|\alpha_n\gamma)=e_{7,n,3}-2e_{8,n,3}, \\
& \tilde{\partial}_{n+1,2}(ba|\alpha_n\gamma)=e_{1,n,3}+e_{2,n,3}+e_{3,n,3}+e_{4,n,3}+ e_{11,n,3}+e_{12,n,3}, \\
&\tilde{\partial}_{n+1,2}(ac|\alpha_n\gamma)=-e_{1,n,3}-e_{2,n,3}-e_{5,n,3}+e_{6,n,3}  -e_{12,n,3},\\
& \tilde{\partial}_{n+1,2}(ab|\alpha_{n-1}\beta_2)=\tilde{\partial}_{n+1,2}(ba|\alpha_{n-1}\beta_2)=e_{9,n,3}, \quad 
\tilde{\partial}_{n+1,2}(bc|\alpha_{n-1}\beta_2)=-e_{10,n,3}, \\
& \tilde{\partial}_{n+1,2}(ac|\alpha_{n-1}\beta_2)=e_{10,n,3}-e_{9,n,3},
\end{align*}
the elements in $\tilde{\mathfrak{B}}_{n,3}$ span the space $\tilde{B}_{n,3}$. 
By Fact \ref{indep}, the elements $e_{\ell,n,3}$ for $\ell \in \llbracket 1,8\rrbracket$ are linearly independent.  
The reader can easily verify that the elements 
$e_{\ell,n,3}$ for $\ell \in \llbracket 9,12\rrbracket$ are linearly independent. 
Since the underlined terms of $e_{\ell,n,3}$ for $\ell \in \llbracket 1,8\rrbracket$ do not appear in $e_{\ell,n,3}$ for $\ell \in \llbracket 9,12\rrbracket$, 
the elements in $\tilde{\mathfrak{B}}_{n,3}$ are linearly independent. So $\tilde{\mathfrak{B}}_{n,3}$ is a basis of $\tilde{B}_{n,3}$, as claimed. 
If $n\geqslant 2$ is even, let 
\begin{align*}
\tilde{\mathfrak{B}}_{n,3}=\big\{  
g_{1,n,3} & =\suline{aba|\alpha_n}=\tilde{\partial}_{n+1,2}(ba|\alpha_{n+1}),\quad
g_{2,n,3}=\suline{abc|\alpha_n}=\tilde{\partial}_{n+1,2}(ac|\alpha_{n+1}), 
\\
g_{3,n,3} & =\suline{aba|\beta_n}=\tilde{\partial}_{n+1,2}(ab|\beta_{n+1}),\quad 
g_{4,n,3}=\suline{bac|\beta_n}=\tilde{\partial}_{n+1,2}(bc|\beta_{n+1}), 
\\
g_{5,n,3} & = \suline{abc|\gamma_n}=-\tilde{\partial}_{n+1,2}(ac|\gamma_{n+1}),\quad
g_{6,n,3}= \suline{bac|\gamma_n}=-\tilde{\partial}_{n+1,2}(bc|\gamma_{n+1}), 
\\
g_{7,n,3} & = bac|(\alpha_n+\alpha_{n-2}\beta_2)+\suline{aba}|(\suline{\gamma_n}+\alpha_{n-2}\beta_2) 
\\
&  =\tilde{\partial}_{n+1,2}((ab+bc)|\alpha_n\beta)-g_{1,n,3}-g_{6,n,3}, 
\\
g_{8,n,3} & =\suline{abc}|(\suline{\beta_n}+\alpha_{n-2}\beta_2)+aba|(\gamma_n+\alpha_{n-2}\beta_2) 
\\
& =\tilde{\partial}_{n+1,2}((ba+ac)|\alpha_{n-1}\beta_2)-g_{3,n,3}-g_{5,n,3},
\\
g_{9,n,3} & =\suline{bac}|(\suline{\alpha_{n-1}\beta}+\alpha_{n-1}\gamma-\alpha_n-\alpha_{n-2}\beta_2)+aba|(\gamma_n+\alpha_{n-2}\beta_2) 
\\
&  = \tilde{\partial}_{n+1,2}(ab|\alpha_{n}\beta+ac|\alpha_n\gamma)-e_{1,n,3}+e_{2,n,3}-e_{7,n,3}+e_{8,n,3}, \\
g_{10,n,3} & =\suline{aba}|(\alpha_{n-1}\beta+\suline{\alpha_{n-1}\gamma}-2\gamma_n-2\alpha_{n-2}\beta_2) 
\\
&  =-\tilde{\partial}_{n+1,2}(ab|\alpha_n\beta+ba|\alpha_{n-1}\beta_2)+g_{1,n,3}+g_{3,n,3}, 
\\
g_{11,n,3} & =(bac-aba)|\alpha_{n-1}\beta-\suline{abc|\alpha_{n-1}\gamma}+aba|(\gamma_n+\alpha_{n-2}\beta_2)
\\
&  =\tilde{\partial}_{n+1,2}(ab|\alpha_n\beta)-g_{1,n,3},
\\
g_{12,n,3} & =\suline{abc|\alpha_{n-1}\beta}+(aba-bac)|\alpha_{n-1}\gamma-aba|(\gamma_n+\alpha_{n-2}\beta_2)
\\
& =\tilde{\partial}_{n+1,2}(ba|\alpha_n\beta)+g_{1,n,3}
 \big\}.
\end{align*}
We then show that $\tilde{\mathfrak{B}}_{n,3}$ is a basis of $\tilde{B}_{n,3}$.
It follows from the definition that $\tilde{\mathfrak{B}}_{n,3}\subseteq \tilde{B}_{n,3}$.
Since 
\begin{align*}
& \tilde{\partial}_{n+1,2}(ab|\alpha_{n+1})=-\tilde{\partial}_{n+1,2}(ba|\alpha_{n+1})=-g_{1,n,3}, \quad \tilde{\partial}_{n+1,2}(bc|\alpha_{n+1})=g_{1,n,3}+g_{2,n,3}, 
\\
& \tilde{\partial}_{n+1,2}(ac|\alpha_{n+1})=g_{2,n,3},\quad 
\tilde{\partial}_{n+1,2}(ab|\beta_{n+1})=-\tilde{\partial}_{n+1,2}(ba|\beta_{n+1})=g_{3,n,3}, 
\\
& \tilde{\partial}_{n+1,2}(bc|\beta_{n+1})=g_{4,n,3},\quad 
 \tilde{\partial}_{n+1,2}(ac|\beta_{n+1})=g_{3,n,3}+g_{4,n,3}, 
 \\
& \tilde{\partial}_{n+1,2}(ab|\gamma_{n+1})=-\tilde{\partial}_{n+1,2}(ba|\gamma_{n+1})=g_{6,n,3}-g_{5,n,3}, \quad  
\tilde{\partial}_{n+1,2}(bc|\gamma_{n+1})=-g_{6,n,3},
\\
& \tilde{\partial}_{n+1,2}(ac|\gamma_{n+1})=-g_{5,n,3}, \quad
\tilde{\partial}_{n+1,2}(ab|\alpha_n\beta)=g_{1,n,3}+g_{11,n,3},
\\
& \tilde{\partial}_{n+1,2}(bc|\alpha_n\beta)=g_{6,n,3}+g_{7,n,3}-g_{11,n,3},\quad
\tilde{\partial}_{n+1,2}(ba|\alpha_n\beta)=g_{12,n,3}-g_{1,n,3},
\\
&\tilde{\partial}_{n+1,2}(ac|\alpha_n\beta)=g_{1,n,3}+g_{6,n,3}+g_{7,n,3},\quad 
 \tilde{\partial}_{n+1,2}(ab|\alpha_n\gamma)=-g_{2,n,3}+g_{4,n,3}+g_{7,n,3}-g_{8,n,3},
\\
&\tilde{\partial}_{n+1,2}(bc|\alpha_n\gamma)=-g_{4,n,3}+g_{9,n,3}+g_{12,n,3},\quad 
\tilde{\partial}_{n+1,2}(ba|\alpha_n\gamma)=g_{2,n,3}-g_{4,n,3}-g_{7,n,3}+g_{8,n,3},
\\
&\tilde{\partial}_{n+1,2}(ac|\alpha_n\gamma)=-g_{2,n,3}+g_{7,n,3}-g_{8,n,3}+g_{9,n,3}-g_{11,n,3}, 
\\
 & \tilde{\partial}_{n+1,2}(ab|\alpha_{n-1}\beta_2)=-g_{3,n,3}+g_{10,n,3}-g_{12,n,3},\quad 
\tilde{\partial}_{n+1,2}(bc|\alpha_{n-1}\beta_2)=g_{3,n,3}+g_{5,n,3}+g_{8,n,3}, 
\\ 
 & \tilde{\partial}_{n+1,2}(ba|\alpha_{n-1}\beta_2)=g_{3,n,3}-g_{10,n,3}-g_{11,n,3},
 \\
& \tilde{\partial}_{n+1,2}(ac|\alpha_{n-1}\beta_2)=g_{5,n,3}+g_{8,n,3}+g_{10,n,3}+g_{11,n,3}, 
\end{align*}
the elements in $\tilde{\mathfrak{B}}_{n,3}$ span the space $\tilde{B}_{n,3}$. 
By Fact \ref{indep}, the elements in $\tilde{\mathfrak{B}}_{n,3}$ are linearly independent, so $\tilde{\mathfrak{B}}_{n,3}$ is a basis of $\tilde{B}_{n,3}$, as claimed. 
Hence, the dimension of $\tilde{B}_{n,3}$ is given by 
\begin{equation}\label{dimbtilde3}
	\begin{split}
		\operatorname{dim} \tilde{B}_{n,3} =
		\begin{cases}
			3,  &  \text{if $n=0$}, 
			\\
			8,  &  \text{if $n=1$}, 
			\\
			12, &  \text{if $n\geqslant 2$}.
			\end{cases} 
	\end{split}
\end{equation}

Suppose now $m=4$. Table \ref{tpar1} tells us that
the usual basis of $\tilde{K}_{0,4}$
is a basis of $\tilde{B}_{0,4}$.
If $n\in\NN$ is odd, Tables \ref{tparevenalphan} - \ref{tparevenalphabeta2} show that $\tilde{B}_{n,4}$ is spanned by 
$\tilde{\partial}_{n+1,3}(aba|\alpha_{n}\beta)$, 
$\tilde{\partial}_{n+1,3}(abc|\alpha_{n}\beta)$
and 
$\tilde{\partial}_{n+1,3}(bac|\alpha_{n}\beta)$. 
Since 
\[     \tilde{\partial}_{n+1,3}(bac|\alpha_{n}\beta)=\tilde{\partial}_{n+1,3}(aba|\alpha_{n}\beta)-\tilde{\partial}_{n+1,3}(abc|\alpha_{n}\beta),     \] 
and the elements $\tilde{\partial}_{n+1,3}(aba|\alpha_{n}\beta)$ and 
$\tilde{\partial}_{n+1,3}(abc|\alpha_{n}\beta)$ are linearly independent, we define a basis of $\tilde{B}_{n,4}$ by  
\begin{align*} 
\tilde{\mathfrak{B}}_{n,4}=\big\{
	&\tilde{\partial}_{n+1,3}(aba|\alpha_{n}\beta)=\suline{abac}|(\alpha_n+\alpha_{n-2}\beta_2-\suline{\beta_n}-\alpha_{n-1}\beta), \\
	& \tilde{\partial}_{n+1,3}(aba|\alpha_{n}\beta)=\suline{abac}|(\alpha_n+\alpha_{n-2}\beta_2-\suline{\gamma_n}-\alpha_{n-1}\gamma) \big\}.
\end{align*}
If $n=2$, by Tables \ref{tparoddalphan} - \ref{tparoddalphabeta2}, we define a basis of $\tilde{B}_{2,4}$ by 
\[
\tilde{\mathfrak{B}}_{2,4}=\big\{ \suline{abac|\alpha_2}, \suline{abac|\beta_2},
\suline{abac|\gamma_2}, \suline{abac}|(\alpha\beta+\suline{\alpha\gamma}) \big\}.
\]
If $n\geqslant 4$ is even,  
we note that $abac|\alpha_{n-2}\beta_2=(1/2)\tilde{\partial}_{n+1,3}(abc|\alpha_n\beta-bac|\alpha_{n+1}+aba|\gamma_{n+1})$. So we can define a basis of $\tilde{B}_{n,4}$ by 
\[
\tilde{\mathfrak{B}}_{n,4}=\big\{ \suline{abac|\alpha_n}, 
\suline{abac|\beta_n}, 
\suline{abac|\gamma_n},
\suline{abac}|(\alpha_{n-1}\beta+\suline{\alpha_{n-1}\gamma}), 
\suline{abac|\alpha_{n-2}\beta_2}
\big\}.
\]
In conclusion, the dimension of $\tilde{B}_{n,4}$ is given by 
\begin{equation}\label{dimbtilde4}
	\begin{split}
		\operatorname{dim} \tilde{B}_{n,4} =
		\begin{cases}
			1,  & \text{if $n=0$}, 
			\\
			2,  & \text{if $n\in\NN$ is odd}, 
			\\
			4 , & \text{if $n= 2$}, 
			\\
			5 , & \text{if $n\geqslant 4$ is even}.
			\end{cases} 
	\end{split}
\end{equation}

\subsubsection{\texorpdfstring{Computation of $\mathfrak{B}_{n,m}$}{Computation of Bnm}}
\label{subsubsection:boundaries-homology-2}

Recall that $B_{n,m}=\Img(\partial_{n+1,m-1})$ and $\partial_{n,m}:\tilde{P}_{n,m}\to \tilde{P}_{n-1,m+1}$. 
Since $\partial_{n,m}=\tilde{\partial}_{n,m}$ for either $m=-1,0,1$ and $n\in \NN$, or $m=2,3$ and $n=1,2,3$, 
we get $B_{n,m}=\tilde{B}_{n,m}$ for either $m=0,1,2$ and $n\in \NN_0$, or $m=3,4$ and $n=0,1,2$. 
So we define a basis of $B_{n,m}$ by 
$\mathfrak{B}_{n,m}=\mathfrak{\tilde{B}}_{n,m}$
for either $m=0,1,2$ and $n\in \NN_0$, or $m=3,4$ and $n=0,1,2$.

Suppose $m=3$. 
Consider $\partial_{n+1,2}:\tilde{K}_{n+1,2}\oplus \omega_1\tilde{K}_{n-3,0}\to \tilde{K}_{n,3}\oplus \omega_1 \tilde{K}_{n-4,1}$. 
If $n=3$, the element $ 2bac|\alpha_3+2abc|\beta_3-2aba|\gamma_3-abc|\alpha_2\beta+aba|\alpha_2\gamma-bac|\alpha\beta_2=(1/6)\partial_{4,2}(\omega_1 1|\epsilon^!)$ is not in the space $\tilde{B}_{3,3}$.
So we define a basis of $B_{3,3}$ by  
\[\mathfrak{B}_{3,3}=\tilde{\mathfrak{B}}_{3,3}\cup \big\{ 2bac|\alpha_3+2abc|\beta_3-2aba|\gamma_3-abc|\alpha_2\beta+aba|\alpha_2\gamma-bac|\alpha\beta_2 \big\}.\]
If $n=5$, we define the set 
\begin{align*}
	\mathfrak{B}_{5,3}=
	 \tilde{\mathfrak{B}}_{5,3}\cup 
	 \big\{ 
	 & 4bac|\alpha_5+4abc|\beta_5-4abc|\gamma_5-abc|\alpha_{4}\beta+aba|\alpha_{4}\gamma-bac|\alpha_{3}\beta_2+\suline{\omega_1 a|\alpha}
	\\
	&
	=(1/2)\partial_{6,2}(\omega_1 1|\alpha_{2}),
	\\
	&  4bac|\alpha_5+4abc|\beta_5-4abc|\gamma_5-abc|\alpha_{4}\beta+aba|\alpha_{4}\gamma-bac|\alpha_{3}\beta_2+ \suline{\omega_1 b|\beta }
	\\
	& 
	=(1/2)\partial_{6,2}(\omega_1 1|\beta_{2}),
	\\
	& 
	4bac|\alpha_5+4abc|\beta_5-4abc|\gamma_5-abc|\alpha_{4}\beta+aba|\alpha_{4}\gamma-bac|\alpha_{3}\beta_2+ \suline{\omega_1 c|\gamma}
	\\
	&
	=(1/2)\partial_{6,2}(\omega_1 1|\gamma_{2}),
	\\
	& 
	\suline{\omega_1}[(\suline{a}+c)|\suline{\beta}+(b+a)|\gamma+(c+b)|\alpha]
	= \partial_{6,2}(\omega_1 1|\alpha\beta)=\partial_{6,2}(\omega_1 1|\alpha\gamma) \big\}.
\end{align*}
If $n\geqslant 7$ is odd, we define the set 
\begin{align*}
	\mathfrak{B}_{n,3}=
	& \tilde{\mathfrak{B}}_{n,3}\cup 
	\\
	& \big\{4bac|\alpha_n+4abc|\beta_n-4abc|\gamma_n-abc|\alpha_{n-1}\beta+aba|\alpha_{n-1}\gamma-bac|\alpha_{n-2}\beta_2+\suline{\omega_1 a|\alpha_{n-4}}
	\\
	& \phantom{ \big\{  } 
	=(1/2)\partial_{n+1,2}(\omega_1 1|\alpha_{n-3}),
	\\
	& \phantom{ \big\{  }  4bac|\alpha_n+4abc|\beta_n-4abc|\gamma_n-abc|\alpha_{n-1}\beta+aba|\alpha_{n-1}\gamma-bac|\alpha_{n-2}\beta_2+ \suline{\omega_1 b|\beta_{n-4} }
	\\
	& \phantom{ \big\{  } 
	=(1/2)\partial_{n+1,2}(\omega_1 1|\beta_{n-3}),
	\\
	& \phantom{ \big\{ } 
	4bac|\alpha_n+4abc|\beta_n-4abc|\gamma_n-abc|\alpha_{n-1}\beta+aba|\alpha_{n-1}\gamma-bac|\alpha_{n-2}\beta_2+ \suline{\omega_1 c|\gamma_{n-4}}
	\\
	& \phantom{ \big\{ } 
	=(1/2)\partial_{n+1,2}(\omega_1 1|\gamma_{n-3}),
	\\
	& \phantom{ \big\{ } 
	\suline{\omega_1}[(a+\suline{c})|(\beta_{n-4}+\suline{\alpha_{n-5}\beta})+(b+a)|(\gamma_{n-4}+\alpha_{n-5}\gamma)+(c+b)|(\alpha_{n-4}+\alpha_{n-6}\beta_2)]
	\\
	& \phantom{ \big\{ } 
	= \partial_{n+1,2}(\omega_1 1|\alpha_{n-4}\beta)=\partial_{n+1,2}(\omega_1 1|\alpha_{n-4}\gamma),
	\\
	& \phantom{ \big\{ }
	3(n-5)(bac|\alpha_n+abc|\beta_n-aba|\gamma_n)+\suline{\omega_1}(\suline{a|\alpha_{n-6}\beta_2}+b|\alpha_{n-5}\beta+c|\alpha_{n-5}\gamma) \\
	& \phantom{ \big\{ }
	=\partial_{n+1,2}(\omega_1 1|\alpha_{n-5}\beta_2)
	\big\}.
\end{align*}
By Fact \ref{indep}, the elements in $\mathfrak{B}_{n,3}$ are linearly independent, so $\mathfrak{B}_{n,3}$ is a basis of $B_{n,3}$ for $n\geqslant 5$ with $n$ odd. 
If $n\geqslant 4$ is even, then $\tilde{f}_{n-3}(\tilde{K}_{n-3,0})=0$ since $\tilde{f}$ vanishes on the elements given by \eqref{eq:fn-homology}. Hence, $B_{n,3}=\tilde{B}_{n,3}\oplus\omega_1 \tilde{B}_{n-4,1}$. We define a basis of $B_{4,3}$ by 
$\mathfrak{B}_{4,3}=\tilde{\mathfrak{B}}_{4,3}$, 
and we define a basis of $B_{n,3}$ by 
\[ 
	\mathfrak{B}_{n,3}=\tilde{\mathfrak{B}}_{n,3}\cup \big\{\suline{\omega_1}(\suline{c}-a)|(\alpha_{n-5}\beta-\suline{\alpha_{n-5}\gamma}),\omega_1(a-\suline{b})|(\suline{\alpha_{n-5}\beta}-\alpha_{n-5}\gamma)  \big\}
\] 
for $n\geqslant 6$ with $n$ even.
The dimension of $B_{n,3}$ is then given by 
\begin{equation}\label{dimb3}
	\begin{split}
		\operatorname{dim} B_{n,3} =
		\begin{cases}
			3,  &  \text{if $n=0$}, 
			\\
			8, &  \text{if $n=1$}, 
			\\
			12,  & \text{if $n= 2,4$}, 
			\\
			13,  & \text{if $n=3$}, 
			\\
			16,  & \text{if $n=5$}, 
			\\
			14, &  \text{if $n\geqslant 6$ is even},
			\\
			17,  &  \text{if $n\geqslant 7$ is odd}.
			\end{cases}
	\end{split}
\end{equation}

Suppose $m=4$. 
Consider $\partial_{n+1,3}:\tilde{K}_{n+1,3}\oplus \omega_1\tilde{K}_{n-3,1}\to \tilde{K}_{n,4}\oplus \omega_1 \tilde{K}_{n-4,2}\oplus \omega_2 \tilde{K}_{n-8,0}$.
If $n=3$, then $B_{3,4}=\tilde{B}_{3,4}$ since $\tilde{f}_0(\tilde{K}_{0,1})=0$ by the second line of \eqref{eq:f0-homology}.
So, we define 
$\mathfrak{B}_{3,4}=\tilde{\mathfrak{B}}_{3,4}$. 
If $n\geqslant 4$ is even, since 
$\tilde{f}_{n-3}(\tilde{K}_{n-3,1})\subseteq \tilde{B}_{n,4} $, we have $B_{n,4}=\tilde{B}_{n,4}\oplus \omega_1 B_{n-4,2}=\tilde{B}_{n,4}\oplus \omega_1 \tilde{B}_{n-4,2}$.
If $n\geqslant 5$ is odd, since $\tilde{f}_{n-3}(\tilde{K}_{n-3,1})=0$ by the last identity of Subsection \ref{subsection:explicit-description-diff-homology}, we have $B_{n,4}=\tilde{B}_{n,4}\oplus \omega_1 \tilde{B}_{n-4,2}$.
Hence, for $n\geqslant 4$, we define a basis of $B_{n,4}$ by 
$\mathfrak{B}_{n,4}=\tilde{\mathfrak{B}}_{n,4}\cup \omega_1 \tilde{\mathfrak{B}}_{n-4,2}$. 
The dimension of $B_{n,4}$ is then given by 
\begin{equation}\label{dimb4}
	\begin{split}
		\operatorname{dim} B_{n,4} =
		\begin{cases}
			1,  & \text{if $n=0$}, 
			\\
			2,  & \text{if $n=1,3$},
			\\
			4,  & \text{if $n=2$}, 
			\\
			7,  & \text{if $n=4$}, 
			\\
			9,  & \text{if $n=5$}, 
			\\
			15, & \text{if $n=6$}, 
			\\
			14, & \text{if $n\geqslant 7$ is odd},
			\\
			17, & \text{if $n\geqslant 8$ is even}.
			\end{cases}
	\end{split}
\end{equation}

\subsection{Computation of the cycles}
\label{subsection:com cycles}

As one can remark rather easily, from the computations in the previous subsection we can already deduce the dimensions of the homogeneous components of the spaces of cycles and thus of the Hochschild homology groups. 
However, since having specific representatives of bases of homology classes is relevant for other computations involving the Hochschild homology groups, we will proceed to do so.
More precisely, in this subsection, we will explicitly construct bases $\tilde{\mathfrak{D}}_{n,m}$ and $\mathfrak{D}_{n,m}$ of the $\Bbbk$-vector spaces $\tilde{D}_{n,m}=\Ker(\tilde{\partial}_{n,m})$ and $D_{n,m}=\Ker(\partial_{n,m})$ for $m\in \llbracket 0,4\rrbracket$ and $n\in\NN_0$ respectively, defined before Proposition \ref{bdh}.

\subsubsection{\texorpdfstring{Computation of $\tilde{\mathfrak{D}}_{n,m}$}{Computation of Dnm}}
\label{subsubsection:cycles-homology-1}

Recall that $\tilde{D}_{n,m}=\Ker(\tilde{\partial}_{n,m})$ and $\tilde{\partial}_{n,m}:\tilde{K}_{n,m}=A_m\otimes (A^!_{-n})^*\to \tilde{K}_{n-1,m+1}=A_{m+1}\otimes (A^!_{-(n-1)})^*$ was defined in Subsection \ref{subsection:rds}. 
Since $\tilde{K}_{n,m}/\tilde{D}_{n,m}\cong \tilde{B}_{n-1,m+1}$, we see that 
\begin{equation}\label{dimrelationtilded}
	\begin{split}
		\operatorname{dim}\tilde{D}_{n,m}=\operatorname{dim} \tilde{K}_{n,m}-\operatorname{dim} \tilde{B}_{n-1,m+1}. 
	\end{split}
\end{equation}
Hence, from the dimension of $\tilde{B}_{n-1,m+1}$ computed in Subsubsection \ref{subsubsection:boundaries-homology-1} as well as the dimension of $\tilde{K}_{n,m}$ (see the last paragraph of Subsection \ref{subsection:rds}), we deduce the value of the dimension of $\tilde{D}_{n,m}$. 
We will present them explicitly in the computations below. 

For every $(n,m)\in \NN_0\times \llbracket 0,4 \rrbracket $, we are going to provide a set $\tilde{\mathfrak{D}}_{n,m}\subseteq \tilde{D}_{n,m}$ such that $ \# \tilde{\mathfrak{D}}_{n,m}=\operatorname{dim} \tilde{D}_{n,m}$ and the elements in $\tilde{\mathfrak{D}}_{n,m}$ are linearly independent. 
As a consequence, $\tilde{\mathfrak{D}}_{n,m}$ is a basis of $\tilde{D}_{n,m}$. 
If $\tilde{D}_{n,m}=\tilde{K}_{n,m}$, we pick the usual basis of $\tilde{K}_{n,m}$, defined at the end of Subsection \ref{subsection:rds}.
We leave to the reader the easy verification in each case that the set $\tilde{\mathfrak{D}}_{n,m}$ satisfies these conditions.

Obviously, 
$ \tilde{D}_{0,m}=\tilde{K}_{0,m} $
for $m\in \llbracket 0,4 \rrbracket $.
Then we define the set $\tilde{\mathfrak{D}}_{0,m}$ by the usual basis of $\tilde{K}_{0,m}$.

Suppose $m=0$. By \eqref{dimbtilde1}, the dimension of $\tilde{D}_{n,0}$ is given by 
\begin{equation}\label{dimdtilde0}
	\begin{split}
		\operatorname{dim} \tilde{D}_{n,0} =
		\begin{cases}
			3,  &  \text{if $n=1$},
			\\
			1, &  \text{if $n\in\NN_0$ is even},
			\\
			4,  &  \text{if $n\geqslant 3$ is odd}.
			\end{cases}
	\end{split}
\end{equation}
If $n=1$, then 
$\tilde{D}_{1,0}=\tilde{K}_{1,0}$
since $\operatorname{dim} \tilde{D}_{1,0}=3=\operatorname{dim} \tilde{K}_{1,0}$. 
If $n\geqslant 3$ is odd, we define the set  
\[ 
\tilde{\mathfrak{D}}_{n,0}=\big\{ \suline{1|\alpha_n},\suline{1|\beta_n},\suline{1|\gamma_n},\suline{1}|(\suline{\alpha_{n-1}\beta}+\alpha_{n-1}\gamma+\alpha_{n-2}\beta_2) \big\}.    
\]  
If $n\geqslant 2$ is even, we define the set  
\[  \tilde{\mathfrak{D}}_{n,0}=\big\{ \suline{1}|(\suline{\alpha_{n-1}\beta}-\alpha_{n-1}\gamma) \big\}.\]

Suppose $m=1$. By \eqref{dimbtilde2}, the dimension of $\tilde{D}_{n,1}$ is given by 
\begin{equation}\label{dimdtilde1}
	\begin{split}
		\operatorname{dim} \tilde{D}_{n,1} =
		\begin{cases}
				3,  &  \text{if $n=0$},
				\\
			7,  &  \text{if $n=1$},
			\\
			8,  &  \text{if $n=2,3$},
			\\
			6,  & \text{if $n\geqslant 4$}.
			\end{cases}
	\end{split}
\end{equation}
We define the sets  
\[ 
\tilde{\mathfrak{D}}_{1,1}=\big\{  \suline{a|\alpha},\suline{b|\beta},\suline{c|\gamma}, \suline{a|\beta}+c|\alpha-c|\beta,\suline{ a|\gamma}+c|\alpha, \suline{b|\alpha}-c|\alpha+c|\beta, \suline{b|\gamma}+c|\beta \big\}\subseteq \tilde{D}_{1,1}, 
\] 
\begin{equation}
\begin{split}
    \tilde{\mathfrak{D}}_{2,1}=\big\{
		& \suline{a|\alpha_2},\suline{b|\beta_2},\suline{c|\gamma_2}, (\suline{c}-a)|(\alpha\beta-\suline{\alpha\gamma}), (a-\suline{b})|(\suline{\alpha\beta}-\alpha\gamma), (\suline{a}+c)|\suline{\beta_2}+a|\alpha\gamma+c|\alpha\beta, \\
	& (\suline{a}+b)|\suline{\gamma_2}+a|\alpha\beta+b|\alpha\gamma, (\suline{b}+c)|\suline{\alpha_2}+b|\alpha\gamma+c|\alpha\beta   \big\}\subseteq \tilde{D}_{2,1},
	\end{split}
\end{equation}
and 
\begin{align*}
	\tilde{\mathfrak{D}}_{3,1}=\big\{
		& \suline{a|\alpha_3}, \suline{b|\beta_3},\suline{c|\gamma_3}, \suline{b|\alpha_2\beta}+c|\alpha_2\gamma+a|\alpha\beta_2, \suline{a}|(\beta_3+\suline{\alpha_2\beta})+b|(\gamma_3+\alpha_2\gamma)+c|(\alpha_3+\alpha\beta_2),\\
	& -a|(\beta_3+\gamma_3)+b|(2\alpha_3+\gamma_3-\alpha_2\gamma)+\suline{c}|(\suline{\alpha_2\beta}-\alpha\beta_2-2\alpha_3),\\
	&  \suline{a}|(\suline{\alpha_2\gamma}-\beta_3)+b|(\alpha_2\gamma-\alpha_3)+2c|(\alpha_3+\beta_3),\\
	&  2a|(\beta_3+\gamma_3)+\suline{b}|(\suline{\alpha\beta_2}-\gamma_3)+c|(\alpha\beta_2-\beta_3)    \big\}  \subseteq \tilde{D}_{3,1}.	
\end{align*}
Moreover, if $n\geqslant 4$ is even, we define 
\begin{align*}
	\tilde{\mathfrak{D}}_{n,1}=\big\{
		& \suline{a|\alpha_n},\suline{b|\beta_n},\suline{c|\gamma_n}, (\suline{c}-a)|(\alpha_{n-1}\beta-\suline{\alpha_{n-1}\gamma}), (a-\suline{b})|(\suline{\alpha_{n-1}\beta}-\alpha_{n-1}\gamma), \\
	& (\suline{a}+b+c)|(\alpha_{n-1}\beta+\alpha_{n-1}\gamma+\alpha_{n-2}\beta_2+\alpha_n+\beta_n+\suline{\gamma_n}) \big\}\subseteq \tilde{D}_{n,1},
\end{align*}
and if $n\geqslant 5$ is odd, we set 
\begin{align*}
	\tilde{\mathfrak{D}}_{n,1}=\big\{
		& \suline{a|\alpha_n},
		\suline{b|\beta_n},
		\suline{c|\gamma_n}, \suline{b|\alpha_{n-1}\beta}+c|\alpha_{n-1}\gamma+a|\alpha_{n-2}\beta_2, \\
	& \suline{a}|(\beta_n+\suline{\alpha_{n-1}\beta})+b|(\gamma_{n}+\alpha_{n-1}\gamma)+c|(\alpha_n+\alpha_{n-2}\beta_2), \\
	&  \suline{c}|(\beta_n+\suline{\alpha_{n-1}\beta})+a|(\gamma_n+\alpha_{n-1}\gamma)+b|(\alpha_n+\alpha_{n-2}\beta_2)\big\}\subseteq \tilde{D}_{n,1}.	
\end{align*}

Suppose $m=2$. By \eqref{dimbtilde3}, the dimension of $\tilde{D}_{n,2}$ is given by 
\begin{equation}\label{dimdtilde2}
	\begin{split}
		\operatorname{dim} \tilde{D}_{n,2} =
		\begin{cases}
			4,  & \text{if $n=0$},
		    \\
			9,  & \text{if $n=1$},
			\\
			12, & \text{if $n\geqslant 2$}.
			\end{cases} 
	\end{split}
\end{equation}
We define the sets 
%
\begin{align*}
	\tilde{\mathfrak{D}}_{1,2}=\big\{
		& (\suline{ab}+ba)|\suline{\gamma}, (\suline{bc}-ba-ac)|\suline{\gamma},
		\suline{ac}|(\suline{\alpha}+\gamma),
		(\suline{ba}+ac)|(\suline{\beta}+\gamma), 
		\suline{bc|\alpha}-ac|\beta+ab|\gamma, 
		\\
		& (\suline{ab}+ba)|\suline{\beta}, 
		 (\suline{bc}-ba-ac)|\suline{\beta},
		 (bc-\suline{ba}-ac)|\suline{\alpha}, 
		 (\suline{ab}+ba)|\suline{\alpha}
	\big\} \subseteq \tilde{D}_{1,2}, 
\end{align*}
and
\begin{align*}
	\tilde{\mathfrak{D}}_{2,2}=\big\{
		& (ba-\suline{ab})|\suline{\alpha_2}, (\suline{bc}+ba+ac)|\suline{\alpha_2}, (ba-\suline{ab})|\suline{\beta_2}, (\suline{bc}+ba+ac)|\suline{\beta_2}, (ba-\suline{ab})|\suline{\gamma_2},\\
	& (\suline{bc}+ba+ac)|\suline{\gamma_2}, \suline{ab}|(\suline{\alpha\beta}-2\alpha_2-\beta_2)+bc|(\beta_2-\alpha_2), ab|(\beta_2-\gamma_2)+\suline{bc}|(\suline{\alpha\beta}-\beta_2-2\gamma_2),\\
	&  \suline{ba|\alpha\beta}-ab|\alpha\gamma,
	ab|\alpha\beta-\suline{ba|\alpha\gamma},
	\suline{ac|\alpha\beta}+(ab+bc)|\alpha\gamma,
	(ab+bc)|\alpha\beta+\suline{ac|\alpha\gamma} \big\} \subseteq \tilde{D}_{2,2}.	
\end{align*}
Moreover, if $n\geqslant$ 3 is odd, we define 
\begin{align*}
	\tilde{\mathfrak{D}}_{n,2}=\big\{
		& (\suline{ab}+ba)|\suline{\alpha_n}, (\suline{bc}-ba-ac)|\suline{\alpha_n}, (\suline{ab}+ba)|\suline{\beta_n}, (\suline{bc}-ba-ac)|\suline{\beta_n}, (\suline{ab}+ba)|\suline{\gamma_n}, \\
	& (\suline{bc}-ba-ac)|\suline{\gamma_n},
	(\suline{ab}+ba)|(\suline{\alpha_{n-1}\beta}+\alpha_{n-2}\beta_2), (\suline{ab}+ba)|\suline{\alpha_{n-1}\gamma}, (\suline{bc}-ba-ac)|\suline{\alpha_{n-2}\beta_2},\\
	&  (ab+ba)|\alpha_{n-1}\beta-(\suline{bc}-ba-ac)|\suline{\alpha_{n-1}\gamma},\\
	&  bc|(\alpha_n+\alpha_{n-2}\beta_2)-\suline{ac}|(\beta_n+\suline{\alpha_{n-1}\beta})+ab|(\gamma_n+\alpha_{n-1}\gamma),\\ 
	& (\suline{bc}-ba-ac)|\suline{\alpha_{n-1}\beta}-(ab+ba)|\alpha_{n-2}\beta_2
	\big\} \subseteq \tilde{D}_{n,2},
\end{align*}
and if $n\geqslant 4$ is even, we set
\begin{align*}
	\tilde{\mathfrak{D}}_{n,2}& =\big\{ 
		(ba-\suline{ab})|\suline{\alpha_n}, (\suline{bc}+ba+ac)|\suline{\alpha_n}, (ba-\suline{ab})|\suline{\beta_n}, (\suline{bc}+ba+ac)|\suline{\beta_n},
		(ba-\suline{ab})|\suline{\gamma_n},\\
	& \phantom{ =\big\{ \, } (\suline{bc}+ba+ac)|\suline{\gamma_n}, (ba-\suline{ab})|\suline{\alpha_{n-2}\beta_2}, (\suline{bc}+ba+ac)|\suline{\alpha_{n-2}\beta_2}, 
	\suline{ba|\alpha_{n-1}\beta}-ab|\alpha_{n-1}\gamma,\\
	& \phantom{ =\big\{ \, }  \suline{ab|\alpha_{n-1}\beta}-ba|\alpha_{n-1}\gamma, 
	\suline{ac|\alpha_{n-1}\beta}+(ab+bc)|\alpha_{n-1}\gamma, (ab+\suline{bc})|\suline{\alpha_{n-1}\beta}+ac|\alpha_{n-1}\gamma  \big\} \\
	& \subseteq \tilde{D}_{n,2}.
\end{align*}

Suppose $m=3$. By \eqref{dimbtilde4}, the dimension of $\tilde{D}_{n,3}$ is given by 
\begin{equation}\label{dimdtilde3}
	\begin{split}
		\operatorname{dim} \tilde{D}_{n,3} =
		\begin{cases}
			3,  & \text{if $n=0$},
			\\
			8,  & \text{if $n=1$},
			\\
			13, & \text{if $n=2$ or $n\geqslant 5$ is odd},
			\\
			14, & \text{if $n=3$},
			\\
			16, & \text{if $n\geqslant 4$ is even}.
			\end{cases} 
	\end{split}
\end{equation}
We define the sets 
\[
	\tilde{\mathfrak{D}}_{1,3}=\big\{
	\suline{aba|\alpha},
	\suline{abc|\alpha},
	\suline{aba|\beta},
	\suline{bac|\beta},
	\suline{abc|\gamma},
	\suline{bac|\gamma},
	\suline{bac|\alpha}+aba|\gamma, \suline{abc|\beta}+aba|\gamma   \big\} \subseteq \tilde{D}_{1,3}, 
\] 
and
\begin{align*}
	\tilde{\mathfrak{D}}_{3,3}=\big\{
		& \suline{aba|\alpha_3}, 
		\suline{abc|\alpha_3},
		\suline{aba|\beta_3},
		\suline{bac|\beta_3},
		\suline{abc|\gamma_3},
		\suline{bac|\gamma_3}, aba|\alpha_2\beta+\suline{bac|\alpha_2\beta},
		aba|\alpha_2\beta+\suline{abc|\alpha_2\gamma}, \\
	& aba|\alpha_2\beta+\suline{bac|\alpha_2\gamma},
	aba|\alpha_2\beta-\suline{aba|\alpha\beta_2}, aba|\alpha_2\beta+\suline{abc|\alpha\beta_2}, \suline{abc|\alpha_2\beta}-bac|\alpha_3+aba|\gamma_3,\\
	& \suline{aba|\alpha_2\gamma}+bac|\alpha_3+abc|\beta_3, \suline{bac|\alpha\beta_2}-abc|\beta_3+aba|\gamma_3  \big\} \subseteq \tilde{D}_{3,3}.
\end{align*}
Moreover, if $n\geqslant 2$ is even, let 
\begin{align*}
	\mathcal{G}_{n,3} =\big\{
		& \suline{aba|\alpha_n}, \suline{abc|\alpha_n},
		\suline{bac|\alpha_n},
		\suline{aba|\beta_n},
		\suline{abc|\beta_n},
		\suline{bac|\beta_n},
		\suline{aba|\gamma_n},
		\suline{abc|\gamma_n},
		\suline{bac|\gamma_n},\\
	 & \suline{aba|\alpha_{n-1}\beta}+(abc+bac)|\alpha_{n-1}\gamma ,
	 \suline{abc}|(\suline{\alpha_{n-1}\beta}+\alpha_{n-1}\gamma), 
	\suline{ bac}|(\suline{\alpha_{n-1}\beta}+\alpha_{n-1}\gamma), \\
	 & \suline{aba}|(\alpha_{n-1}\beta+\suline{\alpha_{n-1}\gamma})
	 \big\}.
\end{align*}
Then we define the set 
$\tilde{\mathfrak{D}}_{2,3}=\mathcal{G}_{2,3}$, 
and 
\[
	\tilde{\mathfrak{D}}_{n,3}= 
	\mathcal{G}_{n,3}\cup \big\{ \suline{aba|\alpha_{n-2}\beta_2},  \suline{abc|\alpha_{n-2}\beta_2}, \suline{bac|\alpha_{n-2}\beta_2 }  \big\}\subseteq \tilde{D}_{n,3}
\] 
for $n\geqslant 4$ with $n$ even.
If $n\geqslant 5$ is odd, then we define
\begin{align*}
	\tilde{\mathfrak{D}}_{n,3}=\big\{
		& \suline{aba|\alpha_n},
		\suline{abc|\alpha_n},
		\suline{aba|\beta_n},
		\suline{bac|\beta_n},
		\suline{abc|\gamma_n},
		\suline{bac|\gamma_n}, aba|\alpha_{n-1}\beta+\suline{bac|\alpha_{n-1}\beta}, \\
	& aba|\alpha_{n-1}\beta+\suline{abc|\alpha_{n-1}\gamma},
	aba|\alpha_{n-1}\beta+\suline{bac|\alpha_{n-1}\gamma},
	aba|\alpha_{n-1}\beta-\suline{aba|\alpha_{n-2}\beta_2}, \\
	&  aba|\alpha_{n-1}\beta+\suline{abc|\alpha_{n-2}\beta_2}, abc|(\beta_n+\alpha_{n-1}\beta)+\suline{aba}|(\gamma_n+\suline{\alpha_{n-1}\gamma}), \\
	&  \suline{bac}|(\alpha_n+\suline{\alpha_{n-2}\beta_2})-abc|(\beta_n+\alpha_{n-1}\beta)  \big\} \subseteq \tilde{D}_{n,3}.
\end{align*}

Finally, if $m=4$, we immediately see that $\tilde{D}_{n,4}=\tilde{K}_{n,4}$. So we define the set $\tilde{\mathfrak{D}}_{n,4}$ by the usual basis of $\tilde{K}_{n,4}$. The dimension of $\tilde{D}_{n,4}$ is given by 
\begin{equation}\label{dimdtilde4}
	\begin{split}
		\operatorname{dim} \tilde{D}_{n,4} =
		\begin{cases}
			1,  &  \text{if $n=0$},
			\\
			3,  &  \text{if $n=1$},
			\\
			5, &  \text{if $n=2$},
			\\
			6, & \text{if $n\geqslant 3$}.
			\end{cases} 
	\end{split}
\end{equation}

\subsubsection{\texorpdfstring{Computation of $\mathfrak{D}_{n,m}$}{Computation of Dnm}} 
\label{subsubsection:cycles-homology-2}

Recall that $D_{n,m}=\Ker(\partial_{n,m})$ and $\partial_{n,m}:\tilde{P}_{n,m}\to \tilde{P}_{n-1,m+1}$. 
The isomorphism $\tilde{P}_{n,m}/D_{n,m}\cong B_{n-1,m+1}$ tells us that 
\begin{equation}\label{dimrelationd}
	\begin{split}
		\operatorname{dim} D_{n,m}=\operatorname{dim} \tilde{P}_{n,m}-\operatorname{dim} B_{n-1,m+1}.
	\end{split}
\end{equation}
Hence, from the dimension of $B_{n-1,m+1}$ computed in Subsubsection \ref{subsubsection:boundaries-homology-2} as well as the dimension of $\tilde{P}_{n,m}$ (see the last paragraph of Subsection \ref{subsection:rds}), we deduce the value of the dimension of $D_{n,m}$. 
We will present them explicitly in the computations below. 

For integers $(n,m)\in\NN_0 \times \llbracket 0,4 \rrbracket$, we are going to provide a set $\mathfrak{D}_{n,m}\subseteq D_{n,m}$ such that $ \# \mathfrak{D}_{n,m}=\operatorname{dim} D_{n,m}$ and the elements in $\mathfrak{D}_{n,m}$ are linearly independent. 
As a consequence, $\mathfrak{D}_{n,m}$ is a basis of $D_{n,m}$.
We leave to the reader the easy verification in each case that the set $\mathfrak{D}_{n,m}$ satisfies these conditions.

For either $m=0,1$ and $n\in \NN_0 $, or $m=2,3,4$ and $n=0,1,2,3$, note that $\partial_{n,m}=\tilde{\partial}_{n,m}$, then $D_{n,m}=\tilde{D}_{n,m}$. 
So we define the basis of $D_{n,m}$ by 
$\mathfrak{D}_{n,m}=\tilde{\mathfrak{D}}_{n,m}$.

Suppose $m=2$. By \eqref{dimb3}, the dimension of $D_{n,2}$ is given by 
\begin{equation}\label{dimd2}
	\begin{split}
		\operatorname{dim} D_{n,2} =
		\begin{cases}
			4,  &  \text{if $n=0$},
			\\
			9,  &  \text{if $n=1$},
			\\
			12, &  \text{if $n=2,3,4$},
			\\
			15, & \text{if $n=5$},
			\\
			13, & \text{if $n\geqslant 6$ is even}, 
			\\
			16, & \text{if $n\geqslant 7$ is odd}.
			\end{cases}
	\end{split}
\end{equation}
If $n=4$, we define the set  
$\mathfrak{D}_{4,2}=\tilde{\mathfrak{D}}_{4,2} \subseteq D_{4,2}$. 
If $n\geqslant 6$ is even, we define 
\[ \mathfrak{D}_{n,2}= \tilde{\mathfrak{D}}_{n,2}\cup \big\{ \suline{\omega_1 1}|(\suline{\alpha_{n-5}\beta}-\alpha_{n-5}\gamma) \big\}\subseteq D_{n,2}.\] 
If $n\geqslant 5$ is odd, we define 
$\mathfrak{D}_{n,2}=\tilde{\mathfrak{D}}_{n,2}\cup \omega_1 \tilde{\mathfrak{D}}_{n-4,0}\subseteq D_{n,2}$.

Suppose $m=3$. By \eqref{dimb4}, the dimension of $D_{n,3}$ is given by 
\begin{equation}\label{dimd3}
	\begin{split}
		\operatorname{dim} D_{n,3} = 
		\begin{cases}
			3,  &  \text{if $n=0$},
			\\
			8,  &  \text{if $n=1$},
			\\
			13, &  \text{if $n=2$},
			\\
			14, &  \text{if $n=3$},
			\\
			19, &  \text{if $n=4$ or $n\geqslant 9$ is odd},
			\\
			20, &  \text{if $n=5$},
			\\
			24, &  \text{if $n=6$}, 
			\\
			21, &  \text{if $n=7$}, 
			\\
			22, &  \text{if $n\geqslant 8$ is even}.
			\end{cases}
	\end{split}
\end{equation}
We define the sets 
\begin{align*}
	\mathfrak{D}_{5,3}=\tilde{\mathfrak{D}}_{5,3}\cup \big\{ 
	& 
	5bac|\alpha_5+2abc|\beta_5-5aba|\gamma_5-3abc|\alpha_4\beta+\suline{\omega_1 a|\alpha},
	\\
	& 
	5bac|\alpha_5+2abc|\beta_5-5aba|\gamma_5-3abc|\alpha_4\beta+\suline{\omega_1 b|\beta},
	\\
	& 
	5bac|\alpha_5+2abc|\beta_5-5aba|\gamma_5-3abc|\alpha_4\beta+\suline{\omega_1 c|\gamma}, \suline{\omega_1}(\suline{a|\beta}+c|\alpha-c|\beta),
	\\
	&
	\suline{\omega_1}(\suline{a|\gamma}+c|\alpha), 
	\suline{\omega_1}(\suline{b|\alpha}-c|\alpha+c|\beta), 
	\suline{\omega_1}(\suline{b|\gamma}+c|\beta)  \big\} \subseteq D_{5,3}, 	
\end{align*}
and 
\begin{align*}
	\mathfrak{D}_{7,3}=\tilde{\mathfrak{D}}_{7,3}\cup \big\{  
	& 
	5bac|\alpha_7+2abc|\beta_7-5aba|\gamma_7-3abc|\alpha_6\beta+\suline{\omega_1 a|\alpha_3},
	\\
	&
	5bac|\alpha_7+2abc|\beta_7-5aba|\gamma_7-3abc|\alpha_6\beta+\suline{\omega_1 b|\beta_3},
	\\
	&
	5bac|\alpha_7+2abc|\beta_7-5aba|\gamma_7-3abc|\alpha_6\beta+\suline{\omega_1 c|\gamma_3},
	\\
	& 
	6bac|\alpha_7+6abc|\beta_7-6aba|\gamma_7+\suline{\omega_1}(\suline{b|\alpha_2\beta}+c|\alpha_2\gamma+a|\alpha\beta_2), 
	\\
	&
	\suline{\omega_1}[\suline{a}|(\beta_3+\suline{\alpha_2\beta})+b|(\gamma_3+\alpha_2\gamma)+c|(\alpha_3+\alpha\beta_2)],
	\\
	&
	\suline{\omega_1}[-a|(\beta_3+\gamma_3)+b|(2\alpha_3+\gamma_3-\alpha_2\gamma)+\suline{c}|(\suline{\alpha_2\beta}-\alpha\beta_2-2\alpha_3)],
	\\
	&
	\suline{\omega_1}[\suline{a}|(\suline{\alpha_2\gamma}-\beta_3)+b|(\alpha_2\gamma-\alpha_3)+2c|(\alpha_3+\beta_3)],
	\\
	&
	\suline{\omega_1}[2a|(\beta_3+\gamma_3)+\suline{b}|(\suline{\alpha\beta_2}-\gamma_3)+c|(\alpha\beta_2-\beta_3)] \big\} 
	\subseteq D_{7,3}.	
\end{align*}
Moreover, if $n\geqslant 4$ is even, we define the set
$
\mathfrak{D}_{n,3}=\tilde{\mathfrak{D}}_{n,3}\cup \omega_1 \tilde{\mathfrak{D}}_{n-4,1} \subseteq D_{n,3}$, 
and if $n\geqslant 9$ is odd, we define the set
\begin{align*}
	\mathfrak{D}_{n,3}=\tilde{\mathfrak{D}}_{n,3}\cup \big\{
	&
	5bac|\alpha_n+2abc|\beta_n-5aba|\gamma_n-3abc|\alpha_{n-1}\beta+\suline{\omega_1 a|\alpha_{n-4}},
	\\
	&
	5bac|\alpha_n+2abc|\beta_n-5aba|\gamma_n-3abc|\alpha_{n-1}\beta+\suline{\omega_1 b|\beta_{n-4}},
	\\
	&
	5bac|\alpha_n+2abc|\beta_n-5aba|\gamma_n-3abc|\alpha_{n-1}\beta+\suline{\omega_1 c|\gamma_{n-4}},
	\\
	&
	3(n-5)(bac|\alpha_n+abc|\beta_n-aba|\gamma_n)+\suline{\omega_1}(\suline{b|\alpha_{n-5}\beta}+c|\alpha_{n-5}\gamma+a|\alpha_{n-6}\beta_2),
	\\
	&
	\suline{\omega_1}[\suline{a}|(\beta_{n-4}+\suline{\alpha_{n-5}\beta})+b|(\gamma_{n-4}+\alpha_{n-5}\gamma)+c|(\alpha_{n-5}+\alpha_{n-6}\beta_2)],
	\\
	&
	\suline{\omega_1}[\suline{c}|(\beta_{n-4}+\suline{\alpha_{n-5}\beta})+a|(\gamma_{n-4}+\alpha_{n-5}\gamma)+b|(\alpha_{n-4}+\alpha_{n-6}\beta_2)]   \big\}\subseteq D_{n,3}.
\end{align*}

Suppose $m=4$. 
The space $D_{n,4}$ is given by Proposition \ref{d4n}. So $\mathfrak{D}_{n,4}$ is given by the usual basis of $\tilde{K}_{n,4}$ and $\omega_1\mathfrak{D}_{n-4,2}$. 
The dimension of $D_{n,4}$ is then given by 
\begin{equation}\label{dimd4}
	\begin{split}
		\operatorname{dim} D_{n,4} =
		\begin{cases}
			1,  &  \text{if $n=0$},
			\\
			3,  &  \text{if $n=1$},
			\\
			5,  &  \text{if $n=2$},
			\\
			6,  &  \text{if $n=3$},
			\\
			10, &  \text{if $n=4$},
			\\
			15, &  \text{if $n=5$},
			\\
			18, &  \text{if $n=6,7,8$}, 
			\\
			21, &  \text{if $n=9$}, 
			\\
			19, &  \text{if $n\geqslant 10$ is even},
			\\
			22, &  \text{if $n\geqslant 11$ is odd}.
			\end{cases} 
	\end{split}
\end{equation}

\subsection{Hochschild homology}
\label{subsection:homology}
In this subsection, we will explicitly construct a subspace $H_{n,m}$ of $D_{n,m}$ such that 
$D_{n,m}= H_{n,m}\oplus B_{n,m}$ for $m,n\in \NN_0$, 
and we define $H_{n,m}=0$ for $(n,m)\in \ZZ^2\setminus \NN_0^2$. 
By Proposition \ref{bdh}, we have the following similar recursive description. 
\begin{cor}\label{cor recursive homology}
	For integers $m\geqslant 3$ and $n\in \NN_0$, we have 
\begin{equation}\label{homology recursive}
	\begin{split}
	 H_{n,m} \cong
	 \begin{cases}
	\omega_{\frac{m-3}{2}}H_{n-2m+6,3},  & \text{if $m$ is odd}, 
	\\
	\omega_{\frac{m}{2}-2}H_{n-2m+8,4}, & \text{if $m$ is even}.
	\end{cases} 
\end{split}
\end{equation}
\end{cor}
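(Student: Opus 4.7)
The plan is to derive the corollary directly from Proposition \ref{bdh}, which already furnishes the analogous recursive formulas for $B_{n,m}$ and $D_{n,m}$ as genuine equalities of subspaces of $\tilde{P}_{n,m}$, namely $D_{n,m}=\omega_{i}D_{n',m'}$ and $B_{n,m}=\omega_{i}B_{n',m'}$, where $(i,n',m')=((m-3)/2,\,n-2m+6,\,3)$ if $m$ is odd and $(i,n',m')=(m/2-2,\,n-2m+8,\,4)$ if $m$ is even. I would first fix $m\geqslant 3$ and $n\in\NN_{0}$, set these shift data, and record that the map $\omega_{i}\cdot$ is an injective internal-degree shift of graded $\Bbbk$-vector spaces that carries direct sums to direct sums.

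Next, starting from the defining decomposition $D_{n',m'}=H_{n',m'}\oplus B_{n',m'}$, applying $\omega_{i}\cdot$ yields
\[
\omega_{i}D_{n',m'}=\omega_{i}H_{n',m'}\oplus\omega_{i}B_{n',m'}.
\]
Substituting the two identities from Proposition \ref{bdh} gives $D_{n,m}=\omega_{i}H_{n',m'}\oplus B_{n,m}$. Comparing this with the defining decomposition $D_{n,m}=H_{n,m}\oplus B_{n,m}$ exhibits both $H_{n,m}$ and $\omega_{i}H_{n',m'}$ as complements of the same subspace $B_{n,m}$ inside $D_{n,m}$. Each is therefore canonically isomorphic to the quotient $D_{n,m}/B_{n,m}$, and consequently $H_{n,m}\cong\omega_{i}H_{n',m'}$, which is precisely the claimed recursion.

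The argument is entirely formal once Proposition \ref{bdh} is in hand, so no real obstacle arises. The only subtlety worth noting is that, because $H_{n,m}$ is defined only up to the choice of a complement, the isomorphism is stated at the level of abstract graded $\Bbbk$-vector spaces rather than as an identification of specific subspaces of $D_{n,m}$; this is exactly the level of information needed in the sequel, where only the dimensions and the Hilbert series of the $H_{n,m}$ (hence of $\operatorname{HH}_{\bullet}(A)$) enter.
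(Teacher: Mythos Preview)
Your argument is correct and is exactly the approach the paper takes: the corollary is stated as an immediate consequence of Proposition~\ref{bdh}, and you have simply spelled out the one-line deduction (applying $\omega_i\cdot$ to the splitting $D_{n',m'}=H_{n',m'}\oplus B_{n',m'}$ and comparing with $D_{n,m}=H_{n,m}\oplus B_{n,m}$). Your remark that the isomorphism holds at the level of abstract graded $\Bbbk$-vector spaces, rather than as an equality of chosen complements, is precisely why the statement is written with $\cong$ rather than $=$.
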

So it is also sufficient to compute the case $m\in \llbracket 0,4 \rrbracket $. 
Recall that 
\begin{equation}\label{hdimrelation}
	\begin{split}
		\operatorname{dim}H_{n,m}=\operatorname{dim}D_{n,m}-\operatorname{dim}B_{n,m}=\operatorname{dim}\tilde{P}_{n,m}-\operatorname{dim}B_{n-1,m+1}-\operatorname{dim}B_{n,m}.
	\end{split}
\end{equation} 
Hence, from the dimension of $D_{n,m}$ computed in Subsubsection \ref{subsubsection:cycles-homology-2} as well as the dimension of 
$B_{n,m}$ computed in Subsubsection \ref{subsubsection:boundaries-homology-2}, 
we deduce the value of the dimension of $H_{n,m}$. 
We will present them explicitly in the computations below. 

For every $(n,m)\in \NN_0\times \llbracket 0,4 \rrbracket $, we are going to provide a set $\mathfrak{H}_{n,m}\subseteq D_{n,m}$ such that $ \# \mathfrak{H}_{n,m}=\operatorname{dim} H_{n,m}$ and the elements in $ \mathfrak{H}_{n,m}\cup \mathfrak{B}_{n,m}$ are linearly independent. 
As a consequence, the space $H_{n,m}$ spanned by $\mathfrak{H}_{n,m}$ satisfies $D_{n,m}= H_{n,m}\oplus B_{n,m}$.
We leave to the reader the easy verification in each case that the set $\mathfrak{H}_{n,m}$ satisfies these conditions. Note that,
unless stated otherwise, the linear independence of the elements in $ \mathfrak{H}_{n,m}\cup \mathfrak{B}_{n,m}$ is from Fact \ref{indep}, where we put the elements in $\mathfrak{H}_{n,m}$ before the elements in $\mathfrak{B}_{n,m}$.

Suppose $m=0$. We get immediately
$\mathfrak{H}_{n,0}=\mathfrak{D}_{n,0}$
since $B_{n,0}=0$ for $n\in \NN_0$. The dimension of $H_{n,0}$ is given by 
\begin{equation}\label{dimhomology0}
	\begin{split}
	 \operatorname{dim} H_{n,0}=
	 \begin{cases}
	1, & \text{if $n\in\NN_0$ is even}, 
	\\
	3, & \text{if $n=1$}, 
	\\
	4, & \text{if $n\geqslant 3$ is odd}.
	\end{cases}
\end{split}
\end{equation}

Suppose $m=1$. The dimension of $H_{n,1}$ is given by 
\begin{equation}\label{dimhomology1}
	\begin{split}
	\operatorname{dim} H_{n,1}=
	\begin{cases}
	3, &\text{if $n=0,1,3$}, 
	\\
	6, &\text{if $n=2$}, 
	\\
	4, &\text{if $n\geqslant 4$ is even}, 
	\\
	1, &\text{if $n\geqslant 5$ is odd}.
	\end{cases}
\end{split}
\end{equation}
We define the sets 
$\mathfrak{H}_{0,1}=\mathfrak{D}_{0,1}$, 
\[ \mathfrak{H}_{1,1}=\big\{\suline{a|\gamma}+c|\alpha, \suline{b|\alpha}-c|\alpha+c|\beta, \suline{b|\gamma}+c|\beta   \big\}, \] 
\begin{equation}
\begin{split}
	\mathfrak{H}_{2,1}=\mathfrak{D}_{2,1} \backslash \mathfrak{B}_{2,1}=\big\{
	& 
	\suline{a|\alpha_2}, 
	\suline{b|\beta_2}, 
	\suline{c|\gamma_2}, \suline{a}|(\suline{\beta_2}+\alpha\gamma)+c|(\beta_2+\alpha\beta), \suline{a}|(\suline{\gamma_2}+\alpha\beta)+b|(\gamma_2+\alpha\gamma),\\
	&
	\suline{b}|(\suline{\alpha_2}+\alpha\gamma)+c|(\alpha_2+\alpha\beta)   \big\},
\end{split}
\end{equation}
and
\begin{equation}
\begin{split}
	\mathfrak{H}_{3,1}=\big\{
	& 
	\suline{a}|(\beta_3+\suline{\alpha_2\beta})+b|(\gamma_3+\alpha_2\gamma)+c|(\alpha_3+\alpha\beta_2), \\
	&
	\suline{a}|(\suline{\alpha_2\gamma}-\beta_3)+b|(\alpha_2\gamma-\alpha_3)+2c|(\alpha_3+\beta_3),\\
	&
	2a|(\beta_3+\gamma_3)+\suline{b}|(\suline{\alpha\beta_2}-\gamma_3)+c|(\alpha\beta_2-\beta_3)   \big\}.
\end{split}
\end{equation}
Moreover, if $n\geqslant 4$ is even, we define the set
\[ 
	\mathfrak{H}_{n,1}=\mathfrak{D}_{n,1}\backslash \mathfrak{B}_{n,1}=\big\{
	\suline{a|\alpha_n},
	\suline{b|\beta_n},
	\suline{c|\gamma_n}, (\suline{a}+b+c)|(\alpha_{n-1}\beta+\alpha_{n-1}\gamma+\alpha_{n-2}\beta_2+\alpha_n+\beta_n+\suline{\gamma_n})  \big\},
\] 
and if $n\geqslant 5$ is odd, we define  
\[ 
	\mathfrak{H}_{n,1}=\big\{\suline{a}|(\beta_{n}+\suline{\alpha_{n-1}\beta})+b|(\gamma_n+\alpha_{n-1}\gamma)+c|(\alpha_n+\alpha_{n-2}\beta_2) \big\}.
\]

Suppose $m=2$. The dimension of $H_{n,2}$ is given by 
\begin{equation}\label{dimhomology2}
	\begin{split}
	\operatorname{dim} H_{n,2}=
	\begin{cases}
	2, & \text{if $n=0,1,2$}, 
	\\
	0, &  \text{if $n=3,4$}, 
	\\
	3, & \text{if $n=5$},
	\\
	1, & \text{if $n\geqslant 6$ is even}, 
	\\
	4, &  \text{if $n\geqslant 7$ is odd}.
	\end{cases} 
\end{split}
\end{equation}
We define the sets 
\[ 
\mathfrak{H}_{0,2}=\big\{
\suline{ab|\epsilon^!},
\suline{bc|\epsilon^!}
\big\},	
\quad 
	\mathfrak{H}_{1,2}=\big\{(\suline{ba}+ac)|(\suline{\beta}+\gamma), \suline{ac}|(\suline{\alpha}+\gamma)  \big\},	
\] 
\[ 
	\mathfrak{H}_{2,2}=\big\{
	\suline{ab}|(\suline{\beta_2}-\gamma_2)+bc|(\alpha\beta-\beta_2-2\gamma_2),
	\suline{ab}|(\alpha\beta-2\suline{\alpha_2}-\beta_2)+bc|(\beta_2-\alpha_2)  \big\},
\] 
and 
$ 
	\mathfrak{H}_{3,2}=\mathfrak{H}_{4,2}=\emptyset
$.
Moreover, if $n\geqslant 5$ is odd, we define the set
$\mathfrak{H}_{n,2}=\omega_1 \mathfrak{D}_{n-4,0}$,
and if $n\geqslant 6$ is even, we define 
\[ 
	\mathfrak{H}_{n,2}=\big\{
	\suline{\omega_1 1}|(\suline{\alpha_{n-5}\beta}-\alpha_{n-5}\gamma)
	\big\}.
\]

Suppose $m=3$. The dimension of $H_{n,3}$ is given by 
\begin{equation}\label{dimhomology3}
	\begin{split}
	\operatorname{dim} H_{n,3}=
	\begin{cases}
	0, &  \text{if $n=0,1$}, 
	\\
	1, &  \text{if $n=2,3$}, 
	\\
	7, &  \text{if $n=4$}, 
	\\
	4, &  \text{if $n=5,7$},
	\\
	10,&  \text{if $n=6$},
	\\
	8, &  \text{if $n\geqslant 8$ is even}, 
	\\
	2, &  \text{if $n\geqslant 9$ is odd}.
	\end{cases}
\end{split}
\end{equation}
We define the sets 
$\mathfrak{H}_{0,3}=\mathfrak{H}_{1,3}=\emptyset$, 
\[ \mathfrak{H}_{2,3}=\big\{ \suline{bac|\alpha_2}
\big\}, 
\quad 
\mathfrak{H}_{3,3}=\big\{ aba|\alpha_2\beta+bac|\alpha_2\beta   \big\}, \] 
\[ 
	\mathfrak{H}_{4,3}=\big\{ 
	\suline{bac|\alpha_4}, 
	\suline{aba|\alpha_{2}\beta_2}, \suline{abc|\alpha_{2}\beta_2}, \suline{bac|\alpha_{2}\beta_2}, \suline{\omega_1a|\epsilon^!}, \suline{\omega_1b|\epsilon^!},  \suline{\omega_1c|\epsilon^!} 
	\big\},
\] 
\[ 
	\mathfrak{H}_{5,3}=\big\{(aba+bac)|\alpha_4\beta, \suline{\omega_1}(\suline{a|\gamma}+c|\alpha), 
	\suline{\omega_1}(\suline{b|\alpha}-c|\alpha+c|\beta), 
	\suline{\omega_1}(\suline{b|\gamma}+c|\beta)  \big\},
\] 
\begin{equation}
\begin{split}
	\mathfrak{H}_{6,3}=\big\{ 
	& 
	\suline{bac|\alpha_6},
	\suline{aba|\alpha_{4}\beta_2},
	\suline{abc|\alpha_{4}\beta_2},
	\suline{bac|\alpha_{4}\beta_2}, \suline{\omega_1a|\alpha_2},
	\suline{\omega_1b|\beta_2},
	\suline{\omega_1c|\gamma_2},
	\\
	&
	\suline{\omega_1}[\suline{a}|(\suline{\beta_2}+\alpha\gamma)+c|(\beta_2+\alpha\beta)], 
	\suline{\omega_1}[\suline{a}|(\suline{\gamma_2}+\alpha\beta)+b|(\gamma_2+\alpha\gamma)],\\
	&
	\suline{\omega_1}[\suline{b}|(\suline{\alpha_2}+\alpha\gamma)+c|(\alpha_2+\alpha\beta)]  \big\},
\end{split}
\end{equation}
and 
\begin{align*}
	\mathfrak{H}_{7,3}=\big\{
	& 
	(aba+bac)|\alpha_6\beta, \suline{\omega_1}[\suline{a}|(\beta_3+\suline{\alpha_2\beta})+b|(\gamma_3+\alpha_2\gamma)+c|(\alpha_3+\alpha\beta_2)], \\
	&
	\suline{\omega_1}[\suline{a}|(\suline{\alpha_2\gamma}-\beta_3)+b|(\alpha_2\gamma-\alpha_3)+2c|(\alpha_3+\beta_3)],
	\\
	& \suline{\omega_1}[2a|(\beta_3+\gamma_3)+\suline{b}|(\suline{\alpha\beta_2}-\gamma_3)+c|(\alpha\beta_2-\beta_3)] \big\}.
\end{align*}
Moreover, if $n\geqslant 8$ is even, we define 
\begin{align*}
	\mathfrak{H}_{n,3}=\big\{ 
	& 
	\suline{bac|\alpha_n},
	\suline{aba|\alpha_{n-2}\beta_2},
	\suline{abc|\alpha_{n-2}\beta_2},
	\suline{bac|\alpha_{n-2}\beta_2},
	\suline{\omega_1a|\alpha_{n-4}},
	\suline{\omega_1b|\beta_{n-4}},
	\suline{\omega_1c|\gamma_{n-4}},
	\\
	&
	\suline{\omega_1}(\suline{a}+b+c)|(\alpha_{n-5}\beta+\alpha_{n-5}\gamma+\alpha_{n-6}\beta_2+\alpha_{n-4}+\beta_{n-4}+\suline{\gamma_{n-4}}) \big\},
\end{align*}
and if $n\geqslant 9$ is odd, we define 
\begin{align*}
	\mathfrak{H}_{n,3}=\big\{ (aba+bac)|\alpha_{n-1}\beta,
	\suline{\omega_1}[\suline{a}|(\beta_{n-4}+\suline{\alpha_{n-5}\beta})+b|(\gamma_{n-4}+\alpha_{n-5}\gamma)+c|(\alpha_{n-4}+\alpha_{n-6}\beta_2)] \big\}.
\end{align*}
Moreover, the set $\mathfrak{H}_{n,3}\cup \mathfrak{B}_{n,3}$ for $n\geqslant 3$ and $n$ odd is linearly independent. 
Indeed, Fact \ref{indep} tells us that the elements containing underlined terms do form a linearly independent set. 
It is then easy to prove that the elements of $\mathfrak{H}_{n,3}\cup \mathfrak{B}_{n,3}$ 
without any underlining are not a linear combination of the remaining elements, proving the claim.  

Suppose $m=4$. The dimension of $H_{n,4}$ is given by 
\begin{equation}\label{dimhomology4}
	\begin{split}
	\operatorname{dim} H_{n,4}=
	\begin{cases}
	0, & \text{if $n=0$}, 
	\\
	1, & \text{if $n=1,2,8$}, 
	\\
	4, & \text{if $n=3,7$}, 
	\\
	3, & \text{if $n=4,6$},
	\\
	6, & \text{if $n=5$},
	\\
	7, & \text{if $n=9$},
	\\
	2, & \text{if $n\geqslant 10$ is even}, 
	\\
	8, & \text{if $n\geqslant 11$ is odd}.
	\end{cases}
\end{split}
\end{equation}
We define the sets  
$\mathfrak{H}_{0,4}=\emptyset$, 
\[ \mathfrak{H}_{1,4}=\big\{ \suline{abac|\alpha} 
\big\},
\quad 
 \mathfrak{H}_{2,4}=\big\{
\suline{abac|\alpha\beta} 
\big\},  \]
\[
\mathfrak{H}_{3,4}=\big\{
\suline{abac|\alpha_3},\suline{ abac|\alpha_2\beta}, 
\suline{abac|\alpha_2\gamma},
\suline{abac|\alpha\beta_2 } 
\big\},  
\quad 
 \mathfrak{H}_{4,4}=\big\{ \suline{abac|\alpha_3\beta}, \suline{\omega_1 ab|\epsilon^!}, \suline{\omega_1 bc|\epsilon^!} 
\big\}, \]
\[ \mathfrak{H}_{5,4}=\big\{
\suline{abac|\alpha_5},
\suline{abac|\alpha_4\beta}, 
\suline{abac|\alpha_4\gamma},
\suline{abac|\alpha_3\beta_2}, \suline{\omega_1}(\suline{ba}+ac)|(\suline{\beta}+\gamma), \suline{\omega_1ac}|(\suline{\alpha}+\gamma)  \big\}, \] 
\begin{equation}
\begin{split}
	 \mathfrak{H}_{6,4}=\big\{ 
	 & \suline{abac|\alpha_5\beta},
	 \suline{\omega_1}[\suline{ab}|(\suline{\beta_2}-\gamma_2)+bc|(\alpha\beta-\beta_2-2\gamma_2)],
	 \\
	&
	\suline{\omega_1}[\suline{ab}|(\alpha\beta-2\suline{\alpha_2}-\beta_2)+bc|(\beta_2-\alpha_2)]
	\big\},
\end{split}
\end{equation}
\[\mathfrak{H}_{7,4}=\big\{
\suline{abac|\alpha_7}, 
\suline{abac|\alpha_6\beta},
\suline{abac|\alpha_6\gamma},
\suline{abac|\alpha_5\beta_2} 
\big\},   \] 
and 
\[ \mathfrak{H}_{8,4}=\big\{ \suline{abac|\alpha_7\beta} 
\big\}. \] 
Moreover, if $n\geqslant 9$ is odd, we define the set 
\[	\mathfrak{H}_{n,4}=\big\{  \suline{abac|\alpha_n}, \suline{abac|\alpha_{n-1}\beta}, \suline{abac|\alpha_{n-1}\gamma}, \suline{abac|\alpha_{n-2}\beta_2 }
\big\}\cup \omega_2 \mathfrak{D}_{n-8,0}, \] 
and if $n\geqslant 10$ is even, we define  
\[ \mathfrak{H}_{n,4}=\big\{  \suline{abac|\alpha_{n-1}\beta}, \suline{\omega_2 1}|(\suline{\alpha_{n-9}\beta}-\alpha_{n-9}\gamma)\big\}. \]

The previous results can be restated as follows.

\begin{cor}\label{corollary:tildeH}
Let $m\in \llbracket 0,4 \rrbracket$ and  $n\in \NN_0$. Then $H_{n,m}=\tilde{H}_{n,m}\oplus \omega_1 H_{n-4, m-2}$ except for $(n,m)=(4,2)$. Moreover, $H_{4,2}=\tilde{H}_{4,2}=0$. Here, $\tilde{H}_{n,m}$ is the $\Bbbk$-vector space spanned by the set $\tilde{\mathfrak{H}}_{n,m}$. The set $\tilde{\mathfrak{H}}_{n,m}$ is defined as follows.
If $m=0$ or $1$, we define the set 
$\tilde{\mathfrak{H}}_{n,m}=\mathfrak{H}_{n,m}$ 
for $n\in\NN_0$. 
If $m=2$, we define the sets 
\begin{align*}
    \tilde{\mathfrak{H}}_{0,2} &  =\big\{ab|\epsilon^!,bc|\epsilon^!\big\},
    \quad 
    \tilde{\mathfrak{H}}_{1,2}=\big\{(ba+ac)|(\beta+\gamma), ac|(\alpha+\gamma)  \big\},	
    \\
    \tilde{\mathfrak{H}}_{2,2}& =\big\{ab|(\alpha\beta-2\alpha_2-\beta_2)+bc|(\beta_2-\alpha_2), ab|(\beta_2-\gamma_2)+bc|(\alpha\beta-\beta_2-2\gamma_2)    \big\},
\end{align*}
and $\tilde{\mathfrak{H}}_{n,2}=\emptyset$ for $n\geqslant 3$.
If $m=3$, we define the set  	 
$\tilde{\mathfrak{H}}_{0,3}=\emptyset$,  
and   
\[ \tilde{\mathfrak{H}}_{n,3}=\big\{(aba+bac)|\alpha_{n-1}\beta\big\} 
\] 
for $n\in\NN$ with $n$ odd, 
together with 
\[ \tilde{\mathfrak{H}}_{n,3}=\big\{bac|\alpha_n, aba|\alpha_{n-2}\beta_2, abc|\alpha_{n-2}\beta_2, bac|\alpha_{n-2}\beta_2 \big\}\] 
for $n\geqslant 2$ with $n$ even. 
If $m=4$, we define the set 
$\tilde{\mathfrak{H}}_{0,4}=\emptyset$, 
and
\[ \tilde{\mathfrak{H}}_{n,4}=\big\{abac|\alpha_n, abac|\alpha_{n-1}\beta, abac|\alpha_{n-1}\gamma, \alpha_{n-2}\beta_2\big\}
\] 
for $n\in\NN$ with $n$ odd,
together with 
	\[ \tilde{\mathfrak{H}}_{n,4}=\big\{abac|\alpha_{n-1}\beta \big\}
	\] 
for $n\geqslant 2$ with $n$ even. 
Furthermore, if we define $\tilde{H}_{n,m}=0$ for $(n,m)\in \ZZ^2\setminus (\NN_0\times \llbracket 0,4 \rrbracket)$, then $H_{n,m}=\tilde{H}_{n,m}\oplus \omega_1 H_{n-4, m-2}$ holds for $(n,m)\in \ZZ^2\setminus\{ (4,2) \}$ by applying Corollary \ref{cor recursive homology}.
\end{cor}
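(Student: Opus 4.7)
The plan is to verify the decomposition $H_{n,m}=\tilde{H}_{n,m}\oplus \omega_1 H_{n-4,m-2}$ (for $(n,m)\neq (4,2)$) by direct inspection, comparing the bases $\mathfrak{H}_{n,m}$ already constructed in Subsection \ref{subsection:homology} with the candidate sets $\tilde{\mathfrak{H}}_{n,m}$. The key observation is that every listed generator of $H_{n,m}$ either has a representative inside $\tilde{K}_{n,m}$ (no $\omega_i$ with $i\geqslant 1$ appears) or is of the form $\omega_1 y$ for some $y$. This gives a natural partition of $\mathfrak{H}_{n,m}$ into two subsets separated by the $\omega_1$-degree, which is exactly what the statement of the corollary asserts.

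For $m\in \{0,1\}$ the claim is automatic: $H_{n-4,m-2}=0$ by the convention that $H_{n,m}$ vanishes for $m<0$, and $\tilde{\mathfrak{H}}_{n,m}$ has been defined to equal $\mathfrak{H}_{n,m}$. For $m\in \{2,3,4\}$ the approach is to match the two subsets of $\mathfrak{H}_{n,m}$ with $\tilde{\mathfrak{H}}_{n,m}$ and $\omega_1 \mathfrak{H}_{n-4,m-2}$, proceeding by the residue class of $n$ modulo $2$, with the small cases $n=0,1,\dots$ treated individually. A minor bookkeeping point arises in the subcase $m=4$, $n\geqslant 9$ odd, where the summand $\omega_2 \mathfrak{D}_{n-8,0}$ of $\mathfrak{H}_{n,4}$ must be recognized as $\omega_1$ applied to the $\omega_1$-part of $\mathfrak{H}_{n-4,2}$, using the convention $\omega_i\omega_j=\omega_{i+j}$. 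Linear independence of the union is inherited from the construction of $\mathfrak{H}_{n,m}$ via Fact \ref{indep}, and the direct sum property is automatic since the two families live in complementary $\omega_1$-graded summands of $\tilde{P}_{n,m}$.

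The main obstacle, and the reason the pair $(n,m)=(4,2)$ has to be excluded, lies in the interaction between $\omega_1$ and the map $\tilde{f}_0$. Since $H_{0,0}$ is one-dimensional and spanned by $1|\epsilon^!$, one would naively expect a class $\omega_1\cdot (1|\epsilon^!)$ in $H_{4,2}$. However, by \eqref{eq:f0-homology} the element $\tilde{f}_0(1|\epsilon^!)$ is nonzero, so
\[\partial_{4,2}\bigl(\omega_1\cdot (1|\epsilon^!)\bigr)=\tilde{f}_0(1|\epsilon^!)\neq 0,\]
and hence $\omega_1\cdot (1|\epsilon^!)$ is not a cycle. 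This explains both why $\mathfrak{H}_{4,2}=\emptyset$ and why the extra boundary $2bac|\alpha_3+2abc|\beta_3-2aba|\gamma_3-abc|\alpha_2\beta+aba|\alpha_2\gamma-bac|\alpha\beta_2$ appears in $\mathfrak{B}_{3,3}\setminus \tilde{\mathfrak{B}}_{3,3}$.

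For every other pair with $m\geqslant 2$, one checks with the formulas of Subsection \ref{subsection:explicit-description-diff-homology} that $\tilde{f}_{n-3}$ either vanishes on the representatives used to build $\mathfrak{H}_{n-4,m-2}$, or its image has already been absorbed into the boundaries $\mathfrak{B}_{n-1,m+1}$, so the lifted elements $\omega_1 y$ remain cycles modulo boundaries. Combining these verifications with the recursive description of Proposition \ref{bdh} and Corollary \ref{cor recursive homology} gives the full statement of the corollary, including the recursion $H_{n,m}\cong\omega_{(m-3)/2}H_{n-2m+6,3}$ or $\omega_{m/2-2}H_{n-2m+8,4}$ for $m\geqslant 3$ obtained by iteratively applying the decomposition.
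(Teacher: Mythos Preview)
Your proposal is correct and follows essentially the same approach as the paper, which presents this corollary as a direct restatement of the explicit bases $\mathfrak{H}_{n,m}$ computed in Subsection~\ref{subsection:homology}: one simply observes that each listed element of $\mathfrak{H}_{n,m}$ lies either in $\tilde{K}_{n,m}$ or in $\omega_1\tilde{P}_{n-4,m-2}$, and matches the two pieces with $\tilde{\mathfrak{H}}_{n,m}$ and $\omega_1\mathfrak{H}_{n-4,m-2}$ respectively. Your additional discussion of why $(4,2)$ is exceptional via $\tilde{f}_0(1|\epsilon^!)\neq 0$ is a useful elaboration of what the paper leaves implicit.
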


\begin{rk}
	The reader can easily check that  $\tilde{D}_{n,m}= \tilde{H}_{n,m}\oplus \tilde{B}_{n,m}$ except the case $m=n=3$.
\end{rk}

Recall that the Hochschild homology is decomposed as $\operatorname{HH}_n(A)=\oplus_{m\in\NN_0}H_{n,m}$ for $n\in\NN_0$.

\begin{prop}
\label{prop:linear HHnA}
Let $n\in\NN$. 
Then
\[ \operatorname{HH}_n(A) = \mathop{\bigoplus}\limits_{\substack{i\in \llbracket 0, \lfloor n/4 \rfloor \rrbracket, \\
m\in \llbracket 0,4 \rrbracket}}\omega_{i}  \tilde{H}_{n-4i,m} \] 
for $4\nmid n$, and 
\[ \operatorname{HH}_n(A)=\bigg(\mathop{\bigoplus}\limits_ {\substack{i\in \llbracket 0, n/4-1 \rrbracket, \\
m \in \llbracket 0,4 \rrbracket}}\omega_{i}  \tilde{H}_{n-4i,m}\bigg) \oplus \bigg(\mathop{\bigoplus}\limits_{m\in \llbracket 1,4 \rrbracket}\omega_{n/4}\tilde{H}_{0,m}\bigg) \] 
for $4|n$.
\end{prop}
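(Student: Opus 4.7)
The plan is to combine the decomposition $\operatorname{HH}_n(A) = \bigoplus_{m \in \NN_0} H_{n,m}$ (stated immediately before the proposition) with the recursive description of $H_{n,m}$ given in Corollary \ref{corollary:tildeH}. That corollary asserts $H_{n,m} = \tilde{H}_{n,m} \oplus \omega_1 H_{n-4,m-2}$ for every $(n,m) \in \ZZ^2 \setminus \{(4,2)\}$ (using the convention $\tilde{H}_{n,m} = 0$ outside $\NN_0 \times \llbracket 0,4 \rrbracket$), while $H_{4,2} = 0$. Iterating this identity will express each $H_{n,m}$ as a finite direct sum of pieces of the form $\omega_k \tilde{H}_{n-4k, m-2k}$.

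Concretely, I would argue by induction on $n$ that, for $(n,m)$ whose descending sequence $(n,m),(n-4,m-2),(n-8,m-4),\ldots$ never passes through the exceptional pair $(4,2)$, one has
\[ H_{n,m} = \bigoplus_{k \geqslant 0} \omega_k \tilde{H}_{n-4k,\, m-2k}, \]
the sum being automatically finite (terms vanish once $n-4k<0$ or $m-2k$ leaves $\llbracket 0,4\rrbracket$). The descending sequence visits $(4,2)$ precisely when $4 \mid n$ and $m = n/2$; in that case the iteration stops at the step where $H_{4,2}=0$ is encountered, yielding
\[ H_{n,n/2} = \bigoplus_{k=0}^{n/4 - 2} \omega_k \tilde{H}_{n-4k,\, n/2 - 2k}, \]
which differs from the naive unfolding only by the would-be final summand $\omega_{n/4}\tilde{H}_{0,0}$ (since the intermediate term $\omega_{n/4-1}\tilde{H}_{4,2}$ is itself zero by Corollary \ref{corollary:tildeH}).

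To conclude, the last step is to sum these decompositions over $m \in \NN_0$ and re-index via $(k, m-2k) \mapsto (i, m')$, where $i \in \llbracket 0, \lfloor n/4 \rfloor \rrbracket$ corresponds to $k$ and $m' \in \llbracket 0, 4 \rrbracket$ corresponds to $m - 2k$ (using that $\tilde{H}_{n-4k, m'}$ vanishes outside this range). When $4 \nmid n$, no value of $m$ lies on the exceptional diagonal, so the naive expansion holds throughout and directly produces the first displayed formula. When $4 \mid n$, write $n = 4s$; the exceptional diagonal contributes only at $m = 2s$, and the single missing summand $\omega_s \tilde{H}_{0,0}$ must be removed from the naive formula, leaving the second displayed expression in which the $i = s$ piece ranges only over $m' \in \llbracket 1,4 \rrbracket$.

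The main obstacle is the bookkeeping around the exceptional pair $(4,2)$: one needs to verify that $(4,2)$ is the unique pair where the recursion fails, that it appears along any descending sequence at most once, and that the term lost relative to the naive iteration is exactly $\omega_{n/4} \tilde{H}_{0,0}$ when $4 \mid n$. All three points follow directly from Corollary \ref{corollary:tildeH}, so the argument itself reduces to careful indexing.
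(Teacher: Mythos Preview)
Your approach is correct and essentially the same as the paper's: both iterate the recursion $H_{n,m}=\tilde{H}_{n,m}\oplus\omega_1 H_{n-4,m-2}$ from Corollary~\ref{corollary:tildeH}, sum over $m$, and re-index, tracking the exceptional pair $(4,2)$ to account for the missing $\omega_{n/4}\tilde{H}_{0,0}$ when $4\mid n$. The only organizational difference is that the paper first invokes Corollary~\ref{cor recursive homology} to reduce $H_{n,m}$ for $m\geqslant 3$ to $m\in\{3,4\}$ and then applies the explicit two-step expansions \eqref{h420}, whereas you iterate Corollary~\ref{corollary:tildeH} directly; your version is slightly more streamlined but the content is identical.
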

\begin{proof}
By Corollary \ref{corollary:tildeH}, we have 
\begin{equation}\label{h420}
    \begin{split}
    H_{n,2}& =\tilde{H}_{n,2}\oplus \omega_1 \tilde{H}_{n-4,0} \textnormal{ for $n\in \NN_0\setminus \{4 \}$}, \quad H_{4,2}= \tilde{H}_{4,2},\\
    H_{n,3}& =\tilde{H}_{n,3}\oplus \omega_1 \tilde{H}_{n-4,1} \textnormal{ for $n\in\NN_0$}, \\
    H_{n,4}& =\tilde{H}_{n,4}\oplus \omega_1\tilde{H}_{n-4,2}\oplus \omega_2\tilde{H}_{n-8,0}   
    \textnormal{ for $n\in \NN_0\setminus \{8 \}$},\quad
    H_{8,4}=\tilde{H}_{8,4}\oplus\omega_1\tilde{H}_{4,2}.
    \end{split}
\end{equation}
If $4\nmid n$, using Corollary \ref{cor recursive homology} and \eqref{h420}, we get 
%
%
%
\begin{align*}
\operatorname{HH}_n(A)
& =\bigoplus_{ m\in \llbracket 0, 2\lfloor n/4 \rfloor +4 \rrbracket}H_{n,m}
\\
& = H_{n,0}\oplus H_{n,1} \oplus H_{n,2}
\oplus \bigg( \bigoplus_{i\in \llbracket 0, \lfloor n/4 \rfloor \rrbracket }\omega_i H_{n-4i,3} \bigg) 
\oplus \bigg( \bigoplus_{i\in \llbracket 0, \lfloor n/4 \rfloor \rrbracket }\omega_i H_{n-4i,4} \bigg) 
\\
& = \tilde{H}_{n,0}\oplus \tilde{H}_{n,1}
\oplus (\tilde{H}_{n,2}\oplus \omega_1 \tilde{H}_{n-4,0})
\oplus \bigg( \bigoplus_{i\in \llbracket 0, \lfloor n/4 \rfloor \rrbracket }\omega_i (\tilde{H}_{n-4i,3}\oplus \omega_1\tilde{H}_{n-4i-4,1}) \bigg) \\
& \phantom{= \; }
\oplus \bigg( \bigoplus_{i\in \llbracket 0, \lfloor n/4 \rfloor \rrbracket }\omega_i ( \tilde{H}_{n-4i,4}\oplus \omega_1 \tilde{H}_{n-4i-4,2}\oplus \omega_2\tilde{H}_{n-4i-8,0}  )\bigg)
\\
& = \tilde{H}_{n,0}\oplus \tilde{H}_{n,1}
\oplus \tilde{H}_{n,2}\oplus \omega_1 \tilde{H}_{n-4,0}
\oplus \bigg( \bigoplus_{i\in \llbracket 0, \lfloor n/4 \rfloor \rrbracket }\omega_i \tilde{H}_{n-4i,3} \bigg)
\\
& \phantom{= \; } 
\oplus \bigg( \bigoplus_{i\in \llbracket 0, \lfloor n/4 \rfloor \rrbracket }\omega_{i+1}\tilde{H}_{n-4i-4,1} \bigg)
\oplus \bigg( \bigoplus_{i\in \llbracket 0, \lfloor n/4 \rfloor \rrbracket }\omega_i \tilde{H}_{n-4i,4} \bigg)
\\
& \phantom{= \; } 
\oplus \bigg( \bigoplus_{i\in \llbracket 0, \lfloor n/4 \rfloor \rrbracket }\omega_{i+1} \tilde{H}_{n-4i-4,2} \bigg)
\oplus \bigg( \bigoplus_{i\in \llbracket 0, \lfloor n/4 \rfloor \rrbracket }\omega_{i+2} \tilde{H}_{n-4i-8,0} \bigg)
\\
& = \tilde{H}_{n,0}\oplus \tilde{H}_{n,1}
\oplus \tilde{H}_{n,2}\oplus \omega_1 \tilde{H}_{n-4,0}
\oplus \bigg( \bigoplus_{i\in \llbracket 0, \lfloor n/4 \rfloor \rrbracket }\omega_i \tilde{H}_{n-4i,3} \bigg)
\oplus \bigg( \bigoplus_{i\in \llbracket 1, \lfloor n/4 \rfloor \rrbracket }\omega_{i}\tilde{H}_{n-4i,1} \bigg)
\\
& \phantom{= \; } 
\oplus \bigg( \bigoplus_{i\in \llbracket 0, \lfloor n/4 \rfloor \rrbracket }\omega_i \tilde{H}_{n-4i,4} \bigg)
\oplus \bigg( \bigoplus_{i\in \llbracket 1, \lfloor n/4 \rfloor \rrbracket }\omega_{i} \tilde{H}_{n-4i,2} \bigg)
\oplus \bigg( \bigoplus_{i\in \llbracket 2, \lfloor n/4 \rfloor \rrbracket }\omega_{i} \tilde{H}_{n-4i,0} \bigg)
\\
& = \mathop{\bigoplus}\limits_{\substack{i\in \llbracket 0, \lfloor n/4 \rfloor \rrbracket, \\
m\in \llbracket 0,4 \rrbracket}}\omega_{i}  \tilde{H}_{n-4i,m}.
\end{align*}
If $4|n$, the proof is similar to above. 
Note that if $n=4$, there is no term $\omega_1\tilde{H}_{0,0}$ when decomposing $H_{4,2}$, 
and if $n\geqslant 8$, there is no term $\omega_{n/4}\tilde{H}_{0,0}$ when decomposing $\omega_{n/4-2}H_{8,4}$. 
\end{proof}

Here is a table of the dimensions of $H_{n,m}$ and $\operatorname{HH}_n(A)$ for $n \in \llbracket 0, 19 \rrbracket$ and $m \in \llbracket 0, 12 \rrbracket$. 

\begin{table}[H]
\begin{center}
\resizebox{\textwidth}{30mm}{
\begin{tabular}{|c|c|c|c|c|c|c|c|c|c|c|c|c|c|c|c|c|c|c|c|c|c|}
	\hline
	\diagbox{$m$}{$n$}&0&1&2&3&4&5&6&7&8&9&10&11&12&13&14&15&16&17&18&19\\
	\hline
	0 &1&3&1&4&1&4&1&4&1&4&1&4&1&4&1&4&1&4&1&4\\
	1 &3&3&6&3&4&1&4&1&4&1&4&1&4&1&4&1&4&1&4&1\\
	2 &2&2&2&0&0&3&1&4&1&4&1&4&1&4&1&4&1&4&1&4\\
	3 &0&0&1&1&7&4&10&4&8&2&8&2&8&2&8&2&8&2&8&2\\
	4 &0&1&1&4&3&6&3&4&1&7&2&8&2&8&2&8&2&8&2&8\\
	5 & & & & &0&0&1&1&7&4&10&4&8&2&8&2&8&2&8&2\\
	6 & & & & &0&1&1&4&3&6&3&4&1&7&2&8&2&8&2&8 \\
	7 & & & & & & & & &0&0&1&1&7&4&10&4&8&2&8&2\\
	8 & & & & & & & & &0&1&1&4&3&6&3&4&1&7&2&8\\
	9 & & & & & & & & & & & & &0&0&1&1&7&4&10&4\\
	10& & & & & & & & & & & & &0&1&1&4&3&6&3&4\\
	11& & & & & & & & & & & & & & & & &0&0&1&1\\
	12& & & & & & & & & & & & & & & & &0&1&1&4\\
	\hline
	$\operatorname{HH}_n$&6&9&11&12&15&19&21&22&25&29&31&32&35&39&41&42&45&49&51&52    \\
	\hline
\end{tabular}
}
\end{center}
\caption{Dimension of $H_{n,m}$ and $\operatorname{HH}_n(A)$.}	
\end{table}

\begin{prop}\label{prop dim HHA}
The dimension of $\operatorname{HH}_n(A)$ is given by 
\[ 
\operatorname{dim}\operatorname{HH}_n(A) =
\begin{cases}
6,  & \text{if $n=0$}, 
\\
\frac{5}{2}n+5,  &\text{if $n=4r$ for $r\in\NN$},
\\
\frac{5n+13}{2}, &\text{if $n=4r+1$ for $r\in\NN_0$},
\\
\frac{5}{2}n+6,  & \text{if $n=4r+2$ for $r\in\NN_0$},
\\
\frac{5n+9}{2},  & \text{if $n=4r+3$ for $r\in\NN_0$}.
\end{cases} 
\] 
\end{prop}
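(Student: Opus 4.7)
The plan is to deduce the dimensions of $\operatorname{HH}_n(A)$ directly from the decomposition in Proposition \ref{prop:linear HHnA} together with the explicit bases $\tilde{\mathfrak{H}}_{n,m}$ assembled in Corollary \ref{corollary:tildeH}. Set
\[ T(k) = \sum_{m\in \llbracket 0,4 \rrbracket} \operatorname{dim}\tilde{H}_{k,m}. \]
Proposition \ref{prop:linear HHnA} then gives, for $n\in\NN$,
\[ \operatorname{dim}\operatorname{HH}_n(A) = \sum_{i\in \llbracket 0, \lfloor n/4 \rfloor \rrbracket} T(n-4i) \]
when $4\nmid n$, and the analogous expression when $4\mid n$, in which the single summand $\operatorname{dim} \tilde{H}_{0,0} = 1$ is removed (since the index $i = n/4$ is restricted to $m\in \llbracket 1,4 \rrbracket$). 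So the proof reduces to evaluating $T(k)$ and summing.

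The first step is to record the values of $T(k)$. Reading off Corollary \ref{corollary:tildeH} and the dimensions computed in Subsection \ref{subsection:homology}, one obtains $T(0)=1+3+2+0+0=6$, $T(1)=3+3+2+0+1=9$, $T(2)=1+6+2+1+1=11$ and $T(3)=4+3+0+1+4=12$, and for $k\geqslant 4$ the distribution is
\[ T(k) = \begin{cases} 1+4+0+4+1=10, & \text{if $k$ is even,} \\ 4+1+0+1+4=10, & \text{if $k$ is odd.} \end{cases} \]
Thus $T(k)=10$ for every $k\geqslant 4$, regardless of parity, which is the key identity that makes the sum telescope.

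The second step is to collect contributions according to the residue class of $n$ modulo $4$. For $n=4r+j$ with $j\in \llbracket 1,3 \rrbracket$ and $r\in\NN_0$, substituting $j' = r-i$ gives
\[ \operatorname{dim}\operatorname{HH}_n(A) = \sum_{j'=0}^{r} T(4j'+j) = T(j) + 10r, \]
since $T(4j'+j)=10$ whenever $j'\geqslant 1$. Plugging in $T(1)=9$, $T(2)=11$, $T(3)=12$ and $r = (n-j)/4$, a short simplification produces $(5n+13)/2$, $(5n/2)+6$ and $(5n+9)/2$ respectively. For $n=4r$ with $r\in \NN$, the modified formula from Proposition \ref{prop:linear HHnA} yields
\[ \operatorname{dim}\operatorname{HH}_n(A) = \Big( \sum_{j'=1}^{r} T(4j') \Big) + \sum_{m=1}^{4} \operatorname{dim}\tilde{H}_{0,m} = 10 r + 5 = \tfrac{5}{2}n + 5, \]
using that $T(4j')=10$ for $j'\geqslant 1$ and that $\sum_{m=1}^{4}\operatorname{dim}\tilde{H}_{0,m} = T(0)-1=5$. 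Finally, $\operatorname{dim}\operatorname{HH}_0(A) = T(0) = 6$.

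The computation itself is essentially arithmetic once the stabilization $T(k)=10$ for $k\geqslant 4$ is observed; the only place that requires genuine attention is making sure the four boundary values $T(0),T(1),T(2),T(3)$ agree with the sets listed in Corollary \ref{corollary:tildeH} (where small-$n$ conventions such as $\alpha_{n-2}\beta_2$ being absent for $n\leqslant 2$ matter), and that the $4\mid n$ case correctly subtracts the rogue $\tilde{H}_{0,0}$ summand. Once these bookkeeping points are settled, the case analysis above yields the four advertised formulas.
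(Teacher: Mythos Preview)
Your argument is correct and is precisely the derivation the paper has in mind: the proposition is stated without a written proof, immediately after Proposition~\ref{prop:linear HHnA} and the table of dimensions, and is meant to follow from the decomposition $\operatorname{HH}_n(A)\cong\bigoplus_{i,m}\omega_i\tilde{H}_{n-4i,m}$ together with the sizes of the sets $\tilde{\mathfrak{H}}_{k,m}$ in Corollary~\ref{corollary:tildeH}. Your key observation that $T(k)=\sum_m\dim\tilde{H}_{k,m}$ stabilizes to $10$ for $k\geqslant 4$, together with the correct handling of the missing $\tilde{H}_{0,0}$ summand when $4\mid n$, is exactly the computation needed.
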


The Hilbert series of $\operatorname{HH}_n(A)$ is $h_n(t)=\sum_{m\in\NN_0}\operatorname{dim}(H_{n,m})t^{m+n}$ for $n\in\NN_0$. Note that $m+n$ is the internal degree of $H_{n,m}$.

\begin{cor}\label{cor hilbert series homology}
The Hilbert series $h_n(t)$ of $\operatorname{HH}_n(A)$ is given as follows. Let $n\geqslant 6$. Then 
\[ 
    h_n(t) =t^n\big [1+3\chi_{n+1}+(3\chi_n+1)t+(1+3\chi_{n+1})t^{2}+\sum\limits_{i=0}^{\mu_n}\big( (2+6\chi_n)t^{3+2i}
    +(2+6\chi_{n+1})t^{4+2i}\big) +p_n(t) \big],
\] 
where
\[ p_n(t)=
      \begin{cases}
      8t^{2\lfloor \frac{n}{4}\rfloor -1} 
      +t^{2\lfloor \frac{n}{4}\rfloor}
      +7t^{2\lfloor \frac{n}{4}\rfloor+1}
      +3t^{2\lfloor \frac{n}{4}\rfloor+2}, 
      &  \text{if $n\equiv 0$ $(\operatorname{mod}$ $4)$},
      \\
      2t^{2\lfloor \frac{n}{4}\rfloor-1}
      +7t^{2\lfloor \frac{n}{4}\rfloor}
      +4t^{2\lfloor \frac{n}{4}\rfloor+1}
      +6t^{2\lfloor \frac{n}{4}\rfloor+2}
      +t^{2\lfloor \frac{n}{4}\rfloor+4}, 
      & \text{if $n\equiv 1$ $(\operatorname{mod}$ $4)$},
      \\
      10t^{2\lfloor \frac{n}{4}\rfloor+1}
      +3t^{2\lfloor \frac{n}{4}\rfloor+2}
      +t^{2\lfloor \frac{n}{4}\rfloor+3}
      +t^{2\lfloor \frac{n}{4}\rfloor+4}, 
      & \text{if $n\equiv 2$ $(\operatorname{mod}$ $4)$},
      \\
      4t^{2\lfloor \frac{n}{4}\rfloor+1}
      +4t^{2\lfloor \frac{n}{4}\rfloor+2}
      +t^{2\lfloor \frac{n}{4}\rfloor+3}
      +4t^{2\lfloor \frac{n}{4}\rfloor+4}, 
      & \text{if $n\equiv 3$ $(\operatorname{mod}$ $4)$},
      \end{cases}
\] 
and 
\[ \mu_n=
\begin{cases}
\lfloor \frac{n}{4}\rfloor-3,&\text{if $n\equiv 0,1$ $(\operatorname{mod}$ $4)$},
\\
\lfloor \frac{n}{4}\rfloor-2,&\text{if $n\equiv 2,3$ $(\operatorname{mod}$ $4)$}.
\end{cases}
\]
Moreover,
\begin{align*}
h_0(t)&=1+3t+2t^2, \quad &
h_1(t)&=3t+3t^2+2t^3+t^5,
\\
h_2(t)& =t^2+6t^3+2t^4+t^5+t^6, \quad &
h_3(t)& =4t^3+3t^4+t^6+4t^7,
\\
h_4(t)& =t^4+4t^5+7t^7+3t^8, \quad &
h_5(t)& =4t^5+t^6+3t^7+4t^8+6t^9+t^{11}.	
\end{align*}
\end{cor}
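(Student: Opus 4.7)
The plan is to derive the Hilbert series by directly summing the contributions from the decomposition of $\operatorname{HH}_n(A)$ in Proposition \ref{prop:linear HHnA}. Since $\omega_i$ carries internal degree $6i$ and $\tilde{H}_{n-4i,m}$ sits in internal degree $(n-4i)+m$, each summand $\omega_i\tilde{H}_{n-4i,m}$ contributes $\operatorname{dim}(\tilde{H}_{n-4i,m})\,t^{n+2i+m}$ to $h_n(t)$. Consequently,
\[ h_n(t) = t^n \sum_{i,m} \operatorname{dim}\!\bigl(\tilde{H}_{n-4i,m}\bigr)\, t^{2i+m}, \]
where $(i,m)$ runs over $\llbracket 0,\lfloor n/4\rfloor\rrbracket\times\llbracket 0,4\rrbracket$, with the pair $(i,m)=(n/4,0)$ omitted when $4\mid n$.

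The next step is to read $\operatorname{dim}\tilde{H}_{n-4i,m}$ off Corollary \ref{corollary:tildeH}, remembering the convention $\alpha_j=\mathbf{0}$ for $j\leqslant 0$ from Section \ref{section:generalities}. Since $n-4i\equiv n\pmod 2$, one checks that for $n-4i\geqslant 4$ the dimension tuple $(\operatorname{dim}\tilde{H}_{n-4i,m})_{m=0}^{4}$ equals $(1,4,0,4,1)$ when $n$ is even and $(4,1,0,1,4)$ when $n$ is odd. Substituting this uniform tuple into the sum and grouping the resulting monomials by internal degree produces the initial terms $1+3\chi_{n+1}+(3\chi_n+1)t+(1+3\chi_{n+1})t^2$ of the claimed formula and the geometric sum $\sum_{i=0}^{\mu_n}\bigl((2+6\chi_n)t^{3+2i}+(2+6\chi_{n+1})t^{4+2i}\bigr)$; the cutoff $\mu_n$ is picked precisely so that these monomials only collect contributions from the uniform regime.

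The polynomial $p_n(t)$ encodes the high-degree corrections coming from the small values $n-4i\in\{0,1,2,3\}$, where $\operatorname{dim}\tilde{H}_{n-4i,m}$ deviates from the uniform pattern. The relevant deviations are: $\operatorname{dim}\tilde{H}_{1,1}=\operatorname{dim}\tilde{H}_{3,1}=3$, $\operatorname{dim}\tilde{H}_{2,1}=6$, $\operatorname{dim}\tilde{H}_{n,2}=2$ for $n\in\{0,1,2\}$ only, and the reductions $\operatorname{dim}\tilde{H}_{2,3}=\operatorname{dim}\tilde{H}_{2,4}=1$ forced by the zero convention applied to $\alpha_{n-2}\beta_2$ at $n=2$. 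The residue of $n$ modulo $4$ then determines which of these deviations appear and at what degree, yielding the four cases for $p_n(t)$. The six exceptional low-degree series $h_0,\ldots,h_5$ are evaluated directly from the dimension tables in Subsection \ref{subsection:homology}, since for these small values of $n$ the separation into uniform and boundary regimes degenerates.

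The argument is thus purely combinatorial bookkeeping; the main difficulty lies in systematically organising the sum by the residue class of $n \bmod 4$, tracking which boundary dimensions enter and at which degree, and confirming that the exception $H_{4,2}=\tilde{H}_{4,2}=0$ is properly absorbed. To guard against clerical errors I would compute $h_n(t)$ coefficient by coefficient for the first few values of $n$ in each residue class, compare with the dimension table of $H_{n,m}$ displayed in Subsection \ref{subsection:homology}, and verify that $h_n(1)$ agrees with the closed form of Proposition \ref{prop dim HHA}.
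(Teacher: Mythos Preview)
Your proposal is correct and follows essentially the same route as the paper: the corollary is stated without proof because it is pure bookkeeping from the dimension data already computed, and your plan to assemble $h_n(t)$ via the decomposition of Proposition~\ref{prop:linear HHnA} together with the $\tilde{H}$-dimensions from Corollary~\ref{corollary:tildeH} is exactly the intended argument. The only minor difference is organisational: the paper records $\operatorname{dim} H_{n,m}$ directly (equations \eqref{dimhomology0}--\eqref{dimhomology4} and Corollary~\ref{cor recursive homology}), whereas you pass through the $\tilde{H}$-pieces first, but by Corollary~\ref{corollary:tildeH} these are equivalent viewpoints.
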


\begin{rk}
As we mentioned at the beginning of Subsection \ref{subsection:com cycles}, one can obtain Proposition \ref{prop dim HHA} and Corollary \ref{cor hilbert series homology} directly from the computations in Subsection \ref{subsection: boundaries} together with Corollary \ref{cor recursive homology}, but a specific choice of cycles for the Hochschild homology can be useful for later computations.
\end{rk}

\subsection{Cyclic homology}

In this subsection we assume that the characteristic of the field $\Bbbk$ is zero. 
Recall that the 
{\color{ultramarine}{\textbf{reduced Hochschild homology}}}
of $A$ is given by 
\begin{equation}\label{reduced Hochschild homology}
	\begin{split}
	\overline{\operatorname{HH}}_{n}(A)=
	\begin{cases}
	\operatorname{HH}_0(A)/\Bbbk, & \text{if $n=0$}, 
	\\
	\operatorname{HH}_n(A), & \text{if $n\in\NN$}, 
	\end{cases} 
\end{split}
\end{equation}
and the 
{\color{ultramarine}{\textbf{reduced cyclic homology}}} 
of $A$ is given by 
\begin{equation}\label{reduced cyclic homology}
	\begin{split}
	\overline{\operatorname{HC}}_{n}(A)=
	\begin{cases}
	\operatorname{HC}_n(A)/\Bbbk, &  \text{if $n\in \NN_0$ is even}, 
	\\
	\operatorname{HC}_n(A), & \text{if $n\in \NN$ is odd}, 
	\end{cases}
\end{split}
\end{equation}
where $\operatorname{HC}_n(A)$ for $n\in\NN_0$ is the cyclic homology of $A$ (see \cite{Louis}).
As a consequence of Goodwillie's Theorem (see \cite{Weibel}, Thm. 9.9.1), we have the isomorphism of graded vector spaces
\begin{equation}\label{Goodwillie}
	\begin{split}
	\overline{\operatorname{HC}}_{n}(A)\cong
	\begin{cases}
	\overline{\operatorname{HH}}_0(A), & \text{if $n= 0$}, 
	\\
	\overline{\operatorname{HH}}_n(A)/\overline{\operatorname{HC}}_{n-1}(A), & \text{if $n\in\NN$}. 
	\end{cases}
\end{split}
\end{equation}

\begin{cor}\label{hseriescyclichomology} 
Assume that the characteristic of $\Bbbk$ is zero. 
Let $g_{n}(t)$ be the Hilbert series of $\overline{\operatorname{HC}}_{n}(A)$ for $n\in\NN_0$. 
Then
\begin{align*}
g_0(t)=3t+2t^2, \quad 
g_1(t)=t^2+2t^3+t^5, \quad 
g_2(t)=4t^3+2t^4+t^6, \quad 
g_3(t)=t^4+4t^7,
\end{align*}
and for $n\geqslant 4$,
\[   
g_n(t)=t^{n+1}\big[ 
1+3\chi_n+\sum\limits_{i=0}^{\lfloor\frac{n}{4}\rfloor-2}\big( (1+3\chi_n)t^{2+2i}+(1+3\chi_{n+1})t^{3+2i}  \big)+t^{2\lfloor\frac{n}{4}\rfloor}q_n(t)
\big],
\] 
where
\[ 
q_n(t)=
\begin{cases}
3+3t, 
& \text{if $n\equiv 0$ $(\operatorname{mod}$ $4)$},
\\
1+6t+t^3,
& \text{if $n\equiv 1$ $(\operatorname{mod}$ $4)$},
\\
4+3t+t^3,
& \text{if $n\equiv 2$ $(\operatorname{mod}$ $4)$},
\\
1+4t+4t^3,
& \text{if $n\equiv 3$ $(\operatorname{mod}$ $4)$}.
\end{cases}
\] 
%
%
%
\end{cor}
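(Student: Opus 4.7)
The proof is essentially an iteration of the formula \eqref{Goodwillie}. Passing to Hilbert series, the graded vector space isomorphism \eqref{Goodwillie} yields the recursion
\[ g_n(t) = \bar{h}_n(t) - g_{n-1}(t) \quad \text{for } n \geqslant 1, \qquad g_0(t) = \bar{h}_0(t), \]
where $\bar{h}_n(t)$ is the Hilbert series of $\overline{\operatorname{HH}}_n(A)$, which by \eqref{reduced Hochschild homology} satisfies $\bar{h}_0(t) = h_0(t) - 1 = 3t + 2t^2$ and $\bar{h}_n(t) = h_n(t)$ for $n \geqslant 1$. My plan is to verify the explicit polynomials for $g_0, g_1, g_2, g_3$ by direct substitution using the small-degree Hilbert series $h_0, h_1, h_2, h_3$ listed in Corollary \ref{cor hilbert series homology}, and then prove the general formula for $n \geqslant 4$ by induction on $n$.

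For the inductive step at $n \geqslant 4$, I would substitute into $g_n(t) = h_n(t) - g_{n-1}(t)$ the explicit expression for $h_n(t)$ from Corollary \ref{cor hilbert series homology} (handling the small cases $n = 4, 5$ from its explicit list) together with the induction hypothesis for $g_{n-1}(t)$. Using the identity $\chi_{n-1} = \chi_{n+1}$, the main sums of $h_n(t)$ and $g_{n-1}(t)$ cancel in pairs up to a shift: the $t^{n+3+2i}$ terms contribute $(2+6\chi_n) - (1+3\chi_n) = 1+3\chi_n$ and the $t^{n+4+2i}$ terms contribute $(2+6\chi_{n+1}) - (1+3\chi_{n+1}) = 1+3\chi_{n+1}$, which are exactly the coefficients appearing in the main sum of the claimed $g_n(t)$. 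What remains to match is the low-degree head of $h_n(t)$ (its constant, linear, and quadratic terms inside the bracket) together with the endpoint polynomials $p_n(t)$ and $t^{2\lfloor (n-1)/4 \rfloor} q_{n-1}(t)$, against the constant $1 + 3\chi_n$ of $g_n(t)$ and its endpoint $t^{2\lfloor n/4\rfloor} q_n(t)$.

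The main obstacle is a case analysis modulo $4$: both $p_n(t)$ and $q_n(t)$ are defined piecewise according to $n \bmod 4$, and the indexing bounds $\mu_n$, $\lfloor n/4 \rfloor - 2$, $\lfloor (n-1)/4\rfloor - 2$ jump or coincide depending on the residue, so that in cases $n \equiv 0, 1 \pmod 4$ the main sum of $g_n$ has one more term than that of $h_n$, while in cases $n \equiv 2, 3 \pmod 4$ the two sums have the same length. I would therefore split the inductive step into the four residue classes and check in each that the leftover head, together with the difference of the endpoint corrections, produces exactly the missing endpoint term of the new main sum and the claimed $t^{2\lfloor n/4\rfloor}q_n(t)$. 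This reduces to mechanical polynomial arithmetic with short explicit polynomials in each case, and the small-case verification already exhibits the pattern that makes each of the four residue cases go through.
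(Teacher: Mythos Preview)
Your proposal is correct and follows essentially the same approach as the paper: the paper's proof simply records the recursion $g_0(t)=h_0(t)-1$ and $g_n(t)=h_n(t)-g_{n-1}(t)$ from \eqref{Goodwillie} and then says ``we get the result by induction,'' without spelling out the case analysis modulo $4$ that you outline.
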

\begin{proof}
By \eqref{Goodwillie}, we have 
\begin{equation}\label{Goodwillie1}
	\begin{split}
	g_n(t)= 
	\begin{cases}
	h_0(t)-1,          & \text{if $n= 0$}, 
	\\
	h_n(t)-g_{n-1}(t), & \text{if $n\in\NN$}. 
	\end{cases} 
\end{split}
\end{equation}
Then we get the result by induction.
\end{proof}

\begin{rk}\label{rk:cyclic coho}
	The cyclic cohomology of $A$ is isomorphic to the dual space of the cyclic homology of $A$, so their Hilbert series coincide (see \cite{Louis}).
\end{rk}

\section{\texorpdfstring{Hochschild cohomology of $\FK(3)$}{Hochschild cohomology of FK(3)}}
\label{Hochschild cohomology}  

In this section we will compute the linear structure of the Hochschild cohomology $\operatorname{HH}^{\bullet}(A)=\operatorname{Ext}_{A^e}^{\bullet}(A,A)$ by means of the complex $\operatorname{H}^{\bullet}(\operatorname{Hom}_{A^e}(P_{\bullet}^b,A))$. 
We refer the reader to \cite{Sarah} for further information about Hochschild cohomology. 

\subsection{Recursive description of the spaces}
\label{rdsco}

Let $K^n=\operatorname{Hom}_{\Bbbk}((A_{-n}^!)^*,A)$ for $n\in\NN_0$ and $K^n=0$ for $n\in \ZZ\setminus \NN_0$. We have $\operatorname{Hom}_{A^e}(P_{n}^b,A)\cong Q^n $ as $\Bbbk$-vector spaces, where $Q^n= \oplus_ {i\in \llbracket 0, \lfloor n/4 \rfloor \rrbracket} \omega^*_i K^{n-4i}$ for $n\in\NN_0$ and $Q^n=0$ for $n\in \ZZ\setminus \NN_0$. We will denote by $\partial^{n}:Q^n\to Q^{n+1}$ the differential
\[
(\delta_{n+1}^b)^*: \operatorname{Hom}_{A^e}(P_{n}^b,A)\to \operatorname{Hom}_{A^e}(P_{n+1}^b,A), 
\] 
by $d^n:K^n\to K^{n+1}$ the differential 
\[ 
(d_{n+1}^b)^*: \operatorname{Hom}_{A^e}(K_{n}^b,A)\to \operatorname{Hom}_{A^e}(K_{n+1}^b,A),
\] 
and by $f^n:K^{n+3}\to K^{n}$ the map
\[ 
(f_n^b)^{*}:\operatorname{Hom}_{A^e}(K_{n+3}^b,A)\to \operatorname{Hom}_{A^e}(K_n^b,A)\] 
for $n\in\ZZ$. 
Then the differential $\partial^n$ for $n\in\NN_0$ is given by 
\begin{equation}
\label{eq:diff-partial-cohomology}
 \partial^{n}\bigg(\sum\limits_{\substack{i\in \llbracket 0, \lfloor n/4\rfloor \rrbracket}}\omega^*_i\xi _{n-4i}\bigg)=\sum\limits_{\substack{i\in \llbracket 0, \lfloor n/4\rfloor \rrbracket}}\big(\omega^*_i d^{n-4i}(\xi_{n-4i})+\omega^*_{i+1}f^{n-4i-3}(\xi_{n-4i}) \big), 
 \end{equation}
where $\xi_j\in K^j$ for $j\in\NN_0$. Note that $\partial^{n}=\tilde{\partial}^{n}=0$ for $n\in \ZZ\setminus \NN_0$.

Our aim is to compute the cohomology of $(Q^{\bullet}, \partial^{\bullet})$. Let $K^n_m=\operatorname{Hom}_{\Bbbk}((A_{-n}^!)^*,A_m)$ for $(n,m)\in \NN_0 \times \llbracket 0,4 \rrbracket$ and $K^n_m=0$ for $(n,m)\in \ZZ^2\setminus (\NN_0 \times \llbracket 0,4 \rrbracket)$. 
Let $Q_{m}^n=\oplus_ {i\in \llbracket 0, \lfloor n/4 \rfloor \rrbracket}\omega^*_i K^{n-4i}_{m+2i}$ for $(n,m)\in \NN_0\times \ZZ_{\leqslant 4}$ 
and $Q^n_m=0$ for $(n,m)\in\ZZ^2\setminus (\NN_0\times \ZZ_{\leqslant 4})$, 
where the symbol $\omega^*_i$ has cohomological degree $4i$ and internal degree $-6i$ for $i\in\NN_0$, and we usually omit $\omega^*_0$ for simplicity. 
The spaces $K^n_m$ and $Q^n_m$ are concentrated in cohomological degree $n$ and internal degree $m-n$.
We have $Q^n=\oplus_{m\leqslant 4}Q^n_m$. 
Let $\partial^n_m=\partial^n|_{Q^n_m} :Q^n_m\to Q^{n+1}_{m+1}$, 
and $d^n_m=d^n|_{K^n_m}:K^n_m\to K^{n+1}_{m+1}$. 
Let $D^n_m=\Ker(\partial^n_m)$, $B^n_m=\Img(\partial^{n-1}_{m-1})$ for $(n,m)\in \NN_0\times \ZZ_{\leqslant 4}$, and $\tilde{D}^n_m=\Ker(d^n_m)$, $\tilde{B}^n_m=\Img(d^{n-1}_{m-1})$ for $(n,m)\in \NN_0\times \llbracket 0,4\rrbracket $. 
Notice that $D^n_m=B^n_m=0$ for $(n,m)\in \ZZ^2\setminus ( \NN_0\times \ZZ_{\leqslant 4})$, and $\tilde{D}^n_m=\tilde{B}^n_m=0$ for $(n,m)\in \ZZ^2\setminus (\NN_0\times \llbracket 0,4\rrbracket) $.

\begin{rk}\label{mrange}
	We have $Q^n=\oplus_{m\in\llbracket -2\lfloor n/4\rfloor, 4\rrbracket }Q^n_m$ since the indices in $Q_{m}^n=\oplus_ {i\in \llbracket 0, \lfloor n/4 \rfloor \rrbracket}\omega^*_i K^{n-4i}_{m+2i}$ satisfy $n-4i\in \NN_0$ and $ m+2i\in \llbracket 0, 4 \rrbracket$.
\end{rk}

\begin{prop}\label{bdnmco}
	For integers $m\leqslant 1$ and $n\in\NN_0$, we have 
	\begin{equation}\label{bnmco}
		\begin{split}
	 B_{m}^n=
	 \begin{cases}
	\omega^*_{\frac{1-m}{2}}B_{1}^{n+2m-2},  &  \text{if $m$ is odd}, 
	\\
	\omega^*_{-\frac{m}{2}}B_0^{n+2m}, & \text{if $m$ is even},
	\end{cases}
\end{split}
\end{equation}
and
\begin{equation}\label{dnmco}
\begin{split}
D_{m}^n=
\begin{cases}
	\omega^*_{\frac{1-m}{2}}D_{1}^{n+2m-2},  &  \text{if $m$ is odd}, 
	\\
	\omega^*_{-\frac{m}{2}}D_{0}^{n+2m}, & \text{if $m$ is even},
	\end{cases}
\end{split}
\end{equation}
where we follow the convention that $\omega^*_i\omega^*_j=\omega^*_{i+j}$ for $i,j\in \NN_0$ and $\omega^*_i=0$ for $i\in \ZZ\setminus \NN_0$. 
\end{prop}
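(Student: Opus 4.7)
The plan is to adapt the proof of Proposition~\ref{bdh} to the cohomological setting, proceeding in three steps.

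\textbf{Step 1 (recursive description of $Q^n_m$).} I first enumerate the admissible indices $i\in\llbracket 0,\lfloor n/4\rfloor\rrbracket$ in the decomposition $Q^n_m=\bigoplus_i \omega^*_i K^{n-4i}_{m+2i}$ under the constraint $m+2i\in\llbracket 0,4\rrbracket$. For $m$ odd the valid values are $m+2i\in\{1,3\}$, and for $m$ even they are $m+2i\in\{0,2,4\}$. Writing out the resulting direct-sum decompositions in analogy to \eqref{pnmodd}--\eqref{pnmeven} and factoring out the smallest admissible power of $\omega^*$ yields
\[
Q^n_m=
\begin{cases}
\omega^*_{(1-m)/2}\, Q^{n+2m-2}_1, & m\leqslant -1,\ m\ \text{odd},\\
\omega^*_{-m/2}\, Q^{n+2m}_0, & m\leqslant -2,\ m\ \text{even},
\end{cases}
\]
the cases $m\in\{0,1\}$ being tautological.

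\textbf{Step 2 (intertwining by $\omega^*$).} Inspection of \eqref{eq:diff-partial-cohomology} shows that for $\xi\in K^{n-4j}_{m+2j}$ regarded as lying in the $j$-th summand of $Q^n_m$, one has
\[
\partial^n(\omega^*_j\xi)=\omega^*_j d^{n-4j}(\xi)+\omega^*_{j+1}f^{n-4j-3}(\xi),
\]
which is precisely $\omega^*_j\partial^{n-4j}(\xi)$ by writing out $\partial^{n-4j}(\xi)=d^{n-4j}(\xi)+\omega^*_1 f^{n-4j-3}(\xi)$ and using $\omega^*_j\omega^*_1=\omega^*_{j+1}$. Hence left multiplication by $\omega^*_j$ intertwines $\partial^{n-4j}$ and $\partial^n$ on matching summands.

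\textbf{Step 3 (reading off kernels and images).} For $m\leqslant -1$ odd, Step 1 gives $Q^{n-1}_{m-1}=\omega^*_{(1-m)/2}Q^{n+2m-3}_0$ (using the even formula since $m-1\leqslant -2$) and $Q^n_m=\omega^*_{(1-m)/2}Q^{n+2m-2}_1$, while Step 2 yields $\partial^{n-1}_{m-1}=\omega^*_{(1-m)/2}\partial^{n+2m-3}_0$ and $\partial^n_m=\omega^*_{(1-m)/2}\partial^{n+2m-2}_1$, the latter landing in $\omega^*_{(1-m)/2}Q^{n+2m-1}_2\subseteq Q^{n+1}_{m+1}$. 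Since $\omega^*_{(1-m)/2}$ is an injective symbol, taking kernels and images gives $D^n_m=\omega^*_{(1-m)/2}D^{n+2m-2}_1$ and $B^n_m=\omega^*_{(1-m)/2}B^{n+2m-2}_1$. The case $m\leqslant -2$ even proceeds identically, with anchor index $1$ replaced by $0$.

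The main obstacle is the edge case $m+1\in\{0,1\}$, where the codomain $Q^{n+1}_{m+1}$ carries summands at $\omega^*$-level strictly below $(1-m)/2$ (respectively $-m/2$) that are not captured by the recursive decomposition of Step 1. The intertwining of Step 2 rescues the argument: the image of $\partial^n_m$ is automatically contained in the $\omega^*$-divisible subspace of $Q^{n+1}_{m+1}$, so those extra summands play no role in either the kernel or the image computation.
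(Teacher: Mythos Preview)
Your proof is correct and follows the same strategy as the paper's: establish the recursive decomposition of $Q^n_m$, then use that left multiplication by $\omega^*_j$ intertwines the differentials. One small correction to your final paragraph: the codomain mismatch you describe actually occurs for \emph{every} odd $m\leqslant -1$ (since then $Q^{n+1}_{m+1}=\omega^*_{-(m+1)/2}Q^{n+2m+3}_0$ always carries the extra summand $\omega^*_{(1-m)/2-1}K^{n+2m+3}_0$ at a strictly lower $\omega^*$-level than the domain), not only when $m+1\in\{0,1\}$; your resolution via Step~2 is valid in all these cases, so the argument is unaffected.
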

\begin{proof}
Consider $Q_{m}^n=\oplus_ {i\in \llbracket 0, \lfloor n/4 \rfloor \rrbracket}\omega^*_i K^{n-4i}_{m+2i}$ for integers $m\leqslant 4$ and $n\in\NN_0$.
If $m$ is odd, then $m+2i=1$ or $3$, \textit{i.e.}
$i=(1-m)/2$ or $(3-m)/2$. 
We have 
\begin{equation}\label{qnmodd}
\begin{split}
	Q_{m}^n=
	\begin{cases}
		\omega^*_{\frac{1-m}{2}}K^{n+2m-2}_{1}\oplus \omega^*_{\frac{3-m}{2}}K^{n+2m-6}_{3},   & \text{if $n\geqslant 6-2m$},
		\\
		\omega^*_{\frac{1-m}{2}}K^{n+2m-2}_{1}, & \text{if $2-2m\leqslant n<6-2m$},
		\\
	    0,  & \text{if $0\leqslant n<2-2m$}.
	\end{cases} 
\end{split}
\end{equation}
If $m$ is even, then $m+2i=0, 2$ or $4$, \textit{i.e.}
$i=-m/2$, $1-m/2$ or $2-m/2$. We have 
\begin{equation}\label{qnmeven}
	\begin{split}
	Q_{m}^n=
	\begin{cases}
		\omega^*_{-\frac{m}{2}}K^{n+2m}_{0}\oplus \omega^*_{1-\frac{m}{2}}K^{n+2m-4}_{2} \oplus \omega^*_{2-\frac{m}{2}}K^{n+2m-8}_{4},  & \text{if $n\geqslant 8-2m$},
		\\
		\omega^*_{-\frac{m}{2}}K^{n+2m}_{0}\oplus \omega^*_{1-\frac{m}{2}}K^{n+2m-4}_{2},   & \text{if $4-2m\leqslant n<8-2m$}, 
		\\
		\omega^*_{-\frac{m}{2}}K^{n+2m}_{0}, & \text{if $-2m\leqslant n<4-2m$}, 
	    \\
	    0,  & \text{if $0\leqslant n<-2m$}.
	\end{cases} 
\end{split}
\end{equation}
Hence, 
\begin{equation}\label{qnm}
	\begin{split}
	 Q_{m}^n=
	 \begin{cases}
	\omega^*_{\frac{1-m}{2}}Q_{1}^{n+2m-2},  & \text{if $m\leqslant 1$ is odd}, 
	\\
	\omega^*_{-\frac{m}{2}}Q_{0}^{n+2m}, & \text{if $m\leqslant 0$ is even}.
	\end{cases}
\end{split}
\end{equation}
Since the identities \eqref{bnmco} and \eqref{dnmco} for $m=1$ are immediate, we suppose $m\leqslant 0$ from now on.

Assume that $m$ is even. 
Then \eqref{qnm} tells us that the sequence 
\[ 
Q^{n-1}_{m-1}\xrightarrow{\partial^{n-1}_{m-1}}  Q^n_m \xrightarrow{\partial^n_m} Q^{n+1}_{m+1}
\] 
of graded $\Bbbk$-vector spaces is of the form 
\[ 
\omega^*_{1-\frac{m}{2}}Q_1^{n+2m-5}
\xrightarrow{\partial^{n-1}_{m-1}} 
\omega^*_{-\frac{m}{2}}Q^{n+2m}_{0}
\xrightarrow{\partial^n_m} 
\omega^*_{-\frac{m}{2}}Q_{1}^{n+2m+1}.
\]
Since $Q_{-1}^{n+2m-1}=\omega^*_1 Q_1^{n+2m-5}$ by \eqref{qnm}, 
the above sequence is of the form 
\[
\omega^*_{-\frac{m}{2}}Q_{-1}^{n+2m-1}
\xrightarrow{\partial^{n-1}_{m-1}} 
\omega^*_{-\frac{m}{2}}Q^{n+2m}_{0}
\xrightarrow{\partial^n_m}
\omega^*_{-\frac{m}{2}}Q_{1}^{n+2m+1}.
\]
Note further that 
$\partial^n_m=\omega^*_{-\frac{m}{2}}\partial^{n+2m}_0$ 
and 
$\partial^{n-1}_{m-1}=\omega^*_{1-\frac{m}{2}}\partial^{n+2m-5}_{1}=\omega^*_{-\frac{m}{2}}\partial^{n+2m-1}_{-1}   $, 
where the differential $\omega^*_j\partial^{n'}_{m'}:\omega^*_j Q^{n'}_{m'}\to \omega^*_j Q^{n'+1}_{m'+1}$ maps $\omega^*_jx$ to $\omega^*_j\partial^{n'}_{m'}(x)$ for all $x\in Q^{n'}_{m'}$, $j,n'\in \NN_0$ and for all integers $m'\leqslant 4$. 
Hence, $B_m^n=\omega^*_{-\frac{m}{2}}B^{n+2m}_0$ and $D_m^n=\omega^*_{-\frac{m}{2}}D^{n+2m}_0$.

Assume that $m$ is odd (so $m\leqslant -1$). 
Then \eqref{qnm} tells us that the sequence 
\[ Q^{n-1}_{m-1}\xrightarrow{\partial^{n-1}_{m-1}}  Q^n_m \xrightarrow{\partial^n_m} Q^{n+1}_{m+1}\] 
of graded $\Bbbk$-vector spaces is of the form
\[\omega^*_{\frac{1-m}{2}}Q_0^{n+2m-3}
\xrightarrow{\partial^{n-1}_{m-1}}
\omega^*_{\frac{1-m}{2}}Q^{n+2m-2}_{1}
\xrightarrow{\partial^{n}_{m}}
\omega^*_{-\frac{m+1}{2}}Q_{0}^{n+2m+3}.\] 
Note that  
$\partial^{n-1}_{m-1}=\omega^*_{\frac{1-m}{2}}\partial^{n+2m-3}_0$.	
Moreover, we also have 
\begin{equation}
\label{eq:decomposition-Q-chomology}
    \omega^*_{-\frac{m+1}{2}}Q_0^{n+2m+3}=\omega^*_{-\frac{m+1}{2}} K^{n+2m+3}_0\oplus \omega^*_{\frac{1-m}{2}} Q^{n+2m-1}_2 
\end{equation}
by \eqref{qnmeven}, 
and the image of $\omega^*_{\frac{1-m}{2}}Q_1^{n+2m-2}$ is contained in $\omega^*_{\frac{1-m}{2}}Q_2^{n+2m-1}$ by the explicit expression of the differential \eqref{eq:diff-partial-cohomology}. 
Furthermore, the composition of $\partial^n_m$ with the canonical projection 
\[       \omega^*_{-\frac{m+1}{2}}Q_0^{n+2m+3} \longrightarrow  \omega^*_{\frac{1-m}{2}} Q^{n+2m-1}_2        \]
induced by \eqref{eq:decomposition-Q-chomology} is precisely
$\omega^*_{\frac{1-m}{2}}\partial^{n+2m-2}_1$.
It is thus sufficient to consider the sequence 
\[ \omega^*_{\frac{1-m}{2}}Q_0^{n+2m-3}
\xrightarrow{\omega^*_{\frac{1-m}{2}}\partial^{n+2m-3}_0}
\omega^*_{\frac{1-m}{2}}Q^{n+2m-2}_{1}
\xrightarrow{\omega^*_{\frac{1-m}{2}}\partial^{n+2m-2}_1}
\omega^*_{\frac{1-m}{2}}Q_{2}^{n+2m-1}. \]
Hence, $B_{m}^n=\omega^*_{\frac{1-m}{2}}B_1^{n+2m-2}$ and $D_{m}^n=\omega^*_{\frac{1-m}{2}}D_1^{n+2m-2}$, as was to be shown.
\end{proof}
	
Throughout the Sections \ref{Hochschild cohomology} and \ref{section:algebra-cohomology} we will use the symbol $y|x$, where $x\in \mathcalboondox{B}_{m}$ and $y\in \mathcalboondox{B}_{n}^{!*}$, to denote the $\Bbbk$-linear map in $K^n=\operatorname{Hom}_{\Bbbk}((A^!_{-n})^*,A)$, which maps $y$ to $x$ and sends the other usual basis elements of $(A^!_{-n})^*$ to zero. 
Even though one usually denotes the previous map by $y||x$, we will use $y|x$ for the sake of reducing space in the expressions of the next subsection.

In order to compute $B^n_{m}$ and $D^n_{m}$, it is sufficient to compute the case $m\in \llbracket 0,4 \rrbracket$ according to Proposition \ref{bdnmco}.
First, we will compute the coboundaries, and then we will compute the cocycles. 
Since this will require handling elements of $K^n_{m}$ and $Q^n_{m}$ for $n \in \NN_{0}$ and $m \in \llbracket 0 , 4 \rrbracket$, we will use the basis $\{ y | x \hskip1mm | \hskip1mm x \in \mathcalboondox{B}_{m}, y \in \mathcalboondox{B}_{n}^{!*} \}$ of 
$K^n_{m}$ 
and the basis 
$\{ \omega^*_{i}  y|x  \hskip1mm | \hskip1mm i \in \llbracket 0 , \lfloor n/4 \rfloor \rrbracket, x \in \mathcalboondox{B}_{m+2i}, y \in \mathcalboondox{B}_{n-4i}^{!*} \}$ of 
$Q^n_{m}$, both of which will be called {\color{ultramarine}{\textbf{usual}}} bases, constructed from the usual bases of the homogeneous components of $A$ and $(A^{!})^{\#}$, introduced in Section \ref{section:generalities}.

\subsection{Explicit description of the differentials}
\label{subsection:explicit-description-diff-cohomology}

Recall the isomorphism $\operatorname{Hom}_{A^e}(A\otimes (A^!_{-n})^*\otimes A,A)\cong \operatorname{Hom}_{\Bbbk}((A^!_{-n})^*,A)$.
We will use it together with Proposition \ref{prb} to explicitly describe $d^n$ and $f^n$, which were defined at the beginning of Subsection \ref{rdsco}.

Let $x\in A$. It is then straightforward to see that the differential 
$d^0:\operatorname{Hom}_{\Bbbk}((A^!_{0})^*,A)\to \operatorname{Hom}_{\Bbbk}((A^!_{-1})^*,A) $ is given by $d^0(\epsilon^!|x)=\alpha|(xa-ax)+\beta|(xb-bx)+\gamma|(xc-cx)$. 
Analogously, for $n\in\NN$, $d^n:\operatorname{Hom}_{\Bbbk}((A^!_{-n})^*,A)\to \operatorname{Hom}_{\Bbbk}((A^!_{-(n+1)})^*,A)$ is given by
\begin{equation}\label{codifferential}
	\begin{split}
	\alpha_n|x & \mapsto   \alpha_{n+1}|[(-1)^{n+1}ax+xa]+\alpha_{n}\beta|(\chi_{n+1}cx-\chi_n bx+xb)
	\\
	& \phantom{ \mapsto \; } +\alpha_n\gamma|(\chi_{n+1}bx-\chi_n cx+xc),
	\\
	\beta_n|x&\mapsto \beta_{n+1}|[(-1)^{n+1}bx+xb]+\chi_{n+1}\alpha_n\beta|(ax+xc)+\alpha_n\gamma|[(-1)^{n+1}cx+\chi_{n+1}xa+\chi_n xc]
	\\
	&\phantom{ \mapsto \; }
	+\chi_n \alpha_{n-1}\beta_2|(xa-ax),
	\\
	\gamma_n|x &\mapsto \gamma_{n+1}|[(-1)^{n+1}cx+xc]+\alpha_n\beta|[(-1)^{n+1}bx+\chi_{n+1}xa+\chi_n xb]+\chi_{n+1}\alpha_n\gamma|(ax+xb)
	\\
	&\phantom{ \mapsto \; } +\chi_n\alpha_{n-1}\beta_2|(xa-ax),
	\\
	\alpha_{n-1}\beta|x & \mapsto \alpha_n\beta|[(-1)^{n+1}ax+xc]+\alpha_n\gamma|(\chi_{n+1}cx-\chi_n bx+xa)
	\\
	   & \phantom{ \mapsto \; }
	   +\alpha_{n-1}\beta_2|(\chi_{n+1}bx-\chi_n cx+xb),
	   \\
	\alpha_{n-1}\gamma|x&\mapsto \alpha_n\beta|(\chi_{n+1}bx-\chi_n cx+xa)+\alpha_n\gamma|[(-1)^{n+1}ax+xb]
	\\
	&\phantom{ \mapsto \; }
	+\alpha_{n-1}\beta_2|(\chi_{n+1}cx-\chi_n bx+xc),
	\\
	\alpha_{n-2}\beta_2|x &\mapsto \alpha_n\beta|(\chi_{n+1}cx-\chi_n bx+xb)+\alpha_n\gamma|(\chi_{n+1}bx-\chi_n cx+xc)
	\\
	&\phantom{ \mapsto \; }
	+\alpha_{n-1}\beta_2|[(-1)^{n+1}ax+xa].  
	\end{split}
\end{equation}
The $\Bbbk$-linear maps $f^n:\operatorname{Hom}_{\Bbbk}((A^!_{-(n+3)})^*,A)\to \operatorname{Hom}_{\Bbbk}((A^!_{-n})^*,A)$
are homogeneous of cohomological degree $-3$ and internal degree $6$. The map $f^0$ is given by 
\begin{align*}
    \alpha_3|x &\mapsto \epsilon^!|[2xbac-2bx(ab+bc)+2cxba+2abxc+2acxb+2bacx],
	\\
	\beta_3|x&\mapsto \epsilon^!|[2xabc-2ax(ba+ac)+2cxab+2bcxa+2baxc+2abcx],
	\\
	\gamma_3|x&\mapsto \epsilon^!|[-2xaba+2axbc+2bxac-2(ab+bc)xb-2(ba+ac)xa-2abax],
	\\
	\alpha_2\beta|x&\mapsto \epsilon^!|[-xabc+ax(ba+ac)-cxab-bcxa-baxc-abcx],
	\\
	\alpha_2\gamma|x &\mapsto \epsilon^!|[xaba-axbc-bxac+(ab+bc)xb+(ba+ac)xa+abax],
	\\
	\alpha\beta_2|x&\mapsto \epsilon^!|[-xbac+bx(ab+bc)-cxba-abxc-acxb-bacx].
	\end{align*}
For $n\in\NN$, $f^n$ is given by
\begin{small}
\allowdisplaybreaks
\begin{align*}
\alpha_{n+3}|x&\mapsto \alpha_n|[2xbac-\chi_n bx(ab+bc)+\chi_n cxba+\chi_n acxb+\chi_n abxc+(-1)^n 2bacx]\\
&\phantom{ \mapsto \; } +\beta_n|[\chi_n xbac-\chi_n bxbc+(-1)^n 2cxba+\chi_n(ba+ac)xb+2abxc+\chi_n bacx]\\
&\phantom{ \mapsto \; } +\gamma_n|	[\chi_n xbac+(-1)^{n+1}2bx(ab+bc)+\chi_n cx(ba+ac)+2acxb-\chi_n bcxc+\chi_n bacx]\\
&\phantom{ \mapsto \; } +\chi_{n+1}\alpha_{n-1}\beta|[axab-(n-2)cxba+cxac-baxa+(n-1)abxc+bcxc]\\
&\phantom{ \mapsto \; } +\chi_{n+1}\alpha_{n-1}\gamma|[axac+(n-1)bxab+(n-2)bxbc+(n-2)acxb-baxb+(ab+bc)xa]\\
&\phantom{ \mapsto \; } +\alpha_{n-2}\beta_2|\{\chi_{n+1}[(n-1)xbac+cx(ab+bc)-bxba+acxc+abxb-(n-1)bacx]\\
&\phantom{ \mapsto \; } +\chi_n (n-2)[xbac-bx(ab+bc)+cxba+acxb+abxc+bacx]\},\\
\beta_{n+3}|x&\mapsto \alpha_n|[\chi_n xabc+(-1)^n2cxab-\chi_n axac+2baxc+\chi_n (ab+bc)xa+\chi_n abcx]\\
&\phantom{ \mapsto \; } +\beta_n|[2xabc+\chi_n cxab-\chi_n ax(ba+ac)+\chi_n baxc+\chi_n bcxa+(-1)^n2abcx]\\
&\phantom{ \mapsto \; } +\gamma_n|[\chi_n xabc+\chi_n cx(ab+bc)+(-1)^{n+1}2ax(ba+ac)-\chi_n acxc+2bcxa+\chi_n abcx]\\
&\phantom{ \mapsto \; } +\chi_{n+1}\alpha_{n-1}\beta|[(n-1)xabc-axab+cx(ba+ac)+baxa+bcxc-(n-1)abcx]\\
&\phantom{ \mapsto \; } +\chi_{n+1}\alpha_{n-1}\gamma|[bxbc+(n-1)axba+(n-2)axac+(ba+ac)xb+(n-2)bcxa-abxa]\\
&\phantom{ \mapsto \; } +\alpha_{n-2}\beta_2|\{\chi_{n+1}[bxba-(n-2)cxab+cxbc+acxc+(n-1)baxc-abxb]\\
&\phantom{ \mapsto \; } +\chi_n(n-2)[xabc+cxab-ax(ba+ac)+baxc+bcxa+abcx]\},
\stepcounter{equation}\tag{\theequation}\label{cofbn}
\\
\gamma_{n+3}|x&\mapsto \alpha_n|[-\chi_n xaba-\chi_n axab+(-1)^n 2bxac-\chi_n baxa-2(ab+bc)xb-\chi_n abax]
\\
&\phantom{ \mapsto \; } +\beta_n|[-\chi_n xaba+(-1)^n 2axbc-\chi_n bxba-2(ba+ac)xa-\chi_n abxb-\chi_n abax]
\\
&\phantom{ \mapsto \; } +\gamma_n |[-2xaba+\chi_n axbc+\chi_n bxac-\chi_n(ba+ac)xa-\chi_n (ab+bc)xb+(-1)^{n+1}2abax]
\\
&\phantom{ \mapsto \; } +\chi_{n+1}\alpha_{n-1}\beta|[-axab-cx(ba+ac)-(n-1)axbc-(n-2)baxa-(n-1)acxa-bcxc]
\\
&\phantom{ \mapsto \; } +\chi_{n+1}\alpha_{n-1}\gamma|[-(n-1)xaba-bxbc-axac-(ba+ac)xb-(ab+bc)xa+(n-1)abax]
\\
&\phantom{ \mapsto \; } +\alpha_{n-2}\beta_2|\{ \chi_{n+1}[-cx(ab+bc)-(n-1)bxac-bxba-acxc-(n-1)bcxb-(n-2)abxb]
\\
&\phantom{ \mapsto \; }
+\chi_n (n-2)[-xaba+axbc+bxac-(ba+ac)xa-(ab+bc)xb-abax]  \},
\\
\alpha_{n+2}\beta|x&\mapsto \alpha_n|(-\chi_n cxab-\chi_n baxc)+\beta_n|(-\chi_n xabc-\chi_n abcx)+\gamma_n|[\chi_n ax(ba+ac)-\chi_n bcxa],
\\
\alpha_{n+2}\gamma|x&\mapsto \alpha_n|[-\chi_n bxac+\chi_n (ab+bc)xb]+\beta_n|[-\chi_n axbc+\chi_n (ba+ac)xa]
+\gamma_n|(\chi_n xaba+\chi_n abax),\\
\alpha_{n+1}\beta_2|x&\mapsto \alpha_n|[-xbac+\chi_{n+1}bxac+\chi_{n+1}cxab+\chi_{n+1}(ab+bc)xb-\chi_{n+1}baxc+(-1)^{n+1}bacx]
\\
&\phantom{ \mapsto \; }
+\beta_n|[-\chi_{n+1}xabc+(-1)^{n+1}cxba+\chi_{n+1}axbc-abxc+\chi_{n+1}(ba+ac)xa+\chi_{n+1}abcx]\\
&\phantom{ \mapsto \; }
+\gamma_n|[\chi_{n+1}xaba-\chi_{n+1}ax(ba+ac)+(-1)^n bx(ab+bc)-\chi_{n+1}bcxa-acxb-\chi_{n+1}abax].
\end{align*}
\end{small}

For the reader’s convenience, we list the images of the differentials $d^n$ and the maps $f^n$ evaluated at elements of the usual $\Bbbk$-basis of the respective domain. 
In the following tables, $d^n_{m}(y|x)$ is the entry appearing in the column indexed by $x$ and the row indexed by $y$, where $m$ is the internal degree of $x$ and $n$ is the internal degree of $y$.

If $n\in\NN$ is odd, the differential $d^n$ is given by 
\begin{table}[H]
	\begin{center}
	\begin{tabular}{|c|c|c|c|c|c|}
		\hline
		\diagbox[width=16mm , height=5mm]{$y$}{$x$}   & $abac$ & $aba$ & $abc$ & $bac$ \\
		\hline
		$\alpha_n$             &   0    & $(\alpha_n\gamma-\alpha_n\beta)|abac$ & $(\alpha_n\gamma-\alpha_n\beta)|abac$ & $0$   \\
		\hline
		$\beta_n$              &   0    & $(\alpha_n\beta-\alpha_n\gamma)|abac$ & $0$ & $(\alpha_n\beta-\alpha_n\gamma)|abac$   \\
		\hline     
		$\gamma_n$             &   0    &  0  & $(\alpha_n\beta-\alpha_n\gamma)|abac$ & $(\alpha_n\gamma-\alpha_n\beta)|abac$   \\
		\hline
        $\alpha_{n-1}\beta$    &   0    & $(\alpha_n\beta-\alpha_n\gamma)|abac$ &  0   & $(\alpha_n\beta-\alpha_n\gamma)|abac$   \\
		\hline
		$\alpha_{n-1}\gamma$   &   0    &  0   & $(\alpha_n\beta-\alpha_n\gamma)|abac$ & $(\alpha_n\gamma-\alpha_n\beta)|abac$   \\
		\hline
		$\alpha_{n-2}\beta_2$  &   0    & $(\alpha_n\gamma-\alpha_n\beta)|abac$ & $(\alpha_n\gamma-\alpha_n\beta)|abac$ &  0   \\
		\hline
	\end{tabular}
	\end{center}
	\caption{Images of $d^n$ for $n\in\NN$ and $n$ odd, where the last three lines are for $n\geqslant 3$ and $n$ odd.}
	\label{odd43}	
	\end{table}

\noindent together with

	\begin{table}[H]
		\begin{center}
		\resizebox{\textwidth}{15mm}{
		\begin{tabular}{|c|c|c|}
			\hline
			\diagbox[width=16mm , height=5mm]{$y$}{$x$}					   & $ab$ & $bc$  \\
			\hline
			$\alpha_n$             & $\alpha_{n+1}|aba+\alpha_n\beta|bac+\alpha_n\gamma|(aba+abc)$ & $\alpha_{n+1}|(abc-aba)-2\alpha_{n}\beta|bac$ \\
			\hline
			$\beta_n$              & $\beta_{n+1}|aba+\alpha_n\beta|abc+\alpha_n\gamma|(aba+bac)$  & $-\beta_{n+1}|bac+\alpha_n\beta|abc-\alpha_n\gamma|(aba+bac)$     \\
			\hline     
			$\gamma_n$             & $\gamma_{n+1}|(abc+bac)+2\alpha_n\beta|aba$ & $-\gamma_{n+1}|bac-\alpha_n\beta|aba+\alpha_n\gamma|(abc-bac)$   \\
			\hline
			$\alpha_{n-1}\beta$    &  $\alpha_n\beta|abc+\alpha_n\gamma|(aba+bac)+\alpha_{n-1}\beta_2|aba$ & $\alpha_n\beta|abc-\alpha_n\gamma|(aba+bac)-\alpha_{n-1}\beta_2|bac$ \\
			\hline
			$\alpha_{n-1}\gamma$   &  $2\alpha_n\beta|aba+\alpha_{n-1}\beta_2|(abc+bac)$ & $-\alpha_n\beta|aba+\alpha_n\gamma|(abc-bac)-\alpha_{n-1}\beta_2|bac$ \\
			\hline
			$\alpha_{n-2}\beta_2$  & $\alpha_n\beta|bac+\alpha_n\gamma|(aba+abc)+\alpha_{n-1}\beta_2|aba$ & $-2\alpha_n\beta|bac+\alpha_{n-1}\beta_2|(abc-aba)$ \\
			\hline
		\end{tabular}
    	}
		\end{center}
		\caption{Images of $d^n$ for $n\in\NN$ and $n$ odd, where the last three lines are for $n\geqslant 3$ and $n$ odd.}
		\label{odd2 ab bc}	
		\end{table}
	
\noindent and	
	
		\begin{table}[H]
			\begin{center}
			\resizebox{\textwidth}{15mm}{
			\begin{tabular}{|c|c|c|}
				\hline
				\diagbox[width=16mm , height=5mm]{$y$}{$x$}					   & $ba$ & $ac$  \\
				\hline
				$\alpha_n$             & $\alpha_{n+1}|aba+\alpha_n\beta|(aba+abc)+\alpha_n\gamma|bac$ & $-\alpha_{n+1}|abc-\alpha_{n}\beta|(aba+abc)+\alpha_n\gamma|bac$ \\
				\hline
				$\beta_n$              & $\beta_{n+1}|aba+\alpha_n\beta|(aba+bac)+\alpha_n\gamma|abc$  & $\beta_{n+1}|(bac-aba)-2\alpha_n\gamma|abc$     \\
				\hline     
				$\gamma_n$             & $\gamma_{n+1}|(abc+bac)+2\alpha_n\gamma|aba$ & $-\gamma_{n+1}|abc+\alpha_n\beta|(bac-abc)-\alpha_n\gamma|aba$   \\
				\hline
				$\alpha_{n-1}\beta$    &  $\alpha_n\beta|(aba+bac)+\alpha_n\gamma|abc+\alpha_{n-1}\beta_2|aba$ & $-2\alpha_n\gamma|abc+\alpha_{n-1}\beta_2|(bac-aba)$ \\
				\hline
				$\alpha_{n-1}\gamma$   &  $2\alpha_n\gamma|aba+\alpha_{n-1}\beta_2|(abc+bac)$ & $\alpha_n\beta|(bac-abc)-\alpha_n\gamma|aba-\alpha_{n-1}\beta_2|abc$ \\
				\hline
				$\alpha_{n-2}\beta_2$  & $\alpha_n\beta|(aba+abc)+\alpha_n\gamma|bac+\alpha_{n-1}\beta_2|aba$ & $-\alpha_n\beta|(aba+abc)+\alpha_n\gamma|bac-\alpha_{n-1}\beta_2|abc$ \\
				\hline
			\end{tabular}
    		}
			\end{center}
			\caption{Images of $d^n$ for $n\in\NN$ and $n$ odd, where the last three lines are for $n\geqslant 3$ and $n$ odd.}\label{odd2 ba ac}	
			\end{table}

\noindent as well as

			\begin{table}[H]
				\begin{center}
				\resizebox{\textwidth}{15mm}{
				\begin{tabular}{|c|c|c|}
					\hline
						\diagbox[width=16mm , height=5mm]{$y$}{$x$}				   & $a$ & $b$  \\
					\hline
					$\alpha_n$             & $-\alpha_n\beta|bc+\alpha_n\gamma|(ba+ac)$ & $\alpha_{n+1}|(ab+ba)-\alpha_{n}\beta|(ba+ac)+\alpha_n\gamma|bc$ \\
					\hline
					$\beta_n$              & $\beta_{n+1}|(ab+ba)+\alpha_n\beta|ac-\alpha_n\gamma|(ab+bc)$  & $\alpha_n\beta|(ab+bc)-\alpha_{n}\gamma|ac$  \\
					\hline     
					$\gamma_n$             & $\gamma_{n+1}|(ac-ab-bc)+\alpha_n\beta|ba+\alpha_{n}\gamma|ab$ & $\gamma_{n+1}|(bc-ba-ac)+\alpha_n\beta|ba+\alpha_{n}\gamma|ab$   \\
					\hline
					$\alpha_{n-1}\beta$    &  $\alpha_n\beta|ac-\alpha_{n}\gamma|(ab+bc)+\alpha_{n-1}\beta_2|(ab+ba)$ & $\alpha_n\beta|(ab+bc)-\alpha_n\gamma|ac$ \\
					\hline
					$\alpha_{n-1}\gamma$   &  $\alpha_n\beta|ba+\alpha_{n}\gamma|ab+\alpha_{n-1}\beta_2|(ac-ab-bc)$ & $\alpha_n\beta|ba+\alpha_n\gamma|ab+\alpha_{n-1}\beta_2|(bc-ba-ac)$ \\
					\hline
					$\alpha_{n-2}\beta_2$  & $-\alpha_n\beta|bc+\alpha_n\gamma|(ba+ac)$ & $-\alpha_n\beta|(ba+ac)+\alpha_n\gamma|bc+\alpha_{n-1}\beta_2|(ab+ba)$ \\
					\hline
				\end{tabular}
				}
				\end{center}
				\caption{Images of $d^n$ for $n\in\NN$ and $n$ odd, where the last three lines are for $n\geqslant 3$ and $n$ odd.}
				\label{odd1 a b}	
				\end{table}
			
\noindent and			
			
				\begin{table}[H]
					\begin{center}
					\resizebox{\textwidth}{15mm}{
					\begin{tabular}{|c|c|c|}
						\hline
							\diagbox[width=16mm , height=5mm]{$y$}{$x$}				   & $c$ & $1$  \\
						\hline
						$\alpha_n$             & $\alpha_{n+1}|(ac-ab-bc)-\alpha_n\beta|(ba+ac)+\alpha_n\gamma|bc$ & $2\alpha_{n+1}|a+(\alpha_{n}\beta+\alpha_n\gamma)|(b+c)$ \\
						\hline
						$\beta_n$              & $\beta_{n+1}|(bc-ba-ac)+\alpha_n\beta|ac-\alpha_n\gamma|(ab+bc)$  & $2\beta_{n+1}|b+(\alpha_n\beta+\alpha_{n}\gamma)|(a+c)$  \\
						\hline     
						$\gamma_n$             & $-\alpha_n\beta|ab-\alpha_{n}\gamma|ba$ & $2\gamma_{n+1}|c+(\alpha_n\beta+\alpha_{n}\gamma)|(a+b)$   \\
						\hline
						$\alpha_{n-1}\beta$    &  $\alpha_n\beta|ac-\alpha_{n}\gamma|(ab+bc)+\alpha_{n-1}\beta_2|(bc-ba-ac)$ & $(\alpha_n\beta+\alpha_n\gamma)|(a+c)+2\alpha_{n-1}\beta_2|b$ \\
						\hline
						$\alpha_{n-1}\gamma$   &  $-\alpha_n\beta|ab-\alpha_{n}\gamma|ba$ & $(\alpha_n\beta+\alpha_n\gamma)|(a+b)+2\alpha_{n-1}\beta_2|c$ \\
						\hline
						$\alpha_{n-2}\beta_2$  & $-\alpha_n\beta|(ba+ac)+\alpha_n\gamma|bc+\alpha_{n-1}\beta_2|(ac-ab-bc)$ & $(\alpha_n\beta+\alpha_n\gamma)|(b+c)+2\alpha_{n-1}\beta_2|a$ \\
						\hline
					\end{tabular}
    				}
					\end{center}
					\caption{Images of $d^n$ for $n\in\NN$ and $n$ odd, where the last three lines are for $n\geqslant 3$ and $n$ odd.}
					\label{odd10 c 1}	
					\end{table}

If $n\geqslant 2$ is even, the differential $d^n$ is given by 
\begin{table}[H]
	\begin{center}
	\resizebox{\textwidth}{15mm}{
	\begin{tabular}{|c|c|c|c|c|c|}
		\hline
		    \diagbox[width=16mm , height=5mm]{$y$}{$x$}                   & $abac$ & $aba$ & $abc$ & $bac$ \\
		\hline
		$\alpha_n$             &   0    & $2\alpha_n\gamma|abac$ & $-2\alpha_n\beta|abac$   & $-2\alpha_{n+1}|abac$   \\
		\hline
		$\beta_n$              &   0    & $2\alpha_n\gamma|abac$ & $-2\beta_{n+1}|abac$     & $-2\alpha_{n-1}\beta_2|abac$   \\
		\hline     
		$\gamma_n$             &   0    & $2\gamma_{n+1}|abac$   & $-2\alpha_{n}\beta|abac$ & $-2\alpha_{n-1}\beta_2|abac$   \\
		\hline
        $\alpha_{n-1}\beta$    &   0    & $\alpha_n\beta|abac+\alpha_{n-1}\beta_2|abac$ & $-\alpha_{n}\gamma|abac-\alpha_{n-1}\beta_2|abac$ & $-\alpha_n\beta|abac-\alpha_n\gamma|abac$   \\
		\hline
		$\alpha_{n-1}\gamma$   &   0    & $\alpha_n\beta|abac+\alpha_{n-1}\beta_2|abac$ & $-\alpha_{n}\gamma|abac-\alpha_{n-1}\beta_2|abac$ & $-\alpha_n\beta|abac-\alpha_n\gamma|abac$   \\
		\hline
		$\alpha_{n-2}\beta_2$  &   0    & $2\alpha_n\gamma|abac$ & $-2\alpha_n\beta|abac$ & $-2\alpha_{n-1}\beta_2|abac$  \\
		\hline
	\end{tabular}
    }
	\end{center}
	\caption{Images of $d^n$ for $n\geqslant 2$ and $n$ even, where the last line is for $n\geqslant 4$ and $n$ even.}
	\label{even43}	
	\end{table}
 
\noindent together with 

\begin{table}[H]
	\begin{center}
	\resizebox{\textwidth}{15mm}{
	\begin{tabular}{|c|c|c|}
		\hline
		     \diagbox[width=16mm , height=5mm]{$y$}{$x$}                  & $ab$ & $bc$  \\
		\hline
		$\alpha_n$             & $\alpha_{n+1}|aba-\alpha_n\beta|aba+\alpha_n\gamma|(abc-bac)$ & $-\alpha_{n+1}|(aba+abc)-\alpha_{n}\beta|bac+\alpha_n\gamma|bac$ \\
		\hline
		$\beta_n$              & $-\beta_{n+1}|aba+\alpha_n\gamma|(abc-bac)+\alpha_{n-1}\beta_2|aba$  & $-\beta_{n+1}|bac+\alpha_n\gamma|bac-\alpha_{n-1}\beta_2|(aba+abc)$     \\
		\hline     
		$\gamma_n$             & $\gamma_{n+1}|(abc-bac)-\alpha_n\beta|aba+\alpha_{n-1}\beta_2|aba$ & $\gamma_{n+1}|bac-\alpha_n\beta|bac-\alpha_{n-1}\beta_2|(aba+abc)$   \\
		\hline
        $\alpha_{n-1}\beta$    &  $\alpha_n\beta|abc-\alpha_{n-1}\beta_2|bac$ & $-\alpha_n\beta|abc-\alpha_n\gamma|aba$ \\
		\hline
		$\alpha_{n-1}\gamma$   &  $\alpha_n\beta|(aba-bac)+\alpha_{n-1}\beta_2|(abc-aba)$ & $\alpha_n\beta|(bac-aba)-\alpha_n\gamma|(abc+bac)$ \\
		\hline
		$\alpha_{n-2}\beta_2$  & $-\alpha_n\beta|aba+\alpha_n\gamma|(abc-bac)+\alpha_{n-1}\beta_2|aba$ & $-\alpha_n\beta|bac+\alpha_n\gamma|bac-\alpha_{n-1}\beta_2|(aba+abc)$ \\
		\hline
	\end{tabular}
    }
	\end{center}
	\caption{Images of $d^n$ for $n\geqslant 2$ and $n$ even, where the last line is for $n\geqslant 4$ and $n$ even.}
	\label{even2 ab bc}	
	\end{table}

\noindent and

	\begin{table}[H]
		\begin{center}
		\resizebox{\textwidth}{15mm}{
		\begin{tabular}{|c|c|c|}
			\hline
					\diagbox[width=16mm , height=5mm]{$y$}{$x$}			   & $ba$ & $ac$  \\
			\hline
			$\alpha_n$             & $-\alpha_{n+1}|aba+\alpha_n\beta|aba+\alpha_n\gamma|(bac-abc)$ & $-\alpha_{n+1}|abc-\alpha_{n}\beta|(aba+bac)+\alpha_n\gamma|abc$ \\
			\hline
			$\beta_n$              & $\beta_{n+1}|aba+\alpha_n\gamma|(bac-abc)-\alpha_{n-1}\beta_2|aba$  & $-\beta_{n+1}|(aba+bac)+\alpha_n\gamma|abc-\alpha_{n-1}\beta_2|abc$     \\
			\hline     
			$\gamma_n$             & $\gamma_{n+1}|(bac-abc)+\alpha_n\beta|aba-\alpha_{n-1}\beta_2|aba$ & $\gamma_{n+1}|abc-\alpha_n\beta|(aba+bac)-\alpha_{n-1}\beta_2|abc$   \\
			\hline
			$\alpha_{n-1}\beta$    &  $\alpha_n\beta|(bac-aba)+\alpha_{n-1}\beta_2|(aba-abc)$ & $-\alpha_n\gamma|(abc+bac)+\alpha_{n-1}\beta_2|(abc-aba)$ \\
			\hline
			$\alpha_{n-1}\gamma$   &  $-\alpha_n\beta|abc+\alpha_{n-1}\beta_2|bac$ & $-\alpha_n\gamma|aba-\alpha_{n-1}\beta_2|bac$ \\
			\hline
			$\alpha_{n-2}\beta_2$  & $\alpha_n\beta|aba+\alpha_n\gamma|(bac-abc)-\alpha_{n-1}\beta_2|aba$ & $-\alpha_n\beta|(aba+bac)+\alpha_n\gamma|abc-\alpha_{n-1}\beta_2|abc$ \\
			\hline
		\end{tabular}
		}
		\end{center}
		\caption{Images of $d^n$ for $n\geqslant 2$ and $n$ even, where the last line is for $n\geqslant 4$ and $n$ even.}
		\label{even2 ba ac}	
		\end{table}

\noindent as well as

\begin{table}[H]
	\begin{center}
	\resizebox{\textwidth}{15mm}{
	\begin{tabular}{|c|c|c|}
		\hline
				\diagbox[width=16mm , height=5mm]{$y$}{$x$}			   & $a$ & $b$  \\
		\hline
		$\alpha_n$             & $\alpha_n\beta|(ab-ba)+\alpha_n\gamma|(ab+bc+ac)$ & $\alpha_{n+1}|(ba-ab)+\alpha_{n}\gamma|(ba+ac+bc)$ \\
		\hline
		$\beta_n$              & $\beta_{n+1}|(ab-ba)+\alpha_n\gamma|(ab+bc+ac)$  & $\alpha_n\gamma|(ba+ac+bc)+\alpha_{n-1}\beta_2|(ba-ab)$  \\
		\hline     
		$\gamma_n$             & $\gamma_{n+1}|(ab+bc+ac)+\alpha_n\beta|(ab-ba)$ & $\gamma_{n+1}|(ba+ac+bc)+\alpha_{n-1}\beta_2|(ba-ab)$   \\
		\hline
		$\alpha_{n-1}\beta$    &  $\alpha_n\beta|ac-\alpha_{n}\gamma|ba+\alpha_{n-1}\beta_2|(2ab+bc)$ & $\alpha_n\beta|(bc-ab)+\alpha_n\gamma|ba+\alpha_{n-1}\beta_2|(ba+ac)$ \\
		\hline
		$\alpha_{n-1}\gamma$   &  $\alpha_n\beta|(ab+bc)+\alpha_{n}\gamma|ab+\alpha_{n-1}\beta_2|(ac-ba)$ & $\alpha_n\beta|(2ba+ac)-\alpha_n\gamma|ab+\alpha_{n-1}\beta_2|bc$ \\
		\hline
		$\alpha_{n-2}\beta_2$  & $\alpha_n\beta|(ab-ba)+\alpha_n\gamma|(ab+bc+ac)$ & $\alpha_n\gamma|(ba+ac+bc)+\alpha_{n-1}\beta_2|(ba-ab)$ \\
		\hline
	\end{tabular}
	}
	\end{center}
	\caption{Images of $d^n$ for $n\geqslant 2$ and $n$ even, where the last line is for $n\geqslant 4$ and $n$ even.}	
	\label{even 1 a b}
	\end{table}

\noindent and 

	\begin{table}[H]
		\begin{center}
		\resizebox{\textwidth}{15mm}{
		\begin{tabular}{|c|c|c|}
			\hline
					\diagbox[width=16mm , height=5mm]{$y$}{$x$}			   & $c$ & $1$  \\
			\hline
			$\alpha_n$             & $-\alpha_{n+1}|(ab+bc+ac)-\alpha_n\beta|(bc+ba+ac)$     & $0$  \\
			\hline
			$\beta_n$              & $-\beta_{n+1}|(bc+ba+ac)-\alpha_{n-1}\beta_2|(ab+bc+ac)$ & $0$  \\
			\hline     
			$\gamma_n$             & $-\alpha_n\beta|(bc+ba+ac)-\alpha_{n-1}\beta_2|(ab+bc+ac)$ & $0$   \\
			\hline
			$\alpha_{n-1}\beta$    &  $-\alpha_n\beta|ac-\alpha_{n}\gamma|(ab+2bc)-\alpha_{n-1}\beta_2|(ba+ac)$ & $\alpha_n\beta|(c-a)+\alpha_n\gamma|(a-b)+\alpha_{n-1}\beta_2|(b-c)$ \\
			\hline
			$\alpha_{n-1}\gamma$   &  $-\alpha_n\beta|(ab+bc)-\alpha_{n}\gamma|(ba+2ac)-\alpha_{n-1}\beta_2|bc$ & $\alpha_n\beta|(a-c)+\alpha_n\gamma|(b-a)+\alpha_{n-1}\beta_2|(c-b)$ \\
			\hline
			$\alpha_{n-2}\beta_2$  & $-\alpha_n\beta|(bc+ba+ac)-\alpha_{n-1}\beta_2|(ab+bc+ac)$ & $0$ \\
			\hline
		\end{tabular}
		}
		\end{center}
		\caption{Images of $d^n$ for $n\geqslant 2$ and $n$ even, where the last line is for $n\geqslant 4$ and $n$ even.}
		\label{even10 c 1}	
		\end{table}

Now we turn to the maps $f^n$. 
Note that $f^n(u)=0$ for $u\in K^{n+3}_{m}$, with $m\in \llbracket 2,4 \rrbracket $ and $n\in\NN_0$ by degree reasons. 
In the following tables, $f^n(y|x)$ is the entry appearing in the column indexed by $x$ and the row indexed by $y$, where $n$ is the internal degree of $y$. The map $f^0$ is given by

\begin{table}[H]
	\begin{center}
	\begin{tabular}{|c|c|c|c|c|}
		\hline
			\diagbox[width=16mm , height=5mm]{$y$}{$x$}				   & $1$ & $a$ & $b$ & $c$ \\
		\hline
		$\alpha_3$             & $4\epsilon^!|(bac-aba+abc)$     & $0$ & $0$ & $0$ \\
		\hline
		$\beta_3$              & $4\epsilon^!|(bac-aba+abc)$     & $0$ & $0$ & $0$ \\
		\hline     
		$\gamma_3$             & $4\epsilon^!|(bac-aba+abc)$     & $0$ & $0$ & $0$  \\
		\hline
		$\alpha_{2}\beta$      & $2\epsilon^!|(aba-abc-bac)$     & $0$ & $0$ & $0$ \\
		\hline
		$\alpha_{2}\gamma$     & $2\epsilon^!|(aba-abc-bac)$     & $0$ & $0$ & $0$ \\
		\hline
		$\alpha\beta_2$        & $2\epsilon^!|(aba-abc-bac)$     & $0$ & $0$ & $0$ \\
		\hline
	\end{tabular}
	\end{center}
	\caption{Images of $f^0$.}
	\label{f0co}	
	\end{table}

If $n\in\NN$ is odd, the map $f^n$ is given by

\begin{table}[H]
	\begin{center}
	\resizebox{\textwidth}{15mm}{
	\begin{tabular}{|c|c|c|c|c|}
		\hline
			\diagbox[width=16mm , height=5mm]{$y$}{$x$}				     & $1$ & $a$ & $b$ & $c$ \\
		\hline
		$\alpha_{n+3}$           & $0$ & $f^n(\alpha_{n+3}|a)$ & $2(\alpha_{n-1}\gamma-\alpha_{n-2}\beta_2)|abac$ & $2(\alpha_{n-1}\beta-\alpha_{n-2}\beta_2)|abac$ \\
		\hline
		$\beta_{n+3}$            & $0$ & $2(\alpha_{n-1}\gamma-\alpha_{n-1}\beta)|abac$ & $f^n(\beta_{n+3}|b)$ & $2(\alpha_{n-2}\beta_2-\alpha_{n-1}\beta)|abac$ \\
		\hline     
		$\gamma_{n+3}$           & $0$ & $2(\alpha_{n-1}\beta-\alpha_{n-1}\gamma)|abac$ & $2(\alpha_{n-2}\beta_2-\alpha_{n-1}\gamma)|abac$ & $f^n(\gamma_{n+3}|c)$ \\
		\hline
		$\alpha_{n+2}\beta$      & $0$ & $0$ & $0$ & $0$ \\
		\hline
		$\alpha_{n+2}\gamma$     & $0$ & $0$ & $0$ & $0$ \\
		\hline
		$\alpha_{n+1}\beta_2$    & $0$ & $-2(\alpha_n+\beta_n+\gamma_n)|abac$ & $-2(\alpha_n+\beta_n+\gamma_n)|abac$ & $-2(\alpha_n+\beta_n+\gamma_n)|abac$ \\
		\hline
	\end{tabular}
	}
	\end{center}
	\caption{Images of $f^n$ for $n\in\NN$ and $n$ odd.}
	\label{fnoddco}	
	\end{table}
\noindent where 
\begin{align*}
	f^n(\alpha_{n+3}|a)&=[4\alpha_n+4\beta_n+4\gamma_n+2(n-2)\alpha_{n-1}\beta+2(n-2)\alpha_{n-1}\gamma+2(n-1)\alpha_{n-2}\beta_2]|abac,\\
	f^n(\beta_{n+3}|b)&= [4\alpha_n+4\beta_n+4\gamma_n+2(n-1)\alpha_{n-1}\beta+2(n-2)\alpha_{n-1}\gamma+2(n-2)\alpha_{n-2}\beta_2]|abac,\\
	f^n(\gamma_{n+3}|c)&= [4\alpha_n+4\beta_n+4\gamma_n+2(n-2)\alpha_{n-1}\beta+2(n-1)\alpha_{n-1}\gamma+2(n-2)\alpha_{n-2}\beta_2]|abac.
\end{align*}

If $n\geqslant 2$ is even, the map $f^n$ is given by 
\begin{align*}
	f^n(\alpha_{n+3}|1)&=2\alpha_n|(2bac-aba+abc)+2\beta_n|(2abc+bac)+2\gamma_n|(bac-2aba)\\
	&\phantom{ =  \;}
	+2(n-2)\alpha_{n-2}\beta_2|(abc-aba+bac),\\
	f^n(\beta_{n+3}|1)&=2\alpha_n|(2bac+abc)+2\beta_n|(2abc-aba+bac)+2\gamma_n|(abc-2aba)\\
	&\phantom{ =  \;}
	+2(n-2)\alpha_{n-2}\beta_2|(abc-aba+bac),\\
    f^n(\gamma_{n+3}|1)&=2\alpha_n|(2bac-aba)+2\beta_n|(2abc-aba)+2\gamma_n|(abc-2aba+bac)\\
	&\phantom{ =  \;}
	+2(n-2)\alpha_{n-2}\beta_2|(abc-aba+bac),\\
	f^n(\alpha_{n+2}\beta|1)&=-2\alpha_n|bac-2\beta_n|abc+2\gamma_n|aba, \\
	f^n(\alpha_{n+2}\gamma|1)&=-2\alpha_n|bac-2\beta_n|abc+2\gamma_n|aba,\\
	f^n(\alpha_{n+1}\beta_2|1)&=-2\alpha_n|bac-2\beta_n|abc+2\gamma_n|aba,
\end{align*}
and $f^n(x)=0$ for $x\in K^{n+3}_1$.

\subsection{Computation of the coboundaries}
\label{subsection: com coboun}

In this subsection, we will explicitly construct bases $\tilde{\mathfrak{B}}^n_m$ and $\mathfrak{B}^n_m$ of the $\Bbbk$-vector spaces $\tilde{B}^n_m=\Img(d^{n-1}_{m-1})$ and $B^n_m=\Img(\partial^{n-1}_{m-1})$ for $m\in\llbracket 0,4 \rrbracket$ and $n\in\NN_0$ respectively, defined before Remark \ref{mrange}.
This will be done by simply applying the corresponding differential $d^{n-1}_{m-1}$ or $\partial^{n-1}_{m-1}$ to the usual basis of its domain and extracting a linearly independent generating subset.

\subsubsection{\texorpdfstring{Computation of $\tilde{\mathfrak{B}}^n_m$}{Computation of Bnm}} 
\label{subsubsection:cob1}

Recall that $\tilde{B}^n_m=\Img(d^{n-1}_{m-1})$ and 
\[     d^n_m:K^n_m=\operatorname{Hom}_{\Bbbk}((A_{-n}^!)^*,A_m)
\to 
K^{n+1}_{m+1}=\operatorname{Hom}_{\Bbbk}((A_{-(n+1)}^!)^*,A_{m+1})     \] 
was defined in Subsection \ref{rdsco}. 
Obviously, 
$ \tilde{B}^0_m=\Img(d^{-1}_{m-1})=0 $
for $m\in \llbracket 0,4 \rrbracket$, and $\tilde{B}^n_0=\Img(d^{n-1}_{-1})=0$ for $n\in\NN$. 
Then we define 
$\tilde{\mathfrak{B}}^0_m =\emptyset$
for $m\in \llbracket 0,4 \rrbracket$, 
and 
$\tilde{\mathfrak{B}}^n_0=\emptyset$
for $n\in \NN$.

Suppose $m=4$. 
If $n=1$, since 
\[ 
\gamma|abac=(1/2)d^{0}_3(\epsilon^!|aba), 
\quad 
\beta|abac=-(1/2)d^0_3(\epsilon^!|abc),
\quad 
\alpha|abac=-(1/2)d^0_3(\epsilon^!|bac),
\] 
we have 
$\tilde{B}^1_4=K^1_4$.
We define a basis of $\tilde{B}^1_4$ by 
the usual basis of $K^1_4$.
If $n\geqslant 3$ is odd, Table \ref{even43} shows that 
\begin{align*}
& \alpha_n|abac=-(1/2)d^{n-1}_3(\alpha_{n-1}|bac),
\quad 
	\beta_n|abac=-(1/2)d^{n-1}_3(\beta_{n-1}|abc), 
	\\
&	\gamma_n|abac=(1/2)d^{n-1}_3(\gamma_{n-1}|aba), \quad 
\alpha_{n-1}\beta|abac=-(1/2)d^{n-1}_3(\alpha_{n-1}|abc),\\
&	\alpha_{n-1}\gamma|abac=(1/2)d^{n-1}_3(\alpha_{n-1}|aba),
\quad  
\alpha_{n-2}\beta_2|abac=-(1/2)d^{n-1}_3(\beta_{n-1}|bac),
\end{align*}
so 
$\tilde{B}^n_4=K^n_4.$ 
We define a basis of $\tilde{B}^n_4$ by the usual basis of $K^n_4$.
If $n\geqslant 2$ is even, it is easy to see that $\tilde{B}^n_4$ is spanned by the element $(\alpha_{n-1}\beta-\alpha_{n-1}\gamma)|abac$ from Table \ref{odd43}, so we define a basis of $\tilde{B}^n_4$ by 
\[ \tilde{\mathfrak{B}}^n_4=\big\{ (\alpha_{n-1}\beta-\suline{\alpha_{n-1}\gamma})|\suline{abac} \big\}.\] 
The dimension of $\tilde{B}^n_4$ is then given by 
\begin{equation}\label{dimcobtilde4}
	\begin{split}
		\operatorname{dim} \tilde{B}^n_4 =
		\begin{cases}
			0,  &  \text{if $n=0$}, 
			\\
			3,  &  \text{if $n=1$}, 
			\\
			1,  &  \text{if $n\geqslant 2$ is even},
			\\
			6,  &  \text{if $n\geqslant 3$ is odd}. 
			\end{cases} 
	\end{split}
\end{equation}

Suppose $m=3$.
If $n=1$, since 
\begin{align*}
	&(\alpha-\beta)|aba+\gamma|(abc-bac)=d^0_2(\epsilon^!|ab)=-d^0_2(\epsilon^!|ba), \\
	& \alpha|(aba+abc)+(\beta-\gamma)|bac=-d^0_2(\epsilon^!|bc),\\ 
	&-\alpha|abc-\beta|(aba+bac)+\gamma|abc=d^0_2(\epsilon^!|ac)=d^0_2(\epsilon^!|ab)+d^0_2(\epsilon^!|bc),
\end{align*}
we define a basis of $\tilde{B}^1_3$ by 
\[ 
\tilde{\mathfrak{B}}^1_3=\big\{
\alpha|(aba+abc)+(\beta-\suline{\gamma})|\suline{bac},
(\alpha-\beta)|aba+\suline{\gamma}|(\suline{abc}-bac)
\big\}. 
\] 
If $n\geqslant 2$ is even, we define the set  
\begin{align*}
	\mathcal{G}^n_3=\big\{ 
	& g^n_{1,3}=(\suline{\alpha_{n-1}\beta}-\alpha_{n-1}\gamma)|\suline{aba}=(1/2)d^{n-1}_2(\gamma_{n-1}|(ab-ba)),\\
	& g^n_{2,3}=(\suline{\alpha_{n-1}\beta}-\alpha_{n-1}\gamma)|\suline{abc}=(1/2)d^{n-1}_2(\beta_{n-1}|(ab+bc+ac)), \\
	& g^n_{3,3}=(\suline{\alpha_{n-1}\beta}-\alpha_{n-1}\gamma)|\suline{bac}=-(1/2)d^{n-1}_2(\alpha_{n-1}|(bc+ba+ac)), \\
	& g^n_{4,3}=\suline{\alpha_n|aba}+\alpha_{n-1}\beta|bac+\alpha_{n-1}\gamma|(aba+abc)=d^{n-1}_2(\alpha_{n-1}|ab),\\
	& g^n_{5,3}=\suline{\alpha_n|abc}-\alpha_{n-1}\beta|bac+\alpha_{n-1}\gamma|(aba+abc)=d^{n-1}_2(\alpha_{n-1}|ab)+d^{n-1}_2(\alpha_{n-1}|bc),\\
	& g^n_{6,3}=\suline{\beta_n|aba}+\alpha_{n-1}\beta|abc+\alpha_{n-1}\gamma|(aba+bac)=d^{n-1}_2(\beta_{n-1}|ab),\\
	& g^n_{7,3}=\suline{\beta_n|bac}-\alpha_{n-1}\beta|abc+\alpha_{n-1}\gamma|(aba+bac)=-d^{n-1}_2(\beta_{n-1}|bc),\\
	& g^n_{8,3}=\suline{\gamma_n|abc}+\alpha_{n-1}\beta|aba+\alpha_{n-1}\gamma|(abc-bac)=d^{n-1}_2(\gamma_{n-1}|ab)+d^{n-1}_2(\gamma_{n-1}|bc),\\
	& g^n_{9,3}=\suline{\gamma_n|bac}+\alpha_{n-1}\beta|aba+\alpha_{n-1}\gamma|(bac-abc)=-d^{n-1}_2(\gamma_{n-1}|bc)
	\big\}.
\end{align*}
Then we define the set 
$\tilde{\mathfrak{B}}^2_3=\mathcal{G}^2_3$, 
and 
\begin{align*}
	\tilde{\mathfrak{B}}^n_3=\mathcal{G}^n_3 \cup \big\{ 
		& g^n_{10,3}=\suline{\alpha_{n-2}\beta_2|aba}+\alpha_{n-1}\gamma|(aba+abc+bac)= d^{n-1}_2(\alpha_{n-2}\beta|ab)-g^n_{2,3},\\
		& g^n_{11,3}=\suline{\alpha_{n-2}\beta_2|abc}+\alpha_{n-1}\gamma|(aba+abc-bac)=-d^{n-1}_2(\alpha_{n-2}\gamma|ac)-g^n_{2,3}+g^n_{3,3},\\
		& g^n_{12,3}=\suline{\alpha_{n-2}\beta_2|bac}+\alpha_{n-1}\gamma|(aba-abc+bac)=-d^{n-1}_2(\alpha_{n-2}\beta|bc)+g^n_{2,3} \big\}
\end{align*}
for $n\geqslant 4$ with $n$ even.
We will show that $\tilde{\mathfrak{B}}^n_3$ is a basis of $\tilde{B}^n_3$ for $n\geqslant 2$ with $n$ even. 
From the definition, we see that $\tilde{\mathfrak{B}}^n_3\subseteq \tilde{B}^n_3$. 
Since 
\begin{align*}
& d^{n-1}_2(\alpha_{n-1}|ab)=g^n_{4,3}, 
\quad 
d^{n-1}_2(\alpha_{n-1}|bc)=g^n_{5,3}-g^n_{4,3},\\
& d^{n-1}_2(\alpha_{n-1}|ba)=g^n_{1,3}+g^n_{2,3}+g^n_{4,3}-g^n_{3,3},\quad 
d^{n-1}_2(\alpha_{n-1}|ac)=-g^n_{1,3}-g^n_{2,3}-g^n_{3,3}-g^n_{5,3},\\
& d^{n-1}_2(\beta_{n-1}|ab)=g^n_{6,3},\quad 
d^{n-1}_2(\beta_{n-1}|bc)=-g^n_{7,3}, \quad 
d^{n-1}_2(\beta_{n-1}|ba)=g^n_{1,3}-g^n_{2,3}+g^n_{3,3}+g^n_{6,3},\\
& d^{n-1}_2(\beta_{n-1}|ac)= 2g^n_{2,3}-g^n_{6,3}+g^n_{7,3}, \quad 
d^{n-1}_2(\gamma_{n-1}|ab)=g^n_{8,3}+g^n_{9,3}, \quad 
d^{n-1}_2(\gamma_{n-1}|bc)=-g^n_{9,3}, 
\\
& d^{n-1}_2(\gamma_{n-1}|ba)= g^n_{8,3}+g^n_{9,3}-2g^n_{1,3},\quad 
d^{n-1}_2(\gamma_{n-1}|ac)=g^n_{1,3}-g^n_{2,3}+g^n_{3,3}-g^n_{8,3}
\end{align*}
for $n\geqslant 2$ with $n$ even, and 
\begin{align*}
& d^{n-1}_2(\alpha_{n-2}\beta|ab)=g^n_{2,3}+g^n_{10,3}, \quad 
d^{n-1}_2(\alpha_{n-2}\beta|bc)=g^n_{2,3}-g^n_{12,3}, 
\\
& d^{n-1}_2(\alpha_{n-2}\beta|ba)=g^n_{1,3}+g^n_{3,3}+g^n_{10,3},\quad 
d^{n-1}_2(\alpha_{n-2}\beta|ac)=g^n_{12,3}-g^n_{10,3}, \\
& d^{n-1}_2(\alpha_{n-2}\gamma|ab)=g^n_{11,3}+g^n_{12,3}+2g^n_{1,3},\quad 
d^{n-1}_2(\alpha_{n-2}\gamma|bc)=-g^n_{1,3}-g^n_{12,3}, \\
& d^{n-1}_2(\alpha_{n-2}\gamma|ba)=g^n_{11,3}+g^n_{12,3},\quad 
d^{n-1}_2(\alpha_{n-2}\gamma|ac)=-g^n_{2,3}+g^n_{3,3}-g^n_{11,3},\\
& d^{n-1}_2(\alpha_{n-3}\beta_2|ab)=g^n_{3,3}+g^n_{10,3},\quad 
d^{n-1}_2(\alpha_{n-3}\beta_2|bc)=g^n_{11,3}-g^n_{10,3}-2g^n_{3,3},\\
& d^{n-1}_2(\alpha_{n-3}\beta_2|ba)=g^n_{1,3}+g^n_{2,3}+g^n_{10,3},\quad 
d^{n-1}_2(\alpha_{n-3}\beta_2|ac)=-g^n_{1,3}-g^n_{2,3}-g^n_{11,3}
\end{align*}
for $n\geqslant 4$ with $n$ even, 
the elements in $\tilde{\mathfrak{B}}^n_3$ span the space $\tilde{B}^n_3$. 
By Fact \ref{indep}, the elements in $\tilde{\mathfrak{B}}^n_3$ are linearly independent, so $\tilde{\mathfrak{B}}^n_3$ is a basis of $\tilde{B}^n_3$ for $n\geqslant 2$ with $n$ even. 
If $n\geqslant 3$ is odd, we define the set 
\begin{align*}
	\mathcal{E}^n_3=\big\{
	e^n_{1,3} & =(\suline{\alpha_n}-\alpha_{n-1}\beta)|\suline{aba}+\alpha_{n-1}\gamma|(abc-bac)=d^{n-1}_2(\alpha_{n-1}|ab), 
	\\
	 e^n_{2,3} & =\suline{\alpha_n}|(aba+\suline{abc})+(\alpha_{n-1}\beta-\alpha_{n-1}\gamma)|bac=-d^{n-1}_2(\alpha_{n-1}|bc),
	 \\
	e^n_{3,3} & =(\alpha_{n-2}\beta_2-\suline{\beta_n})|\suline{aba}+\alpha_{n-1}\gamma|(abc-bac)=d^{n-1}_2(\beta_{n-1}|ab), 
	\\
	e^n_{4,3} & =(\alpha_{n-1}\gamma-\suline{\beta_n})|\suline{bac}-\alpha_{n-2}\beta_2|(aba+abc)=d^{n-1}_2(\beta_{n-1}|bc), 
	\\
	e^n_{5,3} & = (\suline{\gamma_n}-\alpha_{n-1}\beta)|\suline{bac}-\alpha_{n-2}\beta_2|(aba+abc)=d^{n-1}_2(\gamma_{n-1}|bc),
	\\
	e^n_{6,3} & = \suline{\gamma_n}|(\suline{abc}-bac)+(\alpha_{n-2}\beta_2-\alpha_{n-1}\beta)|aba=d^{n-1}_2(\gamma_{n-1}|ab),
	\\
	e^n_{7,3} & = \alpha_{n-1}\beta|abc-\suline{\alpha_{n-2}\beta_2|bac}=d^{n-1}_2(\alpha_{n-2}\beta|ab),
	\\
	e^n_{8,3} & =\alpha_{n-1}\beta|abc+\suline{\alpha_{n-1}\gamma|aba}=-d^{n-1}_2(\alpha_{n-2}\beta|bc), 
	\\
	e^n_{9,3} & = \alpha_{n-1}\beta|(bac-aba)+\alpha_{n-2}\beta_2|(aba-abc)=-d^{n-1}_2(\alpha_{n-2}\gamma|ab),
	\\
	e^n_{10,3} & = \alpha_{n-1}\gamma|(abc+bac)+\alpha_{n-2}\beta_2|(aba-abc)=-d^{n-1}_2(\alpha_{n-2}\gamma|(ab+bc)) 	
	\big\}.
\end{align*}
Then we define the set 
$\tilde{\mathfrak{B}}^3_3=\mathcal{E}^3_3$, 
and 
\begin{align*}
\tilde{\mathfrak{B}}^n_3=\mathcal{E}^n_3 \cup \big\{  
& e^n_{11,3}=(\alpha_{n-2}\beta_2-\alpha_{n-1}\beta)|aba+\alpha_{n-1}\gamma|(abc-bac)=d^{n-1}_2(\alpha_{n-3}\beta_2|ab), \\
&e^n_{12,3}=(\alpha_{n-1}\beta-\alpha_{n-1}\gamma)|bac+\alpha_{n-2}\beta_2|(aba+abc) =-d^{n-1}_2(\alpha_{n-3}\beta_2|bc) 
\big\}
\end{align*}
for $n\geqslant 5$ with $n$ odd. 
We will show that $\tilde{\mathfrak{B}}^n_3$ is a basis of $\tilde{B}^n_3$ for $n\geqslant 3$ with $n$ odd.
By definition, $\tilde{\mathfrak{B}}^n_3\subseteq \tilde{B}^n_3$. Since 
\begin{align*}
  & d^{n-1}_2(\alpha_{n-1}|ba)=-e^n_{1,3},\quad 
	d^{n-1}_2(\alpha_{n-1}|ac)=e^n_{1,3}-e^n_{2,3}, \quad 
	d^{n-1}_2(\beta_{n-1}|ba)=-e^n_{3,3},\\	
  &	d^{n-1}_2(\beta_{n-1}|ac)=e^n_{3,3}+e^n_{4,3},\quad 
  d^{n-1}_2(\gamma_{n-1}|ba)=-e^n_{6,3}, \quad 
  d^{n-1}_2(\gamma_{n-1}|ac)=e^n_{5,3}+e^n_{6,3},\\
  & d^{n-1}_2(\alpha_{n-2}\beta|ba)=e^n_{9,3},\quad 
  d^{n-1}_2(\alpha_{n-2}\beta|ac)=-e^n_{10,3}, \quad 
  d^{n-1}_2(\alpha_{n-2}\gamma|bc)=e^n_{9,3}-e^n_{10,3}, \\
  & d^{n-1}_2(\alpha_{n-2}\gamma|ba)=-e^n_{7,3}, \quad
   d^{n-1}_2(\alpha_{n-2}\gamma|ac)=e^n_{7,3}-e^n_{8,3}
\end{align*}
for $n\geqslant 3$ with $n$ odd, and 
\begin{align*}
	d^{n-1}_2(\alpha_{n-3}\beta_2|ba)=-e^n_{11,3},\quad 
	d^{n-1}_2(\alpha_{n-3}\beta_2|ac)=e^n_{11,3}-e^n_{12,3}
\end{align*}
for $n\geqslant 5$ with $n$ odd, the elements in $\tilde{\mathfrak{B}}^n_3$ span the space $\tilde{B}^n_3$.
By Fact \ref{indep}, the elements $e^n_{\ell,3}$ for $\ell \in \llbracket 1,8\rrbracket$ are linearly independent.  
The reader can easily verify that the elements 
$e^n_{\ell,3}$ for $\ell \in \llbracket 9,12\rrbracket$ are linearly independent. 
Since the underlined terms of $e^n_{\ell,3}$ for $\ell \in \llbracket 1,8\rrbracket$ do not appear in $e^n_{\ell,3}$ for $\ell \in \llbracket 9,12\rrbracket$, 
the elements in $\tilde{\mathfrak{B}}^n_3$ are linearly independent. 
So $\tilde{\mathfrak{B}}^n_3$ is a basis of $\tilde{B}^n_3$.
The dimension of $\tilde{B}^n_3$ is thus given by 
\begin{equation}\label{dimcobtilde3}
	\begin{split}
		\operatorname{dim} \tilde{B}^n_3 =
		\begin{cases}
			0,  & \text{if $n=0$}, 
			\\
			2,  & \text{if $n=1$},
			\\
			9,  & \text{if $n=2$}, 
			\\
			10,  & \text{if $n=3$}, 
			\\
			12,  & \text{if $n\geqslant 4$}. 
			\end{cases} 
	\end{split}
\end{equation}

Suppose $m=2$. 
If $n=1$, since 
\begin{align*}
	& \beta|(ab-ba)+\suline{\gamma}|(\suline{ab}+bc+ac)=d^0_1(\epsilon^!|a),\quad
	(\alpha+\beta+\suline{\gamma})|(ab-\suline{ba})=d^0_1(\epsilon^!|(a-b)),\\
    & (\suline{\alpha}+\beta+\gamma)|(ab+\suline{bc}+ac)=d^0_1(\epsilon^!|(a-c)),
\end{align*}
and these three elements are linearly independent, we define a basis of $\tilde{B}^1_2$ by 
\[ \tilde{\mathfrak{B}}^1_2=\big\{\beta|(ab-ba)+\gamma|(ab+bc+ac), (\alpha+\beta+\gamma)|(ab-ba),(\alpha+\beta+\gamma)|(ab+bc+ac) \big\}.\] 
If $n=2$, we define the set 
\begin{align*}
	\tilde{\mathfrak{B}}^2_2=\big\{
	& g^2_{1,2}=\alpha_2|(ab+ba)-\suline{\alpha\beta}|(\suline{ba}+ac)+\alpha\gamma|bc=d^1_1(\alpha|b),\\ 
	& g^2_{2,2}=\beta_2|(ab+ba)+\alpha\beta|ac-\suline{\alpha\gamma}|(\suline{ab}+bc)=d^1_1(\beta|a),\\
	& g^2_{3,2}=\suline{\gamma_2}|(\suline{bc}-ba-ac)+\alpha\beta|ba+\alpha\gamma|ab=d^1_1(\gamma|b), \\
	& g^2_{4,2}=\suline{\alpha\beta|ab}+\alpha\gamma|ba=-d^1_1(\gamma|c),\\
	& g^2_{5,2}=\suline{\alpha\beta}|(ab+\suline{bc})-\alpha\gamma|ac=d^1_1(\beta|b),\\
	& g^2_{6,2}= \suline{\alpha_2}|(2ab+\suline{bc}+ba-ac)=d^1_1(\alpha|(b-c)),\\
	& g^2_{7,2}=\suline{\beta_2}|(ab-\suline{bc}+2ba+ac)=d^1_1(\beta|(a-c)),\\
	& g^2_{8,2}=\suline{\gamma_2}|(\suline{ab}+2bc-ba-2ac)=d^1_1(\gamma|(b-a)) 
	\big\}.	
\end{align*}
By definition, $\tilde{\mathfrak{B}}^2_{2}\subseteq \tilde{B}^2_2$. Since 
\[ d^1_1(\alpha|a)=g^2_{4,2}-g^2_{5,2}, \]  
the elements in $\tilde{\mathfrak{B}}^2_{2}$ span the space $\tilde{B}^2_2$. 
By Fact \ref{indep}, the elements in $\tilde{\mathfrak{B}}^2_{2}$ are linearly independent, so $\tilde{\mathfrak{B}}^2_{2}$ is a basis of $\tilde{B}^2_2$. 
If $n\geqslant 3$ is odd, 
we define the set 
\begin{align*}
	\tilde{\mathfrak{B}}^n_2=\big\{
	e^n_{1,2} & = \alpha_{n-1}\beta|(ab-ba)+\alpha_{n-1}\gamma|(ab+bc+ac)=d^{n-1}_1(\alpha_{n-1}|a), 
    \\
	 e^n_{2,2} & = (\alpha_{n-1}\beta+\alpha_{n-1}\gamma+\alpha_{n-2}\beta_2)|(ab-ba)=d^{n-1}_1(\alpha_{n-1}|a-\beta_{n-1}|b),
	\\
    e^n_{3,2} & = (\alpha_{n-1}\beta+\alpha_{n-1}\gamma+\alpha_{n-2}\beta_2)|(ab+bc+ac)=d^{n-1}_1(\alpha_{n-1}|a-\gamma_{n-1}|c), 
    \\
	 e^n_{4,2} & =2\alpha_{n-1}\beta|bc+\alpha_{n-1}\gamma|(ab+ba)+2\alpha_{n-2}\beta_2|ac=d^{n-1}_1(\alpha_{n-2}\beta|b+\alpha_{n-2}\gamma|a),
	\\
    e^n_{5,2}& = \alpha_{n-1}\beta|ac-\alpha_{n-1}\gamma|ba+\alpha_{n-2}\beta_2|(2ab+bc)=d^{n-1}_1(\alpha_{n-2}\beta|a), 
    \\
	e^n_{6,2} & =\alpha_{n-1}\gamma|(ab+bc-ac)+\alpha_{n-2}\beta_2|(ac-ab-bc)
	\\
	& =e^n_{3,2}+d^{n-1}_1(\alpha_{n-2}\gamma|c-\alpha_{n-2}\beta|a) ,
	\\
     e^n_{7,2} & = \suline{\beta_n}|(ab-\suline{ba})+\alpha_{n-1}\gamma|(ab+bc+ac)=d^{n-1}_1(\beta_{n-1}|a), 
    \\
	e^n_{8,2} & =(\suline{\beta_n}+\alpha_{n-1}\gamma+\alpha_{n-2}\beta_2)|(ab+\suline{bc}+ac)=d^{n-1}_1(\beta_{n-1}|(a-c)),
	\\
     e^n_{9,2} & =\suline{\gamma_n}|(ab+\suline{bc}+ac)+\alpha_{n-1}\beta|(ab-ba)=d^{n-1}_1(\gamma_{n-1}|a),
    \\
	e^n_{10,2} & =(\suline{\gamma_n}+\alpha_{n-1}\beta+\alpha_{n-2}\beta_2)|(ab-\suline{ba})=d^{n-1}_1(\gamma_{n-1}|(a-b)), 
	\\
	e^n_{11,2} & =(\suline{\alpha_n}+\beta_n+\alpha_{n-1}\gamma)|(ab-\suline{ba})=d^{n-1}_1(\beta_{n-1}|a-\alpha_{n-1}|b), 
	\\
	e^n_{12,2} & =(\suline{\alpha_n}+\gamma_n+\alpha_{n-1}\beta)|(ab+\suline{bc}+ac)=d^{n-1}_1(\gamma_{n-1}|a-\alpha_{n-1}|c)
	\big\}.
\end{align*}
By definition, $\tilde{\mathfrak{B}}^n_2\subseteq \tilde{B}^n_2$.
Since 
\begin{align*}
	& d^{n-1}_1(\alpha_{n-2}\beta|a)=e^n_{5,2},\quad
	d^{n-1}_1(\alpha_{n-2}\beta|b)=e^n_{1,2}-e^n_{2,2}-e^n_{3,2}+e^n_{4,2}+e^n_{5,2},
	\\
	& d^{n-1}_1(\alpha_{n-2}\beta|c)=-e^n_{1,2}+e^n_{2,2}-e^n_{5,2}-e^n_{6,2},\quad 
	d^{n-1}_1(\alpha_{n-2}\gamma|a)=-e^n_{1,2}+e^n_{2,2}+e^n_{3,2}-e^n_{5,2},
	\\
	& d^{n-1}_1(\alpha_{n-2}\gamma|b)=-2e^n_{1,2}+2e^n_{3,2}-e^n_{4,2}-e^n_{5,2}, \quad
	d^{n-1}_1(\alpha_{n-2}\gamma|c)=-e^n_{3,2}+e^n_{5,2}+e^n_{6,2}
\end{align*}
for $n\geqslant 3$ with $n$ odd, and 
\[ 
	d^{n-1}_1(\alpha_{n-3}\beta_2|a)=e^n_{1,2}, \quad 
	d^{n-1}_1(\alpha_{n-3}\beta_2|b)=e^n_{1,2}-e^n_{2,2},\quad 
	d^{n-1}_1(\alpha_{n-3}\beta_2|c)=e^n_{1,2}-e^n_{3,2}
\] 
for $n\geqslant 5$ with $n$ odd, the elements in $\tilde{\mathfrak{B}}^n_{2}$ span the space $\tilde{B}^n_2$. 
The reader can easily verify that the elements 
$e^n_{\ell,3}$ for $\ell \in \llbracket 1,6\rrbracket$ are linearly independent. 
By Fact \ref{indep}, the elements $e^n_{\ell,3}$ for $\ell \in \llbracket 7,12\rrbracket$ are linearly independent.  
Since the underlined terms of $e^n_{\ell,3}$ for $\ell \in \llbracket 7,12\rrbracket$ do not appear in $e^n_{\ell,3}$ for $\ell \in \llbracket 1,6\rrbracket$, 
the elements in $\tilde{\mathfrak{B}}^n_{2}$ are linearly independent.
So $\tilde{\mathfrak{B}}^n_{2}$ is a basis of $\tilde{B}^n_2$. 
If $n\geqslant 4$ is even, we define the set 
\begin{align*}
\tilde{\mathfrak{B}}^n_2=\big\{
	g^n_{1,2}& =\suline{\alpha_n}|(ab+\suline{ba})=d^{n-1}_1(\alpha_{n-1}|b)+g^n_{11,2}+g^n_{12,2},
	\\
	g^n_{2,2} & =\suline{\alpha_n}|(ab+\suline{bc}-ac)=-d^{n-1}_1(\alpha_{n-1}|c)-g^n_{11,2}-g^n_{12,2}
	\\
	g^n_{3,2} & =\suline{\beta_n}|(ab+\suline{ba})=d^{n-1}_1(\beta_{n-1}|a)-g^n_{12,2},
	\\
	g^n_{4,2} & =\suline{\beta_n}|(ab+\suline{bc}-ac)=d^{n-1}_1(\beta_{n-1}|(a+c))-2g^n_{12,2},
	\\
	g^n_{5,2} & =\suline{\gamma_n}|(ab+\suline{ba})=-d^{n-1}_1(\gamma_{n-1}|(a+b))+2g^n_{11,2},
	\\
    g^n_{6,2} & =\suline{\gamma_n}|(ab+\suline{bc}-ac)=-d^{n-1}_1(\gamma_{n-1}|a)+g^n_{11,2},
    \\
	g^n_{7,2} & =\suline{\alpha_{n-2}\beta_2}|(ab+\suline{ba})=(1/3)d^{n-1}_1(\alpha_{n-2}\beta|(a-c)+\alpha_{n-3}\beta_2|(b-c)),
	\\
	g^n_{8,2} & =\suline{\alpha_{n-2}\beta_2}|(ab+\suline{bc}-ac)=(1/3)d^{n-1}_1(2\alpha_{n-3}\beta_2|(b-c)-\alpha_{n-2}\beta|(a-c)), 
	\\
	g^n_{9,2} & =\alpha_{n-1}\beta|ab+\suline{\alpha_{n-1}\gamma|ba}=-d^{n-1}_1(\gamma_{n-1}|c), 
	\\
    g^n_{10,2} & =\alpha_{n-1}\beta|(ab+bc)-\suline{\alpha_{n-1}\gamma|ac}=d^{n-1}_1(\beta_{n-1}|b),
    \\
	g^n_{11,2} & =\suline{\alpha_{n-1}\beta|ba}+\alpha_{n-1}\gamma|ab=d^{n-1}_1(\alpha_{n-2}\gamma|a)+g^n_{8,2},
	\\
	g^n_{12,2} & =\suline{\alpha_{n-1}\beta|ac}-\alpha_{n-1}\gamma|(ab+bc)=d^{n-1}_1(\alpha_{n-2}\beta|a)-g^n_{7,2} 
\big\}.
\end{align*}
By definition, $\tilde{\mathfrak{B}}^n_2\subseteq \tilde{B}^n_2$. 
Since 
\begin{align*}
	& d^{n-1}_1(\alpha_{n-1}|a)=d^{n-1}_1(\alpha_{n-3}\beta_2|a)=g^n_{9,2}-g^n_{10,2},\quad 
	d^{n-1}_1(\alpha_{n-1}|b)=g^n_{1,2}-g^n_{11,2}-g^n_{12,2},\\
	& d^{n-1}_1(\alpha_{n-1}|c)=-g^n_{2,2}-g^n_{11,2}-g^n_{12,2},\quad 
	d^{n-1}_1(\beta_{n-1}|a)=g^n_{3,2}+g^n_{12,2},\\
	& d^{n-1}_1(\beta_{n-1}|b)=d^{n-1}_1(\alpha_{n-2}\beta|b)=g^n_{10,2},\quad 
	d^{n-1}_1(\beta_{n-1}|c)=g^n_{4,2}-g^n_{3,2}+g^n_{12,2},\\
	& d^{n-1}_1(\gamma_{n-1}|a)=-g^n_{6,2}+g^n_{11,2},\quad 
	 d^{n-1}_1(\gamma_{n-1}|b)=-g^n_{5,2}+g^n_{6,2}+g^n_{11,2},\\
	& d^{n-1}_1(\gamma_{n-1}|c)=d^{n-1}_1(\alpha_{n-2}\gamma|c)=-g^n_{9,2}, \quad 
	d^{n-1}_1(\alpha_{n-2}\beta|a)=g^n_{7,2}+g^n_{12,2},\\
	& d^{n-1}_1(\alpha_{n-2}\beta|c)=-g^n_{7,2}+g^n_{8,2}+g^n_{12,2}, \quad 
	d^{n-1}_1(\alpha_{n-2}\gamma|a)=g^n_{11,2}-g^n_{8,2},\\
	& d^{n-1}_1(\alpha_{n-2}\gamma|b)=-g^n_{7,2}+g^n_{8,2}+g^n_{11,2},\quad 
	 d^{n-1}_1(\alpha_{n-3}\beta_2|b)=g^n_{7,2}-g^n_{11,2}-g^n_{12,2},\\
	& d^{n-1}_1(\alpha_{n-3}\beta_2|c)=-g^n_{8,2}-g^n_{11,2}-g^n_{12,2},
\end{align*}
the elements in $\tilde{\mathfrak{B}}^n_2$ span the space $\tilde{B}^n_2$. 
By Fact \ref{indep}, the elements in $\tilde{\mathfrak{B}}^n_2$ are linearly independent, so $\tilde{\mathfrak{B}}^n_2$ is a basis of $\tilde{B}^n_2$. 
Hence, the dimension of $\tilde{B}^n_2$ is given by  
\begin{equation}\label{dimcobtilde2}
	\begin{split}
		\operatorname{dim} \tilde{B}^n_2 =
		\begin{cases}
			0,  & \text{if $n=0$}, 
			\\
			3,  &  \text{if $n=1$}, 
			\\
			8,  &  \text{if $n=2$}, 
			\\
			12,  & \text{if $n\geqslant 3$}. 
			\end{cases}
	\end{split}
\end{equation}

Suppose finally $m=1$. 
If $n=1$, since $d^0_0(\epsilon^!|1)=0$, we have $\tilde{B}^1_1=0$. We define 
$\tilde{\mathfrak{B}}^1_1=\emptyset$. 
If $n\geqslant 3$ is odd, by Table \ref{even10 c 1}, the space $\tilde{B}^n_1$ is spanned by the element  $\alpha_{n-1}\beta|(c-a)+\alpha_{n-1}\gamma|(a-b)+\alpha_{n-2}\beta_2|(b-c)$. So we define a basis of $\tilde{B}^n_1$ by 
\[ \tilde{\mathfrak{B}}^n_1=\big\{\suline{\alpha_{n-1}\beta}|(c-\suline{a})+\alpha_{n-1}\gamma|(a-b)+\alpha_{n-2}\beta_2|(b-c) \big\}. \] 
If $n=2$, by Table \ref{odd10 c 1}, we define a basis of $\tilde{B}^2_1$ by 
\[ \tilde{\mathfrak{B}}^2_1=\big\{2\suline{\alpha_2|a}+(\alpha\beta+\alpha\gamma)|(b+c), 
2\suline{\beta_2|b}+(\alpha\beta+\alpha\gamma)|(a+c),
2\suline{\gamma_2|c}+(\alpha\beta+\alpha\gamma)|(a+b) \big\}. \] 
If $n\geqslant 4$ is even, by Table \ref{odd10 c 1}, we define a basis of $\tilde{B}^n_1$ by 
\begin{align*}
\tilde{\mathfrak{B}}^n_1=\big\{
	& 2\suline{\alpha_n|a}+(\alpha_{n-1}\beta+\alpha_{n-1}\gamma)|(b+c),
	2\suline{\beta_n|b}+(\alpha_{n-1}\beta+\alpha_{n-1}\gamma)|(a+c),
	\\
&  2\suline{\gamma_n|c}+(\alpha_{n-1}\beta+\alpha_{n-1}\gamma)|(a+b),
(\alpha_{n-1}\beta+\alpha_{n-1}\gamma)|(a+c)+2\suline{\alpha_{n-2}\beta_2|b},
\\
& (\alpha_{n-1}\beta+\alpha_{n-1}\gamma)|(a+b)+2\suline{\alpha_{n-2}\beta_2|c},
(\alpha_{n-1}\beta+\alpha_{n-1}\gamma)|(b+c)+2\suline{\alpha_{n-2}\beta_2|a} \big\}.
\end{align*}
In conclusion, the dimension of $\tilde{B}^n_1$ is given by 
\begin{equation}\label{dimcobtilde1}
	\begin{split}
		\operatorname{dim} \tilde{B}^n_1 =
		\begin{cases}
			0,  & \text{if $n=0,1$}, 
			\\
			3,  & \text{if $n=2$}, 
			\\
			1,  & \text{if $n\geqslant 3$ is odd}, 
			\\
			6,  & \text{if $n\geqslant 4$ is even}. 
			\end{cases} 
	\end{split}
\end{equation}

\subsubsection{\texorpdfstring{Computation of $\mathfrak{B}^n_m$}{Computation of Bnm}} 
\label{subsubsection:cob2}

Recall that $B^n_m=\Img(\partial^{n-1}_{m-1})$ and $\partial^n_m:Q^n_m\to Q^{n+1}_{m+1}$. 
Since $d^n_m=\partial^n_m$ for either $m=3$ and $n\geqslant -1$, or $m, n \in \llbracket -1,2 \rrbracket$, we get $B^n_m=\tilde{B}^n_m$ for either $m=4$ and $n\in \NN_0$, or $m, n \in \llbracket 0,3 \rrbracket$. 
So, we define a basis of $B^n_m$ by 
$\mathfrak{B}^n_m=\tilde{\mathfrak{B}}^n_m$ for either $m=4$ and $n\in \NN_0$, or $m, n \in \llbracket 0,3 \rrbracket$.

Suppose $m=3$.
The differential $\partial^{n-1}_2:K^{n-1}_2\oplus \omega^*_1 K^{n-5}_4\to K^n_3$ maps the space $\omega^*_1 K^{n-5}_4$ to zero, 
so $B^n_3=\Img(\partial^{n-1}_{2})=\Img(d^{n-1}_{2})=\tilde{B}^n_3$. 
We define a basis of $B^n_3$ by 
$\mathfrak{B}^n_3=\tilde{\mathfrak{B}}^n_3$.

Suppose $m=2$. 
Consider $\partial^{n-1}_1: K^{n-1}_1\oplus \omega^*_1 K^{n-5}_3\to K^n_2\oplus \omega^*_1 K^{n-4}_4$.
If $n\geqslant 4$ is even, we get $B^n_2=\tilde{B}^n_2\oplus \omega^*_1\tilde{B}^{n-4}_4$, since $f^{n-4}(K^{n-1}_1)=0$ by the last identity of Subsection \ref{subsection:explicit-description-diff-cohomology} for $n > 4$ and $f^{0}(K^{3}_1)=0$ by the last three columns of Table \ref{f0co}, as well as 
$f^{n-8}(u)=0$ for $u\in K^{n-5}_{3}$ by degree reasons. 
If $n\geqslant 5$ is odd, we have $B^n_2=\tilde{B}^n_2\oplus \omega^*_1\tilde{B}^{n-4}_4$, since $\tilde{B}^{n-4}_4=K^{n-4}_4$ as showed in the previous subsubsection.
So, we define a basis of $B^n_2$ by 
$\mathfrak{B}^n_2=\tilde{\mathfrak{B}}^n_2\cup \omega^*_1 \tilde{\mathfrak{B}}^{n-4}_4$
for all integers $n\geqslant 4$.
The dimension of $B^n_2$ is then given by 
\begin{equation}\label{dimcob2}
	\begin{split}
		\operatorname{dim} B^n_2 =
		\begin{cases}
			0,  &  \text{if $n=0$}, 
			\\
			3,  &  \text{if $n=1$}, 
			\\
			8,  &  \text{if $n=2$}, 
			\\
			12,  &  \text{if $n=3,4$}, 
			\\
			15,  &  \text{if $n=5$}, 
			\\
			13,  &  \text{if $n\geqslant 6$ is even},
			\\
			18,  & \text{if $n\geqslant 7$ is odd}. 
			\end{cases} 
	\end{split}
\end{equation}

Suppose $m=1$. 
Consider $\partial^{n-1}_0: K^{n-1}_0\oplus \omega^*_1 K^{n-5}_2\oplus \omega^*_2 K^{n-9}_4\to K^n_1\oplus \omega^*_1 K^{n-4}_3$.
If $n\geqslant 5$ is odd, we have $B^{n}_1=\tilde{B}^n_1\oplus \omega^*_1 B^{n-4}_3=\tilde{B}^n_1\oplus \omega^*_1\tilde{B}^{n-4}_3$, since $f^{n-4}(K^{n-1}_0)=0$ by the second column of Table \ref{fnoddco} and $f^{n'}(u)=0$ for $u\in K^{n'+3}_{m}$, with $m\in \llbracket 2,4 \rrbracket $ and $n'\in\NN_0$, by degree reasons. Then we define a basis of $B^n_1$ by 
$\mathfrak{B}^n_1=\tilde{\mathfrak{B}}^n_1\cup \omega^*_1 \tilde{\mathfrak{B}}^{n-4}_3$. 
If $n=4$, we define the set 
\begin{align*}
\mathfrak{B}^4_1=\big\{
&2\suline{\alpha_4|a}+(\alpha_{3}\beta+\alpha_{3}\gamma)|(b+c)+\omega^*_1 4\epsilon^!|(bac-aba+abc)=\partial^3_0(\alpha_3|1),\\
& 2\suline{\beta_4|b}+(\alpha_{3}\beta+\alpha_{3}\gamma)|(a+c)+\omega^*_14\epsilon^!|(bac-aba+abc)=\partial^3_0(\beta_3|1),\\
&  2\suline{\gamma_4|c}+(\alpha_{3}\beta+\alpha_{3}\gamma)|(a+b)+\omega^*_1 4\epsilon^!|(bac-aba+abc)=\partial^3_0(\gamma_3|1),\\
& (\alpha_{3}\beta+\alpha_{3}\gamma)|(a+c)+2\suline{\alpha_{2}\beta_2|b}+\omega^*_1 2\epsilon^!|(aba-abc-bac)=\partial^3_0(\alpha_2\beta|1),\\
& (\alpha_{3}\beta+\alpha_{3}\gamma)|(a+b)+2\suline{\alpha_{2}\beta_2|c}+\omega^*_1 2\epsilon^!|(aba-abc-bac)=\partial^3_0(\alpha_2\gamma|1),\\
& (\alpha_{3}\beta+\alpha_{3}\gamma)|(b+c)+2\suline{\alpha_{2}\beta_2|a}+\omega^*_1 2\epsilon^!|(aba-abc-bac)=\partial^3_0(\alpha\beta_2|1)  
\big\}.
\end{align*}
Since these six elements are linearly independent by Fact \ref{indep}, $\mathfrak{B}^4_1$ is a basis of $B^4_1$. 
If $n\geqslant 6$ is even, 
we define the set 
\begin{align*}
\mathfrak{B}^n_1=\big\{
	& 2\suline{\alpha_n|a}+(\alpha_{n-1}\beta+\alpha_{n-1}\gamma)|(b+c)+\omega^*_1[2\alpha_{n-4}|(2bac-aba+abc)+2\beta_{n-4}|(2abc+bac)\\
& +2\gamma_{n-4}|(bac-2aba)+2(n-6)\alpha_{n-6}\beta_2|(bac-aba+abc)]=\partial^{n-1}_0(\alpha_{n-1}|1),
\\
& 2\suline{\beta_n|b}+(\alpha_{n-1}\beta+\alpha_{n-1}\gamma)|(a+c)+\omega^*_1[2\alpha_{n-4}|(2bac+abc)+2\beta_{n-4}|(2abc-aba+bac)\\
& +2\gamma_{n-4}|(abc-2aba)+2(n-6)\alpha_{n-6}\beta_2|(bac-aba+abc)]=\partial^{n-1}_0(\beta_{n-1}|1),
\\
&  2\suline{\gamma_n|c}+(\alpha_{n-1}\beta+\alpha_{n-1}\gamma)|(a+b)+\omega^*_1[2\alpha_{n-4}|(2bac-aba)+2\beta_{n-4}|(2abc-aba)\\
& +2\gamma_{n-4}|(bac-2aba+abc)+2(n-6)\alpha_{n-6}\beta_2|(bac-aba+abc)]=\partial^{n-1}_0(\gamma_{n-1}|1),\\
& (\alpha_{n-1}\beta+\alpha_{n-1}\gamma)|(a+c)+2\suline{\alpha_{n-2}\beta_2|b}+\omega^*_1 2(\gamma_{n-4}|aba-\alpha_{n-4}|bac-\beta_{n-4}|abc) \\
& =\partial^{n-1}_0(\alpha_{n-2}\beta|1),\\
& (\alpha_{n-1}\beta+\alpha_{n-1}\gamma)|(a+b)+2\suline{\alpha_{n-2}\beta_2|c} +\omega^*_1 2(\gamma_{n-4}|aba-\alpha_{n-4}|bac-\beta_{n-4}|abc)\\
& =\partial^{n-1}_0(\alpha_{n-2}\gamma|1),\\
& (\alpha_{n-1}\beta+\alpha_{n-1}\gamma)|(b+c)+2\suline{\alpha_{n-2}\beta_2|a} +\omega^*_1 2(\gamma_{n-4}|aba-\alpha_{n-4}|bac-\beta_{n-4}|abc)\\
& =\partial^{n-1}_0(\alpha_{n-3}\beta_2|1) 
\big\}
 \cup \omega^*_1\tilde{\mathfrak{B}}^{n-4}_3.
\end{align*}
Since $f^{n'}(u)=0$ for $u\in K^{n'+3}_{m}$, with $m\in \llbracket 2,4 \rrbracket $ and $n'\in\NN_0$, by degree reasons, 
the previous set is a system of generators of $B^n_1$.
By Fact \ref{indep}, the elements in $\mathfrak{B}^n_1$ are linearly independent, so $\mathfrak{B}^n_1$ is a basis of $B^n_1$. 
The dimension of $B^n_1$ is then given by 
\begin{equation}\label{dimcob1}
	\begin{split}
		\operatorname{dim} B^n_1 =
		\begin{cases}
			0,  &  \text{if $n=0,1$}, 
			\\
			3,  &  \text{if $n=2,5$}, 
			\\
		    1,  &  \text{if $n=3$}, 
		    \\
			6,  &  \text{if $n=4$},
			\\
			15,  &  \text{if $n=6$},
			\\
			11,  &  \text{if $n=7$},
			\\
			18,  &  \text{if $n\geqslant 8$ is even}, 
			\\
			13,  & \text{if $n\geqslant 9$ is odd}. 
			\end{cases} 
	\end{split}
\end{equation}

Suppose finally $m=0$. 
Consider $\partial^{n-1}_{-1}: \omega^*_1K^{n-5}_{1}\oplus \omega^*_2 K^{n-9}_3  \to K^n_0\oplus \omega^*_1 K^{n-4}_2\oplus \omega^*_2 K^{n-8}_4$.
Note that $B^n_0\subseteq  \omega^*_1 K^{n-4}_2\oplus \omega^*_2 K^{n-8}_4$ and $B^n_0=\Img(\partial^{n-1}_{-1})=\Img(\omega^*_1 \partial^{n-5}_1)=\omega^*_1 B^{n-4}_2$. We define a basis of $B^n_0$ by 
$\mathfrak{B}^n_0=\omega^*_1 \mathfrak{B}^{n-4}_2$
for $n\geqslant 4$. 
The dimension of $B^n_0$ is thus given by 
\begin{equation}\label{dimcob0}
	\begin{split}
		\operatorname{dim} B^n_0 =
		\begin{cases}
			0,  & \text{if $n\in \llbracket 0,4\rrbracket$},
			\\
			3,  &  \text{if $n=5$}, 
			\\
			8,  &  \text{if $n=6$}, 
			\\
			12,  &  \text{if $n=7,8$},
			\\
			15,  &  \text{if $n=9$}, 
			\\
			13,  &  \text{if $n\geqslant 10$ is even}, 
			\\
			18,  &  \text{if $n\geqslant 11$ is odd}. 
			\end{cases} 
	\end{split}
\end{equation}

\subsection{Computation of the cocycles}
\label{subsection: cocycles}

As one can remark rather easily, from the computations in the previous subsection we can already deduce the dimensions of the homogeneous components of the spaces of cocycles and thus of the Hochschild cohomology groups. 
However, since we will need specific representatives of  the cohomology classes of bases of the Hochschild cohomology $\operatorname{HH}^{\bullet}(A)$ for computing its algebra structure, we will present them. 
More precisely, in this subsection, we will explicitly construct bases $\tilde{\mathfrak{D}}^n_m$ and $\mathfrak{D}^n_m$ of the $\Bbbk$-vector spaces $\tilde{D}^n_m=\Ker(d^n_m)$ and $D^n_m=\Ker(\partial^n_m)$ for $m\in \llbracket 0, 4\rrbracket $ and $n\in\NN_0$, respectively, defined before Remark \ref{mrange}.

\subsubsection{\texorpdfstring{Computation of $\tilde{\mathfrak{D}}^n_m$}{Computation of Dnm}} 

Recall that $\tilde{D}^n_m=\Ker(d^n_m)$ 
and 
\[     d^n_m:K^n_m=\operatorname{Hom}_{\Bbbk}((A_{-n}^!)^*,A_m)
\to 
K^{n+1}_{m+1}=\operatorname{Hom}_{\Bbbk}((A_{-(n+1)}^!)^*,A_{m+1})     \] 
was defined in Subsection \ref{rdsco}.  
Since
$K^n_m/\tilde{D}^n_m\cong \tilde{B}^{n+1}_{m+1}$, 
we see that 
\begin{equation}\label{dimrelationcotilded}
	\begin{split}
		\operatorname{dim} \tilde{D}^n_m=\operatorname{dim}K^n_m-\operatorname{dim} \tilde{B}^{n+1}_{m+1}.
	\end{split}
\end{equation}
Hence, from the dimension of $\tilde{B}^{n+1}_{m+1}$ computed in Subsubsection \ref{subsubsection:cob1} as well as the dimension of $
K^n_{m}$ (see the last paragraph of Subsection \ref{rdsco}), we deduce the value of the dimension of $\tilde{D}^n_{m}$. 
We will present them explicitly in the computations below. 

For every $(n,m)\in \NN_0 \times \llbracket 0,4 \rrbracket $, we are going to provide a set $\tilde{\mathfrak{D}}^n_{m}\subseteq \tilde{D}^n_{m}$ such that $ \# \tilde{\mathfrak{D}}^n_{m}=\operatorname{dim} \tilde{D}^n_m$ and the elements in $\tilde{\mathfrak{D}}^n_{m}$ are linearly independent. 
As a consequence, $\tilde{\mathfrak{D}}^n_{m}$ is a basis of $\tilde{D}^n_{m}$. If $\tilde{D}^n_{m}=K^n_{m}$, we pick the usual basis of $K^n_m$, defined at the end of Subsection \ref{rdsco}.
We leave to the reader the easy verification in each case that the set $\tilde{\mathfrak{D}}^n_{m}$ satisfies these conditions.

Obviously, 
$\tilde{D}^n_4=K^n_4$  
for $n\in\NN_0$.
Then we define the set $\tilde{\mathfrak{D}}^n_4$ by the usual basis of $K^n_4$. 
The dimension of $\tilde{D}^n_4$ is given by 
\begin{equation}\label{dimcodtilde4}
	\begin{split}
		\operatorname{dim} \tilde{D}^n_4 =
		\begin{cases}
			1,  & \text{if $n=0$},
			\\
			3,  & \text{if $n=1$}, 
			\\
			5,  & \text{if $n=2$}, 
			\\
			6,  & \text{if $n\geqslant 3$}.
			\end{cases}
	\end{split}
\end{equation}

Suppose $m=3$. 
By \eqref{dimcobtilde4},
the dimension of $\tilde{D}^n_3$ is given by 
\begin{equation}\label{dimcodtilde3}
	\begin{split}
		\operatorname{dim} \tilde{D}^n_3 =
		\begin{cases}
			0,  &  \text{if $n=0$},
			\\
			8,  &  \text{if $n=1$}, 
			\\
			9,  &  \text{if $n=2$}, 
			\\
			17,  & \text{if $n\geqslant 3$ is odd}, 
			\\
			12,  & \text{if $n\geqslant 4$ is even}.
			\end{cases} 
	\end{split}
\end{equation}
We define the sets 
$\tilde{\mathfrak{D}}^0_3=\emptyset$, 
\begin{align*}
	\tilde{\mathfrak{D}}^1_3=\big\{
		& \suline{\alpha|bac},
		\suline{\beta|abc},
		\suline{\gamma|aba}, \suline{\alpha}|(aba-\suline{abc}),
		(\alpha+\suline{\beta})|\suline{aba},
		\alpha|aba+\suline{\beta|bac},
		\alpha|aba+\suline{\gamma|abc},
		\\
		& \alpha|aba-\suline{\gamma|bac} \big\}, 
\end{align*}
and 
\begin{align*}
	\tilde{\mathfrak{D}}^2_3=\big\{
		&(\suline{\alpha_2}-\beta_2)|\suline{aba},
		(\suline{\alpha_2}-\gamma_2)|\suline{abc},
		(\suline{\beta_2}-\gamma_2)|\suline{bac}, \alpha_2|abc+\beta_2|bac+2\suline{\alpha\beta|aba},
		\\
& \alpha_2|aba-\beta_2|bac+2\suline{\alpha\beta|abc},
\alpha_2|(aba-abc)+2\suline{\alpha\beta|bac},
\alpha_2|abc+\beta_2|bac+2\suline{\alpha\gamma|aba},\\
& \alpha_2|aba-\beta_2|bac+2\suline{\alpha\gamma|abc},
\alpha_2|(aba-abc)+2\suline{\alpha\gamma|bac} 
\big\}.
\end{align*}
Moreover, if $n\geqslant 3$ is odd, we define 
\begin{align*}
	\tilde{\mathfrak{D}}^n_3=\big\{
	& \suline{\alpha_n|bac},
	\suline{\beta_n|abc},
	\suline{\gamma_n|aba},
	\suline{\alpha_{n-1}\beta|abc},
	\suline{\alpha_{n-1}\gamma|aba},
	\suline{\alpha_{n-2}\beta_2|bac},
	\suline{\alpha_n}|(aba-\suline{abc}),
	(\alpha_n+\suline{\beta_n})|\suline{aba},
	\\
    & \alpha_n|aba+\suline{\beta_n|bac},
    \alpha_n|aba+\suline{\gamma_n|abc},
    \alpha_n|aba-\suline{\gamma_n|bac},
    (\alpha_n+\suline{\alpha_{n-1}\beta})|\suline{aba},
    \\
    & \alpha_n|aba+\suline{\alpha_{n-1}\beta|bac},
    \alpha_n|aba+\suline{\alpha_{n-1}\gamma|abc},
    \alpha_{n}|aba-\suline{\alpha_{n-1}\gamma|bac},
    (\alpha_n-\suline{\alpha_{n-2}\beta_2})|\suline{aba},
    \\
    & \alpha_n|aba-\suline{\alpha_{n-2}\beta_2|abc} 
\big\},
\end{align*}
and if $n\geqslant 4$ is even, we set 
\begin{align*}
\tilde{\mathfrak{D}}^n_3=\big\{
	& (\suline{\alpha_n}-\beta_n)|\suline{aba},
	(\suline{\alpha_n}-\gamma_n)|\suline{abc},
	(\suline{\beta_n}-\gamma_n)|\suline{bac},
	\alpha_n|abc+\beta_n|bac+2\suline{\alpha_{n-1}\beta|aba}, 
	\\
    & \alpha_n|aba-\beta_n|bac+2\suline{\alpha_{n-1}\beta|abc},
    \alpha_n|(aba-abc)+2\suline{\alpha_{n-1}\beta|bac},
    \\
    & \alpha_n|abc+\beta_n|bac+2\suline{\alpha_{n-1}\gamma|aba},
    \alpha_n|aba-\beta_n|bac+2\suline{\alpha_{n-1}\gamma|abc},
    \\
    &\alpha_n|(aba-abc)+2\suline{\alpha_{n-1}\gamma|bac},
    (\suline{\alpha_{n-2}\beta_2}-\alpha_n)|\suline{aba},
    (\suline{\alpha_{n-2}\beta_2}-\alpha_n)|\suline{abc}, 
    \\
    & (\suline{\alpha_{n-2}\beta_2}-\beta_n)|\suline{bac} 
\big\}.
\end{align*}

Suppose $m=2$. 
By \eqref{dimcobtilde3}, 
the dimension of $\tilde{D}^n_2$ is given by 
\begin{equation}\label{dimcodtilde2}
	\begin{split}
		\operatorname{dim} \tilde{D}^n_2 =
		\begin{cases}
			2,  & \text{if $n=0$},
			\\
			3,  &  \text{if $n=1$}, 
			\\
			10,  & \text{if $n=2$}, 
			\\
			12,  & \text{if $n\geqslant 3$}.
			\end{cases} 
	\end{split}
\end{equation}
We define the sets 
\[ 
\tilde{\mathfrak{D}}^0_2= \big\{ \suline{\epsilon^!}|(ab+\suline{ba}), \suline{\epsilon^!}|(ab+bc-\suline{ac}) \big\}, 
\]
\[
\tilde{\mathfrak{D}}^1_2=
\big\{
\beta|(ab-ba)+\suline{\gamma}|(\suline{ab}+bc+ac),
(\alpha+\beta+\suline{\gamma})|(ab-\suline{ba}),
\suline{\alpha}|(ab+\suline{bc}+ac)+\beta|(bc+ba+ac) \big\}, 
\]
\begin{equation}
\begin{split}
\tilde{\mathfrak{D}}^2_2=\big\{ 
& \suline{\alpha_2}|(ab+\suline{ba}),
\suline{\alpha_2}|(ab+\suline{bc}-ac),
\suline{\beta_2}|(ab+\suline{ba}),
\suline{\beta_2}|(ab+\suline{bc}-ac),
\suline{\gamma_2}|(ab+\suline{ba}),
\\
& \suline{\gamma_2}|(ab+\suline{bc}-ac),
\suline{\alpha\beta|ba}+\alpha\gamma|ab,
\suline{\alpha\beta|ab}+\alpha\gamma|ba,
\suline{\alpha\beta}|(ba+\suline{ac})-\alpha\gamma|bc,\\
& \suline{\alpha\beta}|(ab+\suline{bc})-\alpha\gamma|ac
\big\}, 
\end{split}
\end{equation}
and 
$ \tilde{\mathfrak{D}}^n_2=\tilde{\mathfrak{B}}^n_2 $ 
for $n\geqslant 3$.

Suppose $m=1$.
By \eqref{dimcobtilde2}, 
the dimension of $\tilde{D}^n_1$ is given by 
\begin{equation}\label{dimcodtilde1}
	\begin{split}
		\operatorname{dim} \tilde{D}^n_1 =
		\begin{cases}
			0,  &  \text{if $n=0$},
			\\
			1,  &  \text{if $n=1$}, 
			\\
			3,  &  \text{if $n=2$}, 
			\\
			6,  &  \text{if $n\geqslant 3$}.
			\end{cases} 
	\end{split}
\end{equation}
We define the sets 
$\tilde{\mathfrak{D}}^0_1=\emptyset$, 
and  
\[ \tilde{\mathfrak{D}}^1_1=\big\{\suline{\alpha|a}+\beta|b+\gamma|c \big\} .\]
Moreover, if $n\geqslant 2$ is even, we define 
$\tilde{\mathfrak{D}}^n_1=\tilde{\mathfrak{B}}^n_1 , $
and if $n\geqslant 3$ is odd, we define 
\begin{align*}
\tilde{\mathfrak{D}}^n_1=\big\{
&\suline{\alpha_n|a}+\beta_n|b+\gamma_n|c,
(\beta_n-\suline{\alpha_{n-1}\beta})|\suline{b},
(\gamma_n-\suline{\alpha_{n-1}\gamma})|\suline{c},
(\alpha_n-\suline{\alpha_{n-2}\beta_2})|\suline{a},
\\
& \suline{\alpha_{n-1}\beta|c}+\alpha_{n-1}\gamma|a+\alpha_{n-2}\beta_2|b,
\suline{\alpha_{n-1}\beta|a}+\alpha_{n-1}\gamma|b+\alpha_{n-2}\beta_2|c 
\big\}.
\end{align*}

Suppose finally $m=0$. 
By \eqref{dimcobtilde1}, 
the dimension of $\tilde{D}^n_0$ is given by 
\begin{equation}\label{dimcodtilde0}
	\begin{split}
		\operatorname{dim} \tilde{D}^n_0 =
		\begin{cases}
			1,  & \text{if $n=0$},
			\\
			0,  & \text{if $n\geqslant 1$ is odd}, 
			\\
			4,  &  \text{if $n=2$}, 
			\\
			5,  & \text{if $n\geqslant 4$ is even}.
			\end{cases}
	\end{split}
\end{equation}
We define the sets 
\[ \tilde{\mathfrak{D}}^0_0=\big\{ \suline{\epsilon^!|1} \big\},  \] 
and 
\[ \tilde{\mathfrak{D}}^2_0=\big\{ 
	\suline{\alpha_2|1},\suline{\beta_2|1},
	\suline{\gamma_2|1},(\suline{\alpha\beta}+\alpha\gamma)|\suline{1}
\big\}.\]
Moreover, if $n\in\NN$ is odd, we define 
$\tilde{\mathfrak{D}}^n_0=\emptyset$, 
and if $n\geqslant 4$ is even, we define 
\[
\tilde{\mathfrak{D}}^n_0=\big\{ \suline{\alpha_n|1},\suline{\beta_n|1},\suline{\gamma_n|1},(\suline{\alpha_{n-1}\beta}+\alpha_{n-1}\gamma)|\suline{1},\suline{\alpha_{n-2}\beta_2|1} \big\}.
\]

\subsubsection{\texorpdfstring{Computation of $\mathfrak{D}^n_m$}{Computation of Dnm}} 
\label{subsubsection:cod2}

Recall that $D^n_m=\Ker(\partial^n_m)$ and $\partial^n_m:Q^n_m\to Q^{n+1}_{m+1}$.
The isomorphism $Q^n_m/D^n_m\cong B^{n+1}_{m+1}$ tells us that  
\begin{equation}\label{dimrelationcod}
	\begin{split}
		\operatorname{dim}D^n_m=\operatorname{dim} Q^n_m-\operatorname{dim}B^{n+1}_{m+1}.
	\end{split}
\end{equation}
Hence, from the dimension of $B^{n+1}_{m+1}$ computed in Subsubsection \ref{subsubsection:cob2} as well as the dimension of $Q^n_{m}$ (see the last paragraph of Subsection \ref{rdsco}), we deduce the value of the dimension of $D^n_{m}$. 
We will present them explicitly in the computations below.

For every $(n,m)\in \NN_0 \times \llbracket 0,4 \rrbracket $, we are going to provide a set $\mathfrak{D}^n_{m}\subseteq D^n_{m}$ such that $ \# \mathfrak{D}^n_{m}=\operatorname{dim} D^n_m$ and the elements in $\mathfrak{D}^n_{m}$ are linearly independent. 
As a consequence, $\mathfrak{D}^n_{m}$ is a basis of $D^n_{m}$.
We leave to the reader the easy verification in each case that the set $\mathfrak{D}^n_{m}$ satisfies these conditions.

For either $m\in \llbracket 3,4\rrbracket$ and $ n\in\NN_0$, or $m, n \in \llbracket 0,2 \rrbracket$, note that $\partial^n_m=d^n_m$,  
then $D^n_m=\tilde{D}^n_m$. 
So we define the basis of $D^n_m$ by 
$\mathfrak{D}^n_m=\tilde{\mathfrak{D}}^n_m$.

Suppose $m=2$.
By $B^n_3=\tilde{B}^n_3$ and \eqref{dimcobtilde3}, 
the dimension of $D^n_2$ is given by 
\begin{equation}\label{dimcod2}
	\begin{split}
		\operatorname{dim} D^n_2 =
		\begin{cases}
			2,  & \text{if $n=0$},
			\\
			3,  & \text{if $n=1$},
			\\
			10,  & \text{if $n=2$},
			\\
			12,  &  \text{if $n=3$},
			\\
			13,  &  \text{if $n=4$},
			\\
			15,  &  \text{if $n=5$},
			\\
			17,  &  \text{if $n=6$},
			\\
			18,  &  \text{if $n\geqslant 7$}.
			\end{cases}
	\end{split}
\end{equation}
We define the sets 
$\mathfrak{D}^3_2=\tilde{\mathfrak{D}}^3_2$
and 
$\mathfrak{D}^n_2=\tilde{\mathfrak{D}}^n_2\cup \omega^*_1 \tilde{\mathfrak{D}}^{n-4}_4 $
for $n\geqslant 4$.

Suppose $m=1$. 
By \eqref{dimcob2}, 
the dimension of $D^n_1$ is given by 
\begin{equation}\label{dimcod1}
	\begin{split}
		\operatorname{dim} D^n_1 =
		\begin{cases}
			0,  & \text{if $n=0$},
			\\
			1,  & \text{if $n=1$},
			\\
			3,  & \text{if $n=2$},
			\\
			6,  & \text{if $n=3,4$},
			\\
			14,  & \text{if $n=5$},
			\\
			15,  &  \text{if $n=6$},
			\\
			23,  &  \text{if $n\geqslant 7$ is odd},
			\\
			18,  & \text{if $n\geqslant 8$ is even}.
			\end{cases}
	\end{split}
\end{equation}
We define the set 
$\mathfrak{D}^3_1=\tilde{\mathfrak{D}}^3_1$.
Moreover, if $n\geqslant 4$ is even, 
we define 
$\mathfrak{D}^n_1=\mathfrak{B}^n_1$,
and if $n\geqslant 5$ is odd, we define 
$\mathfrak{D}^n_1=\tilde{\mathfrak{D}}^n_1\cup \omega^*_1\tilde{\mathfrak{D}}^{n-4}_3 $.

Suppose finally $m=0$.
By \eqref{dimcob1},
the dimension of $\mathfrak{D}^n_0$ is given by 
\begin{equation}\label{dimcod0}
	\begin{split}
		\operatorname{dim} D^n_0 =
		\begin{cases}
			1,  & \text{if $n=0$},
			\\
			0,  & \text{if $n=1,3$},
			\\
			4,  &  \text{if $n=2$},
			\\
			7,  &  \text{if $n=4$},
			\\
			3,  &  \text{if $n=5$},
			\\
			15,  & \text{if $n=6,9$},
			\\
			12,  & \text{if $n=7$},
			\\
			18,  &  \text{if $n=8$ or $n\geqslant 11$ is odd},
			\\
			22,  &  \text{if $n=10$},
			\\
			23,  & \text{if $n\geqslant 12$ is even}.
			\end{cases}
	\end{split}
\end{equation}
We define the set 
$\mathfrak{D}^3_0=\emptyset$.
Moreover, if $n\geqslant 4$ is even, we define the set
$\mathfrak{D}^n_0=\tilde{\mathfrak{D}}^n_0\cup \omega^*_1 \mathfrak{D}^{n-4}_2  $,
and if $n\geqslant 5$ is odd, we define 
$\mathfrak{D}^n_0=\omega^*_1 \mathfrak{D}^{n-4}_2 $.

\subsection{Hochschild cohomology}
\label{subsection: cohomology}

In this subsection, we will explicitly construct a subspace $H^n_m\subseteq D^n_m$ such that  
$D^n_m= H^{n}_m\oplus B^n_m$ 
for $(n,m)\in \NN_0\times \ZZ_{\leqslant 4}$, 
and we define $H^n_m=0$ for $(n,m)\in \ZZ^2\setminus ( \NN_0\times \ZZ_{\leqslant 4})$.
By Proposition \ref{bdnmco}, we have the following similar recursive description. 

\begin{cor}
\label{H_{-1}}
For integers $m\leqslant 1$ and $n\in\NN_0$, we have 
\begin{equation}\label{euqation:Hnm coho}	
\begin{split}
H_{m}^n\cong
\begin{cases}
\omega^*_{\frac{1-m}{2}}H_{1}^{n+2m-2},  & \text{if $m$ is odd}, 
\\
\omega^*_{-\frac{m}{2}}H_0^{n+2m},  & \text{if $m$ is even}.
\end{cases} 
\end{split}
\end{equation}
\end{cor}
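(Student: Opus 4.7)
The plan is to deduce this recursive description of $H^{n}_{m}$ directly from the analogous one for $B^{n}_{m}$ and $D^{n}_{m}$ established in Proposition \ref{bdnmco}, exactly parallel to the way Corollary \ref{cor recursive homology} follows from Proposition \ref{bdh} on the homology side.

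First, I would fix $m\leqslant 1$ and $n\in\NN_{0}$, and set $c=(1-m)/2$, $(n',m')=(n+2m-2,1)$ if $m$ is odd, or $c=-m/2$, $(n',m')=(n+2m,0)$ if $m$ is even. Proposition \ref{bdnmco} gives
\[     D^{n}_{m}=\omega^{*}_{c}\, D^{n'}_{m'}       \hskip 0.5cm \text{and} \hskip 0.5cm       B^{n}_{m}=\omega^{*}_{c}\, B^{n'}_{m'},     \]
with the \emph{same} shift $c$ in both identities. Next, since the left multiplication by $\omega^{*}_{c}$ on the corresponding summand of $Q^{n}$ (viewed as a $\Bbbk$-linear map from $Q^{n'}_{m'}$ onto its image in $Q^{n}_{m}$) is an isomorphism of graded $\Bbbk$-vector spaces (up to the shift of internal degree by $-6c$) that restricts to isomorphisms $D^{n'}_{m'}\to D^{n}_{m}$ and $B^{n'}_{m'}\to B^{n}_{m}$, it carries any direct-sum decomposition of $D^{n'}_{m'}$ by a complement of $B^{n'}_{m'}$ to a direct-sum decomposition of $D^{n}_{m}$ by the corresponding complement of $B^{n}_{m}$. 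Thus one may simply choose
\[     H^{n}_{m} := \omega^{*}_{c}\, H^{n'}_{m'},     \]
so that $D^{n}_{m}=H^{n}_{m}\oplus B^{n}_{m}$ holds by applying $\omega^{*}_{c}$ to the corresponding decomposition $D^{n'}_{m'}=H^{n'}_{m'}\oplus B^{n'}_{m'}$. This proves the desired isomorphism, in fact as subspaces of $Q^{n}$ after the shift.

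No substantive obstacle is expected: everything reduces to the already-proved Proposition \ref{bdnmco} together with the elementary fact that tensoring with an invertible shift symbol preserves short exact sequences of graded vector spaces, and hence preserves complements. The only mild subtlety to check is bookkeeping of the indices in the two parity cases, to confirm that the same value of $c$ appears in the formulas for $D^{n}_{m}$ and $B^{n}_{m}$ in Proposition \ref{bdnmco}; a quick inspection of the statement of that proposition confirms this, completing the argument.
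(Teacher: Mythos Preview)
Your proposal is correct and follows exactly the approach the paper indicates: the corollary is stated immediately after Proposition~\ref{bdnmco} with the remark ``By Proposition~\ref{bdnmco}, we have the following similar recursive description,'' and your argument spells out precisely how the identical shift $\omega^{*}_{c}$ on both $D^{n}_{m}$ and $B^{n}_{m}$ yields the corresponding identification of complements. There is nothing to add.
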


So it is also sufficient to compute the case $m\in\llbracket 0,4\rrbracket $. 
Recall that 
\begin{equation}\label{equation:relation co dim hdb}
	\begin{split}
		\operatorname{dim} H^n_m=\operatorname{dim}D^n_m-\operatorname{dim}B^n_m=\operatorname{dim}Q^n_m-\operatorname{dim}B^{n+1}_{m+1}-\operatorname{dim}B^n_m.
	\end{split}
\end{equation}
Hence, from the dimension of $D^n_{m}$ computed in Subsubsection \ref{subsubsection:cod2} as well as the dimension of 
$B^n_{m}$ computed in Subsubsection \ref{subsubsection:cob2}, 
we deduce the value of the dimension of $H^n_{m}$. 
We will present them explicitly in the computations below.

For every $(n,m)\in \NN_0\times \llbracket 0,4 \rrbracket $, we are going to provide a set $\mathfrak{H}^n_{m}\subseteq D^n_{m}$ such that $ \# \mathfrak{H}^n_{m}=\operatorname{dim} H^n_{m}$ and $ \mathfrak{H}^n_{m}\cup \mathfrak{B}^n_{m}$ is linearly independent. 
As a consequence, the space $H^n_{m}$ spanned by $\mathfrak{H}^n_{m}$ satisfies $D^n_m= H^{n}_m\oplus B^n_m$.
We leave to the reader the easy verification in each case that the set $\mathfrak{H}^n_{m}$ satisfies these conditions. Note that,
unless stated otherwise, the linear independence of the elements in $ \mathfrak{H}^n_{m}\cup \mathfrak{B}^n_{m}$ follows from Fact \ref{indep}, where we put the elements in $\mathfrak{H}^n_{m}$ before the elements in $\mathfrak{B}^n_{m}$.

Suppose $m=4$. 
The dimension of $H^n_4$ is given by 
\begin{equation}\label{dimcoh4}
	\begin{split}
		\operatorname{dim} H^n_4 =
		\begin{cases}
			1,  & \text{if $n=0$},
			\\
			0,  &  \text{if $n\in\NN$ is odd}, 
			\\
			4,  & \text{if $n=2$},
			\\
			5,  &  \text{if $n\geqslant 4$ is even}.
			\end{cases}
	\end{split}
\end{equation}
We define the sets 
\[ \mathfrak{H}^0_4=\big\{ \suline{\epsilon^!|abac} \big\},  \]  
and 
\[  \mathfrak{H}^2_4=\big\{\suline{\alpha_2|abac},\suline{\beta_2|abac},\suline{\gamma_2|abac}, \suline{\alpha\beta|abac} \big\}  .\]
Moreover, if $n\in\NN$ is odd, we define 
$\mathfrak{H}^n_4=\emptyset$, 
and if $n\geqslant 4$ is even, we define 
\[ \mathfrak{H}^n_4=\big\{ \suline{\alpha_n|abac}, \suline{\beta_n|abac}, \suline{\gamma_n|abac}, \suline{\alpha_{n-1}\beta|abac}, \suline{\alpha_{n-2}\beta_2|abac}  \big\}.  \]

Suppose $m=3$.
The dimension of $H^n_3$ is given by 
\begin{equation}\label{dimcoh3}
	\begin{split}
		\operatorname{dim} H^n_3 =
		\begin{cases}
			0,  &  \text{if $n\in \NN_0$ is even},
			\\
			6,  & \text{if $n=1$}, 
			\\
			7,  & \text{if $n=3$},
			\\
			5,  & \text{if $n\geqslant 5$ is odd}.
			\end{cases}
	\end{split}
\end{equation}
We define the sets 
\[ 
\mathfrak{H}^1_3=\big\{
\suline{\alpha|bac},
\suline{\beta|abc},
\suline{\gamma|aba},
\suline{\alpha}|(aba-\suline{abc}),
(\alpha+\suline{\beta})|\suline{aba},
\alpha|aba+\suline{\beta|bac} 
\big\},
\] 
and 
\[ 
\mathfrak{H}^3_3=\big\{ \alpha_3|bac,
\beta_3|abc,
\gamma_3|aba,
\alpha_2\beta|abc,
\alpha_3|(aba-abc),
(\alpha_3+\beta_3)|aba,
\alpha_3|aba+\beta_3|bac 
\big\}. 
\] 
Moreover, if $n\in\NN_0$ is even, we define 
$ \mathfrak{H}^n_3=\emptyset$,
and if $n\geqslant 5$ is odd, we define the set
\[ \mathfrak{H}^n_3=\big\{ \alpha_n|bac,
\beta_n|abc,
\gamma_n|aba,
\alpha_{n-1}\beta|abc,
\alpha_n|(aba-abc)  
\big\}.\] 
The reader can easily verify that the set $ \mathfrak{H}^n_{3}\cup \mathfrak{B}^n_{3}$ for $n\geqslant 3$ and $n$ odd is linearly independent.

Suppose $m=2$.
The dimension of $H^n_2$ is given by 
\begin{equation}\label{dimcoh2}
	\begin{split}
		\operatorname{dim} H^n_2 =
		\begin{cases}
			2,  &  \text{if $n=0,2$}, 
			\\	
			0,  &  \text{if $n\in\NN$ is odd},
			\\
			1,  &  \text{if $n=4$}, 
			\\
			4,  &  \text{if $n=6$},
			\\
			5,  &  \text{if $n\geqslant 8$ is even}.
			\end{cases}. 
	\end{split}
\end{equation}
We define the sets 
\[ \mathfrak{H}^0_2=\big\{ \suline{\epsilon^!}|(ab+\suline{ba}),\suline{\epsilon^!}|(ab+bc-\suline{ac})   \big\} ,
\]
and 
\[ \mathfrak{H}^2_2= \big\{\suline{\alpha_2}|(\suline{ab}+ba),\suline{\beta_2}|(\suline{ab}+ba) \big\} . \]
Moreover, if $n\in\NN$ is odd, we define
$\mathfrak{H}^n_2=\emptyset $, 
and if $n\geqslant 4$ is even, we define 
$\mathfrak{H}^n_2=\omega^*_1 \mathfrak{H}^{n-4}_4 $.

Suppose $m=1$.
The dimension of $H^n_1$ is given by 
\begin{equation}\label{dimcoh1}
	\begin{split}
		\operatorname{dim} H^n_1 =
		\begin{cases}
			0,  & \text{if $n\in\NN_0$ is even},
			\\
			1,  & \text{if $n=1$}, 
			\\
			5,  & \text{if $n=3$}, 
			\\
			11,  & \text{if $n=5$},
			\\
			12,  &  \text{if $n=7$}, 
			\\
			10,  &  \text{if $n\geqslant 9$ is odd}.
			\end{cases} 
	\end{split}
\end{equation}
We define the sets 
\[ \mathfrak{H}^1_1=\big\{\suline{\alpha|a}+\beta|b+\gamma|c \big\} ,\] 
and 
\[ \mathfrak{H}^3_1=\big\{ \suline{\alpha_3|a}+\beta_3|b+\gamma_3|c,
(\beta_3-\suline{\alpha_2\beta})|\suline{b},
(\gamma_3-\suline{\alpha_2\gamma})|\suline{c},
(\alpha_3-\suline{\alpha\beta_2})|\suline{a},
\suline{\alpha_2\beta|c}+\alpha_2\gamma|a+\alpha\beta_2|b   
\big\}  .\] 
Moreover, if $n\in\NN_0$ is even, we define 
$\mathfrak{H}^n_1=\emptyset $,
and if $n\geqslant 5$ is odd, we define 
\begin{align*}
	\mathfrak{H}^n_1=\big\{ 
		& \suline{\alpha_n|a}+\beta_n|b+\gamma_n|c,
		(\beta_n-\suline{\alpha_{n-1}\beta})|\suline{b},
		(\gamma_n-\suline{\alpha_{n-1}\gamma})|\suline{c},
		(\alpha_n-\suline{\alpha_{n-2}\beta_2})|\suline{a},
		\\
		& \suline{\alpha_{n-1}\beta|c}+\alpha_{n-1}\gamma|a+\alpha_{n-2}\beta_2|b
		\big\} \cup \omega^*_1 \mathfrak{H}^{n-4}_3.
\end{align*}

Suppose finally $m=0$. 
The dimension of $H^n_0$ is given by 
\begin{equation}\label{dimcoh0}
	\begin{split}
		\operatorname{dim} H^n_0 =
		\begin{cases}
			1,  &  \text{if $n=0$}, 
			\\	
			0,  &  \text{if $n\in\NN$ is odd},
			\\
			4,  &  \text{if $n=2$}, 
			\\
			7,  &  \text{if $n=4,6$}, 
			\\
			6,  &  \text{if $n=8$},
			\\
			9,  &   \text{if $n=10$}, 
			\\
			10,  &  \text{if $n\geqslant 12$ is even}.
			\end{cases} 
	\end{split}
\end{equation}
We define the sets 
\[ \mathfrak{H}^0_0=\big\{ \suline{\epsilon^!|1} \big\} ,\] 
and 
\[ \mathfrak{H}^2_0=\big\{ \suline{\alpha_2|1}, \suline{\beta_2|1}, \suline{\gamma_2|1}, (\suline{\alpha\beta}+\alpha\gamma)|\suline{1} \big\} .\] 
Moreover, 
if $n\in\NN$ is odd, we define 
$\mathfrak{H}^n_0=\emptyset$,
and if $n\geqslant 4$ is even, we set 
\[ \mathfrak{H}^n_0=\big\{ 
	\suline{\alpha_n|1},\suline{\beta_n|1},\suline{\gamma_n|1}, (\suline{\alpha_{n-1}\beta}+\alpha_{n-1}\gamma)|\suline{1},\suline{\alpha_{n-2}\beta_2|1}
\big\} \cup \omega^*_1\mathfrak{H}^{n-4}_2    . \]

The previous results can be restated as follows.

\begin{cor}\label{tildecohonm}
Let $m\in \llbracket 0,4 \rrbracket$ and $n\in\NN_0$. Then 
$H^n_m=\tilde{H}^n_m\oplus \omega^*_1 H^{n-4}_{m+2}$. Here, $\tilde{H}^n_m$ is the $\Bbbk$-vector space spanned by the set $\tilde{\mathfrak{H}}^n_m$, which is defined as follows. 
If $m\in\llbracket 3,4\rrbracket $, we define the set 
$ \tilde{\mathfrak{H}}^n_m=\mathfrak{H}^n_m $
for $n\in\NN_0$. 
If $m=2$, we define the sets 
\[ \tilde{\mathfrak{H}}^0_2=\big\{ \epsilon^!|(ab+ba),\epsilon^!|(ab+bc-ac) \big\},  
\quad
 \tilde{\mathfrak{H}}^2_2=\big\{\alpha_2|(ab+ba),\beta_2|(ab+ba) \big\} ,\] 
and 
$\tilde{\mathfrak{H}}^n_2=\emptyset$
for $n=1$ and $n\geqslant 3$.
If $m=1$, 
we define the set
\[ \tilde{\mathfrak{H}}^1_1=\big\{ \alpha|a+\beta|b+\gamma|c \big\} ,\] 
and $\tilde{\mathfrak{H}}^n_1=\emptyset$ 
for $n\in\NN_0$ with $n$ even, together with  
\begin{align*}
	\tilde{\mathfrak{H}}^n_1=\big\{
		& \alpha_n|a+\beta_n|b+\gamma_n|c,(\beta_n-\alpha_{n-1}\beta)|b,(\gamma_n-\alpha_{n-1}\gamma)|c,(\alpha_n-\alpha_{n-2}\beta_2)|a,\\
	& \alpha_{n-1}\beta|c+\alpha_{n-1}\gamma|a+\alpha_{n-2}\beta_2|b 
	\big\}
	\end{align*}
for $n\geqslant 3$ with $n$ odd.
If $m=0$, we define the sets 
\[ \tilde{\mathfrak{H}}^0_0=\big\{\epsilon^!|1 \big\} ,
\quad 
  \tilde{\mathfrak{H}}^2_0=\big\{  \alpha_2|1,\beta_2|1, \gamma_2|1, (\alpha\beta+\alpha\gamma)|1 \big\}     ,\]  
and $\tilde{\mathfrak{H}}^n_0=\emptyset$
for $n\in\NN$ with $n$ odd, together with  
\[ \tilde{\mathfrak{H}}^n_0=\big\{ \alpha_n|1,\beta_n|1,\gamma_n|1,(\alpha_{n-1}\beta+\alpha_{n-1}\gamma)|1,\alpha_{n-2}\beta_2|1 \big\}
\]
for $n\geqslant 4$ with $n$ even. 
Moreover, 
if we define $\tilde{H}^n_m=0$ for $(n,m)\in \ZZ^2 \setminus (\NN_0 \times \llbracket 0,4 \rrbracket )$, 
then $H^n_m=\tilde{H}^n_m\oplus \omega^*_1 H^{n-4}_{m+2}$ holds for $m, n  \in \ZZ$ by applying Corollary \ref{H_{-1}}.
\end{cor}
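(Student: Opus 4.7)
The plan is to deduce the decomposition $H^n_m = \tilde{H}^n_m \oplus \omega^*_1 H^{n-4}_{m+2}$ directly from the explicit bases $\mathfrak{H}^n_m$ already built in Subsection \ref{subsection: cohomology}. Indeed, inspection of the constructions there shows that in every relevant case the basis $\mathfrak{H}^n_m$ was produced as a disjoint union of an ``intrinsic'' collection of elements, all lying in $K^n_m \subseteq Q^n_m$, together with a collection of the form $\omega^*_1 \mathfrak{H}^{n-4}_{m+2}$. For $m\in \llbracket 3,4\rrbracket$ the second summand is absent since $Q^n_m=K^n_m$, so there is nothing to prove beyond setting $\tilde{\mathfrak{H}}^n_m=\mathfrak{H}^n_m$. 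For $m=2$ we have $Q^n_2=K^n_2\oplus \omega^*_1 K^{n-4}_4$, and the lists given in the statement for $\tilde{\mathfrak{H}}^0_2$, $\tilde{\mathfrak{H}}^2_2$ and $\tilde{\mathfrak{H}}^n_2=\emptyset$ (for $n\geqslant 3$) match, on the nose, the $K^n_2$-part of the $\mathfrak{H}^n_2$ of Subsection \ref{subsection: cohomology}; analogous matching holds for $m=1$ with the splitting $Q^n_1=K^n_1\oplus\omega^*_1 K^{n-4}_3$, and for $m=0$ with $Q^n_0=K^n_0\oplus\omega^*_1 K^{n-4}_2 \oplus \omega^*_2 K^{n-8}_4$.

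Next, I would verify the three things needed to conclude that $\tilde{H}^n_m:=\operatorname{span}_\Bbbk \tilde{\mathfrak{H}}^n_m$ satisfies the desired direct sum decomposition. First, every element of $\tilde{\mathfrak{H}}^n_m$ lies in $D^n_m$, which follows from checking that $\partial^n_m$ vanishes on each listed element using the tables in Subsection \ref{subsection:explicit-description-diff-cohomology} (this is the same calculation already implicit in Subsubsection \ref{subsubsection:cod2}). Second, $\tilde{\mathfrak{H}}^n_m \cup \mathfrak{B}^n_m$ is linearly independent, which is just the sub-list of the already proved linear independence of $\mathfrak{H}^n_m \cup \mathfrak{B}^n_m$ in Subsection \ref{subsection: cohomology}. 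Third, dimension count: by the construction of $\mathfrak{H}^n_m$ in that subsection, $\#\mathfrak{H}^n_m=\#\tilde{\mathfrak{H}}^n_m + \#\omega^*_1\mathfrak{H}^{n-4}_{m+2}=\dim \tilde{H}^n_m + \dim H^{n-4}_{m+2}$; since $\dim H^n_m=\#\mathfrak{H}^n_m$, the equality of dimensions forces the sum to be direct and to equal $H^n_m$.

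The subspace $\omega^*_1 H^{n-4}_{m+2}$ of $Q^n_m$ is automatically contained in $D^n_m$, since $\omega^*_1$-shifting intertwines $\partial^{\bullet}$ with the analogous differential on $\omega^*_1 Q^{\bullet-4}_{\bullet+2}$ by the explicit formula \eqref{eq:diff-partial-cohomology}, and $\omega^*_1 H^{n-4}_{m+2}\subseteq \omega^*_1 D^{n-4}_{m+2}$ is cocycles. The direct sum with $\tilde{H}^n_m$ then follows formally from the disjointness of their homogeneous components with respect to the $\omega^*_i$-grading, together with the dimension matching above. Finally, the extension of the decomposition to all integers $m$ (in particular negative $m$) is obtained from Corollary \ref{H_{-1}}, by applying the isomorphism \eqref{euqation:Hnm coho} on both sides of the equality and setting $\tilde{H}^n_m=0$ for $(n,m)\notin \NN_0\times \llbracket 0,4\rrbracket$.

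The only step that requires any real work is the bookkeeping in the first paragraph, namely checking case by case that the elements of $\mathfrak{H}^n_m$ from Subsection \ref{subsection: cohomology} that do not carry a factor $\omega^*_1$ coincide exactly with the list $\tilde{\mathfrak{H}}^n_m$ of the statement, and that the remaining elements are precisely $\omega^*_1\mathfrak{H}^{n-4}_{m+2}$. This is tedious but entirely mechanical, so I would leave the verification for $m\in\llbracket 3,4\rrbracket$ and the odd/even case split to the reader and write out only a couple of representative cases (say $m=2$ with $n=6$ and $m=0$ with $n=6$) to make the pattern transparent.
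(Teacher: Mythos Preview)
Your proposal is correct and takes essentially the same approach as the paper: the corollary is stated there without proof as a direct restatement of the explicit bases $\mathfrak{H}^n_m$ computed in Subsection \ref{subsection: cohomology}, which were already built (for $m\leqslant 2$) as disjoint unions $\tilde{\mathfrak{H}}^n_m \cup \omega^*_1 \mathfrak{H}^{n-4}_{m+2}$. Your bookkeeping verification and the dimension/independence argument simply make explicit what the paper leaves to the reader.
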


\begin{rk}
The reader can easily check that $\tilde{D}^n_m=\tilde{H}^n_m \oplus \tilde{B}^n_m$ for $m\in \llbracket 0,4 \rrbracket$ and $n\in\NN_0$.	
\end{rk}

Recall that the Hochschild cohomology is decomposed as $\operatorname{HH}^n(A)=\oplus_{m\leqslant 4}H^n_m$ for $n\in\NN_0$.

\begin{prop}\label{linear structure of cohomology}
For $n\in\NN_0$, 
\[ 
\operatorname{HH}^n(A) = \mathop{\bigoplus}\limits_{\substack{i\in \llbracket 0, \lfloor n/4 \rfloor \rrbracket, \\
m\in \llbracket 0,4 \rrbracket}}\omega^*_{i}  \tilde{H}^{n-4i}_{m}.
\] 
\end{prop}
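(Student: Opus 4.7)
The plan is to prove the decomposition by a simple induction on $\lfloor n/4 \rfloor$, using the recursion from Corollary \ref{tildecohonm} to relate $\operatorname{HH}^n(A)$ to $\operatorname{HH}^{n-4}(A)$. This is analogous to (but cleaner than) the proof of Proposition \ref{prop:linear HHnA}, because $\tilde{H}^n_m$ vanishes outside $\NN_0\times\llbracket 0,4\rrbracket$, so no separate argument is needed for the divisibility classes of $n$ modulo $4$.

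The first step is to establish the one-step identity
\[
\operatorname{HH}^n(A) \;\cong\; \Big(\bigoplus_{m\in \llbracket 0,4\rrbracket} \tilde{H}^n_m\Big) \;\oplus\; \omega^*_1\,\operatorname{HH}^{n-4}(A).
\]
To see this, I will sum the isomorphism $H^n_m = \tilde{H}^n_m\oplus \omega^*_1 H^{n-4}_{m+2}$ of Corollary \ref{tildecohonm} over all $m\leqslant 4$. The first summands contribute only for $m\in\llbracket 0,4\rrbracket$ since $\tilde{H}^n_m=0$ otherwise. For the second summands, the reindexing $m'=m+2$ turns $\bigoplus_{m\leqslant 4}H^{n-4}_{m+2}$ into $\bigoplus_{m'\leqslant 6}H^{n-4}_{m'}$; because $H^{n-4}_{m'}=0$ for $m'>4$ (as $Q^{n-4}_{m'}=0$ there), this collapses to $\bigoplus_{m'\leqslant 4}H^{n-4}_{m'}=\operatorname{HH}^{n-4}(A)$.

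The second step is the base case $n\in\llbracket 0,3\rrbracket$: here $H^{n-4}_{m+2}=0$ for every $m\leqslant 4$ since $n-4<0$, so the one-step identity reduces to $\operatorname{HH}^n(A)=\bigoplus_{m\in\llbracket 0,4\rrbracket}\tilde{H}^n_m$, which matches the claimed formula with $\lfloor n/4\rfloor=0$. The inductive step for $n\geqslant 4$ plugs the inductive hypothesis $\operatorname{HH}^{n-4}(A)=\bigoplus_{i\in\llbracket 0,\lfloor (n-4)/4\rfloor\rrbracket,\,m\in\llbracket 0,4\rrbracket}\omega^*_i\tilde{H}^{n-4-4i}_m$ into the one-step identity; absorbing the outer $\omega^*_1$ by shifting the index $i\mapsto i+1$ and using $\lfloor (n-4)/4\rfloor+1=\lfloor n/4\rfloor$, the $i=0$ term from the direct summand $\bigoplus_{m\in\llbracket 0,4\rrbracket}\tilde{H}^n_m$ fills in precisely the missing range, giving the desired formula.

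There is essentially no obstacle: the argument is bookkeeping with indices, and the only thing worth checking carefully is that the reindexing $m\mapsto m+2$ in the shifted sum, together with the vanishing of $\tilde{H}^{n'}_{m'}$ outside $\NN_0\times\llbracket 0,4\rrbracket$, correctly recombines into the single index set $\{(i,m)\in\llbracket 0,\lfloor n/4\rfloor\rrbracket\times\llbracket 0,4\rrbracket\}$ that appears in the statement.
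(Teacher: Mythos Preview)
Your proof is correct. Both your argument and the paper's rely on the recursion $H^n_m=\tilde{H}^n_m\oplus \omega^*_1 H^{n-4}_{m+2}$ from Corollary~\ref{tildecohonm}, but you organize it differently. The paper proceeds by a single direct expansion: it writes $\operatorname{HH}^n(A)=\bigoplus_{m\in\llbracket -2\lfloor n/4\rfloor,4\rrbracket}H^n_m$, uses Corollary~\ref{H_{-1}} to rewrite each $H^n_m$ with $m\leqslant 1$ as a shift of some $H^{n'}_0$ or $H^{n'}_1$, substitutes the explicit two- and three-term decompositions of $H^{n'}_0,H^{n'}_1,H^{n'}_2$ from~\eqref{hco024}, and then regroups the resulting pile of summands by hand. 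You instead first sum the recursion over all $m\leqslant 4$ to obtain the single clean identity $\operatorname{HH}^n(A)\cong\big(\bigoplus_{m\in\llbracket 0,4\rrbracket}\tilde{H}^n_m\big)\oplus\omega^*_1\operatorname{HH}^{n-4}(A)$, and then induct on $\lfloor n/4\rfloor$. Your route is shorter and more transparent: it avoids the term-by-term bookkeeping of the paper's proof, and because $\tilde{H}^n_m$ vanishes outside $\NN_0\times\llbracket 0,4\rrbracket$ there is (as you note) no need to split into cases by $n\bmod 4$, unlike the homology analogue in Proposition~\ref{prop:linear HHnA}. The paper's approach, on the other hand, makes the internal-degree decomposition more visible at each step.
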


\begin{proof}
%
%
By Corollary \ref{tildecohonm}, we have 
\begin{equation}\label{hco024}
    \begin{split}
    H^n_2=\tilde{H}^n_2\oplus \omega^*_1 \tilde{H}^{n-4}_4,
    \quad 
        H^n_1=\tilde{H}^n_1\oplus \omega^*_1 \tilde{H}^{n-4}_3, \quad 
        H^n_0=\tilde{H}^n_0\oplus \omega^*_1 \tilde{H}^{n-4}_2\oplus \omega^*_2 \tilde{H}^{n-8}_4
    \end{split}
\end{equation}
for $n\in \NN_0$. Using Corollary \ref{H_{-1}} and \eqref{hco024}, we get 
\begin{align*}
\operatorname{HH}^n(A)
& = \bigoplus_{m\in\llbracket -2\lfloor n/4\rfloor, 4\rrbracket }H^n_m 
\\
& = H^n_4\oplus H^n_3\oplus H^n_2\oplus \bigg( \bigoplus_{i\in  \llbracket 0, \lfloor n/4 \rfloor \rrbracket }\omega^*_i H^{n-4i}_1 \bigg) 
\oplus 
\bigg( \bigoplus_{i\in  \llbracket 0, \lfloor n/4 \rfloor \rrbracket }\omega^*_i H^{n-4i}_0 \bigg) 
\\
& = \tilde{H}^n_4
\oplus 
\tilde{H}^n_3
\oplus
(\tilde{H}^n_2\oplus \omega^*_1 \tilde{H}^{n-4}_4) 
\oplus 
\bigg( \bigoplus_{i\in  \llbracket 0, \lfloor n/4 \rfloor  \rrbracket }\omega^*_i (\tilde{H}^{n-4i}_1 \oplus \omega^*_1 \tilde{H}^{n-4i-4}_3)\bigg) \\
& \phantom{= \; }
\oplus \bigg( \bigoplus_{i\in  \llbracket 0, \lfloor n/4 \rfloor \rrbracket }\omega^*_i ( \tilde{H}^{n-4i}_0\oplus \omega^*_1 \tilde{H}^{n-4i-4}_2\oplus \omega^*_2 \tilde{H} ^{n-4i-8}_4)\bigg) 
\\
& = \tilde{H}^n_4
\oplus 
\tilde{H}^n_3
\oplus
\tilde{H}^n_2\oplus \omega^*_1 \tilde{H}^{n-4}_4
\oplus \bigg( \bigoplus_{i\in  \llbracket 0, \lfloor n/4 \rfloor  \rrbracket }\omega^*_i \tilde{H}^{n-4i}_1 \bigg) 
\oplus \bigg( \bigoplus_{i\in  \llbracket 0, \lfloor n/4 \rfloor  \rrbracket }\omega^*_{i+1} \tilde{H}^{n-4i-4}_3 \bigg)\\
& \phantom{= \; }
\oplus \bigg( \bigoplus_{i\in  \llbracket 0, \lfloor n/4 \rfloor \rrbracket }\omega^*_i  \tilde{H}^{n-4i}_0 \bigg)
\oplus \bigg( \bigoplus_{i\in  \llbracket 0, \lfloor n/4 \rfloor \rrbracket }\omega^*_{i+1}  \tilde{H}^{n-4i-4}_2 \bigg)
\oplus \bigg( \bigoplus_{i\in  \llbracket 0, \lfloor n/4 \rfloor \rrbracket }\omega^*_{i+2}  \tilde{H}^{n-4i-8}_4 \bigg)
\\
& =\tilde{H}^n_4
\oplus 
\tilde{H}^n_3
\oplus
\tilde{H}^n_2\oplus \omega^*_1 \tilde{H}^{n-4}_4
\oplus \bigg( \bigoplus_{i\in  \llbracket 0, \lfloor n/4 \rfloor  \rrbracket }\omega^*_i \tilde{H}^{n-4i}_1 \bigg) 
\oplus \bigg( \bigoplus_{i\in  \llbracket 1, \lfloor n/4 \rfloor  \rrbracket }\omega^*_{i} \tilde{H}^{n-4i}_3 \bigg)\\
& \phantom{= \; }
\oplus \bigg( \bigoplus_{i\in  \llbracket 0, \lfloor n/4 \rfloor \rrbracket }\omega^*_i  \tilde{H}^{n-4i}_0 \bigg)
\oplus \bigg( \bigoplus_{i\in  \llbracket 1, \lfloor n/4 \rfloor \rrbracket }\omega^*_{i}  \tilde{H}^{n-4i}_2 \bigg)
\oplus \bigg( \bigoplus_{i\in  \llbracket 2, \lfloor n/4 \rfloor \rrbracket }\omega^*_{i}  \tilde{H}^{n-4i}_4 \bigg)
\\
& =\mathop{\bigoplus}\limits_{\substack{i\in \llbracket 0, \lfloor n/4 \rfloor \rrbracket, \\
m\in \llbracket 0,4 \rrbracket}}\omega^*_{i}  \tilde{H}^{n-4i}_{m}.
\end{align*}
\end{proof}

\begin{rk}
Let $\tilde{H}^n=\oplus_{m\in \llbracket 0,4\rrbracket}\tilde{H}^n_m$. 
Proposition \ref{linear structure of cohomology} shows that $\operatorname{HH}^n(A)=\oplus_{i\in \llbracket 0, \lfloor n/4 \rfloor \rrbracket}\omega^*_{i}  \tilde{H}^{n-4i}$. 
Using Corollary \ref{tildecohonm}, it is easy to compute that $\operatorname{dim}\tilde{H}^0=4$, $\operatorname{dim}\tilde{H}^1=7$, $\operatorname{dim}\tilde{H}^2=10$, $\operatorname{dim}\tilde{H}^3=12$ and $\operatorname{dim}\tilde{H}^n=10$ for $n\geqslant 4$. 
\end{rk}

Using the previous remark, we get the dimension of $\operatorname{HH}^n(A)$.

\begin{prop}
\label{proposition:cohomology}
The dimension of $\operatorname{HH}^n(A)$ is given by 
\[ 
\operatorname{dim}\operatorname{HH}^n(A)=
\begin{cases}
\frac{5}{2}n+4, &  \text{if $n=4r$ for $r\in\NN_0$}, 
\\
\frac{5}{2}n+5, &  \text{if $n=4r+2$ for $r\in\NN_0$},
\\
\frac{5n+9}{2}, &  \text{if $n=2r+1$ for $r\in\NN_0$}.
\end{cases} 
\] 
\end{prop}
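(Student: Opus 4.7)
The plan is to use the remark immediately preceding the statement, which gives the decomposition
\[
\operatorname{HH}^n(A) = \bigoplus_{i \in \llbracket 0, \lfloor n/4 \rfloor \rrbracket} \omega^*_i \tilde{H}^{n-4i},
\]
together with the values $\operatorname{dim} \tilde{H}^0 = 4$, $\operatorname{dim} \tilde{H}^1 = 7$, $\operatorname{dim} \tilde{H}^2 = 10$, $\operatorname{dim} \tilde{H}^3 = 12$ and $\operatorname{dim} \tilde{H}^n = 10$ for $n \geqslant 4$. Hence
\[
\operatorname{dim} \operatorname{HH}^n(A) = \sum_{i=0}^{\lfloor n/4 \rfloor} \operatorname{dim} \tilde{H}^{n-4i}.
\]

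The argument is then a case analysis on the residue of $n$ modulo $4$. First I would write $n = 4r + s$ with $s \in \llbracket 0, 3 \rrbracket$, so that the sum above decomposes as $r$ copies of $\operatorname{dim} \tilde{H}^{k} = 10$ (corresponding to the indices $i = 0, 1, \ldots, r-1$, for which $n - 4i \geqslant 4 + s \geqslant 4$), plus one final term $\operatorname{dim} \tilde{H}^s$ corresponding to $i = r$. So the total dimension is $10r + \operatorname{dim} \tilde{H}^{s}$.

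Substituting the value of $\operatorname{dim} \tilde{H}^s$ in each case yields: $10r + 4 = \frac{5}{2}n + 4$ when $s = 0$; $10r + 7 = \frac{5n+9}{2}$ when $s = 1$; $10r + 10 = \frac{5}{2}n + 5$ when $s = 2$; and $10r + 12 = \frac{5n+9}{2}$ when $s = 3$. Unifying the two odd cases $s = 1$ and $s = 3$ into the single formula $(5n+9)/2$ for $n = 2r+1$ gives precisely the expression in the statement. There is no genuine obstacle here: all the work lies in the preceding subsections that establish the decomposition of $\operatorname{HH}^n(A)$ and compute the dimensions of the $\tilde{H}^n_m$, so the proof itself is simply an arithmetic verification reduced to four residue classes.
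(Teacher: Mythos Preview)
Your proposal is correct and follows exactly the approach the paper intends: the paper simply states ``Using the previous remark, we get the dimension of $\operatorname{HH}^n(A)$'' without spelling out the arithmetic, and you have faithfully expanded that into the case analysis on $n \bmod 4$ using the decomposition $\operatorname{HH}^n(A)=\bigoplus_{i}\omega^*_i\tilde{H}^{n-4i}$ and the listed values of $\dim\tilde{H}^s$.
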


The Hilbert series of $\operatorname{HH}^n(A)$ is $ h^n(t)=\sum_{m\leqslant 4}\operatorname{dim}(H^n_m)t^{m-n}$ for $n\in\NN_0$. Note that $m-n$ is the internal degree of $H^n_m$.
\begin{cor}
\label{corollary:cohomology-hilbert-series}
The Hilbert series $h^n(t)$ of $\operatorname{HH}^n(A)$ is given as follows.
Let $n\geqslant 8$.
Then 
\[ 
h^n(t)=t^{-n}
\big[ 
5\chi_n t^4+5\chi_{n+1} t^3+5\chi_n t^2 +10 \sum\limits_{i=0}^{\lfloor \frac{n}{4} \rfloor-3}t^{\chi_{n+1}-2i}+t^{-2\lfloor \frac{n}{4} \rfloor} p^n(t)
\big],
\] 
where
\[ 
p^n(t)=
\begin{cases}
6t^4+7t^2+1,& \text{if $n\equiv 0$ $(\operatorname{mod}$ $4)$},
\\
10t^5+11t^{3}+t,&\text{if $n\equiv 1$ $(\operatorname{mod}$ $4)$},
\\
9t^4+7t^2+4,&\text{if $n\equiv 2$ $(\operatorname{mod}$ $4)$},
\\
10t^5+12t^{3}+5t,&\text{if $n\equiv 3$ $(\operatorname{mod}$ $4)$}.
\end{cases}
\] 
%
%
%
Moreover, 
\begin{align*}
h^0(t)&=t^4+2t^2+1,\quad &
h^1(t)&=6t^2+1,
\\
h^2(t)&=4t^2+2+4t^{-2},\quad & 
h^3(t)&=7+5t^{-2},
\\
h^4(t)&=5+t^{-2}+7t^{-4}+t^{-6},\quad &
h^5(t)&=5t^{-2}+11t^{-4}+t^{-6},
\\
h^6(t)&=5t^{-2}+4t^{-4}+7t^{-6}+4t^{-8},\quad &
h^7(t)&=5t^{-4}+12t^{-6}+5t^{-8}.
\end{align*}
\end{cor}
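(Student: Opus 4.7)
The plan is to deduce the Hilbert series directly from the additive decomposition of $\operatorname{HH}^{n}(A)$ provided by Proposition~\ref{linear structure of cohomology} and the explicit bases of the $\tilde{H}^{n}_m$ given in Corollary~\ref{tildecohonm}. For each $n \in \NN_0$ set $\tilde{h}^{n}(t) = \sum_{m\in\llbracket 0,4\rrbracket}\operatorname{dim}(\tilde{H}^{n}_m)\,t^{m-n}$. Reading off the dimensions from Corollary~\ref{tildecohonm} and remembering that $\tilde{H}^{n-4i}_{m}$ sits in internal degree $m-(n-4i)$, I obtain
\[
    \tilde{h}^{0}(t)=1+2t^{2}+t^{4},\quad
    \tilde{h}^{1}(t)=1+6t^{2},\quad
    \tilde{h}^{2}(t)=4t^{-2}+2+4t^{2},\quad
    \tilde{h}^{3}(t)=5t^{-2}+7,
\]
\[
    \tilde{h}^{n}(t) = 5t^{-n}+5t^{4-n} \text{ for $n\geqslant 4$ even,} \quad
    \tilde{h}^{n}(t) = 5t^{1-n}+5t^{3-n} \text{ for $n\geqslant 5$ odd.}
\]
Since $\omega^{*}_{i}$ has cohomological degree $4i$ and internal degree $-6i$, Proposition~\ref{linear structure of cohomology} immediately yields the master identity
\[
    h^{n}(t)=\sum_{i=0}^{\lfloor n/4\rfloor}t^{-6i}\,\tilde{h}^{n-4i}(t).
\]

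The initial cases $n\in\llbracket 0,7\rrbracket$ are handled by substituting directly into this formula and simplifying, which recovers the eight closed expressions listed at the end of the corollary. For $n\geqslant 8$ I would split the summation index into the \emph{bulk} range $i\in \llbracket 0,\lfloor n/4\rfloor-2\rrbracket$, where $n-4i\geqslant 4$ and hence $\tilde{h}^{n-4i}(t)$ takes its stable binomial form $5t^{-(n-4i)}+5t^{4-(n-4i)}$, and the \emph{boundary} range $i\in\llbracket\lfloor n/4\rfloor-1,\lfloor n/4\rfloor\rrbracket$, in which $n-4i\in\{0,1,2,3\}$ and $\tilde{h}^{n-4i}(t)$ is one of the four exceptional series computed above.

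In the bulk range the stable form can be parameterised by the parity of $n$: for $n$ even (resp.\ odd) each term contributes $5t^{-n-2i}+5t^{4-n-2i}$ (resp.\ $5t^{1-n-2i}+5t^{3-n-2i}$). Regrouping the two pieces produces a single telescoping family of exponents spaced by $2$, except that the two extreme monomials (corresponding to $i=0$ and $i=\lfloor n/4\rfloor-2$) fail to be doubled; these unpaired monomials account for the prefactors $5\chi_{n}t^{4}$, $5\chi_{n+1}t^{3}$ and $5\chi_{n}t^{2}$ appearing in the statement, while the doubled monomials merge into the term $10\sum_{i=0}^{\lfloor n/4\rfloor-3}t^{\chi_{n+1}-2i}$ after the common factor $t^{-n}$ is pulled out. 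The boundary contribution is
\[
    t^{-6(\lfloor n/4\rfloor-1)}\tilde{h}^{n-4(\lfloor n/4\rfloor-1)}(t)+t^{-6\lfloor n/4\rfloor}\tilde{h}^{n-4\lfloor n/4\rfloor}(t),
\]
which after substitution becomes exactly $t^{-n}\cdot t^{-2\lfloor n/4\rfloor}p^{n}(t)$ in each residue class of $n$ modulo $4$, matching the four cases in the definition of $p^{n}(t)$.

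I expect the only real obstacle to be the bookkeeping of the four parity cases: one must check separately that the stable pairing described above really exhausts all but the three extreme monomials $5\chi_n t^4$, $5\chi_{n+1}t^3$, $5\chi_n t^2$ (which behave differently depending on whether $n$ is even or odd), and that the remaining unpaired terms assemble into the stated $p^{n}(t)$ without overlap with the telescoped block. This is routine but must be done carefully; once the four residue classes modulo $4$ are verified independently, the corollary follows, and the initial low-$n$ values are obtained by direct substitution.
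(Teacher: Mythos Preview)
Your master identity $h^{n}(t)=\sum_{i=0}^{\lfloor n/4\rfloor}t^{-6i}\tilde{h}^{n-4i}(t)$ and your list of the $\tilde{h}^{n}(t)$ are correct, and this is precisely the natural route (the paper states the corollary without proof, as an immediate consequence of the dimension formulas assembled in Subsection~\ref{subsection: cohomology} and Proposition~\ref{linear structure of cohomology}). However, your split into bulk and boundary is misdescribed and leads to a false claim. For $i=\lfloor n/4\rfloor-1$ one has $n-4i\in\{4,5,6,7\}$, not $\{0,1,2,3\}$, so $\tilde{h}^{n-4i}$ is already in its \emph{stable} binomial form there, not an exceptional one. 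Consequently your asserted identity ``boundary contribution $=t^{-n}t^{-2\lfloor n/4\rfloor}p^{n}(t)$'' fails: for instance at $n=8$ your two-term boundary $t^{-6}\tilde{h}^{4}(t)+t^{-12}\tilde{h}^{0}(t)$ equals $t^{-8}\cdot t^{-4}(5t^{6}+t^{4}+7t^{2}+1)$, whereas $t^{-8}\cdot t^{-4}p^{8}(t)=t^{-8}\cdot t^{-4}(6t^{4}+7t^{2}+1)$.

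The fix is to take the bulk over \emph{all} stable indices $i\in\llbracket 0,\lfloor n/4\rfloor-1\rrbracket$ and let the boundary be the single index $i=\lfloor n/4\rfloor$. For $n$ even the stable contributions $5t^{-n-2i}+5t^{4-n-2i}$ pair with shift $2$ (second monomial at $i$ equals first monomial at $i-2$), leaving two unpaired ``high'' monomials at $i=0,1$ (giving $5t^{4}+5t^{2}$ inside the bracket), the doubled block $10\sum_{i=0}^{\lfloor n/4\rfloor-3}t^{-2i}$, and two unpaired ``low'' monomials at $i=\lfloor n/4\rfloor-2,\lfloor n/4\rfloor-1$; the latter combine with the single boundary term to produce $t^{-2\lfloor n/4\rfloor}p^{n}(t)$. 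For $n$ odd the pairing has shift $1$, so there is a single unpaired high monomial ($5t^{3}$), one doubled term beyond the range of the stated sum (namely $10t^{5-2\lfloor n/4\rfloor}$), and a single unpaired low monomial; that extra doubled term, the unpaired low monomial, and the boundary together assemble into $t^{-2\lfloor n/4\rfloor}p^{n}(t)$. With this corrected bookkeeping the four residue classes modulo $4$ check out, and the eight initial values follow by direct substitution as you say.
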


\section{Algebraic structure of Hochschild cohomology} 
\label{section:algebra-cohomology}

In this section we will explicitly determine the algebra structure of the Hochschild cohomology of $A$ given by the cup product $\cup$. 
To do so, we will first find a generating set of the $\Bbbk$-algebra  $\operatorname{HH}^{\bullet}(A)=\oplus_{n\in \NN_0}\operatorname{HH}^n(A)$ (see Proposition \ref{finitely generated}). 
Then, after extracting a minimal generating set 
from the previous set of generators, we will 
find an explicit presentation of the algebra $\operatorname{HH}^{\bullet}(A)$ as a quotient of a free algebra $F$ by the ideal generated by an explicit set $\mathcalboondox{R}$ of homogeneous relations. 
This is done by using a Gröbner basis of $\mathcalboondox{R}$, which allows us to compute the Hilbert series of the quotient $F/(\mathcalboondox{R})$, and then comparing the Hilbert series of the quotient and that of $\operatorname{HH}^{\bullet}(A)$. 
We refer the reader to Sections 2.1 and 2.2 of the very nice book \cite{Sarah} for the usual method for computing the cup product.

\begin{rk}\label{n1n2}
It is easy to see that $f\cup g\in H^{n_1+n_2}_{m_1+m_2}$ for all $f\in H^{n_1}_{m_1}$, $g\in H^{n_2}_{m_2}$. 
Moreover, it is well-known that the cup product on Hochschild cohomology is graded commutative, \textit{i.e.} $f\cup g=(-1)^{mn}g\cup f$ for $f\in \operatorname{HH}^m(A)$, $g\in \operatorname{HH}^n(A)$ (see \cite{Sarah}, Thm. 1.4.6).
\end{rk}

\begin{lem}\label{g}
	Let $g=\omega^*_1\epsilon^!|1\in \omega^*_1 \tilde{H}^0_0=H^4_{-2}$. Then $f\cup g=\omega^*_1 f$ for all $f\in \operatorname{HH}^{\bullet}(A)$.
\end{lem}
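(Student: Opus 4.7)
The plan is to compute $f \cup g$ via the standard comparison-map description of the cup product: if $G_\bullet \colon P^b_{\bullet+4} \to P^b_\bullet$ is any chain map of graded $A$-bimodules lifting $g$ in the sense that $\epsilon^b \circ G_0 = g \colon P^b_4 \to A$, then for a cocycle $f \in \operatorname{Hom}_{A^e}(P^b_n, A)$ the class $f \cup g$ is represented by the composite $f \circ G_n \in \operatorname{Hom}_{A^e}(P^b_{n+4}, A)$. The whole proof then reduces to writing down a concrete $G_\bullet$ and reading off the result.

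First, I would exploit the evident direct-sum decomposition $P^b_{n+4} = K^b_{n+4} \oplus \omega_1 P^b_n$, obtained from $P^b_{n+4} = \bigoplus_{i} \omega_i K^b_{n+4-4i}$ by isolating the $i=0$ summand and reindexing the rest via $i \mapsto i-1$. Define $G_n$ as the projection that kills the $K^b_{n+4}$ summand and sends $\omega_1 \rho$ to $\rho$ for $\rho \in P^b_n$. The chain map condition $\delta^b_n \circ G_n = G_{n-1} \circ \delta^b_{n+4}$ reduces to the identity $\delta^b_{n+4}(\omega_1 \rho) = \omega_1\, \delta^b_n(\rho)$ for $\rho \in P^b_n$: writing $\rho = \sum_i \omega_i \rho_{n-4i}$, the explicit formula from Proposition \ref{prb} shows that both sides equal $\sum_i\bigl(\omega_{i+1} d^b(\rho_{n-4i}) + \omega_i f^b(\rho_{n-4i})\bigr)$, with the boundary case $i=0$ handled by $\omega_{-1} = 0$. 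On the complementary summand $K^b_{n+4}$, $\delta^b_{n+4}$ restricts to $d^b_{n+4}$, whose image lies in $K^b_{n+3} \subseteq P^b_{n+3}$ and is therefore annihilated by $G_{n-1}$. The initial condition $\epsilon^b \circ G_0 = g$ is immediate on generators: $G_0$ sends $\omega_1(1|\epsilon^!|1)$ to $1|\epsilon^!|1$ and annihilates $K^b_4$, so $\epsilon^b G_0(\omega_1(1|\epsilon^!|1)) = 1$, which matches $g = \omega^*_1 \epsilon^!|1$.

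Finally, for a cocycle $f \in \operatorname{Hom}_{A^e}(P^b_n, A)$, the composite $f \circ G_n$ annihilates $K^b_{n+4}$ and sends $\omega_1 \rho$ to $f(\rho)$ for $\rho \in P^b_n$. Under the isomorphism $\operatorname{Hom}_{A^e}(P^b_{n+4}, A) \cong Q^{n+4} = \bigoplus_i \omega^*_i K^{n+4-4i}$ used throughout Section \ref{Hochschild cohomology}, this description is exactly the element $\omega^*_1 f \in \omega^*_1 K^n \subseteq Q^{n+4}$, so $f \cup g = \omega^*_1 f$ as classes in $\operatorname{HH}^{\bullet}(A)$. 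The main point is not computational but structural: the key observation is that $g$ is supported on the unique $\omega_1$-type generator of $P^b_4$, which lets us bypass an inductive construction of the lift and instead use the "strip off $\omega_1$" projection; the only thing requiring verification is that this projection commutes with $\delta^b$, which is a direct consequence of the shape of the differential in the bimodule resolution.
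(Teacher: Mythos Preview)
Your proof is correct and follows exactly the same approach as the paper: both construct the lift of $g$ as the ``strip off one $\omega$'' map $G_n(\omega_i x)=\omega_{i-1}x$ on $P^b_{n+4}$ and read off $f\cup g=f\circ G_n=\omega^*_1 f$. Your write-up is simply more detailed, supplying the verification of the chain-map identity $\delta^b_{n+4}(\omega_1\rho)=\omega_1\,\delta^b_n(\rho)$ and of the initial condition $\epsilon^b\circ G_0=g$ that the paper leaves implicit.
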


\begin{proof}
 The map $g$ can be extended to a chain map $g_{\bullet} : P^b_{\bullet}\to P^b_{\bullet}$ with $g_n(\omega^*_i x)=\omega^*_{i-1}x$ for $x\in K^b_{n+4-4i}$ and $i\in \llbracket 0,\lfloor n/4 \rfloor+1 \rrbracket$. 
 Hence, given $f\in \operatorname{HH}^m(A)$, we get $f\cup g=fg_m=\omega^*_1 f$.
\end{proof}

By Lemma \ref{g} and Proposition \ref{linear structure of cohomology}, the set 
\[ 
\bigg(\underset{\text{\begin{tiny}$\begin{matrix} m\in \llbracket 0, 4 \rrbracket, \\ n\in \NN_0 \end{matrix}$\end{tiny}}}{\bigcup} \tilde{\mathfrak{H}}^n_m 
	\cup \big\{ \omega^*_1\epsilon^!|1 \big\}\bigg) \setminus \big\{ \epsilon^!|1 \big\}
\] 
%
is a generating set of $\operatorname{HH}^{\bullet}(A)$ as $\Bbbk$-algebra.

\begin{fact}\label{fact chain map}
Assume $x,y\in A$. Let $n\geqslant 2$ be even. 
The map $\alpha_n|1$ can be extended to the chain map $g_{\bullet}:P^b_{\bullet}\to P^b_{\bullet}$ satisfying 
\begin{align*}
	& g_0(x|\alpha_{n}|y)=x|\epsilon^!|y,
	\\
	& g_0(x|\beta_{n}|y)=g_0(x|\gamma_{n}|y)=g_0(x|\alpha_{n-1}\beta|y)=g_0(x|\alpha_{n-1}\gamma|y)=g_0(x|\alpha_{n-2}\beta_2|y)=0,\\
	& g_1(x|\alpha_{n+1}|y)=x|\alpha|y,\quad 
	g_1(x|\beta_{n+1}|y)=g_1(x|\gamma_{n+1}|y)=g_1(x|\alpha_{n-1}\beta_2|y)=0,\\
	& g_1(x|\alpha_{n}\beta|y)=x|\beta|y,\quad 
	g_1(x|\alpha_{n}\gamma|y)=x|\gamma|y,\\
	& g_2(x|\alpha_{n+2}|y)=x|\alpha_2|y,\quad 
	g_2(x|\beta_{n+2}|y)=g_2(x|\gamma_{n+2}|y)=0, \quad 
	g_2(x|\alpha_{n+1}\beta|y)=x|\alpha\beta|y,\\
	& g_2(x|\alpha_{n+1}\gamma|y)=x|\alpha\gamma|y,\quad 
	g_2(x|\alpha_{n}\beta_2|y)=x|(\beta_2+\gamma_2)|y.
\end{align*}
Moreover, if $n=2$, the chain map $g_{\bullet}$ satisfies
\begin{align*}
    g_2(\omega_1x|\epsilon^!|y)
    & =x(2|\alpha\beta|ac+1|\alpha\beta|ba+a|\alpha\beta|b-b|\alpha\beta|c+b|\alpha\gamma|a-c|\alpha\gamma|b-b|\alpha_2|b-3c|\alpha_2|c\\
    &  \phantom{ = \;}  
    +a|\beta_2|a+a|\gamma_2|a+2b|\gamma_2|b-2bc|\alpha\gamma|1-ab|\alpha\gamma|1)y.
\end{align*}
If $n=4$, the chain map $g_{\bullet}$ satisfies
\begin{align*}
    & g_0(\omega_1 x|\epsilon^!|y)=0,
    \\
    & g_1(\omega_1 x|\alpha|y)=2x( 1|\beta|ac+b|\alpha|c+ba|\gamma|1 )y, 
    \\
    & g_1(\omega_1x|\beta|y)=-2x(c|\beta|a+a|\beta|c+b|\alpha|a+a|\gamma|a+a|\alpha|b )y,
    \\
    & g_1(\omega_1x|\gamma|y)=-2x(b|\gamma|a+a|\gamma|b+a|\beta|a+a|\alpha|c+c|\alpha|a )y.
\end{align*}
The map $\beta_n|1$ 
can be extended to the chain map $g_{\bullet}:P^b_{\bullet}\to P^b_{\bullet}$ satisfying 
\begin{align*}
	& g_0(x|\beta_{n}|y)=x|\epsilon^!|y,\\
	& g_0(x|\alpha_{n}|y)=g_0(x|\gamma_{n}|y)=g_0(x|\alpha_{n-1}\beta|y)=g_0(x|\alpha_{n-1}\gamma|y)=g_0(x|\alpha_{n-2}\beta_2|y)=0,\\
	& g_1(x|\beta_{n+1}|y)=x|\beta|y,\quad 
	g_1(x|\alpha_{n+1}|y)=g_1(x|\gamma_{n+1}|y)=g_1(x|\alpha_{n}\beta|y)=0,\\
	& g_1(x|\alpha_{n}\gamma|y)=x|\gamma|y,\quad 
	g_1(x|\alpha_{n-1}\beta_2|y)=x|\alpha|y,\\
	& g_2(x|\beta_{n+2}|y)=x|\beta_2|y,\quad 
	g_2(x|\alpha_{n+2}|y)=g_2(x|\gamma_{n+2}|y)=0, \quad 
	g_2(x|\alpha_{n+1}\beta|y)=x|\alpha\beta|y,\\
	& g_2(x|\alpha_{n+1}\gamma|y)=x|\alpha\gamma|y,\quad 
	g_2(x|\alpha_{n}\beta_2|y)=x|(\alpha_2+\gamma_2)|y.
\end{align*}
Moreover, if $n=2$, the chain map $g_{\bullet}$ satisfies
\begin{align*}
    g_2(\omega_1x|\epsilon^!|y)
    & =x(1|\alpha\beta|ba-1|\alpha\beta|ac+b|\alpha\beta|c-c|\alpha\beta|a+c|\alpha\gamma|b-a|\alpha\gamma|c-c|\beta_2|c 
    -3a|\beta_2|a
    \\
    &  \phantom{ = \;}
    +b|\gamma_2|b+b|\alpha_2|b+2c|\alpha_2|c+2ab|\alpha\gamma|1+bc|\alpha\gamma|1)y.
\end{align*}
If $n=4$, the chain map $g_{\bullet}$ satisfies
\begin{align*}
    & g_0(\omega_1 x|\epsilon^!|y)=0, \\
    & g_1(\omega_1 x|\alpha|y)=-2x( b|\alpha|c+c|\alpha|b+a|\beta|b+b|\gamma|b+b|\beta|a )y, \\
    & g_1(\omega_1 x|\beta|y)= 2x(1|\alpha|bc+a|\beta|c+ab|\gamma|1)y, \\
    & g_1(\omega_1 x|\gamma|y)=-2x( a|\gamma|b+b|\gamma|a+b|\alpha|b+b|\beta|c+c|\beta|b )y. 
\end{align*}
The map $\gamma_n|1$ can be extended to the chain map $g_{\bullet}:P^b_{\bullet}\to P^b_{\bullet}$ satisfying 
\begin{align*}
	& g_0(x|\gamma_{n}|y)=x|\epsilon^!|y,\\
	& g_0(x|\alpha_{n}|y)=g_0(x|\beta_{n}|y)=g_0(x|\alpha_{n-1}\beta|y)=g_0(x|\alpha_{n-1}\gamma|y)=g_0(x|\alpha_{n-2}\beta_2|y)=0,\\
	& g_1(x|\gamma_{n+1}|y)=x|\gamma|y,\quad 
	g_1(x|\alpha_{n+1}|y)=g_1(x|\beta_{n+1}|y)=g_1(x|\alpha_{n}\gamma|y)=0,\\
	& g_1(x|\alpha_{n}\beta|y)=x|\beta|y,\quad 
	g_1(x|\alpha_{n-1}\beta_2|y)=x|\alpha|y,\\
	& g_2(x|\gamma_{n+2}|y)=x|\gamma_2|y,\quad 
	g_2(x|\alpha_{n+2}|y)=g_2(x|\beta_{n+2}|y)=0, \quad 
	g_2(x|\alpha_{n+1}\beta|y)=x|\alpha\beta|y,\\
	& g_2(x|\alpha_{n+1}\gamma|y)=x|\alpha\gamma|y,\quad 
	g_2(x|\alpha_{n}\beta_2|y)=x|(\alpha_2+\beta_2)|y.
\end{align*}
Moreover, if $n=2$, the chain map $g_{\bullet}$ satisfies
\begin{align*}
   g_2(\omega_1 x|\epsilon^!|y) 
   & = -x( 1|\alpha\beta|ac+2|\alpha\beta|ba+2a|\alpha\beta|b+b|\alpha\gamma|a+c|\alpha\gamma|b-a|\beta_2|a+2a|\gamma_2|a+3b|\gamma_2|b 
    \\
    &  \phantom{ = \;}
   +ba|\alpha\beta|1+ab|\alpha\gamma|1  )y.
\end{align*}
If $n=4$, the chain map $g_{\bullet}$ satisfies
\begin{align*}
    & g_0(\omega_1 x|\epsilon^!|y)=0, \\
    & g_1(\omega_1 x|\alpha|y)=2x(a|\beta|b+b|\gamma|b+b|\beta|a)y, \quad 
    g_1(\omega_1 x|\beta|y)= 2x ( b|\alpha|a+a|\gamma|a+a|\alpha|b  )y, \\
    & g_1(\omega_1 x|\gamma|y)= -2x(1|\alpha|ba+a|\beta|a+ab|\alpha|1  )y.
\end{align*}
The map $(\alpha_{n-1}\beta+\alpha_{n-1}\gamma)|1$ can be extended to the chain map $g_{\bullet}:P^b_{\bullet}\to P^b_{\bullet}$ satisfying 
\begin{align*}
	& g_0(x|\alpha_{n}|y)=g_0(x|\beta_{n}|y)=g_0(x|\gamma_{n}|y)=g_0(x|\alpha_{n-2}\beta_2|y)=0,\\
	& g_0(x|\alpha_{n-1}\beta|y)=g_0(x|\alpha_{n-1}\gamma|y)=x|\epsilon^!|y,\\
	& g_1(x|\alpha_{n+1}|y)=g_1(x|\beta_{n+1}|y)=g_1(x|\gamma_{n+1}|y)=0,\quad 
	g_1(x|\alpha_{n}\beta|y)=x|(\alpha+\gamma)|y,\\
	& g_1(x|\alpha_{n}\gamma|y)=x|(\alpha+\beta)|y,\quad 
	g_1(x|\alpha_{n-1}\beta_2|y)=x|(\beta+\gamma)|y,\\
	& g_2(x|\alpha_{n+2}|y)=g_2(x|\beta_{n+2}|y)=g_2(x|\gamma_{n+2}|y)=0, \quad 
	g_2(x|\alpha_{n+1}\beta|y)=x|(\alpha\gamma+\alpha_2+\beta_2+\gamma_2)|y,\\
	& g_2(x|\alpha_{n+1}\gamma|y)=x|(\alpha\beta+\alpha_2+\beta_2+\gamma_2)|y,\quad 
	g_2(x|\alpha_{n}\beta_2|y)=x|(\alpha\beta+\alpha\gamma)|y.
\end{align*}
Moreover, if $n=2$, the chain map $g_{\bullet}$ satisfies
\begin{align*}
    g_2(\omega_1x|\epsilon^!|y)
    & =x[1|\alpha_2|(ba+ac)-1|\alpha_2|bc+1|\beta_2|(ab+bc)-1|\beta_2|ac-1|\gamma_2|ab-1|\gamma_2|ba-b|\alpha_2|c
    \\
    & \phantom{ = \;}
    -c|\alpha_2|b-a|\beta_2|c-c|\beta_2|a-a|\gamma_2|b-b|\gamma_2|a
    +(ba+ac)|\alpha_2|1-bc|\alpha_2|1-ac|\beta_2|1
    \\
    & \phantom{ = \;}
    +(ab+bc)|\beta_2|1-ab|\gamma_2|1-ba|\gamma_2|1]y.
\end{align*}
If $n=4$, the chain map $g_{\bullet}$ satisfies
\begin{align*}
   & g_0(\omega_1x|\epsilon^!|y)=0, \\
   & g_1(\omega_1 x|\alpha|y)=g_1(\omega_1 x|\beta|y)=g_1( \omega_1 x|\gamma|y)=0.
\end{align*}
Now let $n\geqslant 4$ be even, then the map $\alpha_{n-2}\beta_2|1$ can be extended to the chain map $g_{\bullet}:P^b_{\bullet}\to P^b_{\bullet}$ satisfying 
\begin{align*}
	& g_0(x|\alpha_{n-2}\beta_2|y)=x|\epsilon^!|y, \\
	& g_0(x|\alpha_n|y)=g_0(x|\beta_n|y)=g_0(x|\gamma_n|y)=g_0(x|\alpha_{n-1}\beta|y)=g_0(x|\alpha_{n-1}\gamma|y)=0, \\
	& g_1(x|\alpha_{n+1}|y)=g_1(x|\beta_{n+1}|y)=g_1(x|\gamma_{n+1}|y)=0, \quad 
	g_1(x|\alpha_{n}\beta|y)=x|\beta|y,\\
	& g_1(x|\alpha_n\gamma|y)=x|\gamma|y,\quad 
	g_1(x|\alpha_{n-1}\beta_2|y)=x|\alpha|y.
\end{align*}
Moreover, if $n=4$, the chain map $g_{\bullet}$ satisfies 
\begin{align*}
    g_0(\omega_1 x|\epsilon^!|y)=0.
\end{align*}
Let $n\in\NN$ be odd, then the map $\alpha_n|a+\beta_n|b+\gamma_n|c$ can be extended to the chain map $g_{\bullet}:P^b_{\bullet}\to P^b_{\bullet}$ satisfying
\begin{align*}
    & g_0(x|\alpha_n|y)=x|\epsilon^!|ay,
    \quad 
    g_0(x|\beta_n|y)=x|\epsilon^!|by,
    \quad 
    g_0(x|\gamma_n|y)=x|\epsilon^!|cy,
    \\
    & g_0(x|\alpha_{n-1}\beta|y)=g_0(x|\alpha_{n-1}\gamma|y)=g_0(x|\alpha_{n-2}\beta_2|y)=0,
    \\
    & g_1(x|\alpha_{n+1}|y)=-x|\alpha|ay,
    \quad 
    g_1(x|\beta_{n+1}|y)=-x|\beta|by,
    \quad 
    g_1(x|\gamma_{n+1}|y)=-x|\gamma|cy, 
    \\
    & g_1(x|\alpha_n\beta|y)=-x|\alpha|by-x|\beta|cy-x|\gamma|ay,
    \quad 
    g_1(x|\alpha_n\gamma|y)=-x|\alpha|cy-x|\beta|ay-x|\gamma|by,\\
    & g_1(x|\alpha_{n-1}\beta_2|y)=0, \\
   &  g_2(x|\alpha_{n+2}|y)=x|\alpha_2|ay,
    \quad
    g_2(x|\beta_{n+2}|y)=x|\beta_2|by,
    \quad 
    g_2(x|\gamma_{n+2}|y)=x|\gamma_2|cy,
    \\
    & g_2(x|\alpha_{n+1}\beta)|y)=x|\alpha\beta|cy+x|\alpha\gamma|ay+x|(\alpha_2+\gamma_2)|by,
    \\
    & g_2(x|\alpha_{n+1}\gamma|y)=x|\alpha\beta|ay+x|\alpha\gamma|by+x|(\alpha_2+\beta_2)|cy,
    \\
    & g_2(x|\alpha_n\beta_2|y)=x|\alpha\beta|by+x|\alpha\gamma|cy+x|(\beta_2+\gamma_2)|ay.
\end{align*}
Moreover, if $n=3$, the chain map $g_{\bullet}$ satisfies
\begin{align*}
    g_1(\omega_1 x|\epsilon^!|y)& =2x[ 1|\alpha|bac+1|\beta|abc-1|\gamma|aba+c|\alpha|(ba+ac)-a|\beta|ac-b|\gamma|ba-(ba+ac)|\gamma|a 
    \\
    & \phantom{ = \;}
    +ac|\alpha|b+ba|\beta|c  ]y,
   \\
   g_2(\omega_1 x|\alpha|y) & =
   2x[ -2|\alpha_2|bac+a|\alpha\gamma|ab-c|\alpha_2|bc+c|\beta_2|ab-b|\gamma_2|ba-bc|\alpha\beta|c+ab|\alpha\gamma|a
    \\
   & \phantom{ = \;}
   -ac|\alpha_2|c
   -2ba|\beta_2|c+ab|\gamma_2|b-ba|\gamma_2|c-abc|\alpha\gamma|1+bac|\alpha_2|1 ]y, 
   \\
   g_2(\omega_1 x|\beta|y) & = 
   2x[ -2|\beta_2|abc+b|\alpha\gamma|bc+a|\beta_2|(ab+bc)+a|\gamma_2|bc+c|\alpha_2|(ba+ac)
   \\
   & \phantom{ = \;}
   +(ab+bc)|\alpha\beta|a 
   +bc|\alpha\gamma|b-ba|\beta_2|a+2(ba+ac)|\gamma_2|a+bc|\alpha_2|c+(ba+ac)|\alpha_2|a
   \\
   & \phantom{ = \;}
   +aba|\alpha\gamma|1
   +abc|\beta_2|1 ]y, 
   \\
   g_2(\omega_1 x|\gamma|y) & = 
   2x[ 2|\gamma_2|aba-c|\alpha\gamma|(ab+bc)-b|\gamma_2|ab-b|\alpha_2|(ab+bc)-a|\beta_2|ac-ab|\alpha\beta|b
   \\
    & \phantom{ = \;}
   -(ab+bc)|\alpha\gamma|c+(ba+ac)|\gamma_2|b-2ac|\alpha_2|b-(ab+bc)|\beta_2|a-ac|\beta_2|b-bac|\alpha\gamma|1 
   \\
    & \phantom{ = \;}
   -aba|\gamma_2|1 ]y.
\end{align*}
\end{fact}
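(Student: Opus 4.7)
The plan is to verify, for each displayed cochain $\phi$ (namely $\alpha_n|1$, $\beta_n|1$, $\gamma_n|1$, $(\alpha_{n-1}\beta+\alpha_{n-1}\gamma)|1$, $\alpha_{n-2}\beta_2|1$ in the even case, and $\alpha_n|a+\beta_n|b+\gamma_n|c$ in the odd case), that the proposed $A$-bimodule morphisms $g_k$ (for the relevant $k\in\{0,1,2\}$) form the initial segment of a chain map $g_\bullet:P^b_{\bullet+n}\to P^b_\bullet$ of degree $-n$ lifting $\phi$ through the augmentation $\epsilon^b:P^b_0\to A$. Existence of such a lift is automatic from projectivity of $P^b_\bullet$ together with the cocycle condition on $\phi$; what requires proof is that these specific explicit formulas are valid choices in the low degrees needed for the cup-product computations of Section \ref{section:algebra-cohomology}.

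First I would verify that each $g_k$ is a well-defined $A$-bimodule morphism. Since $P^b_{k+n}=\bigoplus_i \omega_i A\otimes (A^!_{-(k+n-4i)})^*\otimes A$ is $A$-bimodule free on the elements $\omega_i\otimes u\otimes 1$ with $i\in\llbracket 0,\lfloor(k+n)/4\rfloor\rrbracket$ and $u\in \mathcalboondox{B}^{!*}_{k+n-4i}$, it suffices to specify the images of these generators, which is exactly what the fact does. The compatibility identity $\epsilon^b\circ g_0=\phi$ is then immediate: on each $u\in\mathcalboondox{B}^{!*}_n$ one reads off the value of $\phi$ and compares with $\epsilon^b(g_0(1|u|1))$, and the matching is built into the definitions of the formulas at degree $0$.

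The core of the verification is the chain-map identity
\[
\delta^b_k\circ g_k \;=\; g_{k-1}\circ \delta^b_{k+n} \qquad\text{for the relevant } k,
\]
to be checked on each generator $1|u|1$ of $P^b_{k+n}$. By Proposition \ref{prb}, $\delta^b_{k+n}$ decomposes on the $\omega_0$-summand as $d^b_{k+n}$ plus the connecting term $f^b_{k+n-3}$ landing in $\omega_1 K^b_{k+n-1}$, with analogous terms for higher $\omega_i$. Consequently the verification splits into two parts: the pieces governed by $d^b_\bullet$ alone, expressed by the formulas transferred to the bimodule level from Subsection \ref{subsection:explicit-description-diff-homology} via the natural isomorphism identifying $A\otimes_{A^e}P^b_\bullet$ with $\tilde{P}_\bullet$, and the pieces governed by $f^b_\bullet$ from \eqref{fbn}. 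The correction terms appearing in the ``if $n=2$'' and ``if $n=4$'' clauses of the fact, in particular the lengthy formulas for $g_2(\omega_1 x|\epsilon^!|y)$ and the three $g_1(\omega_1 x|\cdot|y)$, are exactly what is needed to absorb the contribution of $f^b_{k+n-3}$ applied to the $\omega_0$-part of a generator of $P^b_{k+n}$; their form is dictated uniquely (modulo coboundaries) by this requirement.

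The main obstacle is the volume of bookkeeping, particularly for $(\alpha_{n-1}\beta+\alpha_{n-1}\gamma)|1$ and for $\alpha_n|a+\beta_n|b+\gamma_n|c$, where $g_k$ is nonzero on several of the six basis elements of $\mathcalboondox{B}^{!*}_{k+n}$ and the bimodule actions \eqref{eq:action_n>1} produce many terms whose simultaneous cancellation must be tracked. A systematic strategy is, for each fixed $u\in \mathcalboondox{B}^{!*}_{k+n-4i}$, to compute $d^b_{k+n-4i}(1|u|1)$ and $f^b_{k+n-4i-3}(1|u|1)$ from the explicit formulas already assembled in \eqref{fbn}, apply $g_{k-1}$ term by term, separately compute $\delta^b_k(g_k(1|u|1))$, and compare. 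Since $\phi$ is a cocycle, the residues are forced to cancel, so no obstruction can arise; the task reduces entirely to verifying that the specific formulas chosen are consistent representatives, which is straightforward but lengthy.
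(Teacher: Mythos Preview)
Your approach is correct and matches the paper's treatment: the paper states this Fact without proof, treating it as a direct (if tedious) verification that the reader can carry out using the explicit formulas for $d^b_\bullet$ and $f^b_\bullet$ already assembled in Section~\ref{section:min-proj-res}. Your outline of how to organize that verification---checking $A$-bimodule well-definedness on generators, then the chain-map identity $\delta^b_k\circ g_k=g_{k-1}\circ\delta^b_{k+n}$ split into its $d^b$ and $f^b$ pieces---is exactly the right strategy, and your identification of the $\omega_1$-terms as the corrections absorbing the $f^b$ contribution is on the mark.

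One small caveat: the phrase ``dictated uniquely (modulo coboundaries)'' is slightly imprecise. The lift of a cocycle to a chain map is unique only up to chain homotopy, so at each stage $g_k$ is determined by the preceding $g_{k-1}$ only up to adding a map of the form $\delta^b_k\circ h$ for an arbitrary bimodule map $h:P^b_{k+n}\to P^b_{k+1}$. The particular formulas in the Fact are one consistent choice among many, and the verification confirms that this specific choice works; it does not (and need not) establish uniqueness.
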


\begin{prop}\label{finitely generated}
The set 
\[ 
\mathscr{S}  = \bigg(\underset{\text{\begin{tiny}$\begin{matrix} m\in \llbracket 0, 4 \rrbracket, \\ n\in \llbracket 0, 3 \rrbracket \end{matrix}$\end{tiny}}}{\bigcup} \tilde{\mathfrak{H}}^n_m 
	\cup \big\{ \omega^*_1\epsilon^!|1 \big\}\bigg) \setminus \big\{ \epsilon^!|1 \big\} 
\] 
is a generating set of the $\Bbbk$-algebra $\operatorname{HH}^{\bullet}(A)$. 
Hence, $\operatorname{HH}^{\bullet}(A)$ is a finitely generated $\Bbbk$-algebra.
\end{prop}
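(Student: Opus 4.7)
The plan is as follows. By Proposition \ref{linear structure of cohomology}, every homogeneous element of $\operatorname{HH}^{\bullet}(A)$ lies in $\omega^{*k}_1 \tilde{H}^n_m$ for some $k,n \in \NN_0$ and $m\in \llbracket 0, 4 \rrbracket$. Iterating Lemma \ref{g}, we get $\omega^{*k}_1 f = f \cup g^{\cup k}$ for any $f \in \tilde{H}^n$, where $g = \omega^*_1 \epsilon^!|1 \in \mathscr{S}$. Thus, denoting by $\langle \mathscr{S} \rangle$ the subalgebra of $\operatorname{HH}^{\bullet}(A)$ generated by $\mathscr{S}$, it suffices to show that $\tilde{H}^n \subseteq \langle \mathscr{S} \rangle$ for every $n \in \NN_0$. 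For $n \in \llbracket 0, 3 \rrbracket$ this is immediate from the definition of $\mathscr{S}$ since $\tilde{\mathfrak{H}}^n_m \subseteq \mathscr{S}$, and the class $\epsilon^!|1$ omitted from $\mathscr{S}$ is the multiplicative unit of $\operatorname{HH}^{\bullet}(A)$.

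It remains to treat $n \geq 4$ by induction on $n$, and the only elements to produce are the explicit generators of $\tilde{H}^n_m$ listed in Corollary \ref{tildecohonm}. The plan is to split by the parity of $n$ and by internal degree, and to express each generator as a cup product of a class already in $\langle \mathscr{S} \rangle$ with one of $\tilde{\mathfrak{H}}^2_0$, $\tilde{\mathfrak{H}}^1_1$, $\tilde{\mathfrak{H}}^3_1$, $\tilde{\mathfrak{H}}^1_3$, $\tilde{\mathfrak{H}}^3_3$, $\tilde{\mathfrak{H}}^0_4$ or $\tilde{\mathfrak{H}}^2_4$. Concretely, for $n$ even one produces $\alpha_n|1$, $\beta_n|1$, $\gamma_n|1$, $(\alpha_{n-1}\beta+\alpha_{n-1}\gamma)|1$ and $\alpha_{n-2}\beta_2|1$ by cupping the corresponding generators of $\tilde{H}^{n-2}_0$ with classes in $\tilde{\mathfrak{H}}^2_0$; the $m=4$ generators of the form $\alpha_n|abac$, $\beta_n|abac$, etc.\ are then obtained by cupping these classes of $\tilde{H}^n_0$ with $\epsilon^!|abac \in \tilde{\mathfrak{H}}^0_4$. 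For $n$ odd, the five generators of $\tilde{H}^n_1$ are produced inductively by cupping the generators of $\tilde{H}^{n-2}_1$ with elements of $\tilde{\mathfrak{H}}^2_0$ (starting the induction at $\tilde{\mathfrak{H}}^1_1 \cup \tilde{\mathfrak{H}}^2_0 \subseteq \tilde{H}^3_1$ and $\tilde{\mathfrak{H}}^3_1$), while the generators $\alpha_n|bac$, $\beta_n|abc$, $\gamma_n|aba$, $\alpha_{n-1}\beta|abc$ and $\alpha_n|(aba-abc)$ of $\tilde{H}^n_3$ arise from cupping $\tilde{\mathfrak{H}}^{n-1}_0$ with $\tilde{\mathfrak{H}}^1_3$.

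Each of the above cup products is to be computed by lifting one of the two factors to a chain map $g_{\bullet} : P^b_{\bullet} \to P^b_{\bullet}$ and then precomposing the other factor with $g_{\bullet}$, following the standard recipe of \cite{Sarah}. The key input making these computations tractable is Fact \ref{fact chain map}, which provides explicit chain lifts for $\alpha_n|1$, $\beta_n|1$, $\gamma_n|1$, $(\alpha_{n-1}\beta+\alpha_{n-1}\gamma)|1$ and $\alpha_{n-2}\beta_2|1$ in even cohomological degree, as well as for $\alpha_n|a+\beta_n|b+\gamma_n|c$ in odd cohomological degree; these are precisely the classes that have to act by cup product on $\tilde{H}^{\bullet}$. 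The main obstacle will be the book-keeping: for each candidate cup product one has to evaluate the composition on the usual basis of $P^b_n$ given by Proposition \ref{prb}, and then recognise the result, modulo the image of $d^n$ computed in Subsection \ref{subsection: com coboun}, as the desired generator of $\tilde{H}^n_m$ listed in Corollary \ref{tildecohonm}. Once each of the cases above has been checked, the induction closes and $\mathscr{S}$ is a generating set of $\operatorname{HH}^{\bullet}(A)$; since $\mathscr{S}$ is manifestly finite, $\operatorname{HH}^{\bullet}(A)$ is a finitely generated $\Bbbk$-algebra.
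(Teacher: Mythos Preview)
Your proposal is essentially correct and follows the same inductive strategy as the paper: reduce to the spaces $\tilde{H}^n$ via Lemma \ref{g} and Proposition \ref{linear structure of cohomology}, then show by induction on $n$ that each $\tilde{H}^n_m$ lies in the subalgebra generated by $\mathscr{S}$, using the chain lifts of Fact \ref{fact chain map} to compute the relevant cup products. Your treatment of $\tilde{H}^n_0$, $\tilde{H}^n_4$ (even $n$) and $\tilde{H}^n_3$ (odd $n$) matches the paper exactly.

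There is one tactical mismatch you should correct. For odd $n$ you propose to obtain the five generators of $\tilde{H}^n_1$ by cupping the five generators of $\tilde{H}^{n-2}_1$ with elements of $\tilde{\mathfrak{H}}^2_0$. But Fact \ref{fact chain map} supplies a chain lift for only \emph{one} of those five odd-degree classes, namely $\alpha_{n-2}|a+\beta_{n-2}|b+\gamma_{n-2}|c$, and for the degree-$2$ classes it records only $g_0,g_1,g_2$, not $g_{n-2}$; so your plan, as stated, requires four chain lifts not provided by the paper. The paper instead cups the single class $\alpha|a+\beta|b+\gamma|c\in\tilde{\mathfrak{H}}^1_1$ with each of the five basis elements of $\tilde{H}^{n-1}_0$: all five of the latter \emph{do} have chain lifts in Fact \ref{fact chain map} (with the needed $g_1$ written out), so the five cup products are immediately computable. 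One then checks that the resulting five classes, modulo the correction terms in $\omega^*_1\tilde{H}^{n-4}_3$ (already in $\langle\mathscr{S}\rangle$ by induction and Lemma \ref{g}), span $\tilde{H}^n_1$. Replacing your tactic by this one closes the argument with no extra chain-map computations.
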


\begin{proof}
We will prove the proposition by induction on $n$.
Let $n\geqslant 4$. 
Assume that $\tilde{\mathfrak{H}}^{n'}_m$ for $m\in \llbracket 0, 4 \rrbracket$ and $n'\in \llbracket 0, n-1 \rrbracket$	is generated by the elements of $\mathscr{S}$. 
We check that $\tilde{\mathfrak{H}}^n_m$ for $m\in \llbracket 0, 4 \rrbracket$ is generated by the elements of $\mathscr{S}$. 
First, we suppose that $n$ is even. 
Note that $\tilde{\mathfrak{H}}^n_0 = \{ \alpha_n|1, \beta_n|1, \gamma_n|1, (\alpha_{n-1}\beta+\alpha_{n-1}\gamma)|1, \alpha_{n-2}\beta_2|1 \}$.
By Fact \ref{fact chain map}, we have 
\begin{align*}
& \alpha_2|1\cup \alpha_{n-2}|1\in \alpha_n|1+\omega^*_1 H^{n-4}_2,\quad 
\beta_2|1\cup \beta_{n-2}|1\in \beta_n|1+\omega^*_1 H^{n-4}_2, \\
& \gamma_2|1\cup \gamma_{n-2}|1\in \gamma_n|1+\omega^*_1 H^{n-4}_2,\quad 
\gamma_2|1\cup \alpha_{n-2}|1\in \alpha_{n-2}\beta_2|1+\omega^*_1 H^{n-4}_2, \\
& (\alpha\beta+\alpha\gamma)|1\cup \alpha_{n-2}|1\in (\alpha_{n-1}\beta+\alpha_{n-1}\gamma)|1+\omega^*_1 H^{n-4}_2.	
\end{align*}
Hence, the elements in $\tilde{\mathfrak{H}}^n_0$ are generated by the elements in $\mathscr{S}$. 
Note that $\tilde{\mathfrak{H}}^n_1=\tilde{\mathfrak{H}}^n_2=\tilde{\mathfrak{H}}^n_3=\emptyset$. 
Finally, we notice that $\tilde{\mathfrak{H}}^n_4 = \{ \alpha_n|abac, \beta_n|abac, \gamma_n|abac, \alpha_{n-1}\beta|abac, \alpha_{n-2}\beta_2|abac \}$. 
Since
\begin{align*}
&\epsilon^!|abac\cup \alpha_n|1=\alpha_n|abac, \quad 
\epsilon^!|abac\cup \beta_n|1=\beta_n|abac, \quad 
\epsilon^!|abac\cup \gamma_n|1=\gamma_n|abac, \\
&\epsilon^!|abac\cup (\alpha_{n-1}\beta+\alpha_{n-1}\gamma)|1=2\alpha_{n-1}\beta|abac,\quad 
\epsilon^!|abac\cup \alpha_{n-2}\beta_2|1=\alpha_{n-2}\beta_2|abac,
\end{align*}
the elements in $\tilde{\mathfrak{H}}^n_4$ are also generated by the elements of $\mathscr{S}$. 

Next, we suppose that $n$ is odd.
Note first that $\tilde{\mathfrak{H}}^n_0=\tilde{\mathfrak{H}}^n_2=\tilde{\mathfrak{H}}^n_4=\emptyset$, and 
\begin{align*}
	\tilde{\mathfrak{H}}^n_1=\big\{
		& \alpha_n|a+\beta_n|b+\gamma_n|c,(\beta_n-\alpha_{n-1}\beta)|b,(\gamma_n-\alpha_{n-1}\gamma)|c,(\alpha_n-\alpha_{n-2}\beta_2)|a,\\
	& \alpha_{n-1}\beta|c+\alpha_{n-1}\gamma|a+\alpha_{n-2}\beta_2|b 
	\big\}.
\end{align*}
By Fact \ref{fact chain map}, we see that 
\begin{align*}
&(\alpha|a+\beta|b+\gamma|c)	\cup \alpha_{n-1}|1\in \alpha_n|a+\alpha_{n-1}\beta|b+\alpha_{n-1}\gamma|c+\omega^*_1\tilde{H}^{n-4}_3,\\
&(\alpha|a+\beta|b+\gamma|c)\cup \beta_{n-1}|1\in \beta_n|b+\alpha_{n-1}\gamma|c+\alpha_{n-2}\beta_2|a+\omega^*_1\tilde{H}^{n-4}_3,\\
&(\alpha|a+\beta|b+\gamma|c)\cup \gamma_{n-1}|1\in \gamma_n|c+\alpha_{n-1}\beta|b+\alpha_{n-2}\beta_2|a+\omega^*_1 \tilde{H}^{n-4}_3
,\\
&(\alpha|a+\beta|b+\gamma|c)\cup (\alpha_{n-2}\beta+\alpha_{n-2}\gamma)|1\in 2(\alpha_{n-1}\beta|c+\alpha_{n-1}\gamma|a+\alpha_{n-2}\beta_2|b)+\omega^*_1\tilde{H}^{n-4}_3,\\
&(\alpha|a+\beta|b+\gamma|c)\cup \alpha_{n-3}\beta_2|1\in \alpha_{n-1}\beta|b+\alpha_{n-1}\gamma|c+\alpha_{n-2}\beta_2|a+\omega^*_1\tilde{H}^{n-4}_3.
\end{align*}
It is easy to see that the five elements $\alpha_n|a+\alpha_{n-1}\beta|b+\alpha_{n-1}\gamma|c$,
$\beta_n|b+\alpha_{n-1}\gamma|c+\alpha_{n-2}\beta_2|a $,
$\gamma_n|c+\alpha_{n-1}\beta|b+\alpha_{n-2}\beta_2|a $,
$2(\alpha_{n-1}\beta|c+\alpha_{n-1}\gamma|a+\alpha_{n-2}\beta_2|b) $ 
and $\alpha_{n-1}\beta|b+\alpha_{n-1}\gamma|c+\alpha_{n-2}\beta_2|a $ 
are linear combinations of elements of $\tilde{\mathfrak{H}}^n_1$. 
Moreover, they form a basis of $\tilde{H}^n_1$. 
The elements in $\tilde{H}^n_1$, so \textit{a fortiori} $\tilde{\mathfrak{H}}^n_1$, are thus generated by the elements of $\mathscr{S}$. 
Note finally that 
\[ \tilde{\mathfrak{H}}^n_3 = \{ \alpha_n|bac, \beta_n|abc, \gamma_n|aba, \alpha_{n-1}\beta|abc, \alpha_n|(aba-abc) \}.
\] 
Since 
\begin{align*}
&\alpha|bac\cup \alpha_{n-1}|1=\alpha_n|bac,\quad 
\beta|abc\cup \beta_{n-1}|1=\beta_n|abc,\quad 
\gamma|aba\cup \gamma_{n-1}|1=\gamma_n|aba,\\
&\beta|abc\cup \alpha_{n-1}|1=\alpha_{n-1}\beta|abc,\quad 
\alpha|(aba-abc)\cup \alpha_{n-1}|1=\alpha_n|(aba-abc),	
\end{align*}
the elements in $\tilde{\mathfrak{H}}^n_3$ are generated by the elements of $\mathscr{S}$.
\end{proof}

\begin{prop} 
\label{proposition:generators-cohomology}
The set of $14$ elements given by 
\begin{equation}     
\label{eq:minimal-generating-set-cohomology}
\begin{split}
\mathcalboondox{S} &= \{ \epsilon^!|(ab+ba), \epsilon^!|(ab+bc-ac), \epsilon^!|abac, \alpha|a+\beta|b+\gamma|c, \alpha|bac, \beta|abc, \gamma|aba,\\
& \phantom{ = \{ ,}
\alpha|(aba-abc), 
 \alpha_2|1, \beta_2|1, \gamma_2|1, (\alpha\beta+\alpha\gamma)|1, \alpha_3|a+\beta_3|b+\gamma_3|c, \omega^*_1 \epsilon^!|1 \} \subseteq \operatorname{HH}^{\bullet}(A). 
\end{split}
\end{equation}
is a minimal generating set of the $\Bbbk$-algebra $\operatorname{HH}^{\bullet}(A)$.
\end{prop}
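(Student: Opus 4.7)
The plan splits into two steps: showing that $\mathcalboondox{S}$ generates the $\Bbbk$-algebra $\operatorname{HH}^{\bullet}(A)$ and showing that $\mathcalboondox{S}$ is minimal. Generation will rely on Proposition \ref{finitely generated}, which already exhibits the larger generating set $\mathscr{S}$; thus it suffices to express every element of $\mathscr{S}\setminus\mathcalboondox{S}$ as a cup product polynomial in elements of $\mathcalboondox{S}$. Inspecting the bases $\tilde{\mathfrak{H}}^n_m$ collected in Corollary \ref{tildecohonm}, this difference consists of two elements of $\tilde{\mathfrak{H}}^1_3$, both elements of $\tilde{\mathfrak{H}}^2_2$, all four of $\tilde{\mathfrak{H}}^2_4$, four of $\tilde{\mathfrak{H}}^3_1$, and all seven of $\tilde{\mathfrak{H}}^3_3$, for a total of $19$ classes to realize.

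For each of these $19$ classes, I would exhibit an explicit cup product of two factors drawn from $\mathcalboondox{S}$ and compute it using the chain maps recorded in Fact \ref{fact chain map}. Natural candidates are $\alpha_2|1\cup\epsilon^!|abac$ and its $\beta_2|1$, $\gamma_2|1$ and $(\alpha\beta+\alpha\gamma)|1$ analogues for $\tilde{\mathfrak{H}}^2_4$; analogous products against $\epsilon^!|(ab+ba)$ for $\tilde{\mathfrak{H}}^2_2$; cup products of $\alpha|a+\beta|b+\gamma|c$ with each of $\alpha_2|1$, $\beta_2|1$, $\gamma_2|1$, $(\alpha\beta+\alpha\gamma)|1$ to cover $\tilde{\mathfrak{H}}^3_1$; cup products of $\alpha|bac$, $\beta|abc$, $\gamma|aba$, $\alpha|(aba-abc)$ with the same four quadratic generators to cover $\tilde{\mathfrak{H}}^3_3$; and $(\alpha|a+\beta|b+\gamma|c)\cup\epsilon^!|(ab+ba)$ together with $(\alpha|a+\beta|b+\gamma|c)\cup\epsilon^!|(ab+bc-ac)$ to fill out the two remaining dimensions of $\tilde{\mathfrak{H}}^1_3$. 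In each case the resulting cocycle must be reduced modulo the coboundary basis $\mathfrak{B}^n_m$ from Subsubsection \ref{subsubsection:cob2} to identify its class, with any residual $\omega^*_1$-correction absorbed inductively via Lemma \ref{g}.

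Minimality is verified via the bigrading: the cup product is additive in both cohomological degree and internal degree, so for a generator $s$ of bidegree $(n,m-n)$ to be decomposable some combination of cup products of other generators must realize the same bidegree and equal $s$. A bidegree audit eliminates most possibilities at once: $\omega^*_1\epsilon^!|1$ has internal degree $-6$ unreachable from any sum of internal degrees of the remaining generators (all $\geqslant -2$) at cohomological degree $4$; the four quadratic generators have internal degree $-2$ in cohomological degree $2$, and the only potential factorization into two degree-$1$ factors fails since all degree-$1$ generators in $\mathcalboondox{S}$ have internal degree $0$ or $2$; the three cohomological degree $0$ generators cannot arise from positive-degree products; and in the remaining delicate bidegrees $(1,3)$, $(3,1)$, and the $H^3_3$-bidegree one shows directly that the available cup products of other generators span only the subspace of codimension equal to the number of elements of $\mathcalboondox{S}$ in that bidegree.

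The principal obstacle is the sheer volume of explicit cup product computations, each requiring substitution of a chain map from Fact \ref{fact chain map} into a cocycle and simplification in $\operatorname{Hom}_{A^e}(P^b_{\bullet},A)$ modulo coboundaries, with careful sign tracking. The most subtle point is the contribution of $(\alpha|a+\beta|b+\gamma|c)\cup\epsilon^!|(ab+ba)$ and $(\alpha|a+\beta|b+\gamma|c)\cup\epsilon^!|(ab+bc-ac)$ to $\tilde{\mathfrak{H}}^1_3$: these products must yield classes linearly independent from the four elements of $\tilde{\mathfrak{H}}^1_3\cap\mathcalboondox{S}$ to achieve generation, yet their span must exhaust $\tilde{\mathfrak{H}}^1_3\setminus\mathcalboondox{S}$ exactly so as not to spoil minimality of the four degree-$1$ generators of $\mathcalboondox{S}$.
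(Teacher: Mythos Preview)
Your plan is essentially the paper's own proof: both start from the $33$-element set $\mathscr{S}$ of Proposition \ref{finitely generated}, express the $19$ redundant elements as the very cup products you list (these are exactly the identities \eqref{17} in the paper), and then argue minimality bidegree by bidegree. Your list of candidate products matches the paper's, including the two crucial ones $X_8\cup X_1$ and $X_8\cup X_2$ for the missing classes in $\tilde{\mathfrak{H}}^1_3$.

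One small gap in your minimality sketch: saying that ``the three cohomological degree $0$ generators cannot arise from positive-degree products'' is not enough for $X_3=\epsilon^!|abac$, because $X_1^2$, $X_1X_2$, $X_2^2$ all have bidegree $(0,4)$ and could in principle hit $X_3$. You must actually compute that these three cup products vanish in $\operatorname{HH}^0(A)$; this is precisely the content of \eqref{0cup0} in the paper, and it is a genuine computation (in $A$ one checks $(ab+ba)^2=(ab+bc-ac)^2=(ab+ba)(ab+bc-ac)=0$). With that in hand your bidegree audit goes through. The ``residual $\omega^*_1$-correction'' clause is harmless but unnecessary here, since all $19$ classes live in $\tilde{H}^n_m$ with $n\leqslant 3$, where no $\omega^*_1$-term appears.
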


\begin{proof}
By Proposition \ref{finitely generated}, the $33$ element set 
\begin{align*}
	\mathscr{S}
	  =\big\{ 
	  & \epsilon^!|(ab+ba), \epsilon^!|(ab+bc+ac), \epsilon^!|abac, 
	  \alpha|bac,\beta|abc,\gamma|aba,\alpha|(aba-abc),(\alpha+\beta)|aba, \\
	  &  \alpha|aba+\beta|bac, 
	  \alpha|a+\beta|b+\gamma|c,
	  \alpha_2|abac,\beta_2|abac,\gamma_2|abac, \alpha\beta|abac, 
	  \alpha_2|(ab+ba), \\
	  & \beta_2|(ab+ba), \alpha_2|1, \beta_2|1, \gamma_2|1, (\alpha\beta+\alpha\gamma)|1, 
	  \alpha_3|bac,\beta_3|abc,\gamma_3|aba,\alpha_2\beta|abc,\alpha_3|(aba-abc), \\
	  & (\alpha_3+\beta_3)|aba,  \alpha_3|aba+\beta_3|bac, 
	  \alpha_3|a+\beta_3|b+\gamma_3|c, (\beta_3-\alpha_2\beta)|b, (\gamma_3-\alpha_2\gamma)|c, \\ & (\alpha_3-\alpha\beta_2)|a, \alpha_2\beta|c+\alpha_2\gamma|a+\alpha\beta_2|b, \omega^*_1 \epsilon^!|1  
	  	\big\}
\end{align*} 
is a generating set of $\operatorname{HH}^{\bullet}(A)$. 
By Fact \ref{fact chain map} and the computation of coboundaries in Subsubsections \ref{subsubsection:cob1} and \ref{subsubsection:cob2}, we get 
\begin{align*}
&\alpha_2|(ab+ba)=\epsilon^!|(ab+ba)\cup \alpha_2|1, \quad 
\beta_2|(ab+ba)=\epsilon^!|(ab+ba)\cup \beta_2|1, \\
& \alpha_2|abac=\epsilon^!|abac\cup \alpha_2|1, \quad 
\beta_2|abac=\epsilon^!|abac\cup \beta_2|1, \quad 
\gamma_2|abac=\epsilon^!|abac\cup \gamma_2|1, \\
& \alpha\beta|abac=(1/2)\epsilon^!|abac\cup (\alpha\beta+\alpha\gamma)|1, \quad 
\alpha_3|bac=\alpha|bac\cup \alpha_2|1, \quad
\beta_3|abc=\beta|abc\cup\beta_2|1, \\
& \gamma_3|aba=\gamma|aba\cup \gamma_2|1,\quad 
\alpha_2\beta|abc= \alpha|bac\cup \beta_2|1,\\ 
& (\alpha_3-\alpha\beta_2)|a=(1/2)[ (\alpha_3|a+\beta_3|b+\gamma_3|c)+ (\alpha|a+\beta|b+\gamma|c) \cup (\alpha_2|1-\beta_2|1-\gamma_2|1)],
\stepcounter{equation}\tag{\theequation}\label{17}
\\
& (\beta_3-\alpha_2\beta)|b=(1/2)[ (\alpha_3|a+\beta_3|b+\gamma_3|c)+ (\alpha|a+\beta|b+\gamma|c)\cup (\beta_2|1-\alpha_2|1-\gamma_2|1)],\\
& (\gamma_3-\alpha_2\gamma)|c=(1/2)[ (\alpha_3|a+\beta_3|b+\gamma_3|c)+ (\alpha|a+\beta|b+\gamma|c) \cup (\gamma_2|1-\alpha_2|1-\beta_2|1)],\\
& \alpha_2\beta|c+\alpha_2\gamma|a+\alpha\beta_2|b=(1/2) (\alpha|a+\beta|b+\gamma|c) \cup (\alpha\beta+\alpha\gamma)|1,\\
& \alpha_3|(aba-abc)= \alpha|(aba-abc)\cup \alpha_2|1, \quad 
(\alpha_3+\beta_3)|aba= \gamma|aba \cup (\alpha\beta+\alpha\gamma)|1, \\
& \alpha_3|aba+\beta_3|bac= \alpha|(aba-abc)\cup (\beta_2|1+\alpha_2|1)-2 \gamma|aba\cup (\alpha\beta+\alpha\gamma)|1, 
\\
& (\alpha+\beta)|aba=(1/2)\epsilon^!|(ab+bc-ac)\cup (\alpha|a+\beta|b+\gamma|c)+\alpha|(aba-abc),\\
& \alpha|aba+\beta|bac=(1/2)[\epsilon^!|(ab+ba)\cup (\alpha|a+\beta|b+\gamma|c)-\epsilon^!|(ab+bc-ac)\cup (\alpha|a+\beta|b+\gamma|c)].
\end{align*}
Hence, the set $\mathcalboondox{S}$ obtained from $\mathscr{S}$ by removing the nineteen elements in \eqref{17} is still a generating set. 
Similarly, it is easy to check that
\begin{equation}\label{0cup0}
	\begin{split}
		\epsilon^!|(ab+ba)\cup \epsilon^!|(ab+ba)
		&=\epsilon^!|(ab+bc-ac)\cup\epsilon^!|(ab+bc-ac)\\
		&=\epsilon^!|(ab+ba)\cup\epsilon^!|(ab+bc-ac)=0.
	\end{split}
\end{equation}
By Remark \ref{n1n2}, Fact \ref{fact chain map} and \eqref{0cup0}, it is easy to check that any one of the fourteen elements of $\mathcalboondox{S}$ can't be generated by the other thirteen elements, so the generating set $\mathcalboondox{S}$ is minimal.
\end{proof}

Let us number the elements of the set $\mathcalboondox{S}$ given in \eqref{eq:minimal-generating-set-cohomology} by $X_1=\epsilon^!|(ab+ba)$, $X_2=\epsilon^!|(ab+bc-ac)$, $X_3=\epsilon^!|abac$, $X_4=\alpha|bac$, $X_5=\beta|abc$, $X_6=\gamma|aba$, $X_7=\alpha|(aba-abc)$, $X_8=\alpha|a+\beta|b+\gamma|c$, $X_9=\alpha_2|1$, $X_{10}=\beta_2|1$, $X_{11}=\gamma_2|1$, $X_{12}=(\alpha\beta+\alpha\gamma)|1$, $X_{13}=\alpha_3|a+\beta_3|b+\gamma_3|c$ and $X_{14}=\omega^*_1\epsilon^!|1$.
We define the well-ordered set $\{x_i,i\in \llbracket 1, 14 \rrbracket \}$ with $x_i\succ x_j$ for all $i>j$.
Let $F$ be the noncommutative associative free $\Bbbk$-algebra generated by $x_i$ for $i\in \llbracket 1, 14 \rrbracket $, with length-lexicographic order. 
We endow the algebra $F$ with the unique grading over $\ZZ^2$ given by setting the bidegree of $x_i$ to be the same as that of $X_i$ for $i\in\llbracket 1,14\rrbracket$.
Let $\mathcalboondox{R}_1 \subseteq F$ be the set consisting of the following $97$ homogeneous elements 
\allowdisplaybreaks
\begin{align*}
& x_1x_2-x_2x_1, x_1x_3-x_3x_1, x_1x_4-x_4x_1, x_1x_5-x_5x_1, x_1x_6-x_6x_1, x_1x_7-x_7x_1, x_1x_8-x_8x_1,\\
& x_1x_9-x_9x_1, x_1x_{10}-x_{10}x_1, x_1x_{11}-x_{11}x_1, x_1x_{12}-x_{12}x_1, x_1x_{13}-x_{13}x_1, x_1x_{14}-x_{14}x_1,\\
& x_2x_3-x_3x_2, x_2x_4-x_4x_2, x_2x_5-x_5x_2, x_2x_6-x_6x_2, x_2x_7-x_7x_2, x_2x_8-x_8x_2, x_2x_9-x_9x_2,\\
& x_2x_{10}-x_{10}x_2, x_2x_{11}-x_{11}x_2, x_2x_{12}-x_{12}x_2, x_2x_{13}-x_{13}x_2, x_2x_{14}-x_{14}x_2, x_3x_4-x_4x_3,\\
& x_3x_5-x_5x_3, x_3x_6-x_6x_3, x_3x_7-x_7x_3, x_3x_8-x_8x_3, x_3x_9-x_9x_3, x_3x_{10}-x_{10}x_3, x_3x_{11}-x_{11}x_3,\\
& x_3x_{12}-x_{12}x_3, x_3x_{13}-x_{13}x_3, x_3x_{14}-x_{14}x_3, x_4x_5+x_5x_4, x_4x_6+x_6x_4, x_4x_7+x_7x_4,\\
& x_4x_8+x_8x_4, x_4x_9-x_9x_4, x_4x_{10}-x_{10}x_4, x_4x_{11}-x_{11}x_4, x_4x_{12}-x_{12}x_4, x_4x_{13}+x_{13}x_4,\\
& x_4x_{14}-x_{14}x_4, x_5x_6+x_6x_5, x_5x_7+x_7x_5, x_5x_8+x_8x_5, x_5x_9-x_9x_5, x_5x_{10}-x_{10}x_5,
\stepcounter{equation}\tag{\theequation}\label{eq:relations-cohomology-1}
\\
& x_5x_{11}-x_{11}x_5, x_5x_{12}-x_{12}x_5,  x_5x_{13}+x_{13}x_5, x_5x_{14}-x_{14}x_5, x_6x_7+x_7x_6, x_6x_8+x_8x_6, \\
& x_6x_9-x_9x_6, x_6x_{10}-x_{10}x_6, x_6x_{11}-x_{11}x_6, x_6x_{12}-x_{12}x_6, x_6x_{13}+x_{13}x_6, x_6x_{14}-x_{14}x_6, \\
& x_7x_8+x_8x_7, x_7x_9-x_9x_7, x_7x_{10}-x_{10}x_7, x_7x_{11}-x_{11}x_7, x_7x_{12}-x_{12}x_7, x_7x_{13}+x_{13}x_7,\\
& x_7x_{14}-x_{14}x_7, x_8x_9-x_9x_8, x_8x_{10}-x_{10}x_8, x_8x_{11}-x_{11}x_8, x_8x_{12}-x_{12}x_8, x_8x_{13}+x_{13}x_8, \\
& x_8x_{14}-x_{14}x_8, x_9x_{10}-x_{10}x_9,  x_9x_{11}-x_{11}x_9, x_9x_{12}-x_{12}x_9, x_9x_{13}-x_{13}x_9, x_9x_{14}-x_{14}x_9,\\
& x_{10}x_{11}-x_{11}x_{10}, x_{10}x_{12}-x_{12}x_{10}, x_{10}x_{13}-x_{13}x_{10}, x_{10}x_{14}-x_{14}x_{10}, x_{11}x_{12}-x_{12}x_{11},
\\
& x_{11}x_{13}-x_{13}x_{11}, x_{11}x_{14}-x_{14}x_{11},  x_{12}x_{13}-x_{13}x_{12}, x_{12}x_{14}-x_{14}x_{12}, x_{13}x_{14}-x_{14}x_{13}, x_4^2, x_5^2, x_6^2, \\
& x_7^2, x_8^2, x_{13}^2.
\end{align*}

\begin{rk}
\label{remark:quotient-free-graded-algebra}
Note that the quotient of the free algebra $F$ 
generated by $x_i$ for $i\in \llbracket 1, 14 \rrbracket $ modulo the (homogeneous) ideal generated by the previous set $\mathcalboondox{R}_{1}$ is precisely the free graded-commutative (for the homological grading) algebra 
$C$ generated by the same generators $x_i$ for $i\in \llbracket 1, 14 \rrbracket $. 
\end{rk}

Let $\mathcalboondox{R}_2 \subseteq F$ be the set consisting of the following $63$ homogeneous elements 
\begin{align*}
& 
x_1^2, x_1x_2, x_1x_3, x_2^2, x_2x_3, x_3^2,  x_1x_4, x_1x_5, x_1x_6, x_1x_7, x_2x_4, x_2x_5, x_2x_6, x_2x_7, x_3x_4, x_3x_5, 
x_3x_6, 
x_3x_7,
\\
&
x_3x_8, x_4x_5, x_4x_6, x_4x_7,  x_5x_6,  x_5x_7, x_6x_7, x_1x_{11}-2x_1x_9-2x_1x_{10}, x_1x_{12}-x_1x_9-x_1x_{10}, 
\\
& 
x_2x_9+x_1x_9, x_2x_{10}-2x_1x_{10}, x_2x_{11}-x_1x_9-x_1x_{10}, x_2x_{12}-x_1x_{10}, x_3x_9+x_8x_4, x_3x_{10}+x_8x_5,
\\
& x_3x_{11}-x_8x_6, x_3x_{12}-x_8x_7, x_9x_5+x_9x_6, x_9x_5-x_{10}x_4, 
 x_9x_5+x_{10}x_6,  x_9x_5-x_{11}x_4,
 \\ 
& x_9x_5-x_{11}x_5,
 x_{12}x_4-(1/3)x_9x_7+(4/3)x_{10}x_7, x_{12}x_5+(1/3)x_9x_7-x_{12}x_6+(5/3)x_{10}x_7,
 \\ 
& x_{10}x_7-x_{11}x_7, 
x_{12}x_7+2x_9x_5-(1/3)x_9x_7-(2/3)x_{10}x_7, x_9x_{10}-x_9x_{11}, x_9x_{10}-x_{10}x_{11},
\\
& x_9x_{12}-x_{12}x_{12}+2x_9x_{10}-3x_{14}x_1+3x_{14}x_2, x_{10}x_{12}-x_{12}x_{12}+2x_9x_{10}-3x_{14}x_2,
\stepcounter{equation}\tag{\theequation}\label{eq:relations-cohomology-2}
\\
& x_{11}x_{12}-x_{12}x_{12}+2x_9x_{10}+3x_{14}x_1, x_1x_{13}-4x_{12}x_6+4x_{10}x_7, \\
& x_2x_{13}+(4/3)x_9x_7-4x_{12}x_6+(8/3)x_{10}x_7, x_3x_{13}, x_8x_{13}-6x_{14}x_3, x_{13}x_4+x_9x_9x_3,\\
& x_{13}x_5+x_{10}x_{10}x_3, x_{13}x_6-x_{11}x_{11}x_3, x_{13}x_7-x_{12}x_{12}x_3+2x_9x_{10}x_3, x_1x_8x_{12}-2x_{12}x_6+2x_{10}x_7, \\
& x_2x_8x_{12}+(2/3)x_9x_7-2x_{12}x_6+(4/3)x_{10}x_7, x_9x_{13}-x_9x_9x_8+6x_{14}x_4, \\
& x_{10}x_{13}-x_{10}x_{10}x_8+6x_{14}x_5, x_{11}x_{13}-x_{11}x_{11}x_8-6x_{14}x_6, \\
& x_{12}x_{13}-x_{11}x_{12}x_8-6x_{14}x_7-3x_{14}x_2x_8.
\end{align*} 
By abuse of notation, we will also
identify $\mathcalboondox{R}_{2}$ 
with its image under the canonical projection $F \rightarrow F/(\mathcalboondox{R}_{1}) = C$. 

The following theorem is the main result of this article.

\begin{thm}
	\label{thm:HHcohomologymain}
Let $I$ be the two-sided ideal of $F$ generated by the set $\mathcalboondox{R} = \mathcalboondox{R}_1\cup \mathcalboondox{R}_2$ of $160$ homogeneous elements and let $D=F/I$. 
Define the morphism $\varphi:F\to \operatorname{HH}^{\bullet}(A)$ of bigraded $\Bbbk$-algebras by setting $\varphi(x_i)= X_i$ for $i\in \llbracket 1,14 \rrbracket $. 
It is easy to check that $\varphi$ is surjective and $I\subseteq \operatorname{Ker}(\varphi)$,
so $\varphi$ induces the surjective morphism $\bar{\varphi}:D\to  \operatorname{HH}^{\bullet}(A)$. 
Moreover, $\bar{\varphi}$ is an isomorphism, \textit{i.e.} $\operatorname{Ker}(\varphi) = I$.
\end{thm}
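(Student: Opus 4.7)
The plan is to show that the bigraded Hilbert series of $D=F/I$ coincides with that of $\operatorname{HH}^{\bullet}(A)$; since $\bar{\varphi}$ is a bigraded surjection, equality of Hilbert series in each bidegree will force $\operatorname{Ker}(\bar{\varphi})=0$. The Hilbert series of $\operatorname{HH}^{\bullet}(A)$ is already available through Proposition \ref{linear structure of cohomology} and Corollary \ref{corollary:cohomology-hilbert-series}, so the task reduces to computing the Hilbert series of $D$.

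First I would verify that $I\subseteq \operatorname{Ker}(\varphi)$. The subset $\mathcalboondox{R}_1$ is exactly the set of graded-commutativity relations (with the correct sign coming from the parity of the homological degree) together with the squaring relations $x_i^2=0$ for the odd-degree generators $i\in\{4,5,6,7,8,13\}$; these hold in $\operatorname{HH}^{\bullet}(A)$ by the graded commutativity of the cup product mentioned in Remark \ref{n1n2}. For the $63$ relations in $\mathcalboondox{R}_2$, I would compute each cup product on cochains using the explicit chain maps $g_{\bullet}$ listed in Fact \ref{fact chain map} and then reduce the resulting cocycle modulo the bases of coboundaries constructed in Subsection \ref{subsection: com coboun}; the image in cohomology is then read off from the basis of $H^n_m$ compiled in Subsection \ref{subsection: cohomology}, and each relation reduces to zero. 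Lemma \ref{g} takes care of every relation involving $x_{14}$, since multiplication by $g=\omega_1^*\epsilon^!|1$ acts simply as $\omega_1^*$.

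By Remark \ref{remark:quotient-free-graded-algebra}, the quotient $C=F/(\mathcalboondox{R}_1)$ is the free graded-commutative algebra on $x_1,\dots,x_{14}$, so $D\cong C/(\bar{\mathcalboondox{R}}_2)$, where $\bar{\mathcalboondox{R}}_2$ is the image of $\mathcalboondox{R}_2$. I would then equip $C$ with a multiplicative monomial order refining $x_1\prec\cdots\prec x_{14}$ (a deg-lex order on sorted monomials) and compute a Gröbner basis of $(\bar{\mathcalboondox{R}}_2)$. An inspection of the leading terms of \eqref{eq:relations-cohomology-2} shows that every generator of $\bar{\mathcalboondox{R}}_2$ has a distinct quadratic or cubic leading monomial in the $x_i$ with $i\leqslant 13$ (except for $x_{13}^2$ and the $x_{14}$-involving relations, which already treat $x_{14}$ as central), so $\bar{\mathcalboondox{R}}_2$ is expected to be a Gröbner basis or to require only a few reductions to become one. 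The normal monomials are then products of a sorted normal monomial in $x_1,\dots,x_{13}$ and an arbitrary power of the central element $x_{14}$, which reflects Proposition \ref{linear structure of cohomology} with $x_{14}$ playing the role of $\omega_1^*$.

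The expected main obstacle is the Gröbner basis computation itself: one must check that every S-polynomial between two elements of $\bar{\mathcalboondox{R}}_2$ reduces to zero modulo $\bar{\mathcalboondox{R}}_2$, possibly adjoining a small number of new relations (which must themselves be shown to hold in $\operatorname{HH}^{\bullet}(A)$ by the same chain-map method as above, so that $I$ is not enlarged). Once a Gröbner basis is obtained, the set of normal monomials is finite modulo multiplication by $x_{14}$ and can be enumerated bidegree by bidegree; summing their contributions yields a bivariate Hilbert series of the form $\left(\sum_{i\geqslant 0} t^{-6i} s^{4i}\right) \cdot P(s,t)$, where $P(s,t)$ encodes the finite piece corresponding to $\tilde{H}^{\bullet}$. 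Matching this with the Hilbert series $\sum_{n\geqslant 0} s^n h^n(t)$ extracted from Corollary \ref{corollary:cohomology-hilbert-series} bidegree by bidegree completes the argument and gives $\operatorname{Ker}(\varphi)=I$, as claimed.
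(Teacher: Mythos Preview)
Your proposal is correct and follows the same strategy as the paper: verify the relations hold under $\varphi$, compute a Gr\"obner basis of the ideal, enumerate the resulting normal (standard) monomials, and match bidegree-by-bidegree with $\dim \tilde H^{n}_{m}$; the paper carries this out in the free algebra $F$ (computing a $184$-element Gr\"obner basis $G$ with GAP) rather than in $C$, and then performs the enumeration explicitly via the degree equations \eqref{r14n}--\eqref{r14m} to obtain each $S^{n}_{m}$. One small correction: any new elements produced by Buchberger's algorithm are automatically in $I$ and hence already in $\operatorname{Ker}(\varphi)$, so there is no need to verify them separately in $\operatorname{HH}^{\bullet}(A)$.
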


Before presenting the proof of the previous theorem, let us provide some auxiliary results.
We refer the reader to \cite{Ufnarovskij} (see also \cite{Varadarajan}) for the theory of Gröbner bases, as well as the usual terminology we will follow.
Using GAP (see \cite{cohenknopper}), one shows that a Gröbner basis $G$ of $I$ is given by the following $184$ elements
	\allowdisplaybreaks
\begin{align*}
& x_1^2,
x_1x_2,
x_1x_3,
x_1x_4,
x_1x_5,
x_1x_6,
x_1x_7,
x_1x_{11} - 2x_1x_{10} - 2x_1x_9,
x_1x_{12} - x_1x_{10} - x_1x_9,\\
& x_2x_1 - x_1x_2,
x_2^2,
x_2x_3,
x_2x_4,
x_2x_5,
x_2x_6,
x_2x_7,
x_2x_9 + x_1x_9,
x_2x_{10} - 2x_1x_{10},\\
& x_2x_{11} - x_1x_{10} - x_1x_9,
x_2x_{12} - x_1x_{10},
x_3x_1 - x_1x_3,
x_3x_2 - x_2x_3,
x_3^2,
x_3x_4,
x_3x_5,
x_3x_6,
x_3x_7,
x_3x_8,\\
& x_3x_{13},
x_4x_1 - x_1x_4,
x_4x_2 - x_2x_4,
x_4x_3 - x_3x_4,
x_4^2,
x_4x_5,
x_4x_6,
x_4x_7,
x_4x_8 - x_3x_9,
x_4x_{11} - x_4x_{10},\\
& x_5x_1 - x_1x_5,
x_5x_2 - x_2x_5,
x_5x_3 - x_3x_5,
x_5x_4 + x_4x_5,
x_5^2,
x_5x_6,
x_5x_7,
x_5x_8 - x_3x_{10},
x_5x_9 - x_4x_{10},\\
& x_5x_{11} - x_4x_{10},
x_5x_{12} - x_4x_{12} - (1/2)x_2x_{13} + (1/4)x_1x_{13},
x_6x_1 - x_1x_6,
x_6x_2 - x_2x_6,\\
& x_6x_3 - x_3x_6,
x_6x_4 + x_4x_6,
x_6x_5 + x_5x_6,
x_6^2,
x_6x_7,
x_6x_8 + x_3x_{11},
x_6x_9 + x_5x_9,
x_6x_{10} + x_4x_{10},\\
& x_6x_{12} + (1/2)x_5x_{12} + (1/2)x_4x_{12} - (3/8)x_1x_{13},
x_7x_1 - x_1x_7,
x_7x_2 - x_2x_7,
x_7x_3 - x_3x_7,\\
& x_7x_4 + x_4x_7,
x_7x_5 + x_5x_7,
x_7x_6 + x_6x_7,
x_7^2,
x_7x_8 + x_3x_{12},
x_7x_9 - 4x_6x_{12} - 3x_4x_{12} + x_1x_{13},\\
& x_7x_{10} - (1/4)x_7x_9 + (3/4)x_4x_{12},
x_7x_{11} - x_7x_{10},\\
& x_7x_{12} + x_4x_{12} + 2x_4x_{10} + (1/2)x_2x_{13} - (1/2)x_1x_{13},
x_8x_1 - x_1x_8,
x_8x_2 - x_2x_8,
x_8x_3 - x_3x_8,\\
& x_8x_4 + x_4x_8,
x_8x_5 + x_5x_8,
x_8x_6 + x_6x_8,
x_8x_7 + x_7x_8,
x_8^2,
x_8x_{13} - 6x_3x_{14},
x_9x_1 - x_1x_9,\\
& x_9x_2 - x_2x_9,
x_9x_3 - x_3x_9,
x_9x_4 - x_4x_9,
x_9x_5 - x_5x_9,
x_9x_6 - x_6x_9,
x_9x_7 - x_7x_9,
x_9x_8 - x_8x_9,\\
& x_9x_{11} - x_9x_{10},
x_{10}x_1 - x_1x_{10},
x_{10}x_2 - x_2x_{10},
x_{10}x_3 - x_3x_{10},
x_{10}x_4 - x_9x_5,
x_{10}x_5 - x_5x_{10},\\
& x_{10}x_6 + x_9x_5,
x_{10}x_7 - x_7x_{10},
x_{10}x_8 - x_8x_{10},
x_{10}x_9 - x_9x_{10},
x_{10}x_{11} - x_9x_{10},\\
& x_{10}x_{12} - x_9x_{12} - 6x_2x_{14} + 3x_1x_{14},
x_{11}x_1 - x_1x_{11},
x_{11}x_2 - x_2x_{11},
x_{11}x_3 - x_3x_{11},
x_{11}x_4 - x_9x_5,\\
& x_{11}x_5 - x_9x_5,
x_{11}x_6 - x_6x_{11},
x_{11}x_7 - x_{10}x_7,
x_{11}x_8 - x_8x_{11},
x_{11}x_9 - x_9x_{11},
x_{11}x_{10} - x_{10}x_{11},\\
& x_{11}x_{12} - x_{10}x_{12} + 3x_2x_{14} + 3x_1x_{14},
x_{12}x_1 - x_1x_{12},
x_{12}x_2 - x_2x_{12},
x_{12}x_3 - x_3x_{12},
\stepcounter{equation}\tag{\theequation}\label{eq:184}\\
& x_{12}x_4 + (4/3)x_{10}x_7 - (1/3)x_9x_7,
x_{12}x_5 - x_5x_{12},
x_{12}x_6 - x_6x_{12},
x_{12}x_7 - x_7x_{12},
x_{12}x_8 - x_8x_{12},\\
& x_{12}x_9 - x_9x_{12},
x_{12}x_{10} - x_{10}x_{12},
x_{12}x_{11} - x_{11}x_{12},
x_{12}^2 - x_{11}x_{12} - 2x_9x_{10} - 3x_1x_{14},\\
& x_{13}x_1 - x_1x_{13},
x_{13}x_2 - x_2x_{13},
x_{13}x_3 - x_3x_{13},
x_{13}x_4 + x_4x_{13},
x_{13}x_5 + x_5x_{13},
x_{13}x_6 + x_6x_{13},\\
& x_{13}x_7 + x_7x_{13},
x_{13}x_8 + x_8x_{13},
x_{13}x_9 - x_9x_{13},
x_{13}x_{10} - x_{10}x_{13},
x_{13}x_{11} - x_{11}x_{13},
x_{13}x_{12} - x_{12}x_{13},\\
& x_{13}^2,
x_{14}x_1 - x_1x_{14},
x_{14}x_2 - x_2x_{14},
x_{14}x_3 - x_3x_{14},
x_{14}x_4 - x_4x_{14},
x_{14}x_5 - x_5x_{14},
x_{14}x_6 - x_6x_{14},\\
& x_{14}x_7 - x_7x_{14},
x_{14}x_8 - x_8x_{14},
x_{14}x_9 - x_9x_{14},
x_{14}x_{10} - x_{10}x_{14},
x_{14}x_{11} - x_{11}x_{14},
x_{14}x_{12} - x_{12}x_{14},\\
& x_{14}x_{13} - x_{13}x_{14},
x_1x_8x_{12} - 2x_{12}x_6 + 2x_{10}x_7,
x_2x_8x_{12} - 2x_{12}x_6 + (4/3)x_{10}x_7 + (2/3)x_9x_7,\\
& x_3x_9^2 - x_4x_{13},
x_3x_9x_{12} + x_7x_{13},
x_3x_{10}^2 - x_5x_{13},
x_3x_{11}^2 + x_6x_{13},
x_8x_9^2 - x_9x_{13} - 6x_4x_{14},\\
& x_8x_9x_{12} + 6x_2x_8x_{14} - 6x_1x_8x_{14} - x_{12}x_{13} + 6x_7x_{14},
x_8x_{10}^2 - x_{10}x_{13} - 6x_5x_{14},\\
& x_8x_{11}^2 - x_{11}x_{13} + 6x_6x_{14},
x_1x_8x_9 + (1/2)x_2x_{13} - (1/2)x_1x_{13},
x_1x_8x_{10} + x_1x_8x_9 - (1/2)x_1x_{13},\\
& x_1x_8x_{11} - 2x_1x_8x_{10} - 2x_1x_8x_9,
x_1x_9^2,
x_1x_9x_{10} + 2x_1x_9^2,
x_1x_9x_{12} + x_1x_9^2,
x_1x_9x_{13},
x_1x_{10}^2,\\
& x_1x_{10}x_{13}, 
x_2x_8x_9 + x_1x_8x_9,
x_2x_8x_{10} + 2x_1x_8x_9 - x_1x_{13},
x_2x_8x_{11} - (1/2)x_1x_{13},
x_3x_9x_{13},\\
& x_3x_{10}x_{13},
x_3x_{11}x_{13},
x_3x_{12}x_{13},
x_4x_{10}^2 - x_4x_9x_{10},
x_8x_9x_{13} - 6x_3x_9x_{14},
x_8x_{10}x_{13} - 6x_3x_{10}x_{14},\\
& x_8x_{11}x_{13} - 6x_3x_{11}x_{14},
x_8x_{12}x_{13} - 6x_3x_{12}x_{14},
x_9x_{10}^2 - x_9^2x_{10},
x_3x_9x_{10}x_{13},\\
& x_8x_9x_{10}x_{13} - 6x_3x_9x_{10}x_{14}.
\end{align*}

We will now compute the standard words with respect to $G$, \textit{i.e.} the monomials on the letters $x_{i}$, $i \in \llbracket 1, 14\rrbracket$, that are not divisible by the leading terms of the elements of $G$. 
This is a direct but tedious computation. 
We recall that the set of standard words forms a $\Bbbk$-basis $S$ of $D$. 
Obviously, $1\in S$ and $x_i\in S$ for $i\in \llbracket 1, 14 \rrbracket $.
The elements in $S$ generated by $2$ elements are given 
by the following $46$ elements 
\allowdisplaybreaks
\begin{align*}
		&x_1x_8,x_1x_9,x_1x_{10},x_1x_{13},x_1x_{14},\\
&x_2x_8,x_2x_{13},x_2x_{14},\\
&x_3x_{9},x_3x_{10},x_3x_{11},x_3x_{12},x_3x_{14},\\
&x_4x_{9},x_4x_{10},
x_4x_{12},x_4x_{13},x_4x_{14},\\
&x_5x_{10},x_5x_{13},x_5x_{14},\\
&x_6x_{11},x_6x_{13},x_6x_{14},\\
&x_7x_{13},x_7x_{14},\\
&x_8x_{9},x_8x_{10},x_8x_{11},x_8x_{12},x_8x_{14},
\stepcounter{equation}\tag{\theequation}\label{basis2}\\
&x_9^2,x_9x_{10},x_9x_{12},x_9x_{13},x_9x_{14},\\
&x_{10}^2,x_{10}x_{13},x_{10}x_{14},\\
&x_{11}^2,x_{11}x_{13},x_{11}x_{14},\\
&x_{12}x_{13},x_{12}x_{14},\\
&x_{13}x_{14},\\
&x_{14}^2.
\end{align*}
Analogously, the elements in $S$ generated by $3$ elements are given 
by the following $68$ elements 
\begin{small}
\allowdisplaybreaks
\begin{align*}
&x_1x_8x_{14},x_1x_9x_{14},x_1x_{10}x_{14},x_1x_{13}x_{14},x_1x_{14}^2,\\
&x_2x_8x_{14},x_2x_{13}x_{14},x_2x_{14}^2,\\
&x_3x_9x_{10},x_3x_9x_{14},x_3x_{10}x_{14},x_3x_{11}x_{14},x_3x_{12}x_{14},x_3x_{14}^2,\\
&x_4x_9^2,x_4x_9x_{10},x_4x_9x_{12},x_4x_9x_{13},x_4x_9x_{14},x_4x_{10}x_{13},x_4x_{10}x_{14},x_4x_{12}x_{13},x_4x_{12}x_{14},x_4x_{13}x_{14},x_4x_{14}^2,\\
&x_5x_{10}^2,x_5x_{10}x_{13},x_5x_{10}x_{14},x_5x_{13}x_{14},x_5x_{14}^2,\\
&x_6x_{11}^2,x_6x_{11}x_{13},x_6x_{11}x_{14},x_6x_{13}x_{14},x_6x_{14}^2,\\
&x_7x_{13}x_{14},x_7x_{14}^2,
\stepcounter{equation}\tag{\theequation}\label{basis3}\\
&x_8x_9x_{10},x_8x_9x_{14},x_8x_{10}x_{14},x_8x_{11}x_{14},x_8x_{12}x_{14},x_8x_{14}^2,\\
&x_9^3,x_9^2x_{10},x_9^2x_{12},x_9^2x_{13},x_9^2x_{14},x_9x_{10}x_{13},x_9x_{10}x_{14},x_9x_{12}x_{13},x_{9}x_{12}x_{14},x_9x_{13}x_{14},x_9x_{14}^2,\\
&x_{10}^3,x_{10}^2x_{13},x_{10}^2x_{14},x_{10}x_{13}x_{14},x_{10}x_{14}^2,\\
&x_{11}^3,x_{11}^2x_{13},x_{11}^2x_{14},x_{11}x_{13}x_{14},x_{11}x_{14}^2,\\
&x_{12}x_{13}x_{14},x_{12}x_{14}^2,\\
&x_{13}x_{14}^2,\\
&x_{14}^3.
	\end{align*}
\end{small}
Finally, the elements in $S$ generated by $4$ elements are 
given by the following $89$ elements 
\begin{small}
\allowdisplaybreaks
\begin{align*}
	& x_1x_8x_{14}^2, x_1x_9x_{14}^2,x_1x_{10}x_{14}^2,x_1x_{13}x_{14}^2,x_1x_{14}^3,\\
	& x_2x_8x_{14}^2,x_2x_{13}x_{14}^2,x_2x_{14}^3,\\
	& x_3x_9x_{10}x_{14},x_3x_9x_{14}^2,x_3x_{10}x_{14}^2,x_3x_{11}x_{14}^2,x_3x_{12}x_{14}^2,x_3x_{14}^3,\\
	& x_4x_9^3, x_4x_9^2x_{10}, x_4x_9^2x_{12}, x_4x_9^2x_{13}, x_4x_9^2x_{14}, x_4x_9x_{10}x_{13}, x_4x_9x_{10}x_{14}, x_4x_9x_{12}x_{13}, x_4x_9x_{12}x_{14},x_4x_9x_{13}x_{14},\\
	& x_4x_9x_{14}^2,x_4x_{10}x_{13}x_{14},x_4x_{10}x_{14}^2,x_4x_{12}x_{13}x_{14},x_4x_{12}x_{14}^2,x_4x_{13}x_{14}^2,x_4x_{14}^3,\\
	& x_5x_{10}^3, x_5x_{10}^2x_{13},x_5x_{10}^2x_{14}, x_5x_{10}x_{13}x_{14}, x_5x_{10}x_{14}^2, x_5x_{13}x_{14}^2, x_5x_{14}^3, \\
	& x_6x_{11}^3, x_6x_{11}^2x_{13}, x_6x_{11}^2x_{14}, x_6x_{11}x_{13}x_{14}, x_6x_{11}x_{14}^2, x_6x_{13}x_{14}^2, x_6x_{14}^3, \\
	& x_7x_{13}x_{14}^2, x_7x_{14}^3, 
	\stepcounter{equation}\tag{\theequation}\label{basis4}\\
	& x_8x_9x_{10}x_{14}, x_8x_9x_{14}^2, x_8x_{10}x_{14}^2, x_8x_{11}x_{14}^2, x_8x_{12}x_{14}^2, x_8x_{14}^3, \\
	& x_9^4, x_9^3x_{10}, x_9^3x_{12}, x_9^3x_{12}, x_9^3x_{13}, x_9^3x_{14}, x_9^2x_{10}x_{13}, x_9^2x_{10}x_{14}, x_9^2x_{12}x_{13}, x_9^2x_{12}x_{14}, x_9^2x_{13}x_{14},x_9^2x_{14}^2,\\
	& x_9x_{10}x_{13}x_{14},x_9x_{10}x_{14}^2,x_9x_{12}x_{13}x_{14},x_{9}x_{12}x_{14}^2,x_9x_{13}x_{14}^2,x_9x_{14}^3,\\
	& x_{10}^4,x_{10}^3x_{13}, x_{10}^3x_{14}, x_{10}^2x_{13}x_{14}, x_{10}^2x_{14}^2, x_{10}x_{13}x_{14}^2, x_{10}x_{14}^3, \\
	& x_{11}^4, x_{11}^3x_{13}, x_{11}^3x_{14}, x_{11}^2x_{13}x_{14}, x_{11}^2x_{14}^2, x_{11}x_{13}x_{14}^2, x_{11}x_{14}^3, \\
	& x_{12}x_{13}x_{14}^2, x_{12}x_{14}^3, \\
	& x_{13}x_{14}^3, \\
	& x_{14}^4.
\end{align*}
\end{small}
\begin{lem}\label{equivalent}
Let $x$ be a word generated by $r$ elements, where $r\geqslant 5$. Then the following statements are equivalent: 
\begin{enumerate}[label=(\arabic*)]
    \item\label{item:1} $x\in S$; 
    \item\label{item:2} $y\in S$ for any subword $y\subsetneq x$;
    \item\label{item:3} $y\in S$ for any subword $y\subsetneq x$ generated by $r-1$ elements; 
    \item\label{item:4} $y\in S$ for any subword $y\subsetneq x$ generated by $4$ elements.
\end{enumerate}
\end{lem}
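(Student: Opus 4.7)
The plan is to base the proof on the explicit Gröbner basis $G$ listed in \eqref{eq:184} and, in particular, on the observation that \emph{every leading monomial in $G$ has length at most $4$}. Indeed, since $S$ consists by definition of those words in the $x_i$'s that contain no leading monomial of $G$ as a contiguous subword, once this length bound is verified, the four conditions in the statement relate monomial-theoretic properties of $x$ to its length-$4$ contiguous subwords in an essentially combinatorial way. The first step I would carry out is therefore a direct inspection of the list in \eqref{eq:184} to confirm that no leading monomial there involves more than four generators; this is immediate by reading the list.

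With that length bound in hand, I would prove the equivalence via the cyclic chain \ref{item:1} $\Rightarrow$ \ref{item:2} $\Rightarrow$ \ref{item:3} $\Rightarrow$ \ref{item:4} $\Rightarrow$ \ref{item:1}. The implication \ref{item:1} $\Rightarrow$ \ref{item:2} is the standard fact that any contiguous subword of a standard word is itself standard, since any leading monomial dividing a subword $y$ of $x$ also divides $x$. The implication \ref{item:2} $\Rightarrow$ \ref{item:3} is trivial, as \ref{item:3} is literally the restriction of \ref{item:2} to subwords of length $r-1$. For \ref{item:3} $\Rightarrow$ \ref{item:4}, given a proper length-$4$ subword $y$ of $x$, the hypothesis $r\geqslant 5$ gives $r-1\geqslant 4$, so $y$ can be extended to some proper length-$(r-1)$ contiguous subword $z\subsetneq x$; by \ref{item:3}, $z\in S$, hence the sub-subword $y$ also lies in $S$.

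The only remaining step \ref{item:4} $\Rightarrow$ \ref{item:1} is the one where the length bound on leading monomials is actually used. If $x\notin S$, then some leading monomial $\ell$ of $G$ occurs as a contiguous subword of $x$; since $|\ell|\leqslant 4$ and $r\geqslant 5$, we may enlarge the factor $\ell$ inside $x$ to a contiguous subword $y\subsetneq x$ of length exactly $4$, which then contains $\ell$ and is therefore not standard, contradicting \ref{item:4}. I do not anticipate any real obstacle in carrying this out: the entire argument is routine once the length-$\leqslant 4$ bound on leading terms of $G$ is checked, and the hypothesis $r\geqslant 5$ is used exactly to guarantee both that length-$4$ proper subwords of $x$ exist and that length-$(r-1)$ proper subwords can accommodate any length-$4$ subword.
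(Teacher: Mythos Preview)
Your proof is correct and follows essentially the same approach as the paper: the implications \ref{item:1} $\Rightarrow$ \ref{item:2} $\Rightarrow$ \ref{item:3} $\Rightarrow$ \ref{item:4} are immediate from the definition of standard word (and the fact that subwords of standard words are standard), while \ref{item:4} $\Rightarrow$ \ref{item:1} uses exactly the observation that every leading monomial in $G$ has length at most $4$. You spell out a bit more detail for \ref{item:3} $\Rightarrow$ \ref{item:4} than the paper does, but the argument is the same.
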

\begin{proof}
The implications \ref{item:1} $\Rightarrow$ \ref{item:2} $\Rightarrow$ \ref{item:3} $\Rightarrow$ \ref{item:4} follow immediately from the  definition of standard word. 
On the other hand, to prove the implication 
\ref{item:4} $\Rightarrow$ \ref{item:1}, it suffices to note that the leading word of any element in $G$ is generated by at most than $4$ elements.  
\end{proof}

\begin{lem}
\label{xx14}
Let $x$ be a word. 
Then $x \in S$ if and only if $xx_{14}\in S$. 
\end{lem}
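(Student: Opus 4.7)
The plan is to exploit a very specific feature of the Gröbner basis $G$ listed in \eqref{eq:184}: the generator $x_{14}$ only appears in leading terms in the ``left-most'' position (via the thirteen commutation relations with leading terms $x_{14}x_i$ for $i \in \llbracket 1, 13 \rrbracket$), and nowhere else. I would first verify by direct inspection of \eqref{eq:184} that every leading term of $G$ containing the letter $x_{14}$ is of the form $x_{14}x_i$ for some $i \in \llbracket 1, 13 \rrbracket$; in particular, \emph{no leading term of $G$ ends with $x_{14}$}. This is the only real content of the lemma.

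Recall that a monomial $w$ lies in $S$ if and only if no contiguous subword of $w$ coincides with the leading term of an element of $G$, and, as an immediate consequence, $S$ is closed under taking contiguous subwords. The ``if'' direction is then immediate: if $xx_{14} \in S$, then the contiguous subword $x$ of $xx_{14}$ is also standard.

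For the ``only if'' direction, assume $x \in S$. Any contiguous subword $u$ of $xx_{14}$ is either a contiguous subword of $x$, in which case $u$ is not a leading term by the hypothesis $x \in S$, or $u$ is of the form $sx_{14}$ for some suffix $s$ of $x$ (possibly empty). In the latter case, $u$ ends with $x_{14}$, so by the observation above it cannot coincide with the leading term of any element of $G$. Therefore no contiguous subword of $xx_{14}$ is a leading term of $G$, and $xx_{14} \in S$.

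The only potential obstacle is the initial verification that the leading terms involving $x_{14}$ are exhausted by $x_{14}x_1, \ldots, x_{14}x_{13}$; this follows by a routine scan of \eqref{eq:184}, keeping in mind that in the length-lexicographic order with $x_{14}$ the largest generator we have $x_{14}x_i \succ x_ix_{14}$ for $i < 14$, so each commutation relation $x_{14}x_i - x_ix_{14}$ contributes the leading term $x_{14}x_i$.
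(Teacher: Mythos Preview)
Your proof is correct, and it takes a genuinely different route from the paper's argument. The paper proceeds by cases on the length $r$ of $x$: for $r\leqslant 3$ it appeals to the explicit lists of standard words in \eqref{basis2}--\eqref{basis4}, and for $r\geqslant 4$ it invokes Lemma~\ref{equivalent} (which in turn rests on the fact that all leading terms of $G$ have length at most $4$) to reduce to the single length-$4$ subword $zx_{14}$, where $z$ is the length-$3$ suffix of $x$. Your argument bypasses both Lemma~\ref{equivalent} and the explicit tables entirely: the single observation that no leading term of $G$ ends in $x_{14}$ (the only leading terms involving $x_{14}$ being $x_{14}x_i$ for $i\in\llbracket 1,13\rrbracket$) immediately gives the ``only if'' direction with no case analysis on $r$. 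This is cleaner and more self-contained; the paper's approach, on the other hand, reuses machinery (the tables and Lemma~\ref{equivalent}) that is already in place for the Hilbert-series computation, so it costs nothing extra in context.
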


\begin{proof}
To prove the direct implication, suppose that $x \in S$ is generated by $r$ elements. 
If $r\in \llbracket 0,3\rrbracket$, we get that $xx_{14}\in S$ directly from \eqref{basis2} - \eqref{basis4}. 
If $r\geqslant 4$, write $x=yz$, where $y,z$ are words and $z$ is generated by $3$ elements. Obviously, $z\in S$ and $zx_{14}\in S$. 
By Lemma \ref{equivalent}, $xx_{14}=yzx_{14}\in S$.
Finally, note that the converse follows from the definition of standard word. 
\end{proof}

The following result is a direct consequence of the previous lemma. 

\begin{cor}
Let $\tilde{S}$ be the elements in $S$ generated by $x_i$ for $i\in \llbracket 1,13 \rrbracket$, 
and $\tilde{D}$ the subspace of $D$ generated by the elements in $\tilde{S}$. Then 
\begin{equation}\label{Dtilde}
\begin{split}
	D\cong \tilde{D}\otimes \Bbbk[x_{14}]
\end{split}
\end{equation}
 as graded $\Bbbk$-vector spaces.
\end{cor}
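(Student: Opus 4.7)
The plan is to exhibit an explicit $\Bbbk$-linear bijection between $S$ and $\tilde S \times \NN_0$ by extracting the trailing power of $x_{14}$ from each standard word, and then show that the multiplication map $\tilde D \otimes \Bbbk[x_{14}] \to D$, $y \otimes x_{14}^k \mapsto y x_{14}^k$, is an isomorphism of bigraded $\Bbbk$-vector spaces.

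First I would observe that, by inspection of the Gröbner basis $G$ in \eqref{eq:184}, the thirteen relations $x_{14}x_i - x_ix_{14}$ for $i \in \llbracket 1,13\rrbracket$ all belong to $G$, with leading terms $x_{14}x_i$ under the length-lexicographic order (since $x_{14} \succ x_i$). Consequently, any standard word $w \in S$ cannot contain the subword $x_{14}x_i$ with $i < 14$, so every occurrence of $x_{14}$ in $w$ must be preceded only by further occurrences of $x_{14}$. In other words, $w$ admits a unique factorization $w = y\, x_{14}^k$ with $k \in \NN_0$ and $y$ a word on $\{x_1,\ldots,x_{13}\}$.

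Next I would invoke Lemma \ref{xx14} iteratively: for any word $y$ on $\{x_1,\dots,x_{13}\}$ and any $k \in \NN_0$, we have $y\,x_{14}^k \in S$ if and only if $y \in S$, hence if and only if $y \in \tilde S$ (since $y$ already avoids $x_{14}$). Combined with the previous paragraph, the map
\[
\Phi : \tilde S \times \NN_0 \longrightarrow S, \qquad (y,k) \longmapsto y\,x_{14}^k,
\]
is a bijection. Since $S$ is a $\Bbbk$-basis of $D$, $\tilde S$ is a $\Bbbk$-basis of $\tilde D$ by definition, and $\{x_{14}^k\}_{k \in \NN_0}$ is a $\Bbbk$-basis of $\Bbbk[x_{14}]$, the $\Bbbk$-linear extension of $\Phi$ yields a $\Bbbk$-linear isomorphism $\tilde D \otimes_{\Bbbk} \Bbbk[x_{14}] \xrightarrow{\sim} D$ that agrees with the multiplication map in $D$ (here one uses that the commutation of $x_{14}$ with every $x_i$ makes $y\,x_{14}^k$ unambiguous). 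Finally, this map is bihomogeneous because $x_{14}$ has a well-defined bidegree (as an element of $F$) and the bigrading on $D$ is induced from that of $F$, so the bigrading on $\tilde D \otimes \Bbbk[x_{14}]$ defined by tensoring the bigradings of the two factors is carried onto the bigrading of $D$.

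There is no genuine obstacle here: the content of the statement has already been isolated in Lemma \ref{xx14}, and the only thing left is the routine bookkeeping of (i) the commutation of $x_{14}$ inside the standard words, and (ii) the compatibility of the bigrading under the multiplication map, both of which are immediate from the Gröbner basis and from the definition of the bigrading on $F$.
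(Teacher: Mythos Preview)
Your proof is correct and is exactly the argument the paper has in mind: the paper states the corollary as a direct consequence of Lemma \ref{xx14} without further details, and you have simply spelled out those details (the trailing-$x_{14}$ factorization forced by the leading terms $x_{14}x_i$ in $G$, followed by iterated application of Lemma \ref{xx14}).
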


We are now ready to prove Theorem \ref{thm:HHcohomologymain}. 

\begin{proof}[Proof of Theorem \ref{thm:HHcohomologymain}]
It is easy to check that the morphism $\varphi$ vanishes on the set $\mathcalboondox{R}_1$ since the algebra $\operatorname{HH}^{\bullet}(A)$ is graded commutative,
and it also vanishes on the set $\mathcalboondox{R}_2$, as the reader can check using Remark \ref{n1n2}, Fact \ref{fact chain map}, \eqref{0cup0} and the coboundaries in Subsubsections \ref{subsubsection:cob1} and \ref{subsubsection:cob2}.
Hence, $I\subseteq \operatorname{Ker}(\varphi)$. 
By Proposition \ref{linear structure of cohomology}, we have 
\begin{equation}\label{Dtilde01234}
	\begin{split}
		\operatorname{HH}^{\bullet}(A)\cong \big( \underset{ \text{\begin{tiny}$\begin{matrix} m\in \llbracket 0, 4 \rrbracket, \\ n\in \NN_0 \end{matrix}$\end{tiny}}}{\bigoplus}\tilde{H}^n_m \big) \otimes \Bbbk[\omega^*_1\epsilon^!|1]
	\end{split}
	\end{equation}
as graded $\Bbbk$-vector spaces.
Let $S^n_m$ be the elements in $\tilde{S}$ with cohomological degree $n\in\NN_0$ and internal degree $m-n$, where $m\in\ZZ$.
To prove that $\bar{\varphi}$ is an isomorphism, it is sufficient to prove that 
the cardinality of $S^n_m$ is as same as the dimension of $\tilde{H}^n_m$. 

Take $x\in S^n_m$. Since the words $x_ix_j$ for $i>j$ are leading terms of elements in $G$, we may assume that $x$ is of the form $x_1^{r_1}x_2^{r_2}\cdots x_{13}^{r_{13}}$, where $r_i\in\NN_0$ for $i\in \llbracket 1,13\rrbracket $.
Since $x_i^2$ for $i= 1,2,3,4,5,6,7,8,12,13$ are leading terms of elements in $G$, we assume $r_i \in \llbracket 0,1\rrbracket$ for $i= 1,2,3,4,5,6,7,8,12,13$. 
By degree reasons, we have 
\begin{equation}\label{r14n}
	\begin{split}
	r_4+r_5+r_6+r_7+r_8+2r_9+2r_{10}+2r_{11}+2r_{12}+3r_{13}=n,
	\end{split}
\end{equation}
and 
\begin{equation}\label{r14nm}
	\begin{split}
	2r_1+2r_2+4r_3+2r_4+2r_5+2r_6+2r_7-2r_9-2r_{10}-2r_{11}-2r_{12}-2r_{13}=m-n.
	\end{split}
\end{equation}
Adding the two equations together, we get
\begin{equation}\label{r14m}
	\begin{split}
	&2r_1+2r_2+4r_3+3r_4+3r_5+3r_6+3r_7+r_8+r_{13}=m.
	\end{split}
\end{equation}
Note that the previous identity tells us that $m \in\NN_0$. 

First, we will first prove that $m\leqslant 4$. 
If $r_1=1$, then $r_i=0$ for $i=2,3,4,5,6,7,11,12$, since $x_{1} x_{i}$ is the leading term of an element of the Gröbner basis $G$ for $i=2,3,4,5,6,7,11,12$. 
The equation \eqref{r14m} then shows that $m=2+r_8+r_{13}\leqslant 4$. 
Assume for the rest of the paragraph that $r_1=0$. 
If $r_2=1$, then $r_i=0$ for $i=3,4,5,6,7,9,10,11,12$, since $x_{2} x_{i}$ is the leading term of an element of the Gröbner basis $G$ for $i=3,4,5,6,7,9,10,11,12$. 
The equation \eqref{r14m} thus shows that $m=2+r_8+r_{13}\leqslant 4$. 
Suppose for the rest of the paragraph that $r_2=0$. 
If $r_3=1$, then $r_i=0$ for $i=4,5,6,7,8,13$, 
since $x_{3} x_{i}$ is the leading term of an element of the Gröbner basis $G$ for $i=4,5,6,7,8,13$. 
The equation \eqref{r14m} hence shows that $m=4$. 
Assume for the rest of the paragraph that $r_3=0$. 
If $r_4=1$, then $r_i=0$ for $i=5,6,7,8,11$, 
since $x_{4} x_{i}$ is the leading term of an element of the Gröbner basis $G$ for $i=5,6,7,8,11$. 
Then, the equation \eqref{r14m} shows that 
$m=3+r_{13}\leqslant 4$. 
Suppose for the rest of the paragraph that $r_4=0$. 
If $r_5=1$, then $r_i=0$ for $i=6,7,8,9,11,12$, since $x_{5} x_{i}$ is the leading term of an element of the Gröbner basis $G$ for $i=6,7,8,9,11,12$. 
The equation \eqref{r14m} thus shows that $m=3+r_{13}\leqslant 4$.
Assume for the rest of the paragraph that $r_5=0$. 
If $r_6=1$, then $r_i=0$ for $i=7,8,9,10,12$, 
since $x_{6} x_{i}$ is the leading term of an element of the Gröbner basis $G$ for $i=7,8,9,10,12$. 
The equation \eqref{r14m} then shows that $m=3+r_{13}\leqslant 4$.
Suppose further that $r_6=0$. 
If $r_7=1$, then $r_i=0$ for $i=8,9,10,11,12$, since $x_{7} x_{i}$ is the leading term of an element of the Gröbner basis $G$ for $i=8,9,10,11,12$. 
The equation \eqref{r14m} shows that $m=3+r_{13}\leqslant 4$.
Finally, assume also that $r_7=0$. 
Then $m=r_8+r_{13}\leqslant 2\leqslant 4$.

Then we suppose $m=4$. 
The equation \eqref{r14m} then becomes 
\begin{equation}\label{r14m4}
	\begin{split}
	2r_1+2r_2+4r_3+3r_4+3r_5+3r_6+3r_7+r_8+r_{13}=4.
	\end{split}
\end{equation}
If $r_1=1$, then $r_i=0$ for $i=2,3,4,5,6,7,11,12$, and equation \eqref{r14m4} shows $r_8+r_{13}=2$, which gives $r_8=r_{13}=1$. 
This is impossible since $x_1x_8$ can only be followed by $x_{14}$ by \eqref{basis3}.
Assume for the rest of the paragraph that $r_1=0$. 
If $r_2=1$, then $r_i=0$ for $i=3,4,5,6,7,9,10,11,12$, 
and equation \eqref{r14m4} shows $r_8+r_{13}= 2$. 
In the same way as before, this case is also impossible. 
Suppose for the rest of the paragraph that $r_2=0$. If $r_3=1$, then $r_i=0$ for $i=4,5,6,7,8,13$. 
Then $x=x_3$, $x_{3}x_9$, $x_3x_{10}$, $x_3x_{11}$, $x_3x_{12}$ or $x_3x_9x_{10}$.
Suppose for the rest of the paragraph that $r_3=0$. If $r_4=1$, then $r_i=0$ for $i=5,6,7,8,11$, and equation \eqref{r14m4} shows $r_{13}=1$. 
By $\eqref{r14n}$, $n$ is even. 
Moreover, $x=x_4x_9^{r_9}x_{13}$, $x_4x_9^{r_9}x_{10}x_{13}$ or $x_4x_9^{r_9}x_{12}x_{13}$ for $r_9\in\NN_0$.
Suppose for the rest of the paragraph that $r_4=0$. If $r_5=1$, then $r_i=0$ for $i=6,7,8,9,11,12$, and equation \eqref{r14m4} shows $r_{13}=1$. 
Then $n$ is even by \eqref{r14n}, and $x=x_5x_{10}^{r_{10}}x_{13}$ for $r_{10}\in\NN_0$. 
Suppose for the rest of the paragraph that $r_5=0$. If $r_6=1$, then $r_i=0$ for $i=7,8,9,10,12$, and equation \eqref{r14m4} shows $r_{13}=1$. Then $n$ is even by \eqref{r14n}, and $x=x_6x_{11}^{r_{11}}x_{13}$ for $r_{11}\in\NN_0$. 
Suppose for the rest of the paragraph that $r_6=0$. If $r_7=1$, then $r_i=0$ for $i=8,9,10,11,12$, and equation \eqref{r14m4} shows $r_{13}=1$, which implies that $x=x_7x_{13}$. 
Finally, assume also that $r_7=0$. 
Then, equation \eqref{r14m4} shows $4=r_8+r_{13}\leqslant 2$, which is impossible. 
To sum up, we have 
\begin{align*}
S^0_4 & =\{x_3\}, \quad 
S^2_4=\{x_3x_9, x_3x_{10}, x_3x_{11}, x_3x_{12}\}, \quad 
S^4_4=\{x_4x_{13}, x_5x_{13}, x_6x_{13}, x_7x_{13}, x_3x_9x_{10}\},
\\
S^n_4 & =\{x_4x_9^{(n-4)/2}x_{13}, x_4x_9^{(n-6)/2}x_{10}x_{13}, x_4x_9^{(n-6)/2}x_{12}x_{13}, x_5x_{10}^{(n-4)/2}x_{13}, x_6x_{11}^{(n-4)/2}x_{13}\}  
\end{align*}
if $n\geqslant 6$ is even, and $S^n_4=\emptyset$ if $n$ is odd.

Suppose $m=3$. 
Then \eqref{r14m} becomes
\begin{equation}\label{r14m3}
	\begin{split}
	&2r_1+2r_2+4r_3+3r_4+3r_5+3r_6+3r_7+r_8+r_{13}=3.
	\end{split}
\end{equation}
If $r_1=1$, then $r_i=0$ for $i=2,3,4,5,6,7,11,12$, and equation \eqref{r14m3} shows $r_8+r_{13}=1$. 
Then $r_8+r_{13}$ is odd. 
We have thus either $r_8=1$ and $r_{13}=0$, or $r_8=0$ and $r_{13}=1$. 
Both cases imply that $n$ is odd by \eqref{r14n}. 
If $r_8=1$ and $r_{13}=0$, then $r_9=0$ and $r_{10}=0$ by \eqref{basis3}, so $x=x_1x_8$. 
If $r_8=0$ and $r_{13}=1$, then $x$ has the form $x_1x_{9}^{r_9}x_{10}^{r_{10}}x_{13}$. 
By \eqref{basis3}, $x_1x_9$ and $x_1x_{10}$ can only be followed by $x_{14}$, so $x=x_1x_{13}$. 
Now assume for the rest of the paragraph that $r_1=0$. 
If $r_2=1$, then $r_i=0$ for $i=3,4,5,6,7,9,10,11,12$, and equation \eqref{r14m3} shows $r_8+r_{13}=1$. 
We have either $r_8=1$ and $r_{13}=0$, or $r_8=0$ and $r_{13}=1$. 
Moreover, $n$ is odd. 
So, $x=x_2x_8$ or $x_2x_{13}$. 
Suppose for the rest of the paragraph that $r_2=0$. Then $r_3=0$ by \eqref{r14m3}. 
If $r_4=1$, then $r_i=0$ for $i=5,6,7,8,11$, and equation \eqref{r14m3} shows $r_{13}=0$. 
Hence, $n$ is odd by \eqref{r14n}, and $x$ has the form $x_4x_{9}^{r_9}x_{10}^{r_{10}}x_{12}^{r_{12}}$. If $r_9=0$, then $x$ can only be $x_4$, $x_4x_{10}$ or $x_4x_{12}$. 
If $r_9\neq 0$, then $x=x_4x_9^{r_9}$, $x_4x_9^{r_9}x_{10}$ or $x_4x_9^{r_9}x_{12}$. 
Suppose for the rest of the paragraph that $r_4=0$. If $r_5=1$, then $r_i=0$ for $i=6,7,8,9,11,12$. 
Then, equation \eqref{r14m3} shows $r_{13}=0$. 
Then $n$ is odd by \eqref{r14n} and $x=x_5x_{10}^{r_{10}}$.
Suppose for the rest of the paragraph that $r_5=0$. If $r_6=1$, then $r_i=0$ for $i=7,8,9,10,12$, and equation \eqref{r14m3} shows $r_{13}=0$. 
So, $n$ is odd by \eqref{r14n}, and $x=x_6x_{11}^{r_{11}}$.
Suppose for the rest of the paragraph that $r_6=0$. If $r_7=1$, then $r_i=0$ for $i=8,9,10,11,12$, and equation \eqref{r14m3} shows $r_{13}=0$. 
So, $x=x_7$. 
Suppose for the rest of the paragraph that $r_7=0$. If $r_8=1$, then $r_{13}=0$, and equation \eqref{r14m3} shows $1=3$, which is impossible.
Finally, assume also that $r_8=0$. Then equation \eqref{r14m3} shows $r_{13}=3$, which is impossible. To sum up, we have
\begin{align*}
S^1_3 & =\{ x_4, x_5, x_6, x_7, x_1x_8 , x_2x_8\}, \quad 
 S^3_3=\{x_1x_{13}, x_2x_{13}, x_4x_9, x_4x_{10}, x_4x_{12}, x_5x_{10},  x_6x_{11}  \}, 
 \\
 S^n_3 & =\{ x_4x_9^{(n-1)/2}, x_4x_9^{(n-3)/2}x_{10}, x_4x_9^{(n-3)/2}x_{12}, x_5x_{10}^{(n-1)/2}, x_6x_{11}^{(n-1)/2}   \}
\end{align*}
if $n\geqslant 5$ is odd, and $S^n_3=\emptyset$ if $n$ is even. 

Suppose $m=2$. 
Then \eqref{r14m} becomes  
\begin{equation}\label{r14m2}
	\begin{split}
	2r_1+2r_2+4r_3+3r_4+3r_5+3r_6+3r_7+r_8+r_{13}=2.
	\end{split}
\end{equation}
Then $r_i=0$ for $i=3,4,5,6,7$.
If $r_1=1$, then $r_i=0$ for $i=2,3,4,5,6,7,11,12$, and equation \eqref{r14m2} shows $r_8=r_{13}=0$.
Hence, $n$ is even by \eqref{r14n}, and $x=x_1$, $x_1x_9$ or $x_1x_{10}$. 
Assume for the rest of the paragraph that $r_1=0$. 
If $r_2=1$, then $r_i=0$ for $i=3,4,5,6,7,9,10,11,12$, and equation \eqref{r14m2} shows $r_8=r_{13}=0$, so $x=x_2$. 
Suppose for the rest of the paragraph that $r_2=0$. 
If $r_8=1$, \eqref{r14m2} shows $r_{13}=1$, which is impossible. 
Finally, if $r_8=0$, then $r_{13}=2$, which is also impossible. 
We thus have 
\[ S^0_2=\{ x_1,x_2\}  , 
\quad 
S^2_2=\{ x_1x_9,x_1x_{10} \} ,\] 
and $S^n_2=\emptyset$ if $n=1$ and $n\geqslant 3$. 

Suppose $m=1$. 
Then \eqref{r14m} becomes 
\begin{equation}\label{r14m1}
	\begin{split}
	2r_1+2r_2+4r_3+3r_4+3r_5+3r_6+3r_7+r_8+r_{13}=1.
	\end{split}
\end{equation}
If $r_8=1,r_{13}=0$, then $x=x_8$, $x_8x_9$, $x_8x_{10}$, $x_8x_{11}$, $x_8x_{12}$ or $x_8x_9x_{10}$.
If $r_8=0, r_{13}=1$, then $x=x_{13}$, $x_9^{r_9}x_{13}$, $x_9^{r_9}x_{10}x_{13}$, $x_9^{r_9}x_{12}x_{13}$, $x_{10}^{r_{10}}x_{13}$, $x_{11}^{r_{11}}x_{13}$ or $x_{12}x_{13}$. 
We then get  
\begin{align*}
S^1_1 & =\{x_8\}, \quad 
S^3_1=\{x_{13}, x_8x_9,x_8x_{10}, x_8x_{11}, x_8x_{12}\}, 
\\
S^5_1 & =\{ x_9x_{13}, x_{10}x_{13}, x_{11}x_{13}, x_{12}x_{13}, x_8x_9x_{10}\}, 
\\
S^n_1 & =\{ x_9^{(n-3)/2}x_{13}, x_9^{(n-5)/2}x_{10}x_{13}, x_9^{(n-5)/2}x_{12}x_{13}, x_{10}^{(n-3)/2}x_{13}, x_{11}^{(n-3)/2}x_{13} \} 
\end{align*}
if $n\geqslant 7$ is odd, and $S^n_1=\emptyset$ if $n$ is even.

Suppose $m=0$. Then \eqref{r14m} becomes 
\begin{equation}\label{r14m0}
	\begin{split}
	2r_1+2r_2+4r_3+3r_4+3r_5+3r_6+3r_7+r_8+r_{13}=0.
	\end{split}
\end{equation}
Then $r_i=0$ for $i=1,2,3,4,5,6,7,8,13$.
If $r_i=0$ for all $i\in \llbracket 1,14 \rrbracket$, then $x=1$. 
Otherwise, $x=x_{9}^{r_9}$, $x_{9}^{r_9}x_{10}$, $x_{9}^{r_9}x_{12}$, $x_{10}^{r_{10}}$, $x_{11}^{r_{11}}$ or $x_{12}$. 
We thus have
\begin{align*}
S^0_0=\{1\},   \quad 
S^2_0=\{ x_9, x_{10}, x_{11}, x_{12}  \} , \quad 
S^n_0=\{ x_9^{n/2}, x_{9}^{(n-2)/2}x_{10}, x_{9}^{(n-2)/2}x_{12}, x_{10}^{n/2}, x_{11}^{n/2}\}  \end{align*}
if $n\geqslant 4$ is even, and $S^n_0=\emptyset$ if $n$ is odd.

Finally, we leave to the reader the easy task to check that the cardinality of $S^n_m$ is as same as the dimension of $\tilde{H}^n_m$. 
\end{proof}

As a direct consequence of Remark \ref{remark:quotient-free-graded-algebra} and Theorem \ref{thm:HHcohomologymain} 
we get the following result. 
 
\begin{cor}
\label{cor:HHcohomologymain}
Recall that $C=F/(\mathcalboondox{R}_{1})$ is precisely the free graded-commutative (for the homological degree) algebra generated by the elements $x_i$ for $i\in \llbracket 1, 14 \rrbracket $, where $\mathcalboondox{R}_{1}$ is the set given in \eqref{eq:relations-cohomology-1}.
Let $D'=C/J$, where $J$ is the two-sided ideal of $C$ generated by the elements in $\mathcalboondox{R}_2$ given in \eqref{eq:relations-cohomology-2}.
Define the morphism $\varphi':C\to \operatorname{HH}^{\bullet}(A)$ of bigraded $\Bbbk$-algebras by setting $\varphi'(x_i)= X_i$ for $i\in \llbracket 1,14 \rrbracket $. 
It is easy to check that $\varphi'$ is surjective and $J\subseteq \operatorname{Ker}(\varphi')$,
so $\varphi'$ induces the surjective morphism $\bar{\varphi}':D'\to  \operatorname{HH}^{\bullet}(A)$. 
Moreover, $\bar{\varphi}'$ is an isomorphism, \textit{i.e.} $\operatorname{Ker}(\varphi') = J$.
\end{cor}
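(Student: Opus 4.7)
The plan is to obtain Corollary \ref{cor:HHcohomologymain} as an essentially formal consequence of Theorem \ref{thm:HHcohomologymain} and Remark \ref{remark:quotient-free-graded-algebra}, by viewing the situation as an iterated quotient $F \twoheadrightarrow F/(\mathcalboondox{R}_{1}) = C \twoheadrightarrow C/J = D'$ and comparing it with the single quotient $F \twoheadrightarrow F/I = D$ appearing in Theorem \ref{thm:HHcohomologymain}.

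First, I would verify that the map $\varphi'$ is well defined: by Theorem \ref{thm:HHcohomologymain} we have $\mathcalboondox{R}_{1} \subseteq \mathcalboondox{R} \subseteq \operatorname{Ker}(\varphi)$, so $\varphi$ kills the two-sided ideal $(\mathcalboondox{R}_{1})$ and therefore factors through the canonical projection $\pi : F \to C = F/(\mathcalboondox{R}_{1})$. This factorization is precisely $\varphi'$, which is automatically a morphism of bigraded $\Bbbk$-algebras (since $\pi$ and $\varphi$ are) and surjective (because $\varphi$ is). Next, I would note that $\mathcalboondox{R}_{2} \subseteq \operatorname{Ker}(\varphi)$ implies $\pi(\mathcalboondox{R}_{2}) \subseteq \operatorname{Ker}(\varphi')$, so the two-sided ideal $J$ generated in $C$ by the images of $\mathcalboondox{R}_{2}$ is contained in $\operatorname{Ker}(\varphi')$. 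This gives the well-defined surjective morphism $\bar{\varphi}' : D' = C/J \to \operatorname{HH}^{\bullet}(A)$.

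It remains to prove that $\bar{\varphi}'$ is injective, \textit{i.e.} that $\operatorname{Ker}(\varphi') = J$. Here the key observation is that $\operatorname{Ker}(\varphi') = \operatorname{Ker}(\varphi)/(\mathcalboondox{R}_{1}) = I/(\mathcalboondox{R}_{1})$, where the last equality uses Theorem \ref{thm:HHcohomologymain}. Since $I$ is the two-sided ideal of $F$ generated by $\mathcalboondox{R} = \mathcalboondox{R}_{1} \cup \mathcalboondox{R}_{2}$, one has $I = (\mathcalboondox{R}_{1}) + (\mathcalboondox{R}_{2})_{F}$, where $(\mathcalboondox{R}_{2})_{F}$ denotes the two-sided ideal of $F$ generated by $\mathcalboondox{R}_{2}$. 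Taking the image in $C$ under $\pi$ we get $\pi(I) = \pi((\mathcalboondox{R}_{2})_{F}) = J$, by the very definition of $J$ as the two-sided ideal of $C$ generated by $\pi(\mathcalboondox{R}_{2})$. Composing with the isomorphism $\pi(I) \cong I/(\mathcalboondox{R}_{1})$ yields $\operatorname{Ker}(\varphi') = J$, so $\bar{\varphi}'$ is injective and hence an isomorphism, as required.

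There is no real obstacle in this proof: it is a short, purely formal diagram chase once Theorem \ref{thm:HHcohomologymain} is in hand. The only point that requires minimal care is making sure that the ideal generated in the quotient $C$ by the images of $\mathcalboondox{R}_{2}$ coincides with the image of the ideal generated by $\mathcalboondox{R}_{2}$ in $F$, which is standard and follows from the surjectivity of $\pi$. All the substantive work (graded commutativity of $\operatorname{HH}^{\bullet}(A)$ to justify $\mathcalboondox{R}_{1} \subseteq \operatorname{Ker}(\varphi)$, verification that $\mathcalboondox{R}_{2} \subseteq \operatorname{Ker}(\varphi)$ using the chain maps of Fact \ref{fact chain map}, and the Gröbner basis / Hilbert series dimension count) has already been absorbed into Theorem \ref{thm:HHcohomologymain}.
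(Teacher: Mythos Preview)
Your proposal is correct and is precisely the formal unpacking of what the paper means when it says the corollary is ``a direct consequence of Remark \ref{remark:quotient-free-graded-algebra} and Theorem \ref{thm:HHcohomologymain}''; the paper gives no further argument, and your factorization $F \twoheadrightarrow C \twoheadrightarrow D'$ together with the identification $I/(\mathcalboondox{R}_{1}) = J$ is exactly the intended reasoning.
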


\bibliographystyle{model1-num-names}
\addcontentsline{toc}{section}{References}
  
\begin{bibdiv}
\begin{biblist}

\bib{AS10}{article}{
	author={Andruskiewitsch, Nicol\'{a}s},
	author={Schneider, Hans-J\"{u}rgen},
	title={On the classification of finite-dimensional pointed Hopf algebras},
	journal={Ann. of Math. (2)},
	volume={171},
	date={2010},
	number={1},
	pages={375--417},
	issn={0003-486X},
	review={\MR{2630042}},
	doi={10.4007/annals.2010.171.375},
}

    \bib{berger}{article}{
		author={Berger, Roland},
		author={Marconnet, Nicolas},
		title={Koszul and Gorenstein properties for homogeneous algebras},
		journal={Algebr. Represent. Theory},
		volume={9},
		date={2006},
		number={1},
		pages={67--97},
		issn={1386-923X},
		review={\MR{2233117}},
		doi={10.1007/s10468-005-9002-1},
	}
	
	\bib{cohenknopper}{article}{
		author={Cohen, A.M.},
		author={Knopper, J.W.},
		title={GBNP - a GAP package},
		journal={Version 1.0.3},
		date={2016},
		eprint={https://www.gap-system.org/Packages/gbnp.html},
	}

	\bib{FK99}{article}{
		author={Fomin, Sergey},
		author={Kirillov, Anatol N.},
		title={Quadratic algebras, Dunkl elements, and Schubert calculus},
		conference={
		   title={Advances in geometry},
		}, 
		book={
		   series={Progr. Math.},
		   volume={172},
		   publisher={Birkh\"{a}user Boston, Boston, MA},
		}, 
		date={1999},
		pages={147--182},
	   review={\MR{1667680}},
	}

    \bib{GV16}{article}{
	author={Gra\~na, Mat\'{\i}as},
	title={Nichols algebras of non-abelian group type: zoo examples},
	year={2016},
	eprint={http://mate.dm.uba.ar/~lvendram/zoo/},
    }

	\bib{es}{article}{
		author={Herscovich, Estanislao},
		title={An elementary computation of the cohomology of the Fomin-Kirillov
		algebra with 3 generators},
		journal={Homology Homotopy Appl.},
		volume={22},
		date={2020},
		number={2},
		pages={367--386},
		issn={1532-0073},
		review={\MR{4102553}},
		doi={10.4310/hha.2020.v22.n2.a22},
	}

		\bib{Ki00}{article}{
		author={Kirillov, A. N.},
		title={On some quadratic algebras},
		conference={
			title={L. D. Faddeev's Seminar on Mathematical Physics},
		},
		book={
			series={Amer. Math. Soc. Transl. Ser. 2},
			volume={201},
			publisher={Amer. Math. Soc., Providence, RI},
		},
		date={2000},
		pages={91--113},
		review={\MR{1772287}},
		doi={10.1090/trans2/201/07},
	}
	
	\bib{Ki16}{article}{
		author={Kirillov, Anatol N.},
		title={On some quadratic algebras I $\frac{1}{2}$: combinatorics of Dunkl
			and Gaudin elements, Schubert, Grothendieck, Fuss-Catalan, universal
			Tutte and reduced polynomials},
		journal={SIGMA Symmetry Integrability Geom. Methods Appl.},
		volume={12},
		date={2016},
		pages={Paper No. 002, 172},
		issn={1815-0659},
		review={\MR{3439199}},
		doi={10.3842/SIGMA.2016.002},
	}

	\bib{Louis}{book}{
		author={Loday, Jean-Louis},
		title={Cyclic homology},
		series={Grundlehren der Mathematischen Wissenschaften [Fundamental
		Principles of Mathematical Sciences]},
		volume={301},
		edition={2},
		note={Appendix E by Mar\'{\i}a O. Ronco;
		Chapter 13 by the author in collaboration with Teimuraz Pirashvili},
		publisher={Springer-Verlag, Berlin},
		date={1998},
		pages={xx+513},
		isbn={3-540-63074-0},
		review={\MR{1600246}},
		doi={10.1007/978-3-662-11389-9},
	}

	\bib{MS00}{article}{
		author={Milinski, Alexander},
		author={Schneider, Hans-J\"{u}rgen},
		title={Pointed indecomposable Hopf algebras over Coxeter groups},
		conference={
		   title={New trends in Hopf algebra theory},
		   address={La Falda},
		   date={1999},
		}, 
		book={
		   series={Contemp. Math.},
		   volume={267},
		   publisher={Amer. Math. Soc., Providence, RI},
		},
		date={2000},
		pages={215--236},
	   review={\MR{1800714}},
	   doi={10.1090/conm/267/04272},
	}

	\bib{SV16}{article}{
		author={\c{S}tefan, Drago\c{s}},
		author={Vay, Cristian},
		title={The cohomology ring of the 12-dimensional Fomin-Kirillov algebra},
		journal={Adv. Math.},
		volume={291},
		date={2016},
		pages={584--620},
		issn={0001-8708},
		review={\MR{3459024}},
		doi={10.1016/j.aim.2016.01.001},
	}

	\bib{Ufnarovskij}{article}{
		author={Ufnarovskij, V. A.},
		title={Combinatorial and asymptotic methods in algebra [MR1060321 (92h:16024)] },
		conference={ title={Algebra, VI},},
		book={ series={Encyclopaedia Math. Sci.},
		volume={57},
		publisher={Springer, Berlin},  },
		date={1995},
		pages={1--196},
		review={\MR{1360005}},
		doi={10.1007/978-3-662-06292-0\_1},
    }

	 \bib{Varadarajan}{book}{
		author={Varadarajan, V. S.},
		title={Supersymmetry for mathematicians: an introduction},
		series={Courant Lecture Notes in Mathematics},
		volume={11},
		publisher={New York University, Courant Institute of Mathematical
		Sciences, New York; American Mathematical Society, Providence, RI},
		date={2004},
		pages={viii+300},
		isbn={0-8218-3574-2},
		review={\MR{2069561}},
		doi={10.1090/cln/011},
	 }

	 \bib{Weibel}{book}{
		author={Weibel, Charles A.},
		title={An introduction to homological algebra},
		series={Cambridge Studies in Advanced Mathematics},
		volume={38},
		publisher={Cambridge University Press, Cambridge},
		date={1994},
		pages={xiv+450},
		isbn={0-521-43500-5},
		isbn={0-521-55987-1},
		review={\MR{1269324}},
		doi={10.1017/CBO9781139644136},
	}

	\bib{Sarah}{book}{
		author={Witherspoon, Sarah J.},
		title={Hochschild cohomology for algebras},
		series={Graduate Studies in Mathematics},
		volume={204},
		publisher={American Mathematical Society, Providence, RI},
		date={[2019] \copyright 2019},
		pages={xi+250},
		isbn={978-1-4704-4931-5},
		review={\MR{3971234}},
		doi={10.1090/gsm/204},

	}

\end{biblist}
\end{bibdiv}

\end{document}